\titleformat{\section}{\normalfont\scshape\centering}{\thesection}{1em}{}
\titleformat{\subsection}{\bfseries}{\thesubsection}{1em}{}
\newtheorem{theorem}{Theorem}[section]
\newtheorem{lemma}[theorem]{Lemma}
\newtheorem{claim}[theorem]{Claim}
\newtheorem{proposition}[theorem]{Proposition}
\newtheorem{information}[theorem]{Information}
\theoremstyle{definition}
\newtheorem{definition}[theorem]{Definition}
\newtheorem{remark}[theorem]{Remark}
\numberwithin{equation}{section}
\newcommand{\Meas}{\textup{Meas}}
\renewcommand\d{\textnormal{d}}
\author{Olli J\"arviniemi}
\newcommand{\ceil}[1]{\lceil #1 \rceil}
\newcommand{\floor}[1]{\lfloor #1 \rfloor}
\newcommand{\all}{\lll}
\title{On large differences between consecutive primes}
\date{}
\begin{document}
\begin{abstract}
We show that
$$\sum_{\substack{p_n \in [x, 2x] \\ p_{n+1} - p_n \ge x^{1/2}}} (p_{n+1} - p_n) \ll x^{0.57+\epsilon}$$
and
$$\sum_{\substack{p_n \in [x, 2x] \\ p_{n+1} - p_n \ge x^{0.45}}} (p_{n+1} - p_n) \ll x^{0.63+\epsilon},$$
where $p_n$ is the $n$th prime number. The proof combines Heath-Brown's recent work with Harman's sieve, improving and extending his results. We give applications of the results to prime-representing functions, binary digits of primes and approximation of reals by multiplicative functions.
\end{abstract}

\maketitle

\section{Introduction}
\label{sec:intro}

A central problem in number theory is understanding the distribution of prime numbers. Notable work in this area include Baker-Harman-Pintz's~\cite{BHP} result on intervals of length $x^{0.525}$ containing prime numbers and Jia's work~\cite{jia} showing that almost all intervals of length $x^{1/20}$ contain primes, both being preceded by numerous weaker results on the problems.

Both of these results may be viewed as instances of the problem of bounding the number of intervals of length $x^c$ without primes. The case $c = 1/2$ is of special interest, as even under the Riemann hypothesis it is not known that intervals of length $\sqrt{x}$ necessarily contain primes.

The current best result for $c = 1/2$ is given in the recent work of Heath-Brown~\cite{HB-V}, where he shows that there are at most $X^{3/5 + \epsilon}$ intervals $[x, x + x^{1/2}]$ with $x \in \mathbb{Z} \cap [X, 2X]$ that do not contain primes. Heath-Brown's result relies on his mean square estimate (see~\cite[Proposition 1]{HB-V} or Proposition \ref{prop:HB_MVT} below) for the product of two Dirichlet polynomials, one of which is sparse. Heath-Brown's work improves the previous result of Matomäki~\cite{matomaki}, who obtained an exceptional set of size $X^{2/3}$. See \cite{wolke2}, \cite{HB1978}, \cite{HB1979} and \cite{peck2} for earlier results. 

Heath-Brown's argument does not utilize Harman's sieve, in contrast to Matomäki's proof. One may strengthen the result by combining Heath-Brown's methods with Harman's sieve. We show the following.

\begin{theorem}
\label{thm:0.5}
Let $p_n$ denote the $n$th prime. We have
$$\sum_{\substack{p_n \in [x, 2x] \\ p_{n+1} - p_n \ge x^{1/2}}} (p_{n+1} - p_n) \ll x^{0.57+\epsilon}$$
for any fixed $\epsilon > 0$.
\end{theorem}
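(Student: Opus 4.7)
The plan is to follow Heath-Brown's strategy for bounding the number of prime-free intervals of length $\sqrt{x}$, but to replace his identity-based decomposition of the von Mangoldt function by a Harman-sieve style decomposition whose bilinear pieces have more flexible ranges. The first step is a reduction to an exceptional-set statement: if $\mathcal{T}$ denotes the set of integers $y \in [X, 2X]$ such that $[y, y + cy^{1/2}]$ contains no primes, then the theorem follows (up to $X^\epsilon$ losses) from $|\mathcal{T}| \ll X^{0.57+\epsilon}$, since each gap $p_{n+1} - p_n \ge x^{1/2}$ with $p_n \in [x,2x]$ contributes of order $p_{n+1} - p_n$ many integers to $\mathcal{T}$.

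Next I would convert prime-freeness into Dirichlet polynomial estimates. Starting from an identity of the form $\Lambda = \sum_j (\ldots)$ with bilinear and linear pieces, Perron's formula combined with a dyadic splitting shows that for every $y \in \mathcal{T}$ some Dirichlet polynomial $F(s)$ in the decomposition must satisfy $|F(\tfrac{1}{2} + it_y)| \gg X^{-\epsilon}\cdot (\text{trivial mass})$ at a point $t_y$ close to $y$. After a standard pigeonhole, the points $\{t_y\}_{y \in \mathcal{T}}$ can be taken to be well-spaced on the critical line at a cost of a logarithmic factor, so $\mathcal{T}$ is encoded by a sparse set of critical-line samples.

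The main step is the sieve decomposition. I would apply Harman's sieve to $\Lambda(n) \mathbf{1}_{n \asymp X}$, writing it as a linear combination of Type I sums and Type II bilinear forms $\sum_{mn \asymp X} a_m b_n$ with $m \asymp M$, $n \asymp N$, and with the ranges $M, N$ subject only to the flexible constraints allowed by the sieve. For each Type II piece the corresponding product of Dirichlet polynomials is estimated at the points $\{t_y\}$ via Heath-Brown's mean-square inequality (Proposition~\ref{prop:HB_MVT}), taking the indicator of $\{t_y : y \in \mathcal{T}\}$ to play the role of the sparse polynomial. Type I sums are dispatched by standard mean-value estimates. The extra flexibility of Harman's sieve, as compared with the Vaughan or Heath-Brown identities used in \cite{HB-V}, keeps each pair $(M, N)$ inside the range where Heath-Brown's estimate is strongest, and an optimization over the Buchstab cut-offs then yields the exponent $0.57$.

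The main obstacle will be this parameter optimization. One needs to verify that every dyadic configuration arising from the iterated Buchstab decomposition admits either a Type II factorization with $M, N$ in the favorable regime for Heath-Brown's estimate or else reduces to a manageable Type I piece; ranges falling between the scopes of the standard mean-value theorems and the sparse mean-square estimate would be fatal, so the sieve must be designed to avoid them. A secondary point is checking that the sparsity structure Heath-Brown exploits remains effective when the sparse polynomial is indexed by an arbitrary set $\mathcal{T}$ with no algebraic or additive structure.
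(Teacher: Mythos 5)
Your overall strategy is the right one---combine Heath-Brown's mean value theorem with Harman's sieve rather than with the Heath-Brown identity alone---but the mechanism you describe for linking the exceptional set to Dirichlet polynomials is wrong, and this matters.

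You propose to extract from Perron's formula a set of well-spaced critical-line sample points $\{t_y : y \in \mathcal{T}\}$ at which some Dirichlet polynomial is large, and then ``take the indicator of $\{t_y\}$ to play the role of the sparse polynomial'' in Proposition~\ref{prop:HB_MVT}. This does not match the structure of Heath-Brown's result. Proposition~\ref{prop:HB_MVT} bounds $\int_0^T |M(it)|^2 |Q(it)|^2\,\d t$ where $M(s) = \sum_{k\le R} \zeta_k m_k^{-s}$ is itself a Dirichlet polynomial with a \emph{sparse set of integer coefficients} $m_1,\dots,m_R$; the integration is over the continuous variable $t$, not over a discrete set of samples, and there is no way to reinterpret an indicator of a set of $t$-values as the sparse input $M$. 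The correct duality step is Heath-Brown's Proposition 2 (Proposition~\ref{prop:dir_to_arit} here): one indexes the exceptional short intervals by integers $m \approx x/H'$, and if the conclusion \eqref{eq:dir_to_arit_c} fails for $m_1,\dots,m_R$, then choosing unimodular $\zeta_j$ to align the phases of $\int F(it) G(it) m_j^{it}\,\d t$ forces $\int |F(it)M(it)|\,\d t$ to be large. Large values of Dirichlet polynomials over sets of $t$ do enter, but at a later stage, when $[T_1,2T_1]$ is partitioned into sets $\mathcal{T}_\sigma$ according to $|A(it)|,|B(it)|,|C(it)|$ and Huxley's theorem is applied; that is a different object from $M(s)$. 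As written your plan has no route from prime-free intervals to the hypotheses of Proposition~\ref{prop:HB_MVT}.

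A secondary inaccuracy: you assert that the sieve must be ``designed to avoid'' dyadic configurations for which no Type~I/Type~II factorization is available. In Harman's sieve one cannot avoid such configurations; one discards them, keeps the remaining (positive-signed) terms as a lower bound for $S(\mathcal{A}(m), 2\sqrt{x})$, and verifies---by an explicit, here computer-assisted, integration of Buchstab integrals against the ranges $(A,B,C)$ admissible in Proposition~\ref{prop:range_0.5}---that the cumulative loss stays below $1$. The exponent $0.57$ comes out of that quantitative bookkeeping, not from a clean optimization over cut-offs. Your final worry (whether the sparse estimate survives ``arbitrary $\mathcal{T}$ with no structure'') is a non-issue once the setup is corrected: Proposition~\ref{prop:HB_MVT} holds for arbitrary distinct integers $m_i$.
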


It is the best to view the bound as $x^{1/2 + 0.07 + \epsilon}$, so that the ``excess'' is $30\%$ smaller than in Heath-Brown's result $x^{1/2 + 0.1 + \epsilon}$.

We further demonstrate that Heath-Brown's method adapts to intervals shorter than $\sqrt{x}$.

\begin{theorem}
\label{thm:0.45}
Let $p_n$ denote the $n$th prime. We have
$$\sum_{\substack{p_n \in [x, 2x] \\ p_{n+1} - p_n \ge x^{0.45}}} (p_{n+1} - p_n) \ll x^{0.63+\epsilon}$$
for any fixed $\epsilon > 0$.
\end{theorem}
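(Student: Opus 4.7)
The plan is to adapt the method used for Theorem \ref{thm:0.5} to the regime of shorter intervals $H = x^{0.45}$. First I would dyadically decompose the sum over gap sizes, so that it suffices to bound, for each $H \in [x^{0.45}, x]$, the size of a well-spaced set $\mathcal{X} \subset [X, 2X]$ consisting of points $x$ for which $[x, x+H]$ contains no primes (well-spaced in the sense that any two points of $\mathcal{X}$ are $\geq H$ apart). The target bound is $|\mathcal{X}| \ll X^{0.63 + \epsilon}/H$, and summing the contributions $H \cdot |\mathcal{X}|$ over the dyadic ranges of $H$ recovers Theorem \ref{thm:0.45}.

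Given such a well-spaced set, I would encode the absence of primes as the identity $\sum_{x \in \mathcal{X}} \sum_{x < n \leq x + H} \Lambda(n) = O(X^{1/2+\epsilon}|\mathcal{X}|)$ and decompose $\Lambda$ via Heath-Brown's identity into Type I and Type II sums. The Type I sums are controlled routinely; the Type II sums $\sum_{mk=n} a_m b_k$ with $m \sim M$, $k \sim K$, after Perron's formula, become mean value averages of products of Dirichlet polynomials involving the sparse polynomial $F(s) = \sum_{x \in \mathcal{X}} x^{-s}$. Proposition \ref{prop:HB_MVT} is tailor-made for such products and bounds them for a favourable range of $M$. Wherever the direct Heath-Brown decomposition leaves a residual ``forbidden'' range of $M$, I would invoke Harman's sieve, using positive sieve weights on the Buchstab-style combinatorial expansion to recover an estimate while only discarding nonnegative terms.

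The main obstacle lies in the parameter optimization in this new regime. With $H = X^{0.45}$ rather than $X^{1/2}$, the relevant zero-detection contours are shorter by a factor $X^{0.05}$, which tightens every estimate derived from Proposition \ref{prop:HB_MVT} and in particular shrinks the range of $M$ on which the Type II sums are directly manageable. The residual range that Harman's sieve must absorb is correspondingly wider, so a careful case analysis is needed to check that, for every dyadic value of $M$ in the decomposition, the net bound $|\mathcal{X}| \ll X^{0.63 + \epsilon}/H$ still holds. The final exponent $0.63$ should emerge as the extremal value achieved at the worst choice of $M$ after balancing the Harman sieve weights against Proposition \ref{prop:HB_MVT}; verifying this balance numerically, and identifying exactly which decompositions are used for which sub-ranges of $M$, is the technical heart of the argument, while the structural framework parallels the proof of Theorem \ref{thm:0.5} with only routine modifications.
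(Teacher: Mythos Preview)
Your overall framework is correct and matches the paper's strategy: reduce to bounding an exceptional set of intervals, pass to Dirichlet polynomials via Perron's formula, apply Heath-Brown's mean value theorem (Proposition~\ref{prop:HB_MVT}) to the product with the sparse polynomial $M(s)=\sum_{x\in\mathcal{X}}x^{-s}$, and use Harman's sieve for the ranges not directly covered. However, what you have written is a plan rather than a proof, and two points are genuinely problematic.

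First, a small technical slip: the integration range is $T\approx x^{1-c}$, so for $c=0.45$ one has $T\approx x^{0.55}$, which is \emph{longer} than the $T\approx x^{0.5}$ arising for $c=0.5$. Your remark that the ``contours are shorter by a factor $X^{0.05}$'' is therefore backwards, though your conclusion that the problem is harder is correct. (Incidentally, the dyadic decomposition over gap lengths $H$ is unnecessary: any gap of length $\ge x^{0.45}$ is covered by disjoint intervals of length exactly $x^{0.45}$ without primes, so the single scale $H=x^{0.45}$ suffices; this is how Lemma~\ref{lem:to_buch} proceeds.)

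Second, and more seriously, the statement that the passage from $c=0.5$ to $c=0.45$ requires ``only routine modifications'' is a substantial underestimate, and the missing content is exactly what determines the exponent $0.63$. In the paper the admissible region of factorizations $F=ABC$ for $c=0.5$ (Proposition~\ref{prop:range_0.5}) is a single hexagon, whereas for $c=0.45$ (Proposition~\ref{prop:range_0.45}) it splits into several disconnected components described by families of inequalities indexed by an auxiliary integer parameter~$w$; compare Figures~\ref{fig:kuva1} and~\ref{fig:kuva2}. Establishing these regions requires a new case analysis (the proof of Proposition~\ref{prop:range_0.45}(ii) solves systems of linear inequalities in $\sigma_A,\sigma_B,\sigma_C$ that have no analogue in the $c=0.5$ argument), together with auxiliary results such as Lemma~\ref{lem:range_simple} and the machinery of Section~\ref{sec:0.45}. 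The Harman sieve verification that the loss stays below $1$ is then a $30$-hour computation, two orders of magnitude longer than for $c=0.5$. None of this is ``the same proof with different numbers'': the shape of the admissible region changes qualitatively, and identifying which Buchstab decompositions land in it is the entire content of the theorem. Your proposal defers all of this to the phrase ``verifying this balance numerically'', which is precisely the step that needs to be carried out.
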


The Lindelöf hypothesis would imply a bound of $x^{1 - c + \epsilon}$ for intervals of length $x^c$~\cite{yu}. Hence the bound in Theorem~\ref{thm:0.45} should be viewed as $x^{(1 - 0.45) + 0.08 + \epsilon}$, the excess being of similar size as in Theorem~\ref{thm:0.5}.

For intervals shorter than $\sqrt{x}$, previously Peck~\cite{peck} has given a bound of $x^{1.25 - c + \epsilon}$ for intervals of length $x^c$ for any $1/4 < c \le 1/2$. Islam~\cite{islam} gives the bound $x^{2/3 + 5(1/2 - c)}$ for $c < 1/2$, improving on Peck's result for $c > 1/2 - 1/48$. Simultaneously to our work Stadlmann \cite{stadlmann} has given the bound $x^{1.23 - c + \epsilon}$ for $c > 0.23$ (also by using Heath-Brown's mean value theorem from \cite{HB-V}).

Theorem~\ref{thm:0.45} gives a substantial improvement on previous results, demonstrating the incredible strength of Heath-Brown's new methods: the excess in Theorem \ref{thm:0.45} is less than a third of the excess in Peck's result, and, with the exception of Heath-Brown's result~\cite{HB-V}, the bound in Theorem \ref{thm:0.45} for intervals of length $x^{0.45}$ is stronger than any previous bound for intervals of length $\sqrt{x}$.

Certainly, with more work one could obtain bounds for intervals of length $x^c$ with any $0.45 < c < 0.5$ (beating the bound $x^{0.63}$ one gets from Theorem \ref{thm:0.45}). One can also extend the results for even shorter intervals (see also \cite{stadlmann}). The $\epsilon$ term in the exponents could be dropped with a bit more work, and by more effort one could improve the exponents slightly.

The proofs of Theorems~\ref{thm:0.5} and~\ref{thm:0.45} actually show that there are few intervals of length $x^c$ which contain $o(x^c/\log x)$ primes. See Theorem~\ref{thm:many} for a precise formulation.

We present a couple of applications of the results (see Section \ref{sec:applications} for more detailed discussion). First, we show that there are prime-representing functions of the form $\lfloor A^{\alpha^n} \rfloor$ for any $\alpha \ge 20/11$.

\begin{theorem}
\label{thm:PRF}
Let $\alpha \ge 20/11 = 1.818\ldots$ be fixed. There exists $A > 1$ such that $\lfloor A^{\alpha^n} \rfloor$ is a prime for all $n \in \mathbb{Z}_+$. 
\end{theorem}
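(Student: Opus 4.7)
The plan is to construct, for fixed $\alpha \ge 20/11$, an infinite sequence of primes $q_0 < q_1 < q_2 < \cdots$ satisfying $q_n^\alpha \le q_{n+1} < (q_n+1)^\alpha$ for every $n$. Granted such a chain, Mills' original argument applies verbatim: $q_n^{\alpha^{-n}}$ is an increasing sequence, $(q_n+1)^{\alpha^{-n}}$ is decreasing, both converge to a common value $A$, and $\lfloor A^{\alpha^n}\rfloor = q_n$ is prime for every $n \ge 1$.

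The first step is to translate Theorem~\ref{thm:0.45} into an upper bound on ``bad'' integers. Call $m$ bad if $[m^\alpha,(m+1)^\alpha)$ contains no prime. For a bad $m \in [M,2M]$, the enclosing prime gap has length at least $(m+1)^\alpha - m^\alpha \gg M^{\alpha-1} = (M^\alpha)^{1-1/\alpha}$; with $\alpha = 20/11$ the exponent is exactly $9/20 = 0.45$, so Theorem~\ref{thm:0.45} governs these gaps. Since a gap of length $L$ covers at most $O(L \cdot M^{-(\alpha-1)})$ bad integers, summing gives
\[
\#\{\text{bad } m \in [M,2M]\} \;\ll\; M^{-(\alpha-1)} \cdot (M^\alpha)^{0.63+\epsilon} \;=\; M^{\,1-0.37\alpha+\epsilon}.
\]
For $\alpha = 20/11$ this is $M^{0.328+\epsilon}$, giving a bad-integer density $\ll M^{-0.67+\epsilon}$; the bound only improves for larger $\alpha$.

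The chain is then produced by compactness. Fix $Q$ large and consider the finitely-branching rooted tree whose children at a node $p$ are the primes in $[p^\alpha,(p+1)^\alpha)$, with the root's children being the primes in $[Q,2Q]$. By K\"onig's lemma, an infinite branch exists as soon as the tree contains vertices at every finite depth. Let $B_k(Q)$ count primes $p \in [Q,2Q]$ admitting no chain of length $k$. Step~1 gives $B_1(Q) \ll Q^{0.328+\epsilon}$. For the inductive step, $p \in B_{k+1}(Q) \setminus B_1(Q)$ requires every prime in $[p^\alpha,(p+1)^\alpha)$ to lie in $B_k$ at the next scale $Q^\alpha$. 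Discard via Theorem~\ref{thm:many} those starting primes whose interval contains $o(Q^{\alpha-1}/\log Q)$ primes (an exceptional set of size $\ll Q^{0.18+\epsilon}$); for the remaining $p$, using inductively $|B_k(Q^\alpha)| \ll Q^{0.596+\epsilon}$, the ``all children bad'' scenario would force $\gg Q^{0.818}$ typical children to come from a pool of size $\ll Q^{0.596+\epsilon}$, which is impossible for large $Q$. Hence $B_k(Q) \ll Q^{0.328+\epsilon}$ uniformly in $k$, negligible next to $\pi(2Q) - \pi(Q)$, and the tree is infinite.

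The main obstacle is precisely this inductive step: one must propagate the bad-integer bound across scales without the exponent blowing up. The governing inequality is $\alpha \theta < \alpha - 1$, equivalently $\theta < 1 - 1/\alpha$; for $\theta = 0.328$ and $\alpha = 20/11$ this reads $0.328 < 0.45$, with room to spare. This is exactly the condition that the total number of bad descendants at the next scale is smaller than the typical number of candidate descendants, making it combinatorially impossible for a typical node to have all descendants bad. It also explains the threshold $\alpha = 20/11$: larger $\alpha$ gives more room, while smaller $\alpha$ would demand a stronger version of Theorem~\ref{thm:0.45}.
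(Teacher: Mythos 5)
Your proof is correct and hinges on the same key input as the paper's, namely Theorem~\ref{thm:many} applied at scale $p^\alpha$ together with the comparison $Q^{\alpha-1} \gg (Q^\alpha)^{0.18+\epsilon}$, but the packaging is heavier. The paper follows Mills and Matom\"aki directly: it builds the chain $p_0, p_1, \ldots$ one step at a time, maintaining the invariant that $S_n = I_n \cap \mathbb{P}$ has $\ge \epsilon p_n^{\alpha-1}/\log p_n$ elements, and picks $p_{n+1} \in S_n$ outside the Theorem~\ref{thm:many}-exceptional set of size $O(p_n^{(0.18+\epsilon)\alpha^2})$. Your K\"onig's-lemma / $B_k$ argument proves the same thing by a detour: you establish a uniform-in-$k$ bound on $B_k$ at every scale, which is strictly more than the paper's local step requires, and you additionally invoke Theorem~\ref{thm:0.45} to seed the induction --- this is unnecessary, since an interval containing no primes is in particular one containing few primes, so the $B_1$ bound is already a special case of the atypical-set bound from Theorem~\ref{thm:many}, which is the only ingredient the paper uses.

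Two small slips worth fixing. First, the exceptional set from Theorem~\ref{thm:many} lives at scale $x = Q^\alpha$, so its size is $\ll x^{0.18+\epsilon} = Q^{0.18\alpha+\epsilon}$, which is about $Q^{0.327+\epsilon}$ at $\alpha = 20/11$, not $\ll Q^{0.18+\epsilon}$ as you wrote; the conclusion survives since this is still negligible relative to $\pi(2Q) - \pi(Q)$. Second, in the $B_1$ computation the gap swallowing $[m^\alpha, (m+1)^\alpha)$ need only be $\ge \alpha m^{\alpha-1}$, while Theorem~\ref{thm:0.45} requires gaps $\ge p_n^{0.45}$ with $p_n$ ranging up to $(2M)^\alpha$, and at the boundary $\alpha = 20/11$ this threshold is $2^{9/11}M^{\alpha-1}$; the slack $\alpha = 20/11 > 2^{9/11}$ saves you, but this should be said explicitly (together with a dyadic decomposition in $p_n$), or one should simply run the whole argument through Theorem~\ref{thm:many} as the paper does.
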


Previous results on the problem include those of Mills~\cite{mills} (the first such result, giving $\alpha = 3$), Matomäki~\cite{matomaki-PRF} (allowing $\alpha \ge 2$) and Islam~\cite{islam} (with $\alpha \ge 1.946\ldots$).

Second, we show that there are infinitely many primes with very many (or very few) ones in their binary representation.

\begin{theorem}
\label{thm:bin}
Let $d \in \{0, 1\}$. There are infinitely many primes $p$ such that at least $74.2\%$ of the digits of the binary representation of $p$ are equal to $d$.
\end{theorem}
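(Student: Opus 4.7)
I treat $d=1$ in detail; the case $d=0$ is symmetric. Write $s(m)$ for the number of ones in the binary expansion of $m \ge 0$. Fix $n$ large, set $c = 0.45$, $k = \lceil cn \rceil$, $x = 2^{n-1}$, and partition $[2^{n-1}, 2^n)$ into the intervals $I_M := [2^k M, 2^k(M+1))$ for $M \in \mathcal{M} := [2^{n-k-1}, 2^{n-k})$; each has length $2^k \asymp x^{0.45}$. For a prime $p \in I_M$, writing $p = 2^k M + r$ with $r \in [0, 2^k)$ gives disjoint bit expansions for $M$ (shifted left by $k$) and $r$, so $s(p) = s(M) + s(r)$. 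By the refinement of Theorem~\ref{thm:0.45} mentioned in the introduction (Theorem~\ref{thm:many}), all but $O(x^{0.18+\varepsilon})$ of the $M \in \mathcal{M}$ are \emph{good}, meaning $I_M$ contains $\gg x^{0.45}/\log x$ primes.

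Let $H(t) = -t\log_2 t - (1-t)\log_2(1-t)$ be the binary entropy. The plan is a two-stage pigeonhole: first choose $M$ so that $s(M)$ is near its maximum $n-k$, then find a prime in $I_M$ whose remainder $r$ has $s(r)$ near the median $k/2$. For the first stage, pick $\delta$ slightly above the critical value $\delta_0 \approx 0.06$ solving $H(\delta_0)(1-c) = 0.18$. The set $\mathcal{M}_1 := \{M \in \mathcal{M} : s(M) \ge (1-\delta)(n-k)\}$ has size $\sim 2^{H(\delta)(n-k)}$, which for $\delta > \delta_0$ strictly exceeds $O(x^{0.18+\varepsilon})$, so some $M \in \mathcal{M}_1$ is good. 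For the second stage, fix such an $M$; the number of $r \in [0, 2^k)$ with $s(r) < (1/2-\mu)k$ is at most $2^{H(1/2-\mu)k}$, which for $\mu \asymp \sqrt{(\log n)/k}$ is $\ll 2^k/n$ and therefore strictly less than the number of primes in $I_M$. Hence some prime $p = 2^kM + r \in I_M$ satisfies $s(r) \ge (1/2-\mu)k$.

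Combining, $s(p) \ge (1-\delta)(n-k) + (1/2-\mu)k$, and as $n \to \infty$, $\delta \to \delta_0$, $\mu \to 0$, the fraction $s(p)/n$ tends to $(1-0.06)(0.55) + 0.5 \cdot 0.45 = 0.742$. Running this over infinitely many $n$ produces the desired primes for $d = 1$; the case $d = 0$ is handled symmetrically by choosing $\mathcal{M}_0 := \{M : s(M) \le \delta(n-k)\}$ and finding a prime with $s(r) \le (1/2+\mu)k$ by the mirror pigeonhole. The main obstacle---and what fixes the constant $74.2\%$ exactly---is that the entropic size constraint $H(\delta)(1-c) \ge 0.18$ (with the $0.18$ coming from $0.63 - 0.45$ in Theorem~\ref{thm:0.45}) and the target fraction $(1-\delta)(1-c) + c/2$ are both critical at the pair $(\delta, c) = (0.06, 0.45)$; the $o(1)$ losses from the pigeonhole steps must be absorbed into a slight slack in $\delta$, so the argument reaches the critical $0.742$ but not a fraction noticeably beyond it.
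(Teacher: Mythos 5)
Your argument is essentially identical to the paper's: same choice of $c = 0.45$, same prefix/suffix decomposition using Theorem~\ref{thm:many} to locate one good interval among $x^{0.18+\nu}$ many, and the same pigeonhole on the suffix digits; the only cosmetic difference is that you phrase the prefix count via the binary entropy $H(\delta)$ where the paper writes out $\binom{0.55k-1}{\lfloor tk\rfloor}$ and Stirling explicitly. One small point to tighten: the exact critical value is $\delta_0 \approx 0.05996 < 0.06$ (solving $H(\delta_0)\cdot 0.55 = 0.18$), giving a critical fraction $(1-\delta_0)\cdot 0.55 + 0.225 \approx 0.74202 > 0.742$, and it is this strict inequality — not the rounded $0.742$ — that lets you fix $\delta$ slightly above $\delta_0$ (and $\nu$ small enough) so that the achieved fraction actually exceeds $74.2\%$ rather than merely approaching it from below.
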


In \cite{naslund} it is noted that the bound $3/4 - 1/80 - \epsilon = 73.75\% - \epsilon$ for Theorem \ref{thm:bin} follows from the result of Baker, Harman and Pintz \cite{BHP} on primes in intervals of length $x^{1/2 + 1/40}$. To our knowledge this was the best previous bound on the problem. If there were primes in intervals of length $\sqrt{x}$, the same method would give the bound $75\% - \epsilon$.

Finally, we note an improvement on approximation of real numbers by multiplicative functions.

\begin{theorem}
\label{thm:approx}
Let $\epsilon > 0$ and $\alpha > 1$ be given. There are infinitely many integers $n$ such that
$$\left|\frac{\sigma(n)}{n} - \alpha\right| < n^{-0.55 + \epsilon},$$
where $\sigma(n)$ is the sum of divisors of $n$.
\end{theorem}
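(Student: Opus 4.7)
The plan is to build $n$ as a product of primes via an iterated Erd\H{o}s-type greedy multiplicative scheme, using Theorem~\ref{thm:0.45} to control the prime gaps encountered. The key multiplicative observation is that if $m$ has $\beta := \sigma(m)/m < \alpha$ and $\delta := \alpha - \beta$, then for any prime $q \nmid m$ one has
\[
\sigma(mq)/(mq) - \alpha = \beta(1/q - 1/q_\ast) = O(|q - q_\ast|/q_\ast^{2}),
\]
where $q_\ast := \beta/\delta$. In particular, a prime $q$ within $L$ of $q_\ast$ yields a new defect of order $L/q_\ast^{2}$, so the bound $|\sigma(mq)/(mq) - \alpha| \le (mq)^{-0.55+\epsilon}$ is achieved provided $L \le (mq)^{-0.55+\epsilon} q_\ast^{2}$.

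Starting from $m^{(0)} = 1$, I iterate this step: at stage $k$, set $m^{(k+1)} := m^{(k)} q$, where $q$ is the smallest prime above $q_\ast^{(k+1)} := \beta^{(k)}/(\alpha - \beta^{(k)})$ not dividing $m^{(k)}$. By Theorem~\ref{thm:0.45}, for all $q_\ast^{(k+1)}$ outside an exceptional set of measure $\ll (q_\ast^{(k+1)})^{0.63+\epsilon}$ in $[q_\ast^{(k+1)}, 2q_\ast^{(k+1)}]$, the greedy prime satisfies $q - q_\ast^{(k+1)} \ll (q_\ast^{(k+1)})^{0.45}$. A short computation then gives $\delta^{(k+1)} \ll (q_\ast^{(k+1)})^{-1.55} \asymp (\delta^{(k)})^{1.55}$, while $m^{(k+1)} \asymp m^{(k)}/\delta^{(k)}$. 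Taking logarithms shows $\log(1/\delta^{(k)})/\log m^{(k)} \to 0.55$ as $k \to \infty$, so each greedy iterate satisfies $|\sigma(m^{(k)})/m^{(k)} - \alpha| \ll (m^{(k)})^{-0.55 + o(1)}$; taking $n = m^{(k)}$ for arbitrarily large $k$ produces infinitely many $n$ satisfying the theorem.

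The principal obstacle is that Theorem~\ref{thm:0.45} gives only an exceptional-set bound, so one cannot deterministically guarantee the prime-gap estimate at each greedy stage. Since the exceptional set near scale $Q$ has relative density only $\ll Q^{-0.37+\epsilon}$, a sufficient strategy is to introduce flexibility in the greedy, allowing $q$ to range over all primes in the admissible window $[q_\ast^{(k+1)}, q_\ast^{(k+1)} + (q_\ast^{(k+1)})^{0.45}]$. Each such $q$ yields a distinct value of $q_\ast^{(k+2)} \asymp q_\ast^{(k+1)\,2}/(q - q_\ast^{(k+1)})$, and a counting argument over the $\gg (q_\ast^{(k+1)})^{0.45}/\log q_\ast^{(k+1)}$ candidates should produce at least one choice avoiding the exceptional set at the next stage. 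Carrying this flexibility through all $\Theta(\log \log N)$ stages, and absorbing any residual bad stages into the $o(1)$ in the exponent, is the main technical burden.
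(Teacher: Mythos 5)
The paper's proof of Theorem~\ref{thm:approx} is a one-line citation: it invokes Harman's transfer theorem (\cite[Theorem~2.2]{harman-approx}), which asserts that if the number of disjoint intervals $[y, y + y^c]$ up to $x$ containing few primes is $o(x^{c/(2-c)}/\log x)$, then $|\sigma(n)/n - \alpha| < n^{-(1-c)+\epsilon}$ holds for infinitely many $n$. Theorem~\ref{thm:many} with $c = 0.45$ supplies the hypothesis, since $0.18 + \epsilon < 0.45/1.55 \approx 0.29$. Your proposal is, in effect, an attempt to reprove Harman's transfer theorem from scratch via an iterated greedy multiplicative scheme. The recursion bookkeeping in your second paragraph is correct: choosing $q$ with $q - q_\ast \asymp q_\ast^{c}$ gives $q_\ast^{(k+1)} \asymp (q_\ast^{(k)})^{2-c}$, and summing the geometric series gives $\log(1/\delta^{(k)})/\log m^{(k)} \to 1-c$, so the multiplicative machinery is sound in outline.

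The gap is in the step you flag as ``the main technical burden,'' and it is genuine rather than routine. Showing that at each stage some admissible prime $q$ produces a non-exceptional next target is precisely where Harman's threshold $c/(2-c)$ arises, and your proposal never surfaces or verifies that inequality. Concretely: for $q = Q + t$ with $t$ near $Q^c$ (so the next target is near $Q' := Q^{2-c}$), consecutive targets are spaced $\asymp (Q')^2/Q^2 = Q^{2-2c}$ apart, while a bad interval at scale $Q'$ has length $(Q')^c = Q^{c(2-c)}$; since $c(2-c) < 2-2c$ for $c < 2-\sqrt{2}$, each bad interval kills $O(1)$ targets. There are $\ll (Q')^{0.18+\epsilon} = Q^{(2-c)(0.18+\epsilon)}$ bad intervals against $\gg Q^c/\log Q$ targets, so a good choice exists exactly when $(2-c)\cdot 0.18 < c$, i.e.\ $0.18 < c/(2-c)$ --- the hypothesis of Harman's theorem. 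Your fallback --- ``absorbing any residual bad stages into the $o(1)$ in the exponent'' --- would not work: a single stage where one is forced into a gap of order $Q$ resets $q_\ast^{(k+1)} \asymp q_\ast^{(k)}$ instead of $(q_\ast^{(k)})^{2-c}$, which stalls the recursion entirely rather than costing $o(1)$. Without the counting argument the proof does not close; the efficient route is the paper's, namely to feed Theorem~\ref{thm:many} into Harman's ready-made transfer theorem.
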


The previous best result is due to Harman \cite{harman-approx} with the bound $n^{-0.52}$.

\subsection{Overview of the method}
\label{sec:overview}

Our proof largely follows the one given by Heath-Brown in~\cite{HB-V}, with the modification that we in addition utilize Harman's sieve. We give an overview of the proof below. For convenience we mostly consider the case of intervals of length $x^{0.5}$.

First, we perform elementary manipulations, reducing to showing that for all but roughly $O(x^{0.07})$ integers $m \approx \sqrt{x}$ we have
\begin{align}
\label{eq:overview_1}
S(\mathcal{A}(m), 2\sqrt{x}) \ge \epsilon \frac{|\mathcal{A}(m)|}{|\mathcal{B}(m)|} S(\mathcal{B}(m), 2\sqrt{x}),
\end{align}
where $\mathcal{A}(m)$ is a certain interval roughly of length $\sqrt{X}$  associated to $m \in \mathbb{Z}$, $\mathcal{B}(m)$ is an interval of length $x^{1 - o(1)}$, and $S(\mathcal{C}, z)$ counts the number integers in $\mathcal{C}$ which have no prime factors smaller than $z$. This follows the usual approach to applying Harman's sieve, where the set of interest is compared to a larger set.

We then decompose the terms $S(\mathcal{A}(m), 2\sqrt{x})$ and $S(\mathcal{B}(m), 2\sqrt{x})$ in~\eqref{eq:overview_1} by the Buchstab identity, which states that
\begin{align*}
S(\mathcal{C}, z) = S(\mathcal{C}, z') - \sum_{z' \le p < z} S(\mathcal{C}_p, p),
\end{align*}
where $\mathcal{C}_n = \{k \in \mathbb{N} : kn \in \mathcal{C}\}$. This reduces the problem to obtaining asymptotics of the form
\begin{align}
\label{eq:overview_2}
\sum_{\substack{p_1, \ldots , p_n \\ p_i \in [x^{\alpha_i}, x^{\beta_i}] \\ p_n < \ldots < p_1}} S(\mathcal{A}_{p_1 \cdots p_n}(m), z) \approx \frac{|\mathcal{A}|}{|\mathcal{B}|}\sum_{\substack{p_1, \ldots , p_n \\ p_i \in [x^{\alpha_i}, x^{\beta_i}] \\ p_n < \ldots < p_1}} S(\mathcal{B}_{p_1 \cdots p_n}(m), z)
\end{align}
for all but $O(x^{0.07})$ exceptional values of $m$, where $z$ is either a function of $x$ or $z = p_n$.

We use a method of Heath-Brown~\cite[Proposition 2]{HB-V} to link asymptotics of the form~\eqref{eq:overview_2} with the problem of bounding the mean value of certain type of Dirichlet polynomials. More specifically, in order to show that~\eqref{eq:overview_2} holds for all but $O(x^{0.07})$ exceptional $m$, it suffices to show (roughly)
\begin{align}
\label{eq:overview_3}
\int_{|t| \in [T_0, T]} |F(it)M(it)| \d t = o\left(\frac{Rx}{\log x}\right),
\end{align}
where $R = x^{0.07}$, $T = \sqrt{x}$, $T_0 = (\log x)^A$, $F(s)$ is the Dirichlet polynomial of length $x$ whose coefficients correspond to the summands in~\eqref{eq:overview_2} and $M(s)$ is an arbitrary polynomial of the form
$$M(s) = \sum_{i = 1}^R \zeta_im_i^{-s}, \qquad |\zeta_i| = 1, m_i \approx \sqrt{x}.$$
Note that $F(s)$ factorizes as a product of at least $n$ polynomials corresponding to the sums over $p_i$ in~\eqref{eq:overview_2}.

To prove~\eqref{eq:overview_3}, we use a mean value theorem due to Heath-Brown~\cite[Proposition 1]{HB-V}. The mean value theorem applies to mean squares of the form
\begin{align}
\label{eq:overview_4}
\int_{-T}^{T} |Q(it)M(it)|^2 \d t.
\end{align}
Our strategy for showing~\eqref{eq:overview_3} is thus factorizing $F(s)$ as $F(s) = P(s)Q(s)$, applying the Cauchy-Schwarz inequality to obtain
\begin{align}
\label{eq:overview_5}
\int_{-T}^{T} |F(it)M(it)| \d t \le \sqrt{\int_{-T}^{T} |P(it)|^2 \d t}\sqrt{\int_{-T}^{T} |Q(it)M(it)|^2 \d t}
\end{align}
and bounding the latter integral by Heath-Brown's mean value theorem. The former mean square in~\eqref{eq:overview_5} is bounded by further factorizing $P(s)$ and using various pointwise bounds and large value theorems to the factors.

We note that this is a simplification of the actual proof. In practice we start by assuming $F$ factorizes as $F(s) = A(s)B(s)C(s)$ and decompose the integral over $t$ according to the sizes of $|A(it)|, |B(it)|, |C(it)|$. In each of the resulting cases $t \in \mathcal{T}$ we may, in addition to applying the Cauchy-Schwarz argument as above, simply bound 
\begin{align}
\label{eq:overview_6}
\int_{\mathcal{T}} |F(it)M(it)| \d t \le R|\mathcal{T}| \max_{t \in \mathcal{T}} |A(it)B(it)C(it)|.
\end{align}
It suffices that at least one of these strategies yields a bound small enough to imply~\eqref{eq:overview_3}.

With these strategies, we are able to find a set such that if $F(s)$ factorizes as $F(s) = A(s)B(s)C(s)$ with the triplet $(A, B, C)$ of lengths lying in this set, then~\eqref{eq:overview_3} holds. In the case of intervals of length $\sqrt{x}$ the set of admissible $(A, B, C)$ is relatively simple, corresponding to a hexagon in a $(\log_x(A), \log_x(B))$ coordinate system. (Note that the length of $C(s)$ is essentially determined by the lengths of $A(s)$ and $B(s)$, as $F \approx x$.) For intervals of length $x^{0.45}$ the set is much more complicated and best described as a union of intersections of half-planes in the above coordinate system.

Recalling that $F(s)$ has at least $n$ factors corresponding to each of $p_i$ in~\eqref{eq:overview_2} and that there are possibly many ways of grouping the factors of $F(s)$ into three polynomials $A(s), B(s), C(s)$, we obtain numerous ranges of $n, \alpha_i, \beta_i$ for which the asymptotic~\eqref{eq:overview_2} holds.

However, we cannot establish an asymptotic of type~\eqref{eq:overview_2} for all $n, \alpha_i$ and $\beta_i$. Hence we apply Harman's sieve, discarding certain sums arising from the applications of the Buchstab identity, making sure that the resulting ``loss'' is less than $1 - \epsilon$. It would not be feasible to do this by hand and hence we perform this step with a computer calculation.

Of course, we cannot perform a check over all possible lengths of the factors of $F(s)$ (of which there are unboundedly many), and hence we have to manage with merely upper and lower bounds of the form $x^{\alpha} \le P \le x^{\beta}$ for the relevant polynomials $P(s)$. To overcome this issue, we perform an extensive casework, allowing us to reduce to cases where the differences $\beta - \alpha$ are small. In each case, we consider different ways of combining the factors of $F(s)$ to write $F(s) = A(s)B(s)C(s)$ and check whether our bounds on the lengths of polynomials are strong enough to imply that the resulting triplet $(A, B, C)$ necessarily lies in the set obtained before. We then sum the loss over those cases where we cannot find such a factorization $F(s) = A(s)B(s)C(s)$ and check that it indeed is less than one.

In the case of Theorem~\ref{thm:0.5} we obtain the result without using Heath-Brown's identity (except when establishing certain theoretical results). In contrast, for intervals of length $x^{0.45}$ we incorporate the Heath-Brown decomposition into our computer calculation. This is done simply by performing a casework on the lengths of the resulting polynomials.

The running times of the computations being roughly 15 minutes and 30 hours (for Theorems~\ref{thm:0.5} and~\ref{thm:0.45}, respectively) on a usual consumer laptop. Implementations in C++ are available with the arXiv version of the paper.

The organization of the paper is as follows. 

We present notation and our choice of parameters in Section~\ref{sec:parameters}. An exposition of the key tools is given in Section~\ref{sec:heath-brown}. 

We perform a reduction to~\eqref{eq:overview_1} in Section~\ref{sec:to_buchstab}. 

We then link asymptotic formulas of the form~\eqref{eq:overview_2} to mean value bounds as in~\eqref{eq:overview_3} in Section~\ref{sec:dir}. This is a somewhat standard procedure based on tools such as dyadic decomposition, Perron's formula, the Heath-Brown decomposition and so on, though the implementation is technical. We use, in particular, Shiu's bound on the moments of the divisor function in short intervals to bound various error terms.

Having reduced the problem to Dirichlet polynomials, we lay out various tools (such as coefficient bounds, pointwise bounds and bounds for moments of zeta sums) in Section~\ref{sec:tools}. Using these and Huxley's large value theorem, admissible ranges of $(A, B, C)$ are obtained in Section~\ref{sec:ranges}.

We then discuss the application of Harman's sieve, starting with the case $c = 0.5$ in Section~\ref{sec:0.5}. We start with theoretical results and then present the computational procedure and its results. The procedure is adapted to the case $c = 0.45$ in Section~\ref{sec:0.45}.

Discussion and proofs of the applications (Theorems~\ref{thm:PRF},~\ref{thm:bin} and~\ref{thm:approx}) are given in Section~\ref{sec:applications}.

We remark that our research procedure relied heavily on numerical computations. There is, a priori, numerous ways one may bound the integral $\int |F(it)M(it)| \d t$ (such as via Cauchy-Schwarz's inequality as in~\eqref{eq:overview_5}, the $L^1$-type estimate~\eqref{eq:overview_6}, Hölder's inequality as in Lemma~\ref{lem:two_zetas}), numerous ways to bound $\mathcal{T}$ in~\eqref{eq:overview_6} via large value theorems (there are multiple polynomials one may apply the bound to, one may raise those polynomials to some power, one may apply large value theorems of Hal\'asz-Montgomery, Huxley or Jutila), numerous ways one may combine the factors of $F(s)$ to obtain a product $A(s)B(s)C(s)$ and so on (not to mention that initially we, of course, did not know how strong of a result one can prove in Theorems~\ref{thm:0.5} and~\ref{thm:0.45}). We wrote several programs to guide our intuitions and search through the vast search space, and many of the key results and their proofs (such as Proposition~\ref{prop:range_0.5} and~\ref{prop:range_0.45}) were found with the help of such computations. And while in many cases the final argument is, once identified, relatively simple, due to its sheer complexity the application of Harman's sieve relies on a computer calculation.

\subsection{Choice of parameters and notation}
\label{sec:parameters}

Throughout the paper the length of a Dirichlet polynomial $P(s)$ is denoted by the same letter $P$, and $P$ may also refer to the Dirichlet polynomial itself. We often denote the support of the coefficients of $P(s)$ by $[P, P']$ (or $(P, P']$ etc.).

The letters $p$ and $q$ denote prime numbers, $\epsilon$ denotes a small positive constant, not necessarily the same at each occurrence, and $x$ is a large parameter.

Let
$$c \in \{0.45, 0.5\},$$
with $c = 0.5$ in the case of Theorem~\ref{thm:0.5} and $c = 0.45$ in Theorem~\ref{thm:0.45}. We define the following parameters:
$$\delta_0 = \frac{x^{c-1}}{(\log \log x)^2},$$
$$\delta_1 = \exp(-\sqrt{\log x}),$$
$$z_1 = \exp(\log x / (\log \log x)^5),$$
$$z_2 = \exp(\log x / (\log \log x)^3),$$
$$\eta = \exp(-(\log \log \log x)^2),$$
$$S = \exp((\log \log x)^{17}),$$
$$T = x^{1 - c}S^2,$$
$$T_0 = \exp(\sqrt{\log x}/3),$$
$$L_{\zeta} = \begin{cases}x^{(1-c)/2} = x^{1/4} &\text{ if } c = 0.5, \\ x^{(1 - c)/2}\exp(\log x / \sqrt{\log \log x}) = x^{0.275}\exp(\log x / \sqrt{\log \log x}) &\text{ if } c = 0.45\end{cases}$$
and 
\begin{align}
\label{eq:def-R}
R = \begin{cases} x^{0.07 + \nu} &\text{ if } c = 0.5, \\ x^{0.18 + \nu} &\text{ if } c = 0.45,\end{cases}
\end{align}
where $\nu > 0$ is an arbitrarily small but fixed constant.

Furthermore, we define
$$H' = x^c(\log \log x)^{-4},$$
and for an integer $m > 0$ we let
$$\mathcal{A}(m) = \{n \in \mathbb{Z}_+ : mH' < n \le mH'(1 + \delta_0)\}$$
and
$$\mathcal{B}(m) = \{n \in \mathbb{Z}_+ : mH' < n \le mH'(1 + \delta_1)\}.$$
We will always consider only those $m$ with $m \in [x/H', 3x/H']$. The variable $H'$ should not be confused with the letter $H$ used to refer to the length of a Dirichlet polynomial $H(s)$ introduced later in the proof. 

For a set $\mathcal{C}$ of integers, we let $\mathcal{C}_d$ denote $\{n \in \mathbb{Z}_+ : dn \in \mathcal{C}\}$ and $S(\mathcal{C}, z)$ denote the number of integers in $\mathcal{C}$ which have no prime factors smaller than $z$. We will write $\mathcal{A}_d(m)$ and $\mathcal{B}_d(m)$ instead of (the technically correct) $\mathcal{A}(m)_d$ and $\mathcal{B}(m)_d$.

For the convenience of the reader, here is a brief account on the reasons and constraints behind the choices of parameters above.

We reduce the problem to considering primes in the intervals $\mathcal{A}(m)$ and comparing these intervals to the longer intervals $\mathcal{B}(m)$. The parameters $H'$, $\delta_0$ and $\delta_1$ are relevant for this step, being chosen so that $\mathcal{A}(m)$ is slightly shorter than $x^c$ and that $\mathcal{B}(m)$ is long enough that we have asymptotic formulas for the number of primes in $\mathcal{B}(m)$. 

At the beginning of the proof we sieve out prime factors less than $z_1$. This is important for keeping the sizes of the coefficients of Dirichlet polynomials small, and is achieved if $z_1 = x^{f(x)}$ for $f(x)$ tending to zero fast enough. At certain places we use a simple sieve to replace $1_{p \mid n \implies p \ge z_1}$ with
$$\sum_{\substack{d \mid n \\ d < z_2 \\ p \mid d \implies p < z_1}} \mu(d),$$ the latter being occasionally more convenient to work with. This procedure requires $z_2$ to be somewhat larger than $z_1$ (namely $\log z_2 \ge (\log \log x)^{1+\epsilon} \log z_1)$. The pair $(z_1, z_2)$ chosen above satisfies these constraints.

It is convenient to discard polynomials whose length is too close to certain reals $s = s(x)$. We are able to discard lengths lying in $[sx^{-\eta}, sx^{\eta}]$ as long as $\eta^{-1}$ is larger than $(\log \log x)^A$ for some fixed (but large) $A$. Hence the choice of $\eta$ above.

The parameter $S$ encompasses many small losses and additional factors arising in the course of the proof, for example $\log$-powers arising from dyadic decompositions or upper bounds on $\tau(k)$ for integers $k$ which are $z_1$-rough. This imposes lower bounds on $S$ of the form $(\log x)^{O(1)}$ or $2^{\log x / \log z_1}$. A choice of the form $S = \exp((\log \log x)^C)$ for a large enough constant $C$ works. Any losses of powers of $S$ are insignificant, as the Vinogradov pointwise bound for Dirichlet polynomials wins $\exp((\log x)^{\alpha})$ with $\alpha > 0$.

The parameter $T$ corresponds to the length of the range of integration, chosen to be essentially $x^{1 - c}$. As noted above, powers of $S$ are insignificant and not worth too much attention.

As is common for Dirichlet polynomial methods, we handle the case $|t| \le T_0$ separately, as one obtains cancellations in sums such as $\sum p^{-it}$ only for large enough $|t|$. The specific value of $T_0$ is not too important.

The Heath-Brown decomposition essentially allows one to assume that any polynomials longer than a certain power of $x$ are ``zeta sums'', the benefit being that the fourth moment of the zeta function is known. This is useful when applied to zeta sums longer than $\sqrt{T} \approx x^{(1-c)/2}$, the savings being the larger the longer the zeta sums. The parameter $L_{\zeta}$ denotes the threshold starting from which we are interested in zeta sums. What we call zeta sums are not quite sums of the form $\sum 1/n^s$ (see Definition \ref{def:zeta_sum}), and for $c = 0.45$ our fourth moment estimate for the zeta sums is slightly lossy. Hence we leave a margin of $\exp(\log x / \sqrt{\log \log x})$, the constraints behind this term being that it is larger than $\max_{k \le x} \tau(k) = \exp(O(\log x / \log \log x))$ while being less than $x^{\epsilon}$.

The value of $R$ is such that $Rx^c$ corresponds to the bounds in Theorems~\ref{thm:0.5} and~\ref{thm:0.45}.

In the proof we will encounter many situations where a quantity $X$ is bounded by $Y$ up to losses of $S^{o(1)}$. We hence introduce the following notation:
\begin{align}
\label{eq:all}
X \all Y \Leftrightarrow \text{ there exists } \epsilon > 0 \text{ such that } S^{\epsilon}X \ll Y.
\end{align}

\section{Key tools}
\label{sec:heath-brown}

The following two propositions form the core of the method employed in this work. The first one is~\cite[Proposition 2]{HB-V} formulated slightly more generally. We give a proof below.

\begin{proposition}
\label{prop:dir_to_arit}
Let $0 < c < 1$ be a fixed constant, let $x$ be large and define $H', S, T$ and $T_0$ as in Section~\ref{sec:parameters}. Let $0 < \epsilon \le 1$ be fixed.

Let $F(s) = \sum_{k} c_kk^{-s}$ for some $c_k \in \mathbb{C}$ supported on $k \in [x, 2x]$. Assume that there exists a constant $C \in \mathbb{Z}_+$ such that
$$|c_k| \ll (\log x)^C\tau(k)^C.$$

Assume there exists $R$ such that for any distinct integers $m_1, \ldots, m_R \in [x/H', 3x/H']$ and any complex numbers $\zeta_1, \ldots , \zeta_R$ of magnitude $1$ we have
\begin{align}
\label{eq:dir_to_arit}
\int_{T_0 \le |t| \le T} |F(it)M(it)| \d t \le \frac{Rx}{S^{\epsilon}},
\end{align} 
where
$$M(s) = \sum_{i = 1}^R \zeta_i m_i^{-s}.$$

Then, for all but $O(R)$ integers $m \in [x/H', 3x/H']$ one has
\begin{align}
\label{eq:dir_to_arit_c}
\sum_{k \in \mathcal{A}(m)} c_k = \frac{\delta_0}{\delta_1}\sum_{k \in \mathcal{B}(m)} c_k + O\left(\frac{\delta_0 x}{S^{\epsilon}}\right).
\end{align}
\end{proposition}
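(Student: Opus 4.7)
The strategy is a Perron-plus-duality argument. For each $m \in [x/H', 3x/H']$ define the discrepancy
\[
E_m \;:=\; \sum_{k \in \mathcal{A}(m)} c_k \;-\; \frac{\delta_0}{\delta_1}\sum_{k \in \mathcal{B}(m)} c_k,
\]
so the goal is to show $|E_m| \ll \delta_0 x / S^\epsilon$ for all but $O(R)$ values of $m$. By a (truncated) Perron formula applied to the indicators of $\mathcal{A}(m)$ and $\mathcal{B}(m)$, followed by a contour shift to the imaginary axis (legitimate since $F$ is entire and the apparent pole of $\Phi(s)/s$ at $s=0$ is removable), one obtains
\[
E_m \;=\; \frac{1}{2\pi}\int F(it)\,\frac{(mH')^{it}}{it}\,\Phi(it)\,\d t,
\]
where $\Phi(s) := \bigl((1+\delta_0)^s - 1\bigr) - (\delta_0/\delta_1)\bigl((1+\delta_1)^s - 1\bigr)$ is designed so that its constant and linear Taylor terms at $s=0$ vanish. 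A short computation then yields $|\Phi(it)/t| \ll (1+|t|)\delta_0\delta_1$ for $|t| \le 1/\delta_1$ and $|\Phi(it)/t| \ll 1/|t|$ otherwise; in particular $|\Phi(it)/t| \ll \delta_0$ throughout $|t| \ge 1$.

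I split the integral over $t$ into three ranges. For $|t| \le T_0$, the trivial bound $|F(it)| \ll x(\log x)^{O(1)}$ together with $|\Phi(it)/t| \ll |t|\delta_0\delta_1$ gives a contribution $\ll T_0^2\delta_0\delta_1\, x(\log x)^{O(1)}$; since $T_0^2\delta_1 = \exp(-\sqrt{\log x}/3)$ decays faster than any negative power of $S$, this is $o(\delta_0 x / S^\epsilon)$. For the tail $|t| > T$, one either replaces the sharp cutoffs defining $\mathcal{A}(m)$ and $\mathcal{B}(m)$ by smooth weights whose Mellin transforms decay super-polynomially past $T$ (controlling the boundary smoothing error via Shiu's divisor moment bound, using $|c_k| \ll \tau(k)^C (\log x)^C$), or applies dyadic Cauchy--Schwarz together with the mean value bound $\int_{|t| \le U} |F(it)|^2\,\d t \ll (U + x)\sum|c_k|^2$; the parameter $T = x^{1-c}S^2$ is calibrated precisely so that this tail is also $o(\delta_0 x / S^\epsilon)$.

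The main range $T_0 < |t| \le T$ is handled by duality. Suppose for contradiction that for some large constant $C$ there are more than $R$ integers $m \in [x/H', 3x/H']$ with $|E_m| > C\delta_0 x / S^\epsilon$. Pick $R$ of them, call them $m_1, \ldots, m_R$, and choose unit-magnitude complex numbers $\zeta_i$ with $\zeta_i E_{m_i} = |E_{m_i}|$. Setting $M(s) := \sum_i \overline{\zeta_i}\, m_i^{-s}$, whose coefficients have magnitude $1$,
\[
CR\delta_0 x / S^\epsilon \;<\; \Big|\sum_{i=1}^R \zeta_i E_{m_i}\Big| \;\ll\; \int_{T_0 < |t| \le T} |F(it) M(it)| \cdot \frac{|\Phi(it)|}{|t|}\,\d t \;\ll\; \delta_0 \int_{T_0 < |t| \le T} |F(it) M(it)|\,\d t.
\]
By the hypothesis~\eqref{eq:dir_to_arit} the last integral is $\le Rx/S^\epsilon$, producing a contradiction once $C$ is chosen large enough, hence the set of $m$ with $|E_m| > C\delta_0 x / S^\epsilon$ has cardinality $\le R = O(R)$.

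The principal technical obstacle is the tail estimate for $|t| > T$: the bound must be quantitatively sharp enough to be absorbed into $o(\delta_0 x / S^\epsilon)$, which pins down the choices of $T$ and $\delta_1$ and requires Shiu's moment bound on the divisor-dominated coefficients. Once the kernel $\Phi$ is designed so that its low-order Taylor terms cancel and the parameters are calibrated as above, the small-$|t|$ estimate and the duality argument proceed essentially automatically.
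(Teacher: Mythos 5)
Your overall skeleton matches the paper's: write the discrepancy $E_m$ via Perron, split the $t$-range, kill the range $|t|\le T_0$ using the second-order cancellation in the kernel $\Phi$, and run the duality argument against a unimodular Dirichlet polynomial $M$ to cover $T_0 < |t| \le T$. The small-$|t|$ bound and the duality step are both correct and essentially identical to the paper's (the paper's $G(it) = (\Phi(it)/(it))(H')^{it}$ satisfies the same bounds $|G(it)|\ll \delta_0\delta_1(1+|t|)$ near $0$ and $|G(it)|\le 2\delta_0$ globally).

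The gap is in your tail treatment for $|t| > T$, and it is a genuine one. The second option you float — dyadic Cauchy--Schwarz plus the $L^2$ mean value $\int_{|t|\le U}|F(it)|^2\,\d t \ll (U+x)\sum|c_k|^2$ — cannot close. Since $|\Phi(it)/t|\ll \min(\delta_0, 1/|t|)$ and $\sum|c_k|^2 \ll x(\log x)^{O(1)}$, the Cauchy--Schwarz bound over a dyadic block $[U,2U]$ with $U\ge x$ is $\asymp x^{1/2}(\log x)^{O(1)}$ and does \emph{not} decay in $U$, so summing over $U\ge T$ diverges. Passing to absolute values discards all the cancellation in the $t$-integral at infinity; no choice of $T$ can rescue this. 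Your first option (smooth weights with rapidly decaying Mellin transform, boundary error controlled by Shiu) is in principle viable, but it is a different setup you would have to carry through from the start, and you have not supplied the details.

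The paper sidesteps the tail entirely: it invokes the \emph{truncated} Perron formula (Harman, Lemma 1.1), which replaces the infinite integral by $\frac{1}{2\pi i}\int_{-iT}^{iT}F(s)\frac{(1+\delta_0)^s-1}{s}(mH')^s\,\d s$ plus an explicit arithmetic error $E \ll \sum_k |c_k|\bigl(\max(1,T|\log(mH'(1+\delta_0)/k)|)^{-1} + \max(1,T|\log(mH'/k)|)^{-1}\bigr)$. This error is then bounded directly by splitting over dyadic ranges of $|mH'(1+\delta_0)-k|$ and invoking Shiu's short-interval divisor-moment bound (Lemma~\ref{lem:shiu}), yielding $E\ll xT^{-1}(\log x)^{O(1)} + x^{\epsilon}(\log x)^{O(1)} = O(\delta_0 x/S^{\epsilon})$; this is exactly where the choice $T = x^{1-c}S^2$ enters. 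Replacing your tail argument with this truncated-Perron-plus-Shiu estimate makes the proof complete and coincides with the paper's.
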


Note that the left hand side of \eqref{eq:dir_to_arit_c} is bounded from above by $\delta_0 x(\log x)^{O(1)}$ (and is heuristically of this magnitude for many choices of $c_k$), so \eqref{eq:dir_to_arit_c} corresponds to an asymptotic formula for the average of $c_k$ in a short interval with savings of $S^{\epsilon}$ in the error term. Even though we have fixed the choices of $H', S, T$ and $T_0$ here, the result applies for a wider range of parameters. In this work we will be applying the result with $c \in \{0.45, 0.5\}$.

The second vital tool is Heath-Brown's mean value theorem~\cite[Theorem 4(iii)]{HB-MVT}.

\begin{proposition}
\label{prop:HB_MVT}
Let $T \ge 1$ and let $m_1, \ldots , m_R \in (0, T]$ be distinct integers. Let $\zeta_1, \ldots , \zeta_R$ be complex numbers of modulus $1$. Then, for any $N \in \mathbb{Z}_+$ and $q_1, \ldots, q_N \in \mathbb{C}$ we have
\begin{align*}
\int_0^T \left|\sum_{k = 1}^R \zeta_k m_k^{-it}\right|^2\left|\sum_{n \le N} q_nn^{-it}\right|^2 \d t \ll_{\epsilon} \left(N^2R^2 + (NT)^{\epsilon}(NRT + NR^{7/4}T^{3/4})\right)\max_n |q_n|^2
\end{align*}
for any $\epsilon > 0$.
\end{proposition}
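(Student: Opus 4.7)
The plan is to square out $|M(t)Q(t)|^2$, reduce to counting (near-)coincidences among the products $m_j n$, and handle the diagonal and off-diagonal contributions separately. Write $M(t) = \sum_{k=1}^R \zeta_k m_k^{-it}$, $Q(t) = \sum_{n \le N} q_n n^{-it}$, and set $W = \max_n |q_n|^2$.

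I would begin by expanding the fourth-power integrand and swapping the integration with the finite sum, obtaining
\[
\int_0^T |M(t)|^2 |Q(t)|^2 \, dt = \sum_{\substack{j,k \le R \\ n_1, n_2 \le N}} \zeta_j \bar\zeta_k q_{n_1} \bar q_{n_2} \int_0^T \left(\frac{m_j n_1}{m_k n_2}\right)^{-it} dt.
\]
The inner integral equals $T$ when $m_j n_1 = m_k n_2$ and is $\ll |\log(m_j n_1/(m_k n_2))|^{-1}$ otherwise. The diagonal contribution is then
\[
T W \cdot \#\{(j,k,n_1,n_2) : m_j n_1 = m_k n_2\} \ll (NT)^{\epsilon}\, NRT \cdot W,
\]
since for each $(j,n_1)$ the number of admissible $(k,n_2)$ is at most $\tau(m_j n_1) \ll (NT)^{\epsilon}$. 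This reproduces the term $(NT)^{\epsilon} NRT\, W$.

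For the off-diagonal I would partition the quadruples $(j,k,n_1,n_2)$ dyadically according to the size of $|m_j n_1 - m_k n_2|$ and bound the contribution of each dyadic class by a Hal\'asz--Montgomery large-values argument. Concretely, on each scale $\delta$ one controls the number of quadruples via a second-moment bound combined with the classical mean value theorem for a Dirichlet polynomial of length $N$; the distinctness of the integers $m_k \in (0,T]$ prevents concentration of the near-coincidences. The exponent $R^{7/4}$ arises from balancing the large-sieve bound $\sum_r |M(t_r)|^2 \ll R(R + T\delta^{-1})$ at $\delta$-spaced points $t_r$ against the trivial count, optimized in $\delta$. The term $N^2 R^2$ enters as a fallback from the pointwise estimate $|M(t)Q(t)|^2 \le R^2 N^2 W$, which is already dominated once the diagonal $NRT$ becomes large.

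The main obstacle is precisely the off-diagonal step: extracting a power-saving $R^{7/4}$ rather than the trivial $R^2$ requires a delicate application of a Hal\'asz--Montgomery (or large-sieve) inequality together with a careful dyadic decomposition, and it is exactly this ingredient that constitutes the innovative content of Heath-Brown's argument in~\cite{HB-MVT}. Everything else reduces to bookkeeping once this estimate is in place.
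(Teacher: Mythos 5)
The paper does not supply a proof of Proposition~\ref{prop:HB_MVT}: it is quoted verbatim as \cite[Theorem 4(iii)]{HB-MVT} (equivalently \cite[Proposition 1]{HB-V}) and used as a black box. So there is no in-paper argument to compare against; you are on your own to reproduce Heath-Brown's theorem.

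Your diagonal computation is essentially correct: expanding $|MQ|^2$, the coincidences $m_jn_1=m_kn_2$ contribute $T\cdot W$ times the number of quadruples, and the divisor bound gives the count $\ll NR(NT)^{\epsilon}$, matching the $NRT(NT)^\epsilon W$ term. However, there are two genuine problems with the rest of your sketch. First, your explanation of the $N^2R^2$ term is wrong: integrating the pointwise bound $|M(t)Q(t)|^2 \le N^2R^2W$ over $[0,T]$ yields $TN^2R^2W$, with an extra factor of $T$ that the statement does not have. This term cannot come from a pointwise estimate; one has to see it emerge from a genuine mean-value argument (roughly speaking it is what survives from a Montgomery--Vaughan-type bound after the off-diagonal is controlled, and it dominates only in a degenerate parameter range). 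Second, and more importantly, the heart of the proposition — the passage from the trivial $R^2$ to $R^{7/4}$ in the off-diagonal — is left entirely as a heuristic. The phrase ``a Hal\'asz--Montgomery large-values argument on each dyadic scale $|m_jn_1-m_kn_2|\approx\delta$'' names a family of techniques but does not constitute an argument, and the inequality you invoke, $\sum_r|M(t_r)|^2\ll R(R+T\delta^{-1})$, is not a standard large-sieve statement (the usual large sieve for a Dirichlet polynomial of length $T$ at $\delta^{-1}$-spaced points would give $(T+\delta)\sum|\zeta_k|^2=(T+\delta)R$, with no $R^2$ term). You acknowledge candidly that the $R^{7/4}$ exponent is the innovative content of \cite{HB-MVT}; I agree, but that means your proposal stops precisely at the point where the proof would have to begin. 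As it stands the proposal is a reasonable roadmap for the easy parts (the diagonal), with an incorrect account of the $N^2R^2$ term and no proof of the key off-diagonal estimate.
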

For $R \le T^{1/3}$ the term $NR^{7/4}T^{3/4}$ is smaller than $NRT$ and may thus be dropped. This is the case for our choice of parameters.

Furthermore, at a couple of occasions we apply a bound on the moments of the divisor function on short intervals. This lemma follows from the more general result of Shiu~\cite{shiu}.

\begin{lemma}
\label{lem:shiu}
Let $\delta > 0$ and $N \in \mathbb{Z}_+$ be fixed. For any $X, Y, z \ge 2$ with $X^{\delta} \le Y \le X$ we have
\begin{align*}
\sum_{\substack{X < n \le X + Y \\ p \mid n \implies p \ge z}} \tau(n)^N \ll \frac{Y}{\log X} \left(\frac{\log X}{\log z}\right)^{2^N}.
\end{align*}
\end{lemma}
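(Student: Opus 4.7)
The plan is to deduce the bound directly from Shiu's theorem \cite{shiu} on sums of multiplicative functions in short intervals. Recall that Shiu's theorem states that for a non-negative multiplicative function $f$ satisfying (i) $f(p^k) \le A_1^k$ for all prime powers, and (ii) $f(n) \ll_{\epsilon} n^{\epsilon}$ for every $\epsilon > 0$, one has, uniformly in $X^{\delta} \le Y \le X$,
\begin{align*}
\sum_{X < n \le X + Y} f(n) \ll \frac{Y}{\log X} \exp\left(\sum_{p \le X} \frac{f(p)}{p}\right).
\end{align*}

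First, I would define the multiplicative function
\begin{align*}
f(n) = \tau(n)^N \cdot \mathbf{1}_{p \mid n \implies p \ge z},
\end{align*}
so that the left-hand side of the lemma is exactly $\sum_{X < n \le X+Y} f(n)$. Multiplicativity is immediate (both factors are multiplicative), and the two hypotheses of Shiu's theorem hold: $f(p^k) \le (k+1)^N \le 2^{Nk}$ and $\tau(n)^N \ll_{\epsilon} n^{\epsilon}$.

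Next, I would evaluate the prime sum in the exponential. Since $f(p) = 2^N$ for $p \ge z$ and $f(p) = 0$ for $p < z$, Mertens's theorem yields
\begin{align*}
\sum_{p \le X} \frac{f(p)}{p} = 2^N \sum_{z \le p \le X} \frac{1}{p} = 2^N \log\left(\frac{\log X}{\log z}\right) + O(2^N),
\end{align*}
so exponentiating gives $\exp\bigl(\sum_{p \le X} f(p)/p\bigr) \ll (\log X / \log z)^{2^N}$, with the implied constant depending only on $N$. Substituting this into Shiu's bound yields the claimed estimate. No step here is really an obstacle — the entire content of the lemma is to package Shiu's theorem into the particular form needed later; the only mild care needed is checking that the factor $2^N$ absorbed into the implicit constant is permissible, which is fine since $N$ is fixed.
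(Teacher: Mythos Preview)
Your proof is correct and is precisely the intended derivation: the paper simply states that the lemma follows from Shiu's general result \cite{shiu}, and your argument spells this out by applying Shiu's theorem to the multiplicative function $f(n)=\tau(n)^N\,\mathbf{1}_{p\mid n\Rightarrow p\ge z}$ and evaluating the prime sum via Mertens. The only point worth noting explicitly is that the constants $A_1=2^N$ and the bound $f(n)\le\tau(n)^N\ll_\epsilon n^\epsilon$ are uniform in $z$, so Shiu's implied constant is as well; you have this implicitly but it is the one place uniformity could fail.
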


As our formulation of Proposition~\ref{prop:dir_to_arit} is more general than~\cite[Proposition 2]{HB-V}, we give a proof below (even though the proof is essentially the same as in~\cite{HB-V}).

\begin{proof}[Proof of Proposition~\ref{prop:dir_to_arit}]
Let $m \in [x/H', 3x/H']$. We start with an application of Perron's formula (see e.g. \cite[Lemma 1.1]{harman}), obtaining
\begin{align*}
\sum_{k \in \mathcal{A}(m)} c_k = \frac{1}{2\pi i}\int_{-iT}^{iT} F(s)\frac{(1 + \delta_0)^s - 1}{s}(H'm)^s ds + O(E), 
\end{align*}
where the error $E$ is bounded by
\begin{align*}
E \ll \sum_{x \le k \le 2x} |c_k|\left(\frac{1}{\max(1, T|\log(mH'(1 + \delta_0)/k)|)} + \frac{1}{\max(1, T|\log(mH'/k)|)}\right).
\end{align*}

We first bound this error. For a parameter $J \ge x^{\epsilon}$, $k \in [x, 2x]$ and $m \in [x/H', 3x/H']$, the condition $J < |mH'(1 + \delta_0) - k| \le 2J$ implies
\begin{align*}
\frac{1}{T|\log (mH'(1 + \delta_0)/k)|} \ll \frac{x}{JT},
\end{align*}
and hence the corresponding terms contribute $ \ll xT^{-1}(\log x)^{O_C(1)}$ by Lemma~\ref{lem:shiu}. Summing over dyadic ranges of $J$ gives a contribution of $\ll xT^{-1}(\log x)^{O(1)}$. The case $J < |mH' - k| \le 2J$ is similar. Finally, for the case where $|mH'(1 + \delta_0) - k| < x^{\epsilon}$ or $|mH' - k| < x^{\epsilon}$, we bound the contribution by $2|c_k|$, obtaining an error of $\ll x^{\epsilon}(\log x)^{O(1)}$, again by Shiu's bound (Lemma \ref{lem:shiu}). Hence, the error is
$$E \ll \frac{x}{T}(\log x)^{O(1)} + x^{\epsilon}(\log x)^{O(1)} = O(\delta_0 x/S^{\epsilon}).$$

A similar analysis applies to $\mathcal{B}(m)$, leading to
\begin{align*}
\sum_{k \in \mathcal{B}(m)} c_k = \frac{1}{2\pi i}\int_{-iT}^{iT} F(s)\frac{(1 + \delta_1)^s - 1}{s}(H'm)^s ds + O\left(\frac{\delta_0 x}{S^{\epsilon}}\right). 
\end{align*}

Writing $\Delta(n) = \Delta(n, m) = 1_{n \in \mathcal{A}(m)} - \frac{\delta_0}{\delta_1}1_{n \in \mathcal{B}(m)}$, it then follows that
\begin{align*}
\sum_{k \in \mathbb{Z}_+} c_k\Delta(k) = \frac{1}{2\pi i} \int_{-iT}^{iT} F(s)G(s)m^{s} ds + O\left(\frac{\delta_0 x}{S^{\epsilon}}\right),
\end{align*}
where
\begin{align*}
G(s) = \left(\frac{(1 + \delta_0)^s - 1}{s} - \frac{\delta_0}{\delta_1}\frac{(1 + \delta_1)^s - 1}{s}\right)(H')^s.
\end{align*}
For bounding the contribution of small values of $|s|$, we note that if $0 \le \mu \le 1$ and $t$ is real, we have
\begin{align*}
\frac{(1 + \mu)^{it} - 1}{it} = \mu + O(\mu^2(1 + |t|)),
\end{align*}
and hence $|G(it)| \ll \delta_0 \delta_1 (1 + |t|)$. Moreover, we have $|F(it)| \le \sum_{k} |c_k| \ll x(\log x)^{O(1)}$, and hence
\begin{align*}
\int_{-T_0}^{T_0} |F(it)G(it)| \d t \ll \delta_0\delta_1x T_0^2 (\log x)^{O(1)},
\end{align*}
which is $O(\delta_0 x/S^{\epsilon})$.

Hence, we are left with showing that
\begin{align}
\label{eq:prop2_int}
\left|\int_{\substack{T_0 \le |t| \le T}} F(it)G(it)m^{it} \d t\right| = O\left(\frac{\delta_0 x}{S^{\epsilon}}\right)
\end{align}
for all but $O(R)$ integers $m \in [x/H', 3x/H']$. Assume not, and let $m_1, \ldots , m_R$ be such that the integral in~\eqref{eq:prop2_int} is greater than $3\delta_0 x/S^{\epsilon}$ in absolute value. Choose complex coefficients $\zeta_j$ of absolute value $1$ such that
\begin{align*}
\overline{\zeta_j} \int_{T_0 \le |t| \le T} F(it)G(it)m_j^{it} \d t = \left|\int_{T_0 \le |t| \le T} F(it)G(it)m_j^{it} \d t \right|
\end{align*}
for $1 \le j \le R$. Let $M(s) = \sum_{i = 1}^R \zeta_i m_i^{-s}$. Now
\begin{align*}
\int_{T_0 \le |t| \le T} F(it)G(it)\overline{M(it)} \d t \ge 3R\frac{\delta_0 x}{S^{\epsilon}}.
\end{align*}
By 
$$|G(it)| = \left|\int_{1}^{1 + \delta_0} v^{-it - 1} \d v - \delta_0 \delta_1^{-1} \int_{1}^{1 + \delta_1} v^{-it - 1} \d v\right| \le 2\delta_0,$$
we now have
\begin{align*}
\int_{T_0 \le |t| \le T} |F(it)M(it)| \d t \ge \frac{3}{2}\frac{Rx}{S^{\epsilon}},
\end{align*}
contrary to the assumption \eqref{eq:dir_to_arit}.
\end{proof}

\section{Reduction to Buchstab sums}
\label{sec:to_buchstab}

The purpose of this section is to reformulate Theorems~\ref{thm:0.5} and~\ref{thm:0.45} in terms of Buchstab sums. We start with the following lemma. Recall the notations $\mathcal{A}(m), \mathcal{B}(m)$ and $S(\mathcal{C}, z)$ from Section \ref{sec:parameters}.

\begin{lemma}
\label{lem:to_buch}
Let $0 < d < 1$ be a constant and let $c \in \{0.45, 0.5\}$ be given. Assume that the number of integers $m \in [x/H', 3x/H']$ with
\begin{align}
\label{eq:S-diff}
S(\mathcal{A}(m), 2\sqrt{x}) < d \frac{\delta_0}{\delta_1} S(\mathcal{B}(m), 2\sqrt{x})
\end{align}
is less than $kR$ for some constant $k > 0$. Then the measure of $y \in [x, 2x]$ such that 
\begin{align}
\label{eq:pi-diff}
\pi(y(1 + \delta_0)) - \pi(y) \le (d - \epsilon)\frac{y\delta_0}{\log x}
\end{align}
is at most $kRx^c$. In particular, we then have
$$\sum_{\substack{p_n \in [x, 2x] \\ p_{n+1} - p_n \ge x^{c}}} (p_{n+1} - p_n) \ll Rx^c.$$
\end{lemma}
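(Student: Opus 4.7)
The plan is to convert the sifted-count hypothesis \eqref{eq:S-diff} into a statement about $\pi$-differences on the intervals $(mH', mH'(1+\delta_0)]$, then promote from integer-indexed intervals to all real $y \in [x,2x]$, and finally read off the prime-gap consequence.

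First, since $\mathcal{A}(m), \mathcal{B}(m) \subset (x, 4x]$ for $x$ large, any integer in these sets with all prime factors at least $2\sqrt{x}$ must be prime: a product of two such primes would exceed $(2\sqrt{x})^2 = 4x$. Hence $S(\mathcal{A}(m), 2\sqrt{x})$ and $S(\mathcal{B}(m), 2\sqrt{x})$ equal the corresponding prime counts, up to $O(1)$. Writing $\tilde y := mH'$, the prime number theorem with the Vinogradov--Korobov zero-free region yields the uniform asymptotic $S(\mathcal{B}(m), 2\sqrt{x}) = (\delta_1 \tilde y/\log x)(1 + o(1))$, since the PNT error $\exp(-c(\log x)^{3/5}(\log\log x)^{-1/5})$ is comfortably smaller than $\delta_1/\log x$. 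Consequently the hypothesis becomes: for all but at most $kR$ integers $m \in [x/H', 3x/H']$ one has
\[
\pi(\tilde y(1+\delta_0)) - \pi(\tilde y) \geq (d - o(1))\frac{\delta_0 \tilde y}{\log x}.
\]

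Second, I would pass from integer $m$ to continuous $y$. For $y \in [mH', (m+1)H']$, the difference between $\pi(y(1+\delta_0)) - \pi(y)$ and $\pi(\tilde y(1+\delta_0)) - \pi(\tilde y)$ is a signed sum of prime counts on two intervals of length at most $H'$, so Brun--Titchmarsh bounds it by $O(H'/\log x)$. Since $H' = \delta_0 \tilde y/(\log\log x)^2$, this error is smaller than the main term $\delta_0 \tilde y/\log x$ by a factor $(\log\log x)^{-2}$ and is absorbed into the $\epsilon$-slack in \eqref{eq:pi-diff}; similarly, replacing $\tilde y$ by $y$ on the right-hand side costs only a factor $1 + O(H'/\tilde y) = 1 + o(1)$. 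Hence any $y \in [mH', (m+1)H']$ satisfying \eqref{eq:pi-diff} forces $m$ into the exceptional set, and the bad $y \in [x,2x]$ are therefore covered by at most $kR$ intervals of length $H' \leq x^c$, giving measure $kRx^c$.

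For the sum over prime gaps, each pair of consecutive primes $p_n < p_{n+1}$ in $[x,2x]$ with $p_{n+1} - p_n \geq x^c$ produces the subinterval $[p_n, p_{n+1}/(1+\delta_0))$, on which $(y, y(1+\delta_0)]$ is prime-free and hence \eqref{eq:pi-diff} holds trivially. This subinterval has length at least $(p_{n+1} - p_n) - 3x^c/(\log\log x)^2 \geq (1-o(1))(p_{n+1} - p_n)$, and such subintervals for different $n$ are disjoint. Summing and applying the measure bound yields $\sum (p_{n+1} - p_n) \ll Rx^c$. The argument is essentially bookkeeping; the only delicate point is the uniform PNT on $\mathcal{B}(m)$, which needs a zero-free region beyond the classical de la Vall\'ee Poussin type, and the comfortable $(\log\log x)^{-2}$ slack between $H'$ and $\delta_0 y$ absorbs all other boundary errors.
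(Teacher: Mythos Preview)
Your proof is correct and follows essentially the same approach as the paper: both use the prime number theorem with the Vinogradov--Korobov error term to evaluate $S(\mathcal{B}(m),2\sqrt{x})$, Brun--Titchmarsh to absorb the $O(H'/\log x)$ boundary error (which is $o(\delta_0 y/\log x)$ since $H' = \delta_0 y/(\log\log x)^{2+o(1)}$), and then read off the prime-gap bound from the measure estimate. The paper frames the measure step as a contradiction argument, selecting $\lceil kR\rceil$ well-separated bad $y_i$ and exhibiting $\lceil kR\rceil$ exceptional $m_i$, whereas you give the equivalent direct covering argument; the content is identical.
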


\begin{proof}
For the last claim, note that any prime gap $[p_n, p_{n+1}]$ with $p_{n+1} - p_n \ge x^c$ gives an interval $[p_n, (p_{n+1} + p_n)/2]$ of length $(p_{n+1} - p_n)/2$ of values of $y$ satisfying~\eqref{eq:pi-diff}. Hence the sum of lengths of such long prime gaps can be at most $2kRx^c = O(Rx^c)$.

Denote the set of $y$ satisfying \eqref{eq:pi-diff} by $\mathcal{I}(x)$. Assume that $\Meas(\mathcal{I}(x)) > kRx^c$. Denoting $R' = \ceil{kR}$, it follows that one may choose points $y_1, \ldots , y_{R'} \in \mathcal{I}(x)$ so that $|y_i - y_j| > H'$ for any $i \neq j$. Let $m_i = 1 + \floor{y_i/H'}$ for every $i = 1, \ldots, R$, so $m_i \in [x/H', 3x/H']$ and $m_i$ are pairwise distinct. We show that $m_i$ satisfy~\eqref{eq:S-diff}, resulting in a contradiction.

Note that, by the Brun-Titchmarsh theorem, we have, for $i = 1, \ldots , R$,
$$|\pi(m_iH') - \pi(y_i)| \ll \frac{H'}{\log x} = o\left(\frac{\delta_0 x}{\log x}\right).$$
Similarly $|\pi(m_iH'(1 + \delta_0)) - \pi(y_i(1 + \delta_0))| = o(\delta_0 x/\log x)$. It follows that
\begin{align}
\label{eq:small_A}
S(\mathcal{A}(m_i), 2\sqrt{x}) = \pi(m_iH'(1 + \delta_0)) - \pi(m_iH') \le \left(d - \frac{\epsilon}{2}\right) \frac{m_iH'\delta_0}{\log x}.
\end{align}

On the other hand, by the prime number theorem with Vinogradov's error term (see e.g. \cite[Corollary 8.30]{IK}), we have, for $i = 1, \ldots , R$,
\begin{align}
\label{eq:large_B}
S(\mathcal{B}(m_i), 2\sqrt{x}) = \int_{m_iH'}^{m_iH'(1 + \delta_1)} \frac{\d t}{\log t} + o(\delta_1x/\log x) \ge \left(1 - \frac{\epsilon}{2}\right)\frac{m_iH'\delta_1}{\log x}.
\end{align}
The equations \eqref{eq:small_A} and \eqref{eq:large_B} contradict the assumption \eqref{eq:S-diff}, from which the result follows.
\end{proof}

Hence, our task is to show that for all but $O(R)$ integers $m \in [x/H', 3x/H']$ we have
$$S(\mathcal{A}(m), 2\sqrt{x}) > d \frac{\delta_0}{\delta_1} S(\mathcal{B}(m), 2\sqrt{x})$$
for some (small) $d > 0$. As explained in Section~\ref{sec:overview}, we accomplish this by utilizing Buchstab's identity and Harman's sieve. Hence, in the next sections we present a method for obtaining asymptotic formulas of the form
\begin{align}
\label{eq:asy_form}
\sum_{\substack{p_1, \ldots, p_n \\ p_i \in [x^{\alpha_i}, x^{\beta_i}] \\ p_n < \ldots < p_1}} S(\mathcal{A}_{p_1 \cdots p_n}(m), z) = \frac{\delta_0}{\delta_1}\sum_{\substack{p_1, \ldots, p_n \\ p_i \in [x^{\alpha_i}, x^{\beta_i}] \\ p_n < \ldots < p_1}} S(\mathcal{B}_{p_1 \cdots p_n}(m), z) + o\left(\frac{\delta_0 x}{\log x}\right).
\end{align}

We note that in the course of establishing the assumption of Lemma~\ref{lem:to_buch} we do not only obtain Theorems~\ref{thm:0.5} and~\ref{thm:0.45}, but we in fact also get the following stronger result.

\begin{theorem}
\label{thm:many}
Fix $c \in \{0.45, 0.5\}$ and $\nu > 0$. There exist constants $C, d' > 0$ such that number of disjoint intervals $[n, n + n^c], n \in \mathbb{Z} \cap [x, 2x]$ with
\begin{align}
\label{eq:many}
\pi(n + n^c) - \pi(n) \le d' \frac{n^c}{\log x}
\end{align}
is less than $CR$, where $R = x^{0.07 + \nu}$ if $c = 0.5$ and $R = x^{0.18 + \nu}$ if $c = 0.45$.
\end{theorem}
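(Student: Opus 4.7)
The plan is to invoke the central technical input that the subsequent sections will establish, namely that there exist constants $d, C > 0$ for which the hypothesis of Lemma~\ref{lem:to_buch} holds in its count form: the number of integers $m \in [x/H', 3x/H']$ with $S(\mathcal{A}(m), 2\sqrt{x}) < d \, \delta_0/\delta_1 \cdot S(\mathcal{B}(m), 2\sqrt{x})$ is at most $CR$. Applying the Brun--Titchmarsh theorem and the prime number theorem with Vinogradov's error term exactly as in the proof of Lemma~\ref{lem:to_buch} (equations \eqref{eq:small_A} and \eqref{eq:large_B}), this translates into the statement that for all but $CR$ values of $m \in [x/H', 3x/H']$ one has
$$\pi(mH'(1+\delta_0)) - \pi(mH') \ge (d-\epsilon)\frac{mH'\delta_0}{\log x};$$
I refer to the remaining $m$'s as \emph{bad}.

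Given disjoint intervals $[n_i, n_i + n_i^c]$ with $n_i \in \mathbb{Z} \cap [x, 2x]$ satisfying \eqref{eq:many}, the main step is to match each such bad interval injectively to a distinct bad $m$. The relevant length scales are $n_i^c \asymp x^c$ and $mH'\delta_0 \asymp x^c/(\log\log x)^2$, so $[n_i, n_i + n_i^c]$ is a factor of roughly $(\log\log x)^2$ longer than a single $m$-sub-interval. Concretely, I pick $K := \lfloor (\log\log x)^2/C_1 \rfloor$ for a sufficiently large absolute constant $C_1$ and partition $[n_i, n_i+n_i^c]$ into $K$ equal-length sub-intervals of length $L = n_i^c/K$, arranged so that $L \ge H' + 3 n_i\delta_0$ for $x$ large. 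By the pigeonhole principle, at least one such sub-interval $[a_i, a_i + L]$ contains at most $d' n_i^c/(K\log x)$ primes. Setting $m_i := \lceil a_i/H' \rceil \in [x/H', 3x/H']$, the choice of $L$ ensures $[m_iH', m_iH'(1+\delta_0)] \subseteq [a_i, a_i+L]$, so
$$\pi(m_iH'(1+\delta_0)) - \pi(m_iH') \le \frac{d' n_i^c}{K \log x} \le C_2 d' \cdot \frac{m_iH'\delta_0}{\log x}$$
for some absolute constant $C_2$. Choosing $d' > 0$ small enough that $C_2 d' < d - 2\epsilon$ forces each $m_i$ to be bad. Disjointness of the $[n_i, n_i+n_i^c]$ and the inclusion $[m_iH', m_iH'(1+\delta_0)] \subseteq [n_i, n_i+n_i^c]$ make the $m_i$ pairwise distinct, and so the number of bad intervals is at most $CR$, as claimed.

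I expect no genuine obstacle; the argument is a routine pigeonhole once the count-form input is in hand. One conceptual point worth noting is that invoking Lemma~\ref{lem:to_buch}'s conclusion (a Lebesgue measure bound on bad $y$) as a black box would yield only the weaker bound $N \ll R (\log\log x)^2$, losing a polylogarithmic factor; the improvement to $N \le CR$ requires that the arguments of the later sections actually produce the count-form hypothesis directly, which is what happens. The only book-keeping to verify is that the constants $C_1, C_2$ and the pigeonhole loss can be absorbed into the freedom to choose $d'$ arbitrarily small, and that $m_iH' \asymp n_i$ so that the density conversion at the last step is lossless up to a bounded constant.
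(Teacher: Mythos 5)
Your proof is correct and, at its core, a close cousin of the paper's argument, but the routing is genuinely different: you translate the count-form input into a ``bad $m$'' count via the PNT asymptotic on $\mathcal{B}(m)$, then match each bad $n$ injectively to a distinct bad $m$ by pigeonholing one short sub-interval inside $[n,n+n^c]$ and rounding its left endpoint to a multiple of $H'$. The paper instead invokes the \emph{conclusion} of Lemma~\ref{lem:to_buch} (the Lebesgue-measure bound on bad $y$) and shows that every bad interval $[n,n+n^c]$ contributes measure $\gg x^c$ of bad $y$'s. Both arguments give $N\le CR$. Your injective-matching version is arguably cleaner (no measure-theoretic bookkeeping, no need to handle disjointness of $\mathcal{Y}$-sets from different sub-intervals), and it makes visible that the proof only needs the premise of Lemma~\ref{lem:to_buch}, not the lemma itself; the tradeoffs are just stylistic.

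One corrective comment is warranted on your closing conceptual remark. You assert that invoking the Lebesgue-measure conclusion of Lemma~\ref{lem:to_buch} as a black box would lose a factor $(\log\log x)^2$. This is not so. The paper's proof pigeonholes in the opposite direction: out of the $\asymp(\log\log x)^2$ sub-intervals $I$ of $[n,n+n^c]$, at least half (not just one) contain few primes, and each such $I$ yields a set $\mathcal{Y}\subset I$ of bad $y$ of measure $\gg x^c/(\log\log x)^2$. Summing over the $\gg(\log\log x)^2$ bad sub-intervals, each bad $n$ contributes measure $\gg x^c$ of bad $y$ (the $(\log\log x)^2$ factors cancel), and disjointness of the $I$'s across distinct $n$ then gives $Nx^c\ll kRx^c$ directly, with no polylogarithmic loss. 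So the measure bound is in fact exactly as efficient as your count-form matching; the perceived polylog loss only arises if one extracts a \emph{single} bad sub-interval per bad $n$, which is a pigeonhole one need not settle for. Also a small bookkeeping remark: the Brun--Titchmarsh theorem, which you cite, is not actually needed in your translation step, since you evaluate $\pi$ at the endpoints $m H'$ and $m H'(1+\delta_0)$ directly rather than at an arbitrary $y_i$; only the PNT lower bound on $S(\mathcal{B}(m),2\sqrt{x})$ is used.
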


\begin{proof}[Proof of Theorem~\ref{thm:many} assuming premise of Lemma~\ref{lem:to_buch}]

Consider an integer $n \in [x, 2x]$ such that $\pi(n + n^c) - \pi(n) \le d' \frac{n^c}{\log x}$. Write the interval $[n, n+n^c]$ as a disjoint union of $\ell = O((\log \log x)^2)$ half-open intervals $I$ of length $|I| \in [3x^c/(\log \log x)^2, 4x^c/(\log \log x)^2]$. At least $\ell/2$ of such $I$ must contain less than 
$$10d'\frac{x^c}{(\log x)(\log \log x)^2}$$
primes. Each of these $\ell/2$ intervals $I$ results in a set $\mathcal{Y}$ of $y \in I \subset [x, 3x]$ satisfying
$$\pi(y(1 + \delta_0)) - \pi(y) < 10d' \frac{y\delta_0}{\log x}$$
with measure $\Meas(\mathcal{Y}) \gg x^c/(\log \log x)^2$. 

Note that the intervals $I$ and thus the resulting sets $\mathcal{Y}$ obtained from different values of $n$ are pairwise disjoint. Hence, denoting by $N$ the number of $n \in [x, 2x]$ with $\pi(n + n^c) - \pi(n) \le d'\frac{n^c}{\log x}$, the measure of $y$ satisfying \eqref{eq:pi-diff} with $d = 11d'$ is $\gg N \ell x^c/(\log \log x)^2 \gg Nx^c$. Assuming that \eqref{eq:S-diff} holds for $d$ small enough, for $d'$ small enough Lemma \ref{lem:to_buch} then gives $N = O(R)$.
\end{proof}

\section{From Buchstab sums to Dirichlet polynomials}
\label{sec:dir}

In this section we reduce the problem of obtaining asymptotics of type~\eqref{eq:asy_form} to the problem of bounding mean values of Dirichlet polynomials. 

Hence, we consider formulas of the form
\begin{align}
\label{eq:buch2}
\sum_{k \in \mathcal{A}(m)} c_k = \frac{\delta_0}{\delta_1} \sum_{k \in \mathcal{B}(m)} c_k + o\left(\frac{\delta_0 x}{\log x}\right),
\end{align}
where
\begin{align*}
c_k = \sum_{\substack{p_1, \ldots , p_n \in \mathbb{P} \\ p_i \in I_i \\ p_n < \ldots < p_1}} 1_{p_1 \cdots p_n \mid k}1_{p \mid k \implies p \ge z}
\end{align*}
and $I_i$ are intervals. Here and in what follows we will have $p_n \ge z$. The idea is to apply Proposition~\ref{prop:dir_to_arit} to reduce the problem to one on Dirichlet polynomials. However, a direct application of the proposition would not work, as the resulting Dirichlet polynomial $F(s)$ would not have certain desirable properties (such as factorizing as a product of shorter polynomials). Hence, we first have to ``clean up'' the sums before applying Proposition~\ref{prop:dir_to_arit}.

We first introduce some notation and preliminary tools, after which we perform the modifications on the sums.

We define
$$\xi(h) = 1_{p \mid h \implies p \ge z_1}$$
and
\begin{align}
\label{eq:def_xi_0}
\xi_0(h) = \begin{cases} \xi(h) & \text{ if } h < L_{\zeta} \\ \sum\limits_{\substack{d \mid h \\ d < z_2 \\ p \mid d \implies p < z_1}} \mu(d)  & \text{ otherwise.}\end{cases}
\end{align}

For $N \in \mathbb{Z}_+$, denote
$$\Delta(N) = \Delta(N, m) = 1_{N \in \mathcal{A}(m)} - \frac{\delta_0}{\delta_1}1_{N \in \mathcal{B}(m)}.$$
For a set $\mathcal{S} = \mathcal{S}(x) \subset \mathbb{R}_+$ of reals, let 
\begin{align*}
\Delta_{\mathcal{S}}(N, m) = \begin{cases} 0, \text{ if } d \in [sx^{-\eta}, sx^{\eta}] \text{ for some } d \mid N, s \in \mathcal{S} \\ \Delta(N, m) \text{ otherwise} \end{cases}
\end{align*}
and
\begin{align}
\label{def:delta_S'}
\Delta_{\mathcal{S}}'(N, m) = \begin{cases} 0, \text{ if } p_1p_2 \mid N \text{ for some } z_1 \le p_1, p_2 \le x^{c/2 - \epsilon} \text{ with } p_1/4 \le p_2 \le 4p_1 \\ \Delta_{\mathcal{S}}(N, m) \text{ otherwise}\end{cases}.
\end{align}
Hence $\Delta_{\mathcal{S}}'(N, m)$ removes those integers which have a divisor lying close to $s$ for some $s \in \mathcal{S}$ or which have two prime factors (of suitable size) which are almost equal in size.

In what follows we assume
\begin{align}
\label{eq:S_assume}
\sup_{s \in \mathcal{S}} s < x^{c - \epsilon} \quad \text{and} \quad |S| \ll (\log \log x)^5.
\end{align}
While the proofs in this section require no additional information on $\mathcal{S}$, we reveal that we will choose
\begin{align}
\label{eq:S_choice}
\mathcal{S} := \left(\{T^{2/n} \ | \ n \ge 4\} \cap [z_1, \infty)\right) \cup \{L_{\zeta}\},
\end{align}
so $\mathcal{S}$ will be of size $O(\log x / \log z_1) = O((\log \log x)^5)$ and $\sup_{s \in \mathcal{S}} s = \max(T^{1/2}, L_{\zeta}) < x^{c - \epsilon}$.

We first present some preliminary tools, after which we perform the modifications on the sums in~\eqref{eq:buch2}.

\subsection{Preliminary tools}

Many of the results and proofs of this section follow closely those given by Heath-Brown in~\cite{HB-V}, in particular Lemmas 3, 5, 6 and 8 there. 

We first note that the contribution of integers divisible by $p^2$ for some $p \ge L_{\zeta}$ to our sums is negligble.

\begin{lemma}
\label{lem:p^2}
Let $D \in \mathbb{Z}_+$ be a constant. We have, for all but $O(x^{\epsilon})$ integers $m \in [x/H', 3x/H']$,
\begin{align}
\label{eq:p^2}
\sum_{p \ge L_{\zeta}} \sum_{\substack{N \in \mathcal{A}(m) \\ p^2 \mid N}} \tau(N)^D = o\left(\frac{\delta_0 x}{\log x}\right)
\end{align}
The corresponding result holds with $\mathcal{A}$ and $\delta_0$ replaced by $\mathcal{B}$ and $\delta_1$.
\end{lemma}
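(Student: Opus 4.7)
The plan is a first-moment (Markov-type) bound. The key observation is that the condition ``$p^2 \mid N$ for some prime $p \ge L_\zeta$'' is so sparse on $[x, 3x]$ that even after weighting by $\tau(N)^D$, the average of the left-hand side of \eqref{eq:p^2} over $m$ is already much smaller than the target $\delta_0 x/\log x$.

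First I would note that the intervals $\mathcal{A}(m) = (mH', mH'(1+\delta_0)]$ are pairwise disjoint for $m \in [x/H', 3x/H']$, since their length $\delta_0 mH' \ll \delta_0 x$ is much smaller than the spacing $H'$ between consecutive left-endpoints. Summing the left-hand side of \eqref{eq:p^2} over $m$ and swapping the order of summation,
\begin{align*}
\sum_{m} \sum_{p \ge L_\zeta} \sum_{\substack{N \in \mathcal{A}(m) \\ p^2 \mid N}} \tau(N)^D \;\le\; \sum_{p \ge L_\zeta} \sum_{\substack{N \le 3x(1+\delta_0) \\ p^2 \mid N}} \tau(N)^D.
\end{align*}
Parametrising $N = p^2 k$ with $k \le 3x/p^2$ and using submultiplicativity $\tau(p^2 k) \le \tau(p^2)\tau(k) = 3\tau(k)$, the right side is bounded by $3^D \sum_{L_\zeta \le p \le \sqrt{3x}} \sum_{k \le 3x/p^2} \tau(k)^D$. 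Shiu's bound (Lemma \ref{lem:shiu}) controls the inner sum by $\ll (x/p^2)(\log x)^{2^D - 1}$, while $\sum_{p \ge L_\zeta} p^{-2} \ll L_\zeta^{-1}$; altogether
\begin{align*}
\sum_{m}\sum_{p \ge L_\zeta} \sum_{\substack{N \in \mathcal{A}(m) \\ p^2 \mid N}} \tau(N)^D \;\ll\; \frac{x (\log x)^{O_D(1)}}{L_\zeta}.
\end{align*}
Markov's inequality with threshold $A = \delta_0 x/((\log x)(\log\log x))$, which is visibly $o(\delta_0 x/\log x)$, then bounds the number of $m$ violating \eqref{eq:p^2} by $(\log x)^{O_D(1)}(\log\log x)/(L_\zeta \delta_0) \ll x^{(1-c)/2 + o(1)}$. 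The version of the claim for $\mathcal{B}(m)$ is identical after replacing $\delta_0$ by $\delta_1$; disjointness is even more immediate there since $\delta_1 < \delta_0$.

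The main obstacle is that the first-moment estimate delivers only $O(x^{(1-c)/2 + o(1)})$ exceptions rather than the stated $O(x^\epsilon)$. For the downstream application via Proposition \ref{prop:dir_to_arit} the weaker bound is already $O(R)$ and hence acceptable in the paper; but to establish the literal statement one would need to sharpen the averaging, most naturally via a higher-moment estimate $\sum_m S_m^k$ for large $k$: expand $S_m^k$ as a sum over $k$-tuples $(N_1, \ldots, N_k) \in \mathcal{A}(m)^k$ each divisible by some $p_i^2$ with $p_i \ge L_\zeta$, and exploit the fact that all of $N_1, \ldots, N_k$ are constrained to lie in a common short window of length $\delta_0 mH'$ (together with the fact, following from $L_\zeta^2 \gg \delta_0 x$, that $\mathcal{A}(m)$ can contain at most one multiple of each individual $p^2$). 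Controlling the resulting nested prime-and-divisor sums cleanly enough to take $k$ sufficiently large is the real technical hurdle.
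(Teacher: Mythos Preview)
Your first-moment computation is correct and gives $O(x^{(1-c)/2+o(1)})$ exceptional $m$, but your assertion that this already suffices downstream is wrong: for $c=0.5$ one has $(1-c)/2 = 0.25 > 0.07$, and for $c=0.45$ one has $(1-c)/2 = 0.275 > 0.18$, so in neither case is your bound $O(R)$. The literal $O(x^\epsilon)$ claim, which you correctly flag as unproved, is thus actually needed.

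The higher-moment route you sketch is unnecessary; the paper's fix is much simpler. The point you are missing is that the contribution of \emph{small} primes can be handled pointwise for every $m$, not just on average. Since $L_\zeta^2 > |\mathcal{A}(m)|$, for each fixed $m$ and each $p \ge L_\zeta$ there is at most one $N \in \mathcal{A}(m)$ with $p^2 \mid N$; bounding $\tau(N)^D \le x^{\epsilon}$ crudely, the primes $L_\zeta \le p < \delta_0 x^{1-2\epsilon}$ contribute at most $\delta_0 x^{1-2\epsilon} \cdot x^{\epsilon} = o(\delta_0 x/\log x)$ for \emph{every} $m$. Your Markov argument is then applied only to the remaining primes $p > \delta_0 x^{1-2\epsilon}$, for which the total over $m$ is $\ll x^{\epsilon}\sum_{p > \delta_0 x^{1-2\epsilon}} x/p^2 \ll x^{1-c+O(\epsilon)}$; dividing by the threshold $\delta_0 x \asymp x^{c}$ gives $O(x^{O(\epsilon)})$ exceptions. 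So the whole argument is: split at $p \approx \delta_0 x$, handle small $p$ deterministically, and apply Markov only to large $p$.
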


\begin{proof}
We first note that the contribution of $p > 2x^{1/2}$ to the sum in \eqref{eq:p^2} is zero, so we may assume $p \le 2x^{1/2}$.

Then note that $L_{\zeta}^2 > |\mathcal{A}(m)|$, so that for any fixed $m$ and $p \ge L_{\zeta}$ there is at most one $N \in \mathcal{A}(m)$ with $p^2 \mid N$. Hence, bounding $\tau(N)^D \le x^{\epsilon}$,
$$\sum_{L_{\zeta} \le p < \delta_0 x^{1 - 2\epsilon}} \sum_{\substack{N \in \mathcal{A}(m) \\ p^2 \mid N}} \tau(N)^D \ll \sum_{p \le \delta_0 x^{1 - 2\epsilon}} x^{\epsilon} = o\left(\frac{\delta_0 x}{\log x}\right),$$
so the contribution of $p < \delta_0 x^{1 - 2\epsilon}$ is negligble.

Finally note that as $m \in [x/H', 3x/H']$ varies, the intervals $\mathcal{A}(m)$ are disjoint and lie in $[x, 4x]$. Hence the total contribution of a single value $p$ to sums as in \eqref{eq:p^2} is $O(x/p^2)$, so
$$\sum_{m \in [x/H', 3x/H']} \sum_{\delta_0 x^{1-2\epsilon} < p \le 2x^{1/2}} \sum_{\substack{N \in \mathcal{A}(m) \\ p^2 \mid N}} 1 \ll \sum_{\delta_0 x^{1-2\epsilon} < p \le 2x^{1/2}} \frac{x}{p^2} \ll x^{1 - c + 3\epsilon}.$$
Thus the number of $m$ for which \eqref{eq:p^2} does not hold is bounded by $x^{1 - c + 4\epsilon}/(\delta_0 x) \ll x^{5\epsilon}$, which is the desired bound up to redefining $\epsilon$.
\end{proof}

The second result is used to remove integers whose some divisor lies inconveniently close to an element of $\mathcal{S}$.

\begin{lemma}
\label{lem:close}
Let $D \in \mathbb{Z}_+$ be a constant. Assume that $\mathcal{S} \subset \mathbb{R}_+$ is as in~\eqref{eq:S_assume}. We have, for any $m \in [x/H', 3x/H']$,
\begin{align*}
\sum_{s \in \mathcal{S}} \sum_{\substack{d \in [sx^{-\eta}, sx^{\eta}]}} \sum_{\substack{N \in \mathcal{A}(m) \\ d \mid N \\ p \mid N \implies p \ge z_1}} \tau(N)^D = o\left(\frac{\delta_0 x}{\log x}\right).
\end{align*}
The corresponding result holds with $\mathcal{A}$ and $\delta_0$ replaced by $\mathcal{B}$ and $\delta_1$.
\end{lemma}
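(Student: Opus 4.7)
My plan is to factorize $N = dk$ and apply Shiu's lemma (Lemma~\ref{lem:shiu}) twice --- once to the inner sum over $k$ in the dilated interval $\mathcal{A}(m)/d$, and once to dyadic pieces of the outer range $d \in [sx^{-\eta}, sx^{\eta}]$ restricted to $z_1$-rough integers. Submultiplicativity of the divisor function gives $\tau(N)^D \le \tau(d)^D \tau(k)^D$, which cleanly splits the $\tau$-weight between the two sums. The savings arise from the fact that the range $[sx^{-\eta}, sx^{\eta}]$ has logarithmic length $\asymp \eta \log x$, while the choice $\eta = \exp(-(\log \log \log x)^2)$ decays faster than any fixed power of $1/\log \log x$; in particular $\eta(\log \log x)^{C} \to 0$ for every fixed $C$, which will absorb the polylog losses inherent to Shiu's bound.

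For the inner sum, the hypothesis $\sup_{s\in\mathcal{S}} s < x^{c-\epsilon}$ implies $d \le sx^{\eta} \le x^{c-\epsilon/2}$ (for $x$ large), so $mH'/d \asymp x/d \ge x^{1-c+\epsilon/2}$, and Shiu applies to the interval $\mathcal{A}(m)/d$ of length $\delta_0 mH'/d$ yielding
$$\sum_{\substack{k \in \mathcal{A}(m)/d \\ p \mid k \Rightarrow p \ge z_1}} \tau(k)^D \ll \frac{\delta_0 x}{d \log x}(\log \log x)^{O(1)}.$$
For the outer sum, split $[sx^{-\eta}, sx^{\eta}]$ into $O(\eta \log x)$ dyadic pieces $[Y, 2Y]$; Shiu on each gives $\sum_{d \in [Y,2Y],\, \text{rough}} \tau(d)^D \ll (Y/\log Y)(\log\log x)^{O(1)}$, hence $\sum \tau(d)^D/d \ll (\log Y)^{-1}(\log \log x)^{O(1)}$. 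Since $s \in \mathcal{S} \subseteq [z_1,\infty)$ by~\eqref{eq:S_choice} and $\eta \log x = o(\log z_1) = o(\log x/(\log \log x)^5)$, the starting point satisfies $\log(sx^{-\eta}) \asymp \log z_1$, so summing over dyadic pieces gives
$$\sum_{\substack{d \in [sx^{-\eta},sx^{\eta}] \\ p \mid d \Rightarrow p \ge z_1}} \frac{\tau(d)^D}{d} \ll \frac{\eta \log x}{\log z_1}(\log \log x)^{O(1)} \ll \eta (\log \log x)^{O(1)}.$$
Multiplying the two bounds and summing over $s \in \mathcal{S}$ using $|\mathcal{S}| \ll (\log \log x)^5$ yields a total of $(\delta_0 x/\log x)\cdot \eta(\log \log x)^{O(1)} = o(\delta_0 x/\log x)$, as required. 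The $\mathcal{B}(m)$ case is identical with $\delta_1$ replacing $\delta_0$; if anything Shiu's hypothesis is even easier to verify since $\delta_1 = \exp(-\sqrt{\log x})$ decays only sub-polynomially in $x$.

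The main obstacle is the careful verification that Shiu's length hypothesis $Y \ge X^{\delta}$ is satisfied in both applications, and that the resulting polylogarithmic losses from Shiu (which are $(\log \log x)^{5 \cdot 2^D}$ in the inner sum, and similar in the outer sum after summing $O(\eta \log x)$ dyadic pieces) are truly dominated by the $\eta$-gain. This amounts to the comparison $\eta < 1/(\log \log x)^C$ for any fixed $C$, which holds because $\log(1/\eta) = (\log \log \log x)^2 \gg \log \log \log x$, and this is the reason for the specific choice of $\eta$ made in Section~\ref{sec:parameters}.
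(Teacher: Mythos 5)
Your proof is correct and essentially identical to the paper's: both split $N = dN'$, use submultiplicativity of $\tau$, apply Shiu to the inner sum over the interval $\mathcal{A}(m)/d$ (whose length is $\gg x^{\epsilon}$ by $\sup_{s\in\mathcal{S}} s < x^{c-\epsilon}$), then apply Shiu once more on dyadic pieces of the $d$-range and exploit the $O(\eta\log x)$ count of such pieces to gain the factor $\eta$, which overwhelms the $(\log\log x)^{O(1)}$ losses and the $|\mathcal{S}| = (\log\log x)^{O(1)}$ sum. The only cosmetic difference is that you invoke the explicit choice~\eqref{eq:S_choice} to justify $\log(sx^{-\eta}) \gg \log z_1$, whereas the paper leaves this implicit via the $z_1$-roughness of $d$.
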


\begin{proof}
We consider each $s \in S$ individually, and hence have to show
\begin{align}
\label{eq:close}
\sum_{d \in [sx^{-\eta}, sx^{\eta}]} \sum_{\substack{N \in \mathcal{A}(m) \\ d \mid N \\ p \mid N \implies p \ge z_1}} \tau(N)^D = o\left(\frac{\delta_0 x}{|S| \log x}\right).
\end{align}
Write $N = dN'$ in the inner sum and bound $\tau(N) \le \tau(d)\tau(N')$. Applying Lemma \ref{lem:shiu} to the resulting sum over $N'$ (which by \eqref{eq:S_assume} is longer than $x^{\epsilon}$) we obtain
\begin{align*}
\sum_{d \in [sx^{-\eta}, sx^{\eta}]} \sum_{\substack{N \in \mathcal{A}(m) \\ d \mid N \\ p \mid N \implies p \ge z_1}} \tau(N)^D &\ll \sum_{\substack{d \in [sx^{-\eta}, sx^{\eta}] \\ p \mid d \implies p \ge z_1}} \tau(d)^D \frac{\delta_0 x / d}{\log x} \left(\frac{\log x}{\log z_1}\right)^{2^D} \\
&\ll \frac{\delta_0 x}{\log x}(\log \log x)^{O(1)} \sum_{\substack{d \in [sx^{-\eta}, sx^{\eta}] \\ p \mid d \implies p \ge z_1}} \frac{\tau(d)^D}{d}.
\end{align*}
We then perform a dyadic decomposition over $d$. The contribution of the interval $[w, 2w]$ to the sum is, again by Lemma \ref{lem:shiu}, bounded by
$$\ll \frac{1}{w} \cdot \frac{w}{\log X} \left(\frac{\log X}{\log z_1}\right)^{2^D} = \frac{(\log \log x)^{O(1)}}{\log x}.$$
Sum over $O(\eta \log x)$ values of $w$. The left hand side of \eqref{eq:close} is hence bounded by
\begin{align*}
\eta \frac{\delta_0 x}{\log x}(\log \log x)^{O(1)},
\end{align*}
which is sufficient, as $\eta = \exp(-(\log \log \log x)^2)$ and $|S| = (\log \log x)^{O(1)}$.

The proof for $\mathcal{B}$ is similar.
\end{proof}

The next result is similar and used to remove integers which have two (not too large) prime factors close to each other.

\begin{lemma}
\label{lem:dyadic}
For any $m \in [x/H', 3x/H']$ and any $D \in \mathbb{Z}_+$ we have
\begin{align*}
\sum_{\substack{p_1, p_2 \in \mathbb{P} \\ z_1 \le p_1, p_2 < x^{c/2 - \epsilon} \\ p_1/4 \le p_2 \le 4p_1}} \sum_{\substack{N \in \mathcal{A}(m) \\ p_1p_2 \mid N \\ p \mid N \implies p \ge z_1}} \tau(N)^D = o\left(\frac{x \delta_0}{\log x}\right). 
\end{align*}
The corresponding result holds with $\mathcal{A}$ and $\delta_0$ replaced by $\mathcal{B}$ and $\delta_1$.
\end{lemma}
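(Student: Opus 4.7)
The plan is to mimic the proof of Lemma~\ref{lem:close}: dyadically decompose the sum over $p_1$, apply Shiu's lemma to the resulting $\tau$-sum after extracting the factor $p_1 p_2$, and then sum over the primes using Mertens. The near-equality condition $p_1/4 \le p_2 \le 4p_1$ will reduce the double prime sum to essentially one dyadic scale.

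First, I would dyadically partition: for each $P = 2^j$ with $z_1/2 \le P \le x^{c/2 - \epsilon}$ I restrict to $p_1 \in [P, 2P]$, which forces $p_2 \in [P/4, 8P]$. There are $O(\log x)$ such $P$, so it suffices to show that each dyadic piece contributes $o(\delta_0 x/(\log x)^2)$.

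For fixed admissible $(p_1, p_2)$ I would write $N = p_1 p_2 N'$ and use $\tau(N) \le \tau(p_1)\tau(p_2)\tau(N') \le 4\tau(N')$. Since $p_1, p_2 \ge z_1$, the quotient $N'$ remains $z_1$-rough, and it ranges over an interval of length $Y = \delta_0 m H'/(p_1 p_2)$ starting at $X = m H'/(p_1 p_2)$. The crucial input here is the bound $p_1 p_2 \le 16 P^2 \le 16\, x^{c - 2\epsilon}$, which gives $Y \gg \delta_0 x^{1 - c + 2\epsilon}/(\log\log x)^2 \gg x^\epsilon$ while $X \le 3x$. Thus Shiu's hypothesis $X^\delta \le Y \le X$ holds for any small fixed $\delta \in (0, \epsilon)$, and Lemma~\ref{lem:shiu} yields
$$\sum_{\substack{N \in \mathcal{A}(m)\\ p_1 p_2 \mid N\\ p \mid N \Rightarrow p \ge z_1}} \tau(N)^D \ll \frac{Y}{\log X}\left(\frac{\log X}{\log z_1}\right)^{2^D} \ll \frac{\delta_0 x}{p_1 p_2 \log x}(\log\log x)^{O_D(1)},$$
using $\log z_1 = \log x/(\log\log x)^5$.

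Finally I would sum over primes by Mertens,
$$\sum_{p_1 \in [P, 2P]} \frac{1}{p_1} \sum_{\substack{p_2 \text{ prime}\\ p_1/4 \le p_2 \le 4 p_1}} \frac{1}{p_2} \ll \frac{1}{(\log P)^2} \le \frac{(\log\log x)^{10}}{(\log x)^2},$$
where I used $\log P \ge \log z_1$. Hence each dyadic range contributes $O(\delta_0 x (\log\log x)^{O_D(1)}/(\log x)^3)$, and summing over the $O(\log x)$ values of $P$ gives $O(\delta_0 x(\log\log x)^{O_D(1)}/(\log x)^2) = o(\delta_0 x/\log x)$. The proof for $\mathcal{B}(m)$ is identical with $\delta_0, \mathcal{A}$ replaced by $\delta_1, \mathcal{B}$. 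The main obstacle is essentially just verifying Shiu's hypothesis, which is where the upper bound $p_1 p_2 < x^{c - 2\epsilon}$ enters critically; without it the truncated interval containing $N'$ could be too short relative to its location for Lemma~\ref{lem:shiu} to apply. Everything else is a routine dyadic/Mertens calculation.
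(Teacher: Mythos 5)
Your proof is correct and follows essentially the same route as the paper: extract the factor $p_1p_2$, apply Shiu's bound (Lemma~\ref{lem:shiu}) to the $z_1$-rough sum over $N'$ (valid since $p_1p_2 \le x^{c-2\epsilon}$ keeps the interval for $N'$ longer than $x^\epsilon$), and then sum over primes via Mertens. The dyadic decomposition over $p_1$ is an inessential extra step — the paper sums directly using $\sum_{p_1/4 \le p_2 \le 4p_1} 1/p_2 \ll 1/\log p_1$ and then $\sum_{p_1 \ge z_1} 1/(p_1\log p_1) \ll 1/\log z_1$ — but your version arrives at the same bound.
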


\begin{proof}
Write $N = p_1p_2N'$ in the inner sum and bound $\tau(N)^D \ll \tau(N')^D$. Applying Lemma \ref{lem:shiu} to the resulting sum over $N'$ (which by $p_i \le x^{c/2 - \epsilon}$ is longer than $x^{\epsilon}$) we obtain
\begin{align*}
\sum_{\substack{p_1, p_2 \in \mathbb{P} \\ z_1 \le p_1, p_2 < x^{c/2 - \epsilon} \\ p_1/4 \le p_2 \le 4p_1}} \sum_{\substack{N \in \mathcal{A}(m) \\ p_1p_2 \mid N \\ p \mid N \implies p \ge z_1}} \tau(N)^D &\ll \sum_{\substack{p_1, p_2 \in \mathbb{P} \\ z_1 \le p_1, p_2 < x^{c/2 - \epsilon} \\ p_1/4 \le p_2 \le 4p_1}} \frac{\delta_0 x}{p_1p_2 \log x} \left(\frac{\log x}{\log z_1}\right)^{2^D} \\
&\ll \frac{\delta_0 x}{\log x}(\log \log x)^{O(1)} \sum_{\substack{p_1, p_2 \in \mathbb{P} \\ z_1 \le p_1, p_2 < x^{c/2 - \epsilon} \\ p_1/4 \le p_2 \le 4p_1}} \frac{1}{p_1p_2}.
\end{align*}
The sum over $p_1, p_2$ is bounded by
\begin{align*}
\ll \sum_{\substack{p_1 \in \mathbb{P} \\ z_1 \le p_1 < x^{c/2 - \epsilon}}} \frac{1}{p_1 \log p_1} \ll \frac{1}{\log z_1}.
\end{align*}
The result follows. The proof for $\mathcal{B}$ is similar.
\end{proof}

We then present Heath-Brown's identity (also known as the Heath-Brown decomposition).

\begin{lemma}
\label{lem:HB-dec}
Let $f : \mathbb{Z}_+ \to \mathbb{R}$ be an arbitrary function supported on $[1, 10x]$ and let $k \in \mathbb{Z}_+$ be fixed. Let $g(n) = \Lambda(n)1_{n \not\in \mathbb{P}}$. Assume that for any $N_1, \ldots N_{2k}$ and $N_1', \ldots , N_{2k}'$ and any $f_i \in \{1, \log, \mu, g\}$ satisfying $N_i > 5x^{1/k} \implies f_i \in \{1, \log, g\}$ we have
\begin{align*}
\sum_{\substack{n_1, \ldots , n_{2k} \\ N_i < n_i \le N_i'}} f_1(n_1) \cdots f_{2k}(n_{2k})f(n_1 \cdots n_{2k}) = o\left(\frac{\delta_0 x}{\log x}\right).
\end{align*}
Then
\begin{align*}
\sum_{p \le 10x} f(p) = o\left(\frac{\delta_0 x}{\log x}\right).
\end{align*}
\end{lemma}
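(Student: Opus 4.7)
The strategy is to apply Heath-Brown's identity to decompose $\Lambda(n)$ into $2k$-fold convolutions whose contributions are controlled by the hypothesis, and then to convert the resulting bound on a $\Lambda$-weighted sum into the desired bound on a prime sum via partial summation.

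First, the splitting $\Lambda(n) = (\log p)\mathbf{1}_{n = p} + g(n)$ yields
\begin{align*}
\sum_{p \le 10x} f(p) \log p = \sum_{n \le 10x} f(n) \Lambda(n) - \sum_{n \le 10x} f(n) g(n).
\end{align*}
The second sum is directly of the form appearing in the hypothesis with $f_1 = g$, $f_i = 1$ for $i \ge 2$, and degenerate ranges $(N_i, N_i'] = (0,1]$ for $i \ge 2$ (so that $n_2 = \cdots = n_{2k} = 1$), hence is $o(\delta_0 x/\log x)$. For the first sum I invoke Heath-Brown's identity with parameter $k$ and $y := (5x)^{1/k}$, which for $n \le 2y^k = 10x$ reads
\begin{align*}
\Lambda(n) = \sum_{j=1}^k (-1)^{j-1}\binom{k}{j} \sum_{\substack{n = a_1 \cdots a_j b_1 \cdots b_j \\ b_1, \ldots, b_j \le y}} \log(a_1)\, \mu(b_1)\cdots\mu(b_j).
\end{align*}
Substituting into $\sum_n f(n)\Lambda(n)$, dyadically decomposing each of the $2j$ variables $a_i, b_i$, and padding with $2(k-j)$ dummy variables equal to $1$, one obtains $O((\log x)^{2k})$ pieces, each of the shape in the hypothesis with coefficient tuple $(\log, 1, \ldots, 1, \mu, \ldots, \mu, 1, \ldots, 1)$, where the $\mu$'s occupy coordinates $j+1, \ldots, 2j$. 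Crucially, the $\mu$-weighted coordinates have dyadic parameter $N_i < y \le 5x^{1/k}$, so the constraint of the hypothesis is met. Each piece contributes $o(\delta_0 x/\log x)$, and (assuming, as is the case in the Dirichlet polynomial framework of this paper, that the implicit savings in the hypothesis are of quasi-polynomial type $S^\epsilon$ and hence absorb the $(\log x)^{O(k)}$ losses from dyadic summation) the full sum yields $\sum_{p \le 10x} f(p) \log p = o(\delta_0 x/\log x)$.

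Finally, partial summation removes the factor $\log p$. Setting $S(y) = \sum_{p \le y} f(p) \log p$, the argument above applied with $10x$ replaced by an arbitrary $y \in [2, 10x]$ gives $|S(y)| = o(\delta_0 x/\log x)$ uniformly in $y$ — the truncations $f\cdot \mathbf{1}_{n \le y}$ are handled by a standard boundary-dyadic analysis, using that products $\prod n_i$ straddling $y$ contribute only through an $O((\log x)^{2k-1})$-family of pieces. Integration by parts then gives
\begin{align*}
\sum_{p \le 10x} f(p) = \frac{S(10x)}{\log(10x)} + \int_2^{10x} \frac{S(y)}{y \log^2 y}\, \d y = o\!\left(\frac{\delta_0 x}{\log x}\right),
\end{align*}
the integral being bounded using $\int_2^{10x} \d y/(y \log^2 y) = O(1)$. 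The main technical obstacle is establishing the uniformity in $y$ for the partial summation step, but this is routine given the dyadic structure of the argument; an alternative is to work throughout with $f(n)/\log n$ in place of $f(n)$, absorbing the denominator on each dyadic piece where $\log n$ is essentially constant.
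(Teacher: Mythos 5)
Your overall architecture (Heath--Brown identity to decompose $\Lambda$, then partial summation to pass from $\sum (\log p)f(p)$ to $\sum f(p)$) matches the paper's proof, but your implementation has a genuine gap. You perform a dyadic decomposition of the $2j$ variables coming from Heath--Brown's identity, obtaining $O((\log x)^{2k})$ pieces, and then assert that the hypothesis bound $o(\delta_0 x / \log x)$ will still give a total of $o(\delta_0 x/\log x)$. But it will not: $O((\log x)^{2k})$ pieces, each bounded by $\epsilon(x)\,\delta_0 x/\log x$ with $\epsilon(x)\to 0$, gives only $\epsilon(x)\,\delta_0 x (\log x)^{2k-1}$, which is far weaker than the claim. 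Your parenthetical escape --- that ``the implicit savings in the hypothesis are of quasi-polynomial type $S^\epsilon$'' --- appeals to how the lemma is later \emph{used} in the paper, not to what the lemma actually \emph{says}; as a self-contained statement, the hypothesis provides only $o(\delta_0 x/\log x)$, and a proof of the lemma must work with that. The same defect infects your handling of the truncation to $n \le y$, where you again wave at an ``$O((\log x)^{2k-1})$-family of pieces''.

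The fix is to notice that no dyadic decomposition is needed at all. The hypothesis is stated for \emph{arbitrary} ranges $N_i < n_i \le N_i'$, not just dyadic ones, so a single application covers each term $j$ in Heath--Brown's identity: take $(N_i, N_i'] = (0, 5x^{1/k}]$ for the $\mu$-coordinates, $(0, 10x]$ for the $\log$- and $1$-coordinates, $(0,1]$ for the padding coordinates, and (optionally) split once at $5x^{1/k}$ to enforce the restriction on which coordinates may carry $\mu$. This yields $O_k(1)$ applications of the hypothesis, not $(\log x)^{O(k)}$, and the conclusion follows. (The paper also applies partial summation \emph{first}, reducing to bounding $\sum_{p\le t}(\log p)f(p)$ uniformly in $t$, and only then invokes the Heath--Brown identity; this ordering makes it cleaner to absorb the cut-off at $t$ into the same $O_k(1)$ sums rather than having to revisit the decomposition for each $y$ as you do.) Your treatment of the prime-power contribution via $f_1 = g$ with the other variables forced to $1$ is correct and matches the paper.
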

There is of course nothing special with the bound $o(\delta_0 x/\log x)$. The function $g$ is an artifact arising from replacing $(\log p)1_{p \in \mathbb{P}}$ by $\Lambda(n)$. In practice when applying the Heath-Brown decomposition, the case where $f_i \in \{1, \log, \mu\}$ for all $i$ is the most difficult one. 

\begin{proof}
First, in order to evaluate $\sum f(p)$, it suffices to evaluate $\sum (\log p)f(p)$. More precisely, by partial summation we have
\begin{align*}
\sum_{p \ge 2} f(p) &= \sum_{p \ge 2} \frac{1}{\log p}(\log p)f(p) \\
&= -\int_{2}^{\infty} \frac{-1}{t(\log t)^2}\sum_{2 \le p \le t} (\log p)f(p) \d t,
\end{align*}
and so it suffices to show
\begin{align}
\label{eq:HB-dec-proof}
\sum_{p \le t} (\log p)f(p) = o\left(\frac{\delta_0 x}{\log x}\right)
\end{align}
for any $t$.

For this we use Heath-Brown's identity (see e.g.~\cite[(13.37)]{IK})
\begin{align*}
\Lambda(n) = \sum_{1 \le j \le k} (-1)^{j-1} \binom{k}{j} \sum_{m_1, \ldots , m_j \le 5X^{1/k}} \mu(m_1) \cdots \mu(m_j) \sum_{m_1 \cdots m_j n_1 \cdots n_j = n} \log n_1, \quad n \le 10x
\end{align*}
which allows us to write 
\begin{align*}
\sum_{n \in I} \Lambda(n)f(n),
\end{align*}
where $I$ is an interval, as $O(1)$ sums 
\begin{align*}
\sum_{\substack{n_1, \ldots , n_{2k} \\ n_i \in [N_i, N_i'] \\ n_1 \cdots n_{2k} \in I}} f_1(n_1) \cdots f_{2k}(n_{2k})f(n_1 \cdots n_{2k})
\end{align*}
for $f_i \in \{1, \mu, \log\}$, where $N_i \ge 5x^{1/k}$ implies $f_i \in \{1, \log\}$. Note that by splitting the sums if necessary we may assume that $N_i' \ge 5x^{1/k}$ implies $N_i \ge 5x^{1/k}$.

Thus, we have
\begin{align*}
\sum_{p \le t} (\log p)f(p) = &\sum_{\substack{(N_i, N_i', f_i)}} \sum_{\substack{n_1, \ldots , n_{2k} \\ n_i \in [N_i, N_i'] \\ n_1 \cdots n_{2k} \le t}} f_1(n_1) \cdots f_{2k}(n_{2k})f(n_1 \cdots n_{2k}) \\
- &\sum_{e \ge 2} \sum_{p^e \le t} (\log p)f(p^e).
\end{align*}
We note that the last sum is of the same form as the others, as we may write 
\begin{align*}
\sum_{e \ge 2} \sum_{p^e \le t} (\log p)f(p^e) = 
\sum_{\substack{n_1 \in [1, t] \\ n_2, \ldots , n_{2k} \in [1, 1] \\ n_1 \cdots n_{2k} \le t}}f_1(n_1)f_2(n_2) \cdots f_{k}(n_{2k})f(n_1 \cdots n_{2k})
\end{align*}
with $f_1 = g, f_2 = \ldots = f_{2k} = 1$. The result follows.
\end{proof}

The next result is used to replace the indicator function $\xi(h) = 1_{p \mid h \implies p \ge z_1}$ with the more convenient function $\xi_0(h)$ defined in \eqref{eq:def_xi_0}.

\begin{lemma}
\label{lem:xi_to_xi0}
For any $m \in [x/H', 3x/H']$ and any $D \in \mathbb{Z}_+$ we have
\begin{align}
\label{eq:xi_to_xi0}
&\sum_{h \in \mathbb{Z}_+} \sum_{\substack{N \in \mathcal{A}(m) \\ h \mid N}} |\xi(h) - \xi_0(h)| \tau(N)^D(\log N)^D = o\left(\frac{\delta_0 x}{\log x}\right).
\end{align}
The corresponding result holds with $\mathcal{A}$ and $\delta_0$ replaced by $\mathcal{B}$ and $\delta_1$.
\end{lemma}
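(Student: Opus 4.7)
The key algebraic observation I would start from is that $\xi(h) - \xi_0(h) = 0$ for $h < L_\zeta$ by definition, while for $h \geq L_\zeta$ the sieve identity $\xi(h) = \sum_{d \mid h,\, p\mid d \Rightarrow p<z_1}\mu(d)$ gives
$$\xi(h) - \xi_0(h) = \sum_{\substack{d \mid h,\, d \geq z_2 \\ p \mid d \Rightarrow p < z_1}}\mu(d),$$
so $|\xi(h)-\xi_0(h)| \leq \#\{d\mid h: d\geq z_2,\, d\text{ is }z_1\text{-smooth}\}$. In particular this vanishes unless the $z_1$-smooth part $N_{sm}$ of any multiple $N$ of $h$ satisfies $N_{sm} \geq z_2$.

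The plan is to swap the order of summation, replace $(\log N)^D$ by $(\log x)^D$, and then interchange the divisors $h \mid N$ with the divisors $d$ inside $|\xi(h) - \xi_0(h)|$. Bounding $\#\{h : d \mid h \mid N\} \leq \tau(N)$ and dropping the harmless constraint $h \geq L_\zeta$ yields
$$\sum_{h \mid N} |\xi(h) - \xi_0(h)| \leq \sum_{\substack{d\mid N,\, d\geq z_2 \\ d\text{ is }z_1\text{-smooth}}}\tau(N/d) \leq \tau(N)^2\cdot \mathbf{1}_{N_{sm}\geq z_2},$$
so it suffices to show $\sum_{N\in\mathcal{A}(m),\,N_{sm}\geq z_2}\tau(N)^{D+2} = o(\delta_0 x/(\log x)^{D+1})$.

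The crucial reduction is the following: if $N_{sm}\geq z_2$, then writing $N_{sm}=p_1p_2\cdots p_k$ with $p_i<z_1$ and letting $j^*$ be minimal with $p_1\cdots p_{j^*}\geq z_2$, the divisor $d:=p_1\cdots p_{j^*}$ of $N$ is $z_1$-smooth and satisfies $z_2\leq d < z_1z_2$. Since $z_1 z_2 \leq \exp(2\log x/(\log\log x)^3) = x^{o(1)}$, overcounting by summing over such $d$ and writing $N = dN'$, the interval for $N'$ has length $\delta_0 x/d \geq x^{c-o(1)}$ and Shiu's lemma (Lemma~\ref{lem:shiu}), together with $\tau(dN')\leq\tau(d)\tau(N')$, yields
$$\sum_{\substack{N\in\mathcal{A}(m)\\d\mid N}}\tau(N)^{D+2}\ll \tau(d)^{D+2}\cdot\frac{\delta_0 x}{d}(\log x)^{O_D(1)}.$$

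Finally the sum over $d$ is handled by Rankin's trick: taking $s = 1/\log z_1$ gives $z_2^{-s} = \exp(-(\log\log x)^2)$, while $p^s \leq z_1^s = e$ for all $p<z_1$ shows
$$\sum_{\substack{d\text{ is }z_1\text{-smooth}}}\frac{\tau(d)^{D+2}}{d^{1-s}} = \prod_{p<z_1}\Bigl(1+O_D(1/p)\Bigr) \leq (\log x)^{O_D(1)},$$
so $\sum_{d\text{ smooth},\,d\geq z_2}\tau(d)^{D+2}/d \leq (\log x)^{O_D(1)}\exp(-(\log\log x)^2)$. Putting everything together, the LHS is $\ll \delta_0 x(\log x)^{O_D(1)}\exp(-(\log\log x)^2) = o(\delta_0 x/\log x)$, since $\exp((\log\log x)^2)$ beats any fixed power of $\log x$. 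The argument for $\mathcal{B}$ is identical, with $\delta_0,\mathcal{A}$ replaced by $\delta_1,\mathcal{B}$. The main obstacle is the reduction to $d$ of sub-polynomial size; if one tried to sum over all $z_1$-smooth divisors of $N$ directly, the contribution from $d$ of size close to $x$ (where the interval for $N/d$ is shorter than $1$) seems hard to control using only the sparsity of smooth numbers provided by de Bruijn's bound, and exploiting the specific structure $z_2\leq d<z_1z_2$ is what makes Shiu's lemma applicable uniformly.
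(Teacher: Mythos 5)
Your proof is correct and follows essentially the same route as the paper: both rely on extracting a $z_1$-smooth divisor $d\in[z_2,z_1z_2)$ (the paper cites \cite[Lemma 7]{HB-V} for this while you re-derive it via the ``minimal prefix'' observation), then apply Shiu's bound to the complementary factor and Rankin's trick with the shift $1/\log z_1$ to exploit $\log z_2/\log z_1=(\log\log x)^2$. The minor cosmetic differences (you carry $\tau(N)^{D+2}$ rather than $\tau(N)^{D+1}$, and you run Rankin over all $z_1$-smooth $d$ rather than squarefree ones) do not affect the argument.
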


\begin{proof}
By~\cite[Lemma 7]{HB-V} we have
\begin{align*}
|\xi(h) - \xi_0(h)| \le \sum_{\substack{d \mid (h, \Pi_1) \\ z_2 \le d < z_1z_2}} 1,
\end{align*}
where $\Pi_1 = \prod_{p < z_1} p$. Hence the left hand side of~\eqref{eq:xi_to_xi0} is bounded by
\begin{align}
\label{eq:xi_proof_1}
(\log x)^{O(1)} \sum_{\substack{d \mid \Pi_1 \\ z_2 \le d < z_1z_2}} \sum_{\substack{N \in \mathcal{A}(m) \\ d \mid N \\ p \mid N \implies p \ge z_1}} \tau(N)^{D+1}.
\end{align}
Write $N = dN'$ in the inner sum and bound $\tau(N) \le \tau(d)\tau(N')$. As $z_1z_2 < x^{\epsilon}$, we may apply Lemma~\ref{lem:shiu} to bound \eqref{eq:xi_proof_1} by
\begin{align}
\label{eq:xi_proof_2}
\delta_0 x(\log x)^{O(1)} \sum_{\substack{d \mid \Pi_1 \\ z_2 \le d < z_1z_2}} \frac{\tau(d)^{D+1}}{d}.
\end{align}
We bound the sum over $\tau(d)^{D+1}/d$ by Rankin's trick as in the proof of~\cite[Lemma 6]{HB-V}. For a parameter $\theta > 0$, we have
\begin{align*}
\sum_{\substack{d \mid \Pi_1 \\ z_2 \le d < z_1z_2}} \frac{\tau(d)^{D+1}}{d} &\le z_2^{-\theta} \sum_{\substack{d \mid \Pi_1 \\ z_2 \le d < z_1z_2}} \frac{\tau(d)^{D+1}}{d^{1 - \theta}} \\
&\le z_2^{-\theta} \sum_{\substack{d = 1 \\ d \mid \Pi_1}}^{\infty} \frac{\tau(d)^{D+1}}{d^{1 - \theta}} \\
&= z_2^{-\theta} \prod_{p < z_1} \left(1 + \frac{2^{D+1}}{p^{1 - \theta}}\right) \\
&\le z_2^{-\theta} \exp\left(2^{D+1} \sum_{p < z_1} \frac{1}{p^{1 - \theta}}\right).
\end{align*}
Choosing $\theta = 1/\log z_1$ we have, for $x$ large enough
\begin{align*}
\sum_{p < z_1} \frac{1}{p^{1 - \theta}} \le 3 \log \log z_1.
\end{align*}
It follows that \eqref{eq:xi_proof_2} is bounded by
$$\delta_0 x (\log x)^{O(1)} \sum_{\substack{d \mid \Pi_1 \\ z_2 \le d < z_1z_2}} \frac{\tau(d)^{D+1}}{d} \ll \delta_0 x(\log x)^{O(1)}z_2^{-1/\log z_1}.$$
This is sufficient, as $\log z_2 = (\log \log x)^2 \log z_1$.

The proof for $\mathcal{B}$ is similar.
\end{proof}

Finally, we use the following lemma to truncate a certain sum at $T^{1 + \epsilon}$.

\begin{lemma}
\label{lem:long_h}
Let $m \in [x/H', 3x/H']$ be given and fix $\epsilon > 0$. We have, for any fixed $D \in \mathbb{Z}_+$,
\begin{align*}
\sum_{\substack{N' \in \mathbb{Z}_+}} \tau(N')^D(\log N')^D \left|\sum_{h > T^{1 + \epsilon}} \xi_0(h)\Delta(hN')\right| = o\left(\frac{\delta_0 x}{\log x}\right).
\end{align*}
\end{lemma}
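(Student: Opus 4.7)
My plan is to exploit the identity $\delta_0/\delta_1 = |\mathcal{A}(m)|/|\mathcal{B}(m)|$, so that the ``main term'' of $\sum_h \Delta(hN')$ cancels and only an $O(1)$ remainder survives per choice of $d$ in the sieve expansion of $\xi_0$; the cutoff $h > T^{1+\epsilon}$ is then handled by observing that for almost every $N'$ it is either redundant or vacuous. Since $T^{1+\epsilon} > L_{\zeta}$, for $h > T^{1+\epsilon}$ one has
\begin{align*}
\xi_0(h) = \sum_{\substack{d \mid h \\ d < z_2 \\ p\mid d \Rightarrow p < z_1}} \mu(d).
\end{align*}
The summand $\Delta(hN')$ is supported on $h \in J(N') := (mH'/N',\, mH'(1+\delta_1)/N']$, so I would split on $N'$ into three regimes:
\textbf{(a)} $N' \le mH'/T^{1+\epsilon}$, with $\min J(N') > T^{1+\epsilon}$, so the cutoff is redundant;
\textbf{(b)} $mH'/T^{1+\epsilon} < N' < mH'(1+\delta_1)/T^{1+\epsilon}$, a boundary range of length $\asymp \delta_1 x/T^{1+\epsilon}$; and
\textbf{(c)} $N' \ge mH'(1+\delta_1)/T^{1+\epsilon}$, where $\max J(N') \le T^{1+\epsilon}$ and the sum is empty.

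In regime (a) I would interchange the $h$- and $d$-sums and set $h = de$. For each admissible $d$, the inner sum counts integers $e$ with $edN' \in \mathcal{A}(m)$ (resp.\ $\mathcal{B}(m)$), yielding $mH'\delta_0/(dN') + O(1)$ (resp.\ $mH'\delta_1/(dN') + O(1)$), and the main terms cancel exactly after multiplying the $\mathcal{B}$-term by $\delta_0/\delta_1$. Thus
\begin{align*}
\biggl|\sum_{h > T^{1+\epsilon}} \xi_0(h)\Delta(hN')\biggr| \ll D_0 := \#\{d < z_2 \colon p \mid d \Rightarrow p < z_1\} \le z_2.
\end{align*}
Summing $\tau(N')^D (\log N')^D \cdot D_0$ over $N' \le mH'/T^{1+\epsilon}$ using Lemma~\ref{lem:shiu} gives a total of order $z_2 (x/T^{1+\epsilon}) (\log x)^{O(1)}$; by the choices $z_2 = x^{o(1)}$ and $T^{1+\epsilon} \ge x^{(1-c)(1+\epsilon)}$, this is $o(\delta_0 x/\log x)$ with a polynomial saving of order $x^{-(1-c)\epsilon+o(1)}$.

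In regime (b) I would drop the cancellation and use the triangle inequality, together with $|\xi_0(h)| \le \tau(h)$ and $|\Delta(hN')| \le 1_{\mathcal{A}(m)}(hN') + (\delta_0/\delta_1) 1_{\mathcal{B}(m)}(hN')$. Lemma~\ref{lem:shiu} then gives the per-$N'$ bound $\ll (\delta_0 x/N')(\log x)^{O(1)}$; both contributions turn out to be of this size because $|\mathcal{A}(m)|/|\mathcal{B}(m)| = \delta_0/\delta_1$. Summing $\tau(N')^D (\log N')^D$ times this bound over the boundary $N'$-interval (with Lemma~\ref{lem:shiu} controlling the $\tau(N')^D$-sum there as well) yields $\ll \delta_0 \delta_1 x(\log x)^{O(1)}$, which is $o(\delta_0 x/\log x)$ since $\delta_1 = \exp(-\sqrt{\log x})$ decays faster than any power of $\log x$. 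The point requiring most care will be checking that Lemma~\ref{lem:shiu} applies to each of the three $\tau$-sums above, i.e.\ that each interval has length at least a fixed positive power of its upper endpoint; this follows comfortably from the parameter choices in Section~\ref{sec:parameters}. Assembling the three regimes then gives the lemma.
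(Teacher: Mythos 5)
Your proposal is correct and follows essentially the same route as the paper's proof: the same three-regime split on $N'$ according to whether the truncation $h > T^{1+\epsilon}$ is redundant, borderline, or vacuous, with exact cancellation of main terms in the first regime and a trivial bound via $\delta_1 = (\log x)^{-A}$ in the boundary regime. The only cosmetic difference is in regime (b), where you bound the inner $h$-sum by applying Shiu to $\sum_h \tau(h)$, while the paper substitutes $h = dg$ and counts integers $g$ in an interval directly; the two routes yield the same $\delta_0\delta_1 x(\log x)^{O(1)}$ bound.
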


\begin{proof}
Let $V = T^{1 + \epsilon}$. We have, for any $N' \in \mathbb{Z}_+$,
\begin{align*}
\sum_{h > V} \xi_0(h)\Delta(hN') &\ll \sum_{d < z_2} \left|\sum_{g > V/d} \Delta(gdN')\right|.
\end{align*}
If $VN' > mH'(1 + \delta_1)$, then the inner sum is empty. If $VN' \le mH'$, then
\begin{align*}
\sum_{g > V/d} \Delta(gdN') = \left(\frac{mH'\delta_0}{dN'} + O(1)\right) - \frac{\delta_0}{\delta_1}\left(\frac{mH'\delta_1}{dN'} + O(1)\right) = O(1).
\end{align*}
If $mH' < VN' \le mH'(1 + \delta_1)$, then
\begin{align*}
\sum_{g > V/d} |\Delta(gdN')| \le \frac{mH'\delta_0}{dN'} + O(1) + \frac{\delta_0}{\delta_1}\left(\frac{mH'\delta_1}{dN'} + O(1)\right) \ll \frac{mH'\delta_0}{dN'} + O(1).
\end{align*}
It follows that
\begin{align*}
&\sum_{\substack{N' \in \mathbb{Z}_+}} \tau(N')^D(\log N')^D \left|\sum_{h > T^{1 + \epsilon}} \xi_0(h)\Delta(hN)\right| \ll \\
&\sum_{N' \le x/T^{1+\epsilon/2}} z_2 \tau(N')^D(\log N')^D + \sum_{d < z_2} \sum_{mH'/V < N' \le mH'(1 + \delta_1)/V} \frac{mH'\delta_0}{dN'}\tau(N')^D(\log N')^D.
\end{align*}
The first sum gives a power saving bound over $\delta_0 x/\log x$. We bound the second sum as
\begin{align*}
&\frac{mH'\delta_0(\log x)^{D}}{mH'/V} \sum_{d < z_2} \frac{1}{d} \sum_{mH'/V < N' \le mH'(1 + \delta_1)/V} \tau(N')^D \ll \\
&V\delta_0(\log x)^D(\log z_2) \sum_{\substack{mH'/V < N' \le mH'(1 + \delta_1)/V}} \tau(N')^D,
\end{align*}
and apply Lemma~\ref{lem:shiu} to the sum over $N'$ to arrive at
\begin{align*}
V\delta_0 (\log x)^D (\log z_2) \frac{mH' \delta_1}{V}(\log x)^{O(1)} \ll \delta_0 x \delta_1 (\log x)^{O(1)},
\end{align*}
which is sufficient, as $\delta_1 \ll (\log x)^{-A}$ for any $A \ge 1$.
\end{proof}

\subsection{Modification of the sums}

We consider asymptotics of the form
\begin{align}
\label{eq:mod_1}
\sum_{\substack{p_1, \ldots , p_n \\ p_i \in I_i \\ p_n < \ldots < p_1}} S(\mathcal{A}_{p_1 \cdots p_n}(m), z) - \frac{\delta_0}{\delta_1} S(\mathcal{B}_{p_1 \cdots p_n}(m), z) = o\left(\frac{\delta_0 x}{\log x}\right)
\end{align}
and
\begin{align}
\label{eq:mod_2}
\sum_{\substack{p_1, \ldots , p_n \\ p_i \in I_i \\ p_n < \ldots < p_1}} S(\mathcal{A}_{p_1 \cdots p_n}(m), p_n) - \frac{\delta_0}{\delta_1}S(\mathcal{B}_{p_1 \cdots p_n}(m), p_n) = o\left(\frac{\delta_0 x}{\log x}\right)
\end{align}
for $m \in [x/H', 3x/H']$ and intervals $I_i \subset [z_1, 10\sqrt{x}]$. In~\eqref{eq:mod_1} we will assume $p_n \ge z$, that is, $I_i \subset [z, 10\sqrt{x}]$. We will always have $z \ge z_1$. 

Our aim is to reduce the statements \eqref{eq:mod_1} and \eqref{eq:mod_2} to statements regarding mean values of Dirichlet polynomials via Proposition \ref{prop:dir_to_arit}. Before applying Proposition \ref{prop:dir_to_arit} we perform several modifications to the sums for the resulting Dirichlet polynomials to have certain desirable properties. For convenience we will mainly consider sums of the form~\eqref{eq:mod_1}, as the sum~\eqref{eq:mod_2} may be handled via similar methods (see Remark~\ref{rem:z=p_n}). We will perform the following modifications to~\eqref{eq:mod_1}.
\begin{itemize}
\item Handle integers divisible by $p^2$ for a large prime $p$.
\item Write the condition on $z$-roughness as sums over integers via Möbius inversion.
\item Discard cases where some product lies close to $s \in \mathcal{S}$ or where we have two prime factors close to each other (i.e. replace $\Delta$ with $\Delta_{\mathcal{S}}'$).
\item Apply Heath-Brown's identity to certain sums.
\item Replace occurrences of $\xi$ with $\xi_0$.
\item Restrict the size of a certain variable.
\item Decompose a certain sum as sums over primes.
\item Perform dyadic decomposition and remove cross conditions.
\end{itemize}
These steps are undertaken in Lemmas~\ref{lem:mod_qrh} to \ref{lem:mod_cross} below.

First, reduce to $m$ satisfying \eqref{eq:p^2} (and the similar conclusion for $\mathcal{B}$ and $\delta_1$). To this end, we let $\mathcal{M} \subset [x/H', 3x/H']$ denote the set of $m$ for which
$$\sum_{\substack{p_1, \ldots , p_n \in \mathbb{P} \\ p_i \in I_i \\ p_n < \ldots < p_1}} \sum_{\substack{\ell \in \mathbb{Z}_+ \\ \exists p \ge L_{\zeta} : p^2 \mid \ell}} \Delta(p_1 \cdots p_n\ell, m) = o\left(\frac{\delta_0 x}{\log x}\right).$$
By Lemma \ref{lem:p^2}, $\mathcal{M}$ contains all but $O(x^{\epsilon}) = O(R)$ values of $m \in [x/H', 3x/H']$.

We then write the condition on $z$-roughness in~\eqref{eq:mod_1} in a more convenient form.

\begin{lemma}
\label{lem:mod_qrh}
Let $n = O(1)$, $z \in [z_1, 10\sqrt{x}]$, intervals $I_1, \ldots , I_n \subset [z, 10\sqrt{x}]$ and $m \in \mathcal{M}$ be given. Assume that for any $0 \le n' \le 4$ we have
\begin{align*}
\sum_{\substack{p_1, \ldots , p_n \in \mathbb{P} \\ p_i \in I_i \\ p_n < \ldots < p_1}} \sum_{\substack{r \in \mathbb{Z}_+ \\ p \mid r \implies \\ z_1 < p < \min(z, L_{\zeta})}} \sum_{\substack{q_1, \ldots , q_{n'} \in \mathbb{P} \\ \min(z, L_{\zeta}) \le q_i < z \\ q_{n'} < \ldots < q_1}} \sum_{h \in \mathbb{Z}_+} \mu(r) \xi(h) \Delta(p_1 \cdots p_n r q_1 \cdots q_{n'} h, m) = o\left(\frac{\delta_0 x}{\log x}\right).
\end{align*}
Then~\eqref{eq:mod_1} holds.
\end{lemma}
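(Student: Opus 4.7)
The plan is to rewrite $S(\mathcal{A}_{p_1 \cdots p_n}(m), z)$ (and its $\mathcal{B}$-analogue) via a two-step decomposition: first iterate Buchstab's identity to lower the sieve limit from $z$ down to $z_0 := \min(z, L_{\zeta})$, then apply a Möbius sieve to handle the remaining sieve factor. Subtracting the two with weight $\delta_0/\delta_1$ will assemble the $\Delta$-sums appearing in the hypothesis.

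For the Buchstab step, iterating
$$S(\mathcal{C}, w) = S(\mathcal{C}, z_0) - \sum_{z_0 \le q < w} S(\mathcal{C}_q, q)$$
starting from $w = z$ yields
$$S(\mathcal{A}_{p_1 \cdots p_n}(m), z) = \sum_{n' \ge 0} (-1)^{n'} \sum_{\substack{q_1, \ldots, q_{n'} \\ z_0 \le q_i < z \\ q_{n'} < \ldots < q_1}} S(\mathcal{A}_{p_1 \cdots p_n q_1 \cdots q_{n'}}(m), z_0).$$
Because $L_{\zeta} \ge x^{(1-c)/2} \ge x^{1/4}$ and $\mathcal{A}(m) \subset [x, 4x]$, the cofactor set $\mathcal{A}_{p_1 \cdots p_n q_1 \cdots q_{n'}}(m)$ is empty whenever $n' \ge 5$, since $q_1 \cdots q_5 \ge L_{\zeta}^5 \ge x^{5/4} > 4x$. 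Hence the expansion truncates at $n' \le 4$. When $z \le L_{\zeta}$ the range $[z_0, z)$ is already empty, so only the $n' = 0$ term survives, which is consistent.

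For the Möbius step, decompose the sieve condition on the cofactor $N'$ as
$$1_{p \mid N' \implies p \ge z_0} = \xi(N') \sum_{\substack{r \mid N' \\ p \mid r \implies z_1 \le p < z_0}} \mu(r),$$
which splits the condition ``no prime factor $< z_0$'' into the $\xi$-factor (``no prime factor $< z_1$'') and a Möbius sum (``no prime factor in $[z_1, z_0)$''); the latter is the Legendre identity $\sum_{d \mid g} \mu(d) = [g = 1]$ applied to $g = \gcd\!\bigl(N', \prod_{z_1 \le p < z_0} p\bigr)$. Writing $N' = rh$ and observing that $\xi(r) = 1$ holds automatically (so $\xi(N') = \xi(h)$), each term becomes
$$\sum_{\substack{r \\ p \mid r \implies z_1 < p < z_0}} \mu(r) \sum_{h} \xi(h) \, 1_{p_1 \cdots p_n q_1 \cdots q_{n'} r h \in \mathcal{A}(m)},$$
where the strict versus non-strict inequality $z_1 < p$ vs.\ $z_1 \le p$ is immaterial since $z_1$ is not prime.

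Performing the analogous expansion for $\mathcal{B}$, subtracting with weight $\delta_0/\delta_1$ to form $\Delta(\cdot, m)$, and summing over the $p_i$ exhibits the left side of \eqref{eq:mod_1} as an alternating sum over $n' \in \{0, 1, 2, 3, 4\}$ of precisely the five quantities that the hypothesis assumes are $o(\delta_0 x / \log x)$, which gives \eqref{eq:mod_1}. The point requiring care is that the Möbius sieve composed with $\xi$ genuinely reproduces the sieve indicator, which reduces to the observation that any $r$ with prime factors in $[z_1, z_0)$ satisfies $\xi(r) = 1$, so the $\xi$-factor can be transferred transparently from $N'$ to $h = N'/r$. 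The main ``structural'' input is the termination of the Buchstab iteration at $n' \le 4$, which hinges entirely on the choice $L_{\zeta} \ge x^{1/4}$.
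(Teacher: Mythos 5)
Your argument is correct and essentially reproduces the paper's proof: the paper applies the Legendre (M\"obius) sieve from $z$ down to $z_1$ and then splits the squarefree sieving variable $r'$ into its prime factors in $[\min(z,L_\zeta),z)$ and in $[z_1,\min(z,L_\zeta))$, which is the same calculation as your Buchstab-then-M\"obius decomposition. A minor bonus of your phrasing is that you demonstrate the truncation at $n' \le 4$ and the resulting $\Delta$-decomposition hold without first invoking $m \in \mathcal{M}$ to discard $\ell$ having a repeated prime factor $\ge L_\zeta$ (that reduction in the paper is harmless but is not actually needed at this stage, since $q_1\cdots q_{n'} \mid \ell \le 4x < L_\zeta^5$ already forces $n' \le 4$).
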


\begin{proof}
Note that
\begin{align*}
& \sum_{\substack{p_1, \ldots , p_n \\ p_i \in I_i \\ p_n < \ldots < p_1}} S(\mathcal{A}_{p_1 \cdots p_n}(m), z) - \frac{\delta_0}{\delta_1} S(\mathcal{B}_{p_1 \cdots p_n}(m), z) \\
= & \sum_{\substack{p_1, \ldots , p_n \\ p_i \in I_i \\ p_n < \ldots < p_1}} \sum_{\substack{\ell \in \mathbb{Z}_+ \\ p \mid \ell \implies p \ge z}} \Delta(p_1 \cdots p_n \ell).
\end{align*}
As $m \in \mathcal{M}$, we may reduce to $\ell$ not divisible by $p^2$ for any $p \ge L_{\zeta}$. For such $\ell$ we then have
\begin{align*}
1_{p \mid \ell \implies p \ge z} &= \sum_{\substack{r' \in \mathbb{Z}_+ \\ p \mid r' \implies z_1 \le p < z}} \sum_{\substack{h \in \mathbb{Z}_+ \\ p \mid h \implies p \ge z_1}} \mu(r')1_{hr' = \ell} \\
&= \sum_{n' \le 4} \sum_{\substack{q_1, \ldots , q_{n'} \in \mathbb{P} \\ \min(z, L_{\zeta}) \le q_i < z \\ q_{n'} < \ldots < q_1}} \sum_{\substack{r \in \mathbb{Z}_+ \\ p \mid r \implies z_1 \le p < \min(z, L_{\zeta})}} \sum_{h \in \mathbb{Z}_+} (-1)^{n'}\xi(h)1_{q_1 \cdots q_{n'}hr = \ell},
\end{align*}
from which \eqref{eq:mod_1} follows.
\end{proof}

The next step is to discard cases where some product of the numbers is close to elements of $s \in \mathcal{S}$ or which have two not too large prime factors lying close to each other. In other words, we replace $\Delta$ with $\Delta_{\mathcal{S}}'$.

\begin{lemma}
\label{lem:mod_S}
Let $n, I_i, z$ and $m$ be as in Lemma~\ref{lem:mod_qrh}. Let $\mathcal{S}$ be as in \eqref{eq:S_choice}. Assume that for any $n' \le 4$ we have
\begin{align}
\label{eq:mod_S}
\sum_{\substack{p_1, \ldots , p_n \\ p_i \in I_i \\ p_n < \ldots < p_1}} \sum_{\substack{r \in \mathbb{Z}_+ \\ p \mid r \implies \\ z_1 \le p < \min(z, L_{\zeta})}} \sum_{\substack{q_1, \ldots , q_{n'} \in \mathbb{P} \\ \min(z, L_{\zeta}) \le q_i < z \\ q_{n'} < \ldots < q_1}} \sum_{h \in \mathbb{Z}_+} \mu(r) \xi(h) \Delta_{\mathcal{S}}'(p_1 \cdots p_n r q_1 \cdots q_{n'} h, m) = o\left(\frac{\delta_0 x}{\log x}\right).
\end{align}
Then~\eqref{eq:mod_1} holds.
\end{lemma}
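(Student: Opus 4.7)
By Lemma~\ref{lem:mod_qrh}, it suffices to show that the analogous sum with $\Delta_{\mathcal{S}}'$ replaced by $\Delta$ is $o(\delta_0 x/\log x)$. Since $\Delta_{\mathcal{S}}'(N,m) = \Delta(N,m)$ unless $N$ is ``bad'' in the sense of having either a divisor $d \in [sx^{-\eta}, sx^{\eta}]$ for some $s \in \mathcal{S}$, or two prime factors $z_1 \le p_1, p_2 \le x^{c/2 - \epsilon}$ with $p_1/4 \le p_2 \le 4p_1$, the plan is to write $\Delta = \Delta_{\mathcal{S}}' + (\Delta - \Delta_{\mathcal{S}}')$. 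The $\Delta_{\mathcal{S}}'$ contribution is handled directly by the hypothesis~\eqref{eq:mod_S} (summed over the $O(1)$ values of $n' \le 4$), so it remains to bound the ``bad integer'' contribution.

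Concretely, the bad contribution is bounded in absolute value by
\begin{align*}
\sum_{\substack{p_1, \ldots , p_n \\ p_i \in I_i \\ p_n < \ldots < p_1}} \sum_{\substack{r \\ p \mid r \implies \\ z_1 \le p < \min(z, L_{\zeta})}} \sum_{\substack{q_1, \ldots , q_{n'} \\ \min(z,L_\zeta) \le q_i < z \\ q_{n'} < \ldots < q_1}} \sum_{h} \xi(h)\bigl|\Delta(N, m)\bigr| 1_{N \text{ bad}},
\end{align*}
where $N = p_1 \cdots p_n r q_1 \cdots q_{n'} h$. Since $\delta_0 \ll \delta_1$, we have $|\Delta(N,m)| \le 1_{N \in \mathcal{A}(m)} + 1_{N \in \mathcal{B}(m)}$. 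I would then swap the order of summation, fixing $N$ and counting the number of its decompositions of the above form. Each such decomposition is determined by choosing $n + n' + 1$ divisors of $N$ (with $h$ then determined), so the number of decompositions is at most $\tau(N)^D$ for some absolute constant $D = D(n, n') = O(1)$. Moreover, every such $N$ is automatically $z_1$-rough since $p_i, q_j \ge z_1$, every prime factor of $r$ is $\ge z_1$, and $\xi(h)$ forces the prime factors of $h$ to be $\ge z_1$.

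The bad contribution is thus bounded by
\begin{align*}
\sum_{\substack{N \in \mathcal{A}(m)\cup\mathcal{B}(m) \\ p \mid N \implies p \ge z_1 \\ N \text{ bad}}} \tau(N)^D.
\end{align*}
I would then split the sum according to which of the two bad events occurs: the ``divisor near $s \in \mathcal{S}$'' case is bounded by Lemma~\ref{lem:close} (applied once with $\mathcal{A}, \delta_0$ and once with $\mathcal{B}, \delta_1$), while the ``two close prime factors'' case is bounded by Lemma~\ref{lem:dyadic}. Each gives $o(\delta_0 x/\log x)$ (using $\delta_1 \le 1$ and $|\mathcal{S}| = (\log \log x)^{O(1)}$), and summing over the $O(1)$ values of $n' \in \{0, 1, 2, 3, 4\}$ preserves this bound. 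Combining everything with Lemma~\ref{lem:mod_qrh} then yields~\eqref{eq:mod_1}.

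The main point where one must be careful is the transition from the sum over decompositions to the sum over $z_1$-rough $N$: one must verify that the multiplicity of each $N$ is bounded by $\tau(N)^{O(1)}$ and that the $\mathcal{M}$-restriction is not needed at this stage (the $p^2$-divisibility issue was already handled in defining $\mathcal{M}$). No new estimates are required beyond Lemmas~\ref{lem:close} and~\ref{lem:dyadic}, which were built precisely for this purpose.
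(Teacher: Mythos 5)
Your overall strategy matches the paper: bound the error $\Delta - \Delta_{\mathcal{S}}'$ by swapping summation order, controlling the multiplicity by $\tau(N)^{O(1)}$, and invoking Lemmas~\ref{lem:close} and~\ref{lem:dyadic}. However, there is a genuine gap in how you handle the $\mathcal{B}$ contribution. You bound $|\Delta(N,m)| \le 1_{N \in \mathcal{A}(m)} + 1_{N \in \mathcal{B}(m)}$, dropping the weight $\delta_0/\delta_1$ from the $\mathcal{B}$ indicator. After that you apply the $\mathcal{B}$-version of Lemma~\ref{lem:close} (and Lemma~\ref{lem:dyadic}), which only yields a bound of $o(\delta_1 x/\log x)$. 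Since $\delta_1 = \exp(-\sqrt{\log x})$ is vastly larger than $\delta_0 = x^{c-1}(\log\log x)^{-2}$, this is \emph{not} $o(\delta_0 x/\log x)$; your parenthetical appeal to ``$\delta_1 \le 1$'' does not help, as the inequality goes the wrong way. The paper avoids this by keeping the sharper bound $|\Delta(N,m)| \le 1_{\mathcal{A}(m)}(N) + \frac{\delta_0}{\delta_1}1_{\mathcal{B}(m)}(N)$, so the $\mathcal{B}$-side error carries the prefactor $\delta_0/\delta_1$ and becomes $\frac{\delta_0}{\delta_1}\cdot o(\delta_1 x/\log x) = o(\delta_0 x/\log x)$, as needed. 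Once you retain that factor, your argument closes; everything else (the multiplicity count, the reduction via Lemma~\ref{lem:mod_qrh}, and the splitting into the two ``bad'' events) is sound, and the single-step replacement $\Delta \to \Delta_{\mathcal{S}}'$ instead of the paper's two steps $\Delta \to \Delta_{\mathcal{S}} \to \Delta_{\mathcal{S}}'$ is a harmless presentational choice.
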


\begin{proof}
We show the sum in Lemma~\ref{lem:mod_qrh} is $o(\delta_0 x/\log x)$ assuming \eqref{eq:mod_S}. First replace $\Delta$ with $\Delta_{\mathcal{S}}$, the difference being bounded in absolute value by
\begin{align*}
\sum_{s \in \mathcal{S}} \sum_{d \in [sx^{-\eta}, sx^{\eta}]} \sum_{\substack{N \in \mathcal{A}(m) \\ d \mid N \\ p \mid N \implies p \ge z_1}} \tau(N)^{O(1)} + \frac{\delta_0}{\delta_1} \sum_{s \in \mathcal{S}} \sum_{d \in [sx^{-\eta}, sx^{\eta}]} \sum_{\substack{N \in \mathcal{B}(m) \\ d \mid N \\ p \mid N \implies p \ge z_1}} \tau(N)^{O(1)},
\end{align*}
which by Lemma~\ref{lem:close} is $o(\delta_0 x/\log x)$. We then replace $\Delta_{\mathcal{S}}$ by $\Delta'_{\mathcal{S}}$, the error being similarly bounded by
\begin{align*}
\sum_{\substack{p_1, p_2 \in \mathbb{P} \\ z_1 \le p_1, p_2 < x^{c/2 - \epsilon} \\ p_1/4 \le p_2 \le 4p_1}} \sum_{\substack{N \in \mathcal{A}(m) \\ p_1p_2 \mid N \\ p \mid N \implies p \ge z_1}} \tau(N)^{O(1)} + \frac{\delta_0}{\delta_1} \sum_{\substack{p_1, p_2 \in \mathbb{P} \\ z_1 \le p_1, p_2 < x^{c/2 - \epsilon} \\ p_1/4 \le p_2 \le 4p_1}} \sum_{\substack{N \in \mathcal{B}(m) \\ p_1p_2 \mid N \\ p \mid N \implies p \ge z_1}} \tau(N)^{O(1)},
\end{align*}
which by Lemma~\ref{lem:dyadic} is small enough. The result follows.
\end{proof}

We then decompose the sums over $p_1, \ldots , p_n$ and $q_1, \ldots , q_{n'}$ by the Heath-Brown decomposition.

\begin{lemma}
\label{lem:mod_hbd}
Let $n = O(1)$, $z \in [z_1, 10\sqrt{x}]$, intervals $I_1, \ldots , I_n \subset [z, 10\sqrt{x}]$ and $m \in \mathcal{M}$ be given. Write $I_{i} = [\min(z, L_{\zeta}), z)$ for $i > n$. Assume that, for
\begin{enumerate}
\item any $n' \le 4$,
\item any $N_{i, j}$, where $1 \le i \le n+n', 1 \le j \le 8$, and any $N_{i, j}' > N_{i, j}$, and
\item any $f_{i, j}$, where $1 \le i \le n+n', 1 \le j \le 8$, with $f_{i, j} \in \{\xi, \xi \cdot \log, \xi \cdot g, \xi \cdot \mu\}$ and $N_{i, j} > L_{\zeta} \implies f_{i, j} \neq \xi \cdot \mu$,
\end{enumerate}
we have
\begin{align*}
\sum_{\substack{N_{i, j} < n_{i, j} \le N_{i, j}' \\ n_{i, 1} \cdots n_{i, 8} \in I_i}}^{\ast} \prod_{\substack{1 \le i \le n+n' \\ 1 \le j \le 8}} f_{i, j}(n_{i, j})  \sum_{\substack{r \in \mathbb{Z}_+ \\ p \mid r \implies z_1 \le p < \min(z, L_{\zeta})}} \sum_{h \in \mathbb{Z}_+} \mu(r) \xi(h) \Delta_{\mathcal{S}}'\left(rh \prod_{\substack{1 \le i \le n + n' \\ 1 \le j \le 8}} n_{i, j}, m\right) \\
= o\left(\frac{\delta_0 x}{\log x}\right),
\end{align*}
where the asterisk $\ast$ denotes that the sums is only over $n_{i, j}$ satisfying
$$\prod_{1 \le j \le 8} n_{i, j} < \prod_{1 \le j \le 8} n_{i-1, j} \qquad \text{for all } i \in \{2, 3, \ldots , n+n'\} \setminus \{n+1\}.$$
Then~\eqref{eq:mod_1} holds.
\end{lemma}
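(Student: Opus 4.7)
The plan is to reduce first, via Lemma \ref{lem:mod_S}, to establishing \eqref{eq:mod_S}, and then apply Heath-Brown's identity (Lemma \ref{lem:HB-dec}) with $k = 4$ in turn to each of the $n + n'$ prime variables $p_1, \ldots, p_n, q_1, \ldots, q_{n'}$. Since each of these primes lies in $[z_1, 10\sqrt{x}]$, we have $\xi(p_i) = \xi(q_j) = 1$, and these indicators may be freely inserted into the sum before decomposition.

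For each index $i$, I would view the sum over the $i$-th prime variable as $\sum_p F(p)$, where $F$ encapsulates all other variables and factors and is supported on $[1, 10\sqrt{x}]$. Lemma \ref{lem:HB-dec}, applied with ``$x$'' taken to be $\sqrt{x}$ and $k = 4$, rewrites this prime sum as a bounded number of sums over $8$-tuples $(n_{i,1}, \ldots, n_{i,8})$ with $\prod_j n_{i,j} \in I_i$ and individual weights drawn from $\{1, \log, \mu, g\}$. Iterating this step over $i = 1, \ldots, n + n'$ is permissible because $n + n' = O(1)$ keeps the number of resulting configurations bounded, and produces the full decomposition into $8(n+n')$ variables. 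The inserted factor $\xi\bigl(\prod_j n_{i,j}\bigr)$ then splits as $\prod_j \xi(n_{i,j})$ because the indicator of $z_1$-roughness is completely multiplicative, thereby yielding the stated $f_{i,j} \in \{\xi, \xi \cdot \log, \xi \cdot \mu, \xi \cdot g\}$.

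For the threshold condition, I would check that Lemma \ref{lem:HB-dec}'s prohibition ``$N_{i,j} > 5(10\sqrt{x})^{1/4} \implies f_{i,j} \neq \mu$'' is weaker than the stated ``$N_{i,j} > L_\zeta \implies f_{i,j} \neq \xi \cdot \mu$''. Here $5(10\sqrt{x})^{1/4} = O(x^{1/8})$, while $L_\zeta \geq x^{(1-c)/2} \geq x^{1/4}$ in both the $c = 0.45$ and $c = 0.5$ regimes, so $L_\zeta$ comfortably exceeds the HB threshold. For the ordering, the chain $p_1 > \cdots > p_n$ translates into $\prod_j n_{i,j} < \prod_j n_{i-1,j}$ for $i \in \{2, \ldots, n\}$, and $q_1 > \cdots > q_{n'}$ gives the analogous constraint for $i \in \{n+2, \ldots, n+n'\}$. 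The case $i = n+1$ would compare $q_1$ to $p_n$, but this inequality is automatic from $q_1 < z \leq p_n$ and is already encoded in the interval assignments, which is exactly why $n+1$ is excluded from the asterisk constraint.

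The main obstacle I foresee is the bookkeeping of the iterated decomposition: after each application of Lemma \ref{lem:HB-dec} to one prime sum, one must verify that the remaining expression still fits the hypothesis so that HB can be applied again, and that the inserted $\xi$ factors are correctly propagated at every stage. A minor additional point is that the $e \geq 2$ prime-power contributions appearing in the proof of Lemma \ref{lem:HB-dec} must be absorbed as degenerate configurations into the ``$f_{i,j} = \xi \cdot g$'' branches, as in the last display of that proof. Both of these are technical but routine given that $n + n' = O(1)$ bounds the combinatorial explosion.
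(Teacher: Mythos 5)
Your plan matches the paper's at a high level (reduce via Lemma~\ref{lem:mod_S}, apply Heath-Brown's identity with $k=4$ to each of the $n+n'$ prime variables, exploit complete multiplicativity of $\xi$, handle the ordering and prime-power artifacts as you describe), but your threshold analysis hides rather than addresses a real subtlety. You propose to apply Lemma~\ref{lem:HB-dec} ``with $x$ taken to be $\sqrt{x}$'' to obtain a threshold $O(x^{1/8})$. This is not a direct application of the stated lemma: its single parameter $x$ is tied simultaneously to the support $[1,10x]$, the threshold $5x^{1/k}$, and the error target $o(\delta_0 x/\log x)$, so you cannot rescale the first two independently of the third. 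What you have in mind is a re-derivation of the lemma with the underlying Heath-Brown identity applied at $Y=10\sqrt{x}$ rather than $Y=10x$; that is possible but is not a trivial substitution, and should be made explicit.

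The paper applies Lemma~\ref{lem:HB-dec} with the literal threshold $5x^{1/4}$. For $c=0.45$ this is below $L_\zeta\sim x^{0.275+o(1)}$, so the implication $N_{i,j}>L_\zeta\Rightarrow f_{i,j}\neq\xi\cdot\mu$ follows by monotonicity. For $c=0.5$ one has $L_\zeta=x^{1/4}<5x^{1/4}$, so the decomposition a priori produces configurations with $f_{i,j}=\xi\cdot\mu$ and $N_{i,j}\in(x^{1/4},5x^{1/4}]$ that fall outside the hypothesis class of Lemma~\ref{lem:mod_hbd}. The paper closes this gap by observing that $L_\zeta=x^{1/4}\in\mathcal{S}$, so $\Delta'_{\mathcal{S}}$ vanishes on any integer having a divisor in $[x^{1/4-\eta},x^{1/4+\eta}]\supset(x^{1/4},5x^{1/4}]$; the offending range is therefore empty, and one may freely assume $N_{i,j}>x^{1/4}\Rightarrow N_{i,j}>5x^{1/4}$. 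Your proof neither justifies the rescaled decomposition nor invokes this $\mathcal{S}$-argument, so as written it has a genuine gap in the $c=0.5$ case. Either route can be completed, but one of them must be carried out.
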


\begin{remark}
\label{rem:optional}
While in this lemma we have decomposed the sums over all of $p_1, \ldots , p_n$, $q_1, \ldots,  q_{n'}$, we could choose to decompose the sums merely over a (possibly empty) subset of them. For clarity, we state the results here and below for the case where all the sums have been decomposed, understanding that we have this additional flexibility.
\end{remark}

\begin{proof}
We start from the sum in Lemma~\ref{lem:mod_S}. We apply Heath-Brown's decomposition (Lemma~\ref{lem:HB-dec}) with $k = 4$ to the sums over $p_1, \ldots , p_n, q_1, \ldots , q_{n'}$ one by one. For example, applying the decomposition to the sum over $p_1$ we take $f$ in Lemma~\ref{lem:HB-dec} to be
\begin{align*}
&f(\ell) = \\
&\xi(\ell)1_{\ell \in I_1} \sum_{\substack{p_2, \ldots , p_n \\ p_i \in I_i \\ p_n < \ldots < p_2 < \ell}} \sum_{\substack{r \\ p \mid r \implies \\ z_1 \le p < \min(z, L_{\zeta})}} \sum_{\substack{q_1, \ldots , q_{n'} \in \mathbb{P} \\ \min(z, L_{\zeta}) \le q_i < z \\ q_{n'} < \ldots < q_1}} \sum_{h \in \mathbb{Z}_+} \mu(r) \xi(h) \Delta'_{\mathcal{S}}(\ell p_2 \cdots p_n r q_1 \cdots q_{n'} h, m).
\end{align*}
Hence, the sum over $f(p_1)$ may be converted to a sum over $n_{1, 1}, \ldots,  n_{1, 8}$ as in Lemma~\ref{lem:HB-dec}. Note that $\xi(\ell)$ is completely multiplicative.

Performing the decomposition for all $p_1, \ldots,  p_n, q_1, \ldots , q_{n'}$ gives the result. Note that the cross conditions $p_{i} < p_{i-1}$ and $q_i < q_{i-1}$ transform to cross conditions of the form $n_{i, 1} \cdots n_{i, 8} < n_{i-1, 1} \cdots n_{i-1, 8}$.

Considering the implication $N_{i, j} > L_{\zeta} \implies f_{i, j} \neq \xi \cdot \mu$, note that for $c = 0.45$, we have simply relaxed the condition $N_i \ge 5x^{1/4}$ in Lemma \ref{lem:HB-dec} to $N_i \ge L_{\zeta}$. For $c = 0.5$, note that the the sum over $n_{i, j} \in [x^{1/4}, 5x^{1/4}]$ is empty as $x^{1/4} \in \mathcal{S}$, and we may thus assume $N_{i, j} > x^{1/4}$ implies $N_{i, j} > 5x^{1/4}$ and hence $f_{i, j} \neq \xi \cdot \mu$.
\end{proof}

Then we replace each occurrence of $\xi$ with $\xi_0$, as the latter will be more convenient to work with.

\begin{lemma}
\label{lem:mod_xi0}
Let $n = O(1)$, $z \in [z_1, 10\sqrt{x}]$, intervals $I_1, \ldots , I_n \subset [z, 10\sqrt{x}]$ and $m \in \mathcal{M}$ be given. Write $I_i = [\min(z, L_{\zeta}), z)$ for $i > n$. Assume that, for
\begin{enumerate}
\item any $n' \le 4$,
\item any $N_{i, j}, 1 \le i \le n+n', 1 \le j \le 8$ and $N_{i, j}' > N_{i, j}$, and
\item any $f_{i, j}, 1 \le i \le n+n', 1 \le j \le 8$ with $f_{i, j} \in \{\xi_0, \xi_0 \cdot \log, \xi_0 \cdot g, \xi_0 \cdot \mu\}$ and $N_{i, j} > L_{\zeta} \implies f_{i, j} \neq \xi_0 \cdot \mu$,
\end{enumerate}
we have
\begin{align*}
\sum_{\substack{N_{i, j} < n_{i, j} \le N_{i, j}' \\ n_{i, 1} \cdots n_{i, 8} \in I_i \\ \text{for } 1 \le i \le n+n', 1 \le j \le 8}}^{\ast} \prod_{\substack{1 \le i \le n+n' \\ 1 \le j \le 8}} f_{i, j}(n_{i, j})  \sum_{\substack{r \in \mathbb{Z}_+ \\ p \mid r \implies \\ z_1 \le p < \min(z, L_{\zeta})}} \sum_{h \in \mathbb{Z}_+} \mu(r) \xi_0(h) \Delta_{\mathcal{S}}'\left(rh \prod_{\substack{1 \le i \le n + n' \\ 1 \le j \le 8}} n_{i, j}, m\right) \\
= o\left(\frac{\delta_0 x}{\log x}\right),
\end{align*}
Then~\eqref{eq:mod_1} holds.
\end{lemma}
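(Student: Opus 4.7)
The plan is to deduce Lemma~\ref{lem:mod_xi0} from Lemma~\ref{lem:mod_hbd} by replacing, one at a time, each occurrence of $\xi$ by $\xi_0$ in the hypothesis sum, bounding the error at every swap with Lemma~\ref{lem:xi_to_xi0}. Since $n, n' = O(1)$ there are only $1 + 8(n+n') = O(1)$ instances of $\xi$ to swap (one in the $h$-sum, and one inside each factor $f_{i,j} = \xi \cdot \phi_{i,j}$ with $\phi_{i,j} \in \{1,\log,g,\mu\}$), so it suffices to show each individual error is $o(\delta_0 x/\log x)$.

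Concretely, after picking one occurrence of $\xi$, call the underlying variable $d$ (this is either $h$ or some $n_{i_0,j_0}$). The error is, in absolute value, at most
\begin{align*}
\sum_{d \in \mathbb{Z}_+} |\xi(d) - \xi_0(d)| \sum_{\substack{N \in \mathbb{Z}_+ \\ d \mid N}} |\Delta'_{\mathcal{S}}(N,m)| \sum_{\substack{N = rh \prod n_{i,j} \\ d = h \text{ or } d = n_{i_0,j_0}}} \prod_{(i,j) \neq (i_0,j_0)} |f_{i,j}(n_{i,j})|,
\end{align*}
where we have dropped the range restrictions on the $n_{i,j}$. Since $|\phi_{i,j}(n)| \le \log x$ uniformly and there are $O(1)$ remaining factors, and since the number of factorisations of $N$ of the prescribed shape is $\ll \tau(N)^{O(1)}$, the inner double sum is bounded by $(\log x)^{O(1)} \tau(N)^{O(1)}$. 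Using $|\Delta'_{\mathcal{S}}(N,m)| \le 1_{N \in \mathcal{A}(m)} + (\delta_0/\delta_1) 1_{N \in \mathcal{B}(m)}$, the total error is bounded by
\begin{align*}
(\log x)^{O(1)} \left( \sum_{d} |\xi(d) - \xi_0(d)| \sum_{\substack{N \in \mathcal{A}(m) \\ d \mid N}} \tau(N)^{O(1)} + \frac{\delta_0}{\delta_1} \sum_{d} |\xi(d) - \xi_0(d)| \sum_{\substack{N \in \mathcal{B}(m) \\ d \mid N}} \tau(N)^{O(1)} \right),
\end{align*}
which is $o(\delta_0 x / \log x)$ by two applications of Lemma~\ref{lem:xi_to_xi0} (absorbing the additional power of $\log N \ll \log x$ into the $(\log N)^D$ already present there, and noting the factor $\delta_0/\delta_1$ is compensated by applying the lemma with its $\mathcal{B}$-version).

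Summing the $O(1)$ errors yields the desired total bound $o(\delta_0 x / \log x)$, and so the sum in Lemma~\ref{lem:mod_xi0} differs from the one in Lemma~\ref{lem:mod_hbd} by an acceptable error, which by hypothesis is itself $o(\delta_0 x / \log x)$. The constraint $N_{i,j} > L_{\zeta} \implies f_{i,j} \neq \xi_0 \cdot \mu$ matches the analogous restriction in Lemma~\ref{lem:mod_hbd} verbatim, so the assumption translates cleanly. The only mild obstacle is bookkeeping: one must verify that after taking absolute values and dropping the starred range conditions, the residual combinatorics (number of factorisations $N = rh \prod n_{i,j}$ with one coordinate fixed to equal $d$) really is controlled by $\tau(N)^{O(1)}$; this is standard since the total number of variables $r, h, n_{i,j}$ is $O(1)$.
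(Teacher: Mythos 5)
Your proof is correct and follows essentially the same route as the paper: replace each $\xi$ by $\xi_0$ one at a time, bound the individual swap error via the crude inequality $|\Delta'_{\mathcal{S}}(N,m)| \le 1_{N \in \mathcal{A}(m)} + (\delta_0/\delta_1)1_{N \in \mathcal{B}(m)}$, absorb all coefficient and combinatorial factors into a weight of shape $\tau(N)^{O(1)}(\log N)^{O(1)}$, and invoke Lemma~\ref{lem:xi_to_xi0} twice (for $\mathcal{A}$ and $\mathcal{B}$). The only thing worth flagging is that you justify the $\tau(N)^{O(1)}$ bound via ``number of factorisations $\ll \tau(N)^{O(1)}$'' together with $|\phi_{i,j}| \le \log x$; to fully close this you should also note that factors which have already been swapped to $\xi_0$ carry an extra $\tau(n_{i,j}) \le \tau(N)$ (since $n_{i,j} \mid N$), which the paper addresses by the remark $|\xi_0(k)| \le \tau(k)$ — but since you are bounding by $\tau(N)^{O(1)}$ anyway, this is covered.
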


\begin{proof}
We start from a sum as in Lemma~\ref{lem:mod_hbd} and replace occurrences of $\xi$ with $\xi_0$ one by one. Note that $|\xi_0(k)| \le \tau(k)$, the sum is over $O(1)$ variables $n_{i, j}, r, h$, and any $N$ such that $\Delta_{\mathcal{S}}'(N, m)$ has non-zero coefficient in the sum is $z_1$-rough. As $\log, g, \mu$ are bounded by $\log$, it follows that every replacement of $\xi$ with $\xi_0$ induces an error bounded by
\begin{align*}
&\sum_{\substack{h' \in \mathbb{Z}_+}}  |\xi(h') - \xi_0(h')| \sum_{\substack{N \in \mathcal{A}(m) \\ h' \mid N}} \tau(N)^{O(1)}(\log N)^{O(1)} \ + \\
\frac{\delta_0}{\delta_1} &\sum_{\substack{h' \in \mathbb{Z}_+}}  |\xi(h') - \xi_0(h')| \sum_{\substack{N \in \mathcal{B}(m) \\ h' \mid N}} \tau(N)^{O(1)}(\log N)^{O(1)}. 
\end{align*}
This error is small enough by Lemma~\ref{lem:xi_to_xi0}.
\end{proof}

Then we restrict the sum over $h$ to $h \le T^{1+\epsilon}$.

\begin{lemma}
\label{lem:mod_hlarge}
Let $n, I_i, z, m$ be as in Lemma \ref{lem:mod_hbd}. Assume that for any $n', N_{i, j}$ and $f_{i, j}$ as in Lemma~\ref{lem:mod_hbd} we have
\begin{align*}
\sum_{\substack{N_{i, j} < n_{i, j} \le N_{i, j}' \\ n_{i, 1} \cdots n_{i, 8} \in I_i \text{ for} \\ 1 \le i \le n+n', 1 \le j \le 8}}^{\ast} \prod_{\substack{1 \le i \le n+n' \\ 1 \le j \le 8}} f_{i, j}(n_{i, j})  \sum_{\substack{r \in \mathbb{Z}_+ \\ p \mid r \implies \\ z_1 \le p < \min(z, L_{\zeta})}} \sum_{h \le T^{1 + \epsilon}} \mu(r) \xi_0(h) \Delta'_{\mathcal{S}}\left(rh \prod_{\substack{1 \le i \le n + n' \\ 1 \le j \le 8}} n_{i, j}, m\right) \\
= o\left(\frac{\delta_0 x}{\log x}\right).
\end{align*}
Then~\eqref{eq:mod_1} holds.
\end{lemma}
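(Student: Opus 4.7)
The idea is to start from the formula in Lemma~\ref{lem:mod_xi0} and write the inner sum over $h \in \mathbb{Z}_+$ as the sum over $h \le T^{1+\epsilon}$ plus a tail sum over $h > T^{1+\epsilon}$. The assumption of the present lemma handles the truncated sum, so it suffices to show that the tail contribution is $o(\delta_0 x/\log x)$; this reduction is exactly the content of Lemma~\ref{lem:long_h} after a bounding step.

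Concretely, I would fix $n'$, the $N_{i,j}, N_{i,j}'$ and the $f_{i,j}$ and consider
\begin{align*}
E := \sum_{\substack{N_{i,j} < n_{i,j} \le N_{i,j}' \\ n_{i,1}\cdots n_{i,8} \in I_i}}^{\ast} \prod_{i,j} f_{i,j}(n_{i,j}) \sum_{\substack{r \\ p \mid r \Rightarrow z_1 \le p < \min(z,L_\zeta)}} \sum_{h > T^{1+\epsilon}} \mu(r)\xi_0(h) \Delta_{\mathcal{S}}'\!\left(rh\prod_{i,j}n_{i,j},m\right).
\end{align*}
Grouping the non-$h$ variables by their product $N' := r \prod_{i,j} n_{i,j}$ and taking absolute values, and using that $|\Delta_{\mathcal{S}}'(\cdot, m)| \le |\Delta(\cdot, m)|$ by definition~\eqref{def:delta_S'}, I would bound
\begin{align*}
|E| \le \sum_{N' \in \mathbb{Z}_+} c(N')\,\Bigl|\sum_{h > T^{1+\epsilon}} \xi_0(h) \Delta(hN', m)\Bigr|,
\end{align*}
where $c(N')$ is the sum over all ways of writing $N' = r\prod n_{i,j}$ weighted by $|\mu(r)|\prod_{i,j}|f_{i,j}(n_{i,j})|$. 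Since $|f_{i,j}(k)| \ll \xi_0(k)(\log k + \Lambda(k)) \le \tau(k)\log(2k)$ and the number of variables is $O(1)$, the number of factorizations of $N'$ of the relevant shape is $\ll \tau(N')^{O(1)}$, and hence $c(N') \ll \tau(N')^{D}(\log N')^{D}$ for some fixed $D$ depending only on $n$ and $n'$.

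With this bound in hand, $|E|$ is exactly of the form controlled by Lemma~\ref{lem:long_h}, and is therefore $o(\delta_0 x/\log x)$. Combining with the hypothesis of the lemma gives~\eqref{eq:mod_1}. The only mild obstacle is to justify the coefficient bound $c(N') \ll \tau(N')^{O(1)}(\log N')^{O(1)}$ cleanly; this boils down to (i) pointwise bounding $|\mu|, |\log|, |g|$ by $\log$ and $|\xi_0(k)| \le \tau(k)$ (so that each $|f_{i,j}(n_{i,j})|$ is at most $\tau(n_{i,j})\log(2n_{i,j})$) and (ii) using the multiplicativity of $\tau$ together with the fact that the number of ordered factorizations $N' = r n_{1,1} \cdots n_{n+n',8}$ is a divisor-type quantity controlled by a fixed power of $\tau(N')$. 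Neither step involves the cross conditions or the asterisk restriction, since these only restrict the range of summation and can only decrease $c(N')$.
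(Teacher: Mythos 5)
Your proposal is correct and takes essentially the same route as the paper: start from Lemma~\ref{lem:mod_xi0}, separate the tail $h > T^{1+\epsilon}$, group the remaining $O(1)$ variables into $N' = r\prod n_{i,j}$ with coefficient $\tau(N')^{O(1)}(\log N')^{O(1)}$ (using $|\Delta_{\mathcal{S}}'| \le |\Delta|$), and invoke Lemma~\ref{lem:long_h}. You give slightly more detail on the coefficient bound than the paper does, but the argument is the same.
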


\begin{proof}
We start from the sum in Lemma~\ref{lem:mod_xi0}. Let $V = T^{1+\epsilon}$. Consider separately the sums over $n_{i, j}, r$ and the sum over $h$. The sum over $n_{i, j}, r$ has $O(1)$ variables, the coefficients of each variable being bounded by $\tau \cdot \log$. The contribution of $h > V$ is thus bounded by
\begin{align*}
\sum_{N' \le 10x/V} \tau(N')^{O(1)}\log(N')^{O(1)}  \left|\sum_{h > V} \xi_0(h)\Delta(N'h)\right|.
\end{align*}
This is small enough by Lemma~\ref{lem:long_h}.
\end{proof}

We then write the sum over $r$ in a more convenient form as multiple sums over primes.

\begin{lemma}
\label{lem:mod_ri}
Let $n = O(1)$, $z \in [z_1, 10\sqrt{x}]$, intervals $I_1, \ldots , I_n \subset [z, 10\sqrt{x}]$ and $m \in \mathcal{M}$ be given. Write $I_{i} = [\min(z, L_{\zeta}), z)$ for $i > n$. Assume that, for
\begin{enumerate}
\item any $n' \le 4$,
\item any $N_{i, j}, 1 \le i \le n+n', 1 \le j \le 8$ and $N_{i, j}' > N_{i, j}$,
\item any $f_{i, j}, 1 \le i \le n+n', 1 \le j \le 8$ with $f_{i, j} \in \{\xi_0, \xi_0 \cdot \log, \xi_0 \cdot g, \xi_0 \cdot \mu\}$ and $N_{i, j} > L_{\zeta} \implies f_{i, j} \neq \xi_0 \cdot \mu$,
\item any $t \in \mathbb{Z}_{\ge 0}$, and
\item any $R_i \in [z_1, \min(z, L_{\zeta}))$ and $R_i' \ge R_i$ where $1 \le i \le t$,
\end{enumerate}
we have
\begin{align*}
\sum_{\substack{N_{i, j} < n_{i, j} \le N_{i, j}' \\ n_{i, 1} \cdots n_{i, 8} \in I_i \\ \text{for } 1 \le i \le n+n', 1 \le j \le 8}}^{\ast} \prod_{\substack{1 \le i \le n+n' \\ 1 \le j \le 8}} f_{i, j}(n_{i, j}) \sum_{\substack{r_1, \ldots , r_t \in \mathbb{P} \\ r_i \in [R_i, R_i'] \\ r_t < \ldots < r_1}} \sum_{h \le T^{1 + \epsilon}} \xi_0(h) \Delta'_{\mathcal{S}}\left(h\prod_{\substack{1 \le i \le n + n' \\ 1 \le j \le 8}} n_{i, j} \prod_{1 \le i \le t} r_i, m\right) \\
= O\left(\frac{\delta_0 x}{(\log x)^2}\right).
\end{align*}
Then~\eqref{eq:mod_1} holds.
\end{lemma}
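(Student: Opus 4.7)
The plan is to deduce the hypothesis of Lemma~\ref{lem:mod_hlarge} from the hypothesis of Lemma~\ref{lem:mod_ri}, whence~\eqref{eq:mod_1} follows by the reductions already established. The factor $\mu(r)$ in Lemma~\ref{lem:mod_hlarge} vanishes unless $r$ is squarefree, and any squarefree $r$ with prime factors in $[z_1, \min(z, L_{\zeta}))$ can be written uniquely as $r = r_1 r_2 \cdots r_t$ with primes $r_1 > r_2 > \cdots > r_t$, in which case $\mu(r) = (-1)^t$. Consequently the sum over $r$ in Lemma~\ref{lem:mod_hlarge} rearranges as
$$\sum_{t \ge 0} (-1)^t \sum_{\substack{r_1, \ldots, r_t \in \mathbb{P} \\ z_1 \le r_i < \min(z, L_{\zeta}) \\ r_t < \cdots < r_1}} (\cdots),$$
where the inner sum is precisely of the shape appearing in the hypothesis of Lemma~\ref{lem:mod_ri}, with the choice $R_i = z_1$ and $R_i' = \min(z, L_{\zeta}) - 1$ (say) for each $1 \le i \le t$. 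The case $t = 0$ corresponds to $r = 1$ and the empty product, and is covered by the hypothesis (with no $R_i$ to specify).

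Since every $r_i \ge z_1$ and the product $r_1 \cdots r_t$ divides an integer of size $O(x)$, we have the crude bound $t \le \log(10x)/\log z_1 = O((\log \log x)^5)$. Applying the assumed bound of Lemma~\ref{lem:mod_ri} to each value of $t$ and absorbing the signs $(-1)^t$ by the triangle inequality, the total error is
$$\sum_{t=0}^{O((\log \log x)^5)} O\!\left(\frac{\delta_0 x}{(\log x)^2}\right) = O\!\left(\frac{(\log \log x)^5 \, \delta_0 x}{(\log x)^2}\right) = o\!\left(\frac{\delta_0 x}{\log x}\right),$$
which is precisely the bound required to invoke Lemma~\ref{lem:mod_hlarge}, yielding~\eqref{eq:mod_1}.

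No real analytic obstacle arises at this step: the reduction is essentially combinatorial, and the factor of $\log x$ of slack between the assumed $O(\delta_0 x / (\log x)^2)$ and the target $o(\delta_0 x / \log x)$ easily absorbs the sub-polylogarithmic count $O((\log \log x)^5)$ of values of $t$ one has to sum over. In particular, no dyadic decomposition of the $r_i$ ranges is needed here; the dyadic decomposition of the $n_{i,j}$ variables and the removal of the remaining cross conditions announced in the overview can be carried out in a subsequent lemma which further prepares the sums for the application of Proposition~\ref{prop:dir_to_arit}.
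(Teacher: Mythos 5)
Your proof is correct and follows the same route as the paper's: you unfold $\mu(r)1_{p \mid r \implies z_1 \le p < \min(z, L_{\zeta})}$ via the unique squarefree decomposition $r = r_1\cdots r_t$ with $r_1 > \cdots > r_t$, bound $t \le \log(10x)/\log z_1 = O((\log\log x)^5)$, and absorb this count into the $\log x$ of slack between $O(\delta_0 x/(\log x)^2)$ and $o(\delta_0 x/\log x)$. The paper states this more tersely as a single combinatorial identity, but the content is identical.
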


\begin{proof}
The result follows from the identity
\begin{align*}
\mu(r)1_{p \mid r \implies z_1 \le p < \min(z, L_{\zeta})} = \sum_{0 \le t \le \frac{\log 10x}{\log z_1}} \sum_{\substack{r_1, \ldots , r_t \in \mathbb{P} \\ r_t < \ldots < r_1 \\ r_1 < \min(z, L_{\zeta}) \\ r_t \ge z_1}} (-1)^t 1_{r_1 \cdots r_t = r}.
\end{align*}
\end{proof}

Finally, we perform a dyadic decomposition and remove the cross conditions.

\begin{lemma}
\label{lem:mod_cross}
Let $n = O(1)$, $z \in [z_1, 10\sqrt{x}]$, intervals $I_1, \ldots , I_n \subset [z, 10\sqrt{x}]$ and $m \in \mathcal{M}$ be given. Write $I_{i} = [\min(z, L_{\zeta}), z)$ for $i > n$. Assume that for
\begin{enumerate}
\item any $n' \le 4$,
\item any $J_1, \ldots , J_{n+n'} \le 8$,
\item any $N_{i, j} \ge z_1, 1 \le i \le n+n', 1 \le j \le J_i$ and $N_{i, j}' \in [N_{i, j}, 2N_{i, j}]$,
\item any $f_{i, j}, 1 \le i \le n+n', 1 \le j \le J_i$ with $f_{i, j} \in \{\xi_0, \xi_0 \cdot \log, \xi_0 \cdot g, \xi_0 \cdot \mu\}$ and $N_{i, j} > L_{\zeta} \implies f_{i, j} \neq \xi_0 \cdot \mu$,
\item any $t \in \mathbb{Z}_{\ge 0}$, and
\item any $R_i \in [z_1, \min(z, L_{\zeta})]$ and $R_i' \in [R_i, 2R_i]$ where $1 \le i \le t$
\end{enumerate}
such that the intervals $[R_i, R_i']$ are pairwise disjoint and 
\begin{align}
\label{eq:prod_interval}
\prod_{1 \le j \le J_i} N_{i, j} \in \left[\frac{\inf(I_i)}{2^{8}}, \sup(I_i)\right]
\end{align}
we have
\begin{align*}
\sum_{\substack{N_{i, j} < n_{i, j} \le N_{i, j}' \\ \text{for } 1 \le i \le n+n', 1 \le j \le 8}} \prod_{\substack{1 \le i \le n+n' \\ 1 \le j \le J_i}} f_{i, j}(n_{i, j}) \sum_{\substack{r_1, \ldots , r_t \in \mathbb{P} \\ r_i \in [R_i, R_i']}} \sum_{h \le T^{1 + \epsilon}} \xi_0(h) \Delta'_{\mathcal{S}}\left(\prod_{\substack{1 \le i \le n + n' \\ 1 \le j \le 8}} n_{i, j} \prod_{1 \le i \le t} r_i h, m\right) \\
\all \delta_0 x.
\end{align*}
Then~\eqref{eq:mod_1} holds.
\end{lemma}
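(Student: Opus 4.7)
The approach is standard dyadic decomposition applied to the sum in Lemma~\ref{lem:mod_ri}. The plan is to split each of the $O(1)$ variables $n_{i,j}$ and $r_i$ into $O(\log x)$ dyadic blocks, reduce the resulting dyadic sums to those appearing in the hypothesis, and then sum over the $(\log x)^{O(1)}$ configurations.

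First I would dyadically decompose: write each $n_{i,j}$ as lying in $(N_{i,j}, N_{i,j}']$ with $N_{i,j}' \le 2N_{i,j}$, and each $r_i$ in $[R_i, R_i']$ with $R_i' \le 2R_i$. The constraint $n_{i,1}\cdots n_{i,8} \in I_i$ from Lemma~\ref{lem:mod_ri} forces the dyadic range of the product $\prod_j n_{i,j}$ to intersect $I_i$, which is precisely condition~\eqref{eq:prod_interval}; dyadic boxes violating this contribute nothing. The condition $f_{i,j} \in \{\xi_0,\xi_0\log,\xi_0 g,\xi_0\mu\}$ with $N_{i,j}>L_\zeta \implies f_{i,j}\neq \xi_0\mu$ is inherited directly.

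Next I would absorb the cross conditions into the dyadic structure. For the ordering $r_t<\cdots<r_1$: when the dyadic ranges $[R_i,2R_i]$ are pairwise disjoint the ordering is automatic, while in configurations where two such intervals coincide I would subdivide them into halves (or finer pieces) and iterate. After at most $O(\log\log x)$ rounds this produces $(\log x)^{O(1)}$ sub-configurations with pairwise disjoint $r_i$-intervals, matching the requirement in the hypothesis. The cross conditions $\prod_j n_{i,j} < \prod_j n_{i-1,j}$ are handled analogously: in each dyadic box the inequality either holds identically, fails identically, or is borderline (the dyadic ranges of the two products overlap), in which case I subdivide further into $O(1)$ sub-pieces per product until every resulting box is unambiguous. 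Only boxes where every cross condition is identically satisfied survive; the others contribute zero.

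Finally I would apply the hypothesis to each of the $(\log x)^{O(1)}$ surviving configurations, obtaining a bound $\ll \delta_0 x/S^{\epsilon}$ on each, and sum:
\[
\ll (\log x)^{O(1)}\,\frac{\delta_0 x}{S^{\epsilon}} = o\!\left(\frac{\delta_0 x}{\log x}\right),
\]
since $S^{\epsilon} = \exp(\epsilon(\log\log x)^{17})$ dominates any fixed power of $\log x$. Combined with Lemma~\ref{lem:mod_ri} this yields~\eqref{eq:mod_1}. The main obstacle is the careful bookkeeping of the cross conditions on the products $\prod_j n_{i,j}$, as these couple different indices $i$ and do not obviously respect a single dyadic scale; the resolution is that finer subdivision costs only a further polylogarithmic factor, which is cheaply absorbed by the super-polylogarithmic saving $S^{\epsilon}$ built into the $\all$-notation.
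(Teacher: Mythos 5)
Your overall plan (dyadic decomposition, then apply the hypothesis to each configuration, then sum) is the right skeleton, and you correctly identify~\eqref{eq:prod_interval} as the non-empty-box condition. However, the mechanism you propose for removing the cross conditions is flawed, and this is exactly the delicate step of the lemma.

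You propose to eliminate cross conditions by iterated dyadic subdivision: when two ranges coincide, ``subdivide into halves and iterate'' until every box is unambiguous. This cannot terminate. The constraint set $\{(r_1,r_2) : R \le r_2 < r_1 \le 2R\}$ is a triangle; no finite union of products of intervals with \emph{disjoint} coordinate projections covers it, and after each halving the diagonal sub-boxes remain borderline. Pushing the subdivision until the intervals are short enough that the ordering is forced would require intervals of length $\ll 1$, costing a power of $x$ rather than $(\log x)^{O(1)}$. The same objection applies to your treatment of the product cross-conditions $\prod_j n_{i,j} < \prod_j n_{i-1,j}$.

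The paper handles this with two separate devices that you do not use. For the $r_i$, the modified weight $\Delta'_{\mathcal{S}}$ from~\eqref{def:delta_S'} already annihilates any term with two prime factors in $[z_1, x^{c/2-\epsilon}]$ within a factor of $4$ of each other, so after dyadic decomposition overlapping intervals $[R_i,R_i']$ give an identically empty sum whenever $R_i < x^{c/2-\epsilon}$; since only $O(1)$ of the $R_i$ can exceed $x^{c/2-\epsilon}$ (as their product is $\ll x$), only $O(1)$ genuine cross conditions remain. Those, together with the $O(1)$ product cross conditions, are removed by the standard analytic device of splitting into short intervals as in \cite[Section 3.2]{harman} (a Perron-type argument that multiplies by a kernel rather than subdividing), at cost $(\log x)^{O(1)} = S^{o(1)}$. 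A secondary inaccuracy: $t$ need not be $O(1)$; it can be as large as $\log 10x / \log z_1 \asymp (\log\log x)^5$, and the paper first disposes of larger $t$ trivially. Your count of $(\log x)^{O(1)}$ configurations is therefore too optimistic (one really gets $\exp(O((\log\log x)^6))$), though this happens to still be $S^{o(1)}$ and so would not by itself sink the argument.
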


Recall the notation $\all$ from \eqref{eq:all}.

\begin{proof}
Note that the result is trivial if $t > \frac{\log 10x}{\log z_1}$, so assume $t = O((\log \log x)^5)$.

Consider the sum in Lemma~\ref{lem:mod_ri}. By a dyadic decomposition on the $O(1)$ sums over $n_{i, j}$ and $t$ sums over $r_i$, it suffices to obtain a bound of $S^{-\epsilon}\delta_0 x$ for sums of the form
\begin{align*}
\sum_{\substack{N_{i, j} < n_{i, j} \le N_{i, j}' \\ n_{i, 1} \cdots n_{i, J_i} \in I_i}}^{\ast} \prod_{\substack{1 \le i \le n+n' \\ 1 \le j \le J_i}} f_{i, j}(n_{i, j}) \sum_{\substack{r_1, \ldots , r_t \in \mathbb{P} \\ r_i \in [R_i, R_i'] \\ r_t < \ldots < r_1}} \sum_{h \le T^{1 + \epsilon}} \xi_0(h) \Delta'_{\mathcal{S}}\left(\prod n_{i, j} \prod r_i h, m\right)
\end{align*}
with $N_{i, j}' \le 2N_{i, j}$ and $R_i' \le 2R_i$. We may assume~\eqref{eq:prod_interval}, as otherwise the sum is empty and the result is trivial. Similarly, we may assume $R_{i+1} \le R_i'$.

By the definition \eqref{def:delta_S'} of $\Delta_{\mathcal{S}}'$ we may assume that for any $i$ with $R_i < x^{c/2 - \epsilon}$ the intervals $[R_i, R_i']$ and $[R_{i+1}, R_{i+1}']$ are disjoint, as otherwise the sum is empty. Hence we may assume $R_{i+1}' < R_i$ for any $i \ge 5$ (say). The cross conditions $r_{i+1} < r_i$ now follow automatically for $i \ge 5$.

We may remove the $O(1)$ cross conditions on $n_{i, j}$ and any possible $O(1)$ cross conditions on $r_i$ at $(\log x)^{O(1)} = S^{o(1)}$ by decomposing the sums into short intervals (cf. \cite[Section 3.2]{harman}). We may further assume $R_i, N_{i, j} \ge z_1$ as otherwise the corresponding sums are empty or over the set $\{1\}$. As a consequence, the number of variables $n_{i, j}$ may decrease -- hence the new parameters $J_1, \ldots , J_{n + n'}$.
\end{proof}

In terms of Dirichlet polynomials, one could describe our procedure as follows: we wish to establish the assumption of Proposition~\ref{prop:dir_to_arit} for a Dirichlet polynomial of the form
$$P_1(s) \cdots P_n(s)Q(s),$$
where $P_i(s) = \sum_{p \sim P_i} p^{-s}$ correspond to sums over $p_i$ (after a dyadic decomposition) in~\eqref{eq:mod_1} and $Q$ is some Dirichlet polynomial (itself equal to a certain product). We perform the Heath-Brown decomposition for $P_1, \ldots , P_n$, and similarly also decompose $Q$. Hence, we consider polynomials of the form
\begin{align}
\label{def:f}
F(s) = \prod_{\substack{1 \le i \le n+n' \\ 1 \le j \le J_i}} N_{i, j}(s) \prod_{1 \le i \le t} R_i(s) H(s).
\end{align}
We will apply Proposition~\ref{prop:dir_to_arit}. We note that each application of Proposition \eqref{eq:dir_to_arit} loses an exceptional set of $O(R')$ values of $m$, where $R'$ is the $R$-parameter in Proposition~\ref{prop:dir_to_arit}. As we will be applying the proposition more than $O(1)$ times, namely for $S^{o(1)}$ sums obtained by different choices of parameters in (1)--(6) in Lemma \ref{lem:mod_cross}, we have to take $R'$ slightly smaller than $R$ defined as in \eqref{eq:def-R}. The choice $R' = Rx^{-\nu/2}$ works. By redefining $\nu$ as $\nu/2$, we may talk about applying Proposition~\ref{prop:dir_to_arit} with the value of $R$ defined in \eqref{eq:def-R}.

Hence, our task is to show~\eqref{eq:dir_to_arit}. We have the following information on our polynomials (see Lemma \ref{lem:mod_cross}).

\begin{information}
\label{info}
The polynomials in \eqref{def:f} satisfy the following properties.
\begin{itemize}
\item For $i \le n$, the product of $N_{i, j}(s), 1 \le j \le J_i$ has length (approximately) equal to $P_i$. (For convenience, from now on we will write $\prod N_{i, j} = P_i$ or $\prod N_{i, j} \in [P_i, 2P_i]$ instead of the precise condition \eqref{eq:prod_interval}.)
\item All of $N_{i, j}(s), n+1 \le i \le n+n'$ have length lying in $[x^{1/4}, z]$ (and thus $n' = 0$ if $x^{1/4} > z$).
\item All of $R_i(s)$ have length bounded by $\min(x^{1/4}, z)$.
\item $H(s)$ is bounded in length by $T^{1+\epsilon}$.
\item Any polynomial is longer than $z_1$.
\item No product of the polynomials is close to $s, s \in \mathcal{S}$.
\item The coefficients of any polynomial are given by one of the functions $\xi_0, \xi_0 \cdot \log, \xi_0 \cdot g, \xi_0 \cdot \mu, 1_{\mathbb{P}}$.
\item The coefficients of any polynomial longer than $L_{\zeta}$ are given by $\xi_0, \xi_0 \cdot \log$ or $\xi_0 \cdot g$.
\item The coefficients of $F(s)$ are supported on the interval
$$[x/2^{\log x / \log z_1 - C}, x2^{\log x / \log z_1 + C}]$$
for some constant $C > 0$. Note that $2^{\log x / \log z_1} = S^{o(1)}$.
\end{itemize}
\end{information}

We remind that performing the Heath-Brown decomposition to any given polynomial is optional (Remark~\ref{rem:optional}). Our aim is to determine sufficient conditions for the lengths of our polynomials so that $F(s)$ satisfies the assumption of Proposition~\ref{prop:dir_to_arit}.

We highlight a particularly important class of Dirichlet polynomials. (Recall the definition of $\xi_0$ from \eqref{eq:def_xi_0} and $g$ from Lemma \ref{lem:HB-dec}.)

\begin{definition}
\label{def:zeta_sum}
A Dirichlet polynomial $P(s)$ is a \emph{zeta sum} if $P \ge L_{\zeta}$ and the coefficients of $P(s)$ are given by one of the functions $\xi_0, \xi_0 \cdot \log$ and $\xi_0 \cdot g$.
\end{definition}
Note that our definition is nonstandard, as zeta sums usually refer to Dirichlet polynomials whose coefficients are given by $1$ or $\log$. However, for our purposes coefficients $\xi_0, \xi_0 \cdot \log$ or $\xi_0 \cdot g$ work essentially as well as coefficients $1$ or $\log$ -- one only needs care when applying Proposition~\ref{prop:HB_MVT}, as the maximum of $\xi_0$ is quite large, in contrast to $1$ or $\log$.

In what follows we will choose $\mathcal{S}$ as in~\eqref{eq:S_choice}. We have included $L_{\zeta}$ in $\mathcal{S}$ mainly for convenience.

\begin{remark}
\label{rem:z=p_n}
We have above reduced showing~\eqref{eq:mod_1} to establishing the assumption of Lemma~\ref{lem:mod_cross}. The procedure adapts with slight modifications to~\eqref{eq:mod_2}. Namely, we replace $z$ with $\max(I_n)$ in the sums. When arriving at Lemma~\ref{lem:mod_ri}, one has the additional cross condition $r_1 < n_{n, 1} \cdots n_{n, 8}$, corresponding to the condition $p \mid r \implies p < p_n$. This cross condition is removed at Lemma~\ref{lem:mod_cross} similarly to the other cross conditions. We obtain that the polynomials $N_{i, j}, i \ge n+1$ and $R_i(s), i \le t$ are shorter than $P_n(s)$.
\end{remark}

\section{Tools for Dirichlet polynomials}
\label{sec:tools}

For $F(s)$ defined as in~\eqref{def:f}, we aim to show that
\begin{align*}
\int_{T_0 \le |t| \le T} |F(it)M(it)| \d t \all Rx
\end{align*}
for any $M$ as in Proposition~\ref{prop:dir_to_arit}. (Whether or not we succeed depends on the lengths of the factors of $F(s)$ in a fashion we will describe in Section~\ref{sec:ranges}.)

For convenience we shall assume that $t > 0$, the case $t < 0$ being similar. We perform a dyadic decomposition over $t$, and consider $t \in [T_1, 2T_1]$ for some $T_1$ such that $[T_1, 2T_1] \subset [T_0, T]$. Hence our task is to show the following claim.

\begin{claim}
\label{claim}
We have
\begin{align}
\label{eq:target}
\int_{T_1}^{2T_1} |F(it)M(it)|\d t \all Rx
\end{align}
for any $[T_1, 2T_1] \subset [T_0, T]$, $F(s)$ as in Information \ref{info} and $M(s)$ as in Proposition \ref{prop:dir_to_arit}.
\end{claim}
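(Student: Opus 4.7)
The plan is to establish~\eqref{eq:target} by a flexible mix of Cauchy--Schwarz and pointwise bounds, dictated by the shape of $F(s)$. Since $F(s)$ is given as a product of at least two (and typically many) Dirichlet polynomials of known length profiles listed in Information~\ref{info}, the first step is to group these factors into three blocks, writing $F(s) = A(s)B(s)C(s)$. The length of $C$ is essentially determined by the lengths of $A$ and $B$ because $F$ is supported on $[x, 2x]$ up to an $S^{o(1)}$ error. After fixing a grouping, I would dyadically decompose $[T_1, 2T_1]$ into subsets $\mathcal{T}$ on which $|A(it)| \asymp V_A$, $|B(it)| \asymp V_B$, $|C(it)| \asymp V_C$ for dyadic levels $V_A, V_B, V_C$; the resulting loss of $(\log x)^{O(1)} = S^{o(1)}$ is absorbed by $\all$.

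On each piece $\mathcal{T}$ I would attempt two complementary strategies. The Cauchy--Schwarz strategy, modelled on~\eqref{eq:overview_5}, writes $F = PQ$ with $Q$ short and estimates
\begin{equation*}
\int_{\mathcal{T}}|F(it)M(it)|\,\d t \le \Bigl(\int_{-T}^{T} |P(it)|^2\,\d t\Bigr)^{1/2}\Bigl(\int_{-T}^{T}|Q(it)M(it)|^2\,\d t\Bigr)^{1/2},
\end{equation*}
bounding the second factor by Heath-Brown's mean value theorem (Proposition~\ref{prop:HB_MVT}, noting $R \le T^{1/3}$ so the $NR^{7/4}T^{3/4}$ term is dropped) and the first factor by a combination of pointwise bounds for $N_{i,j}, R_i, H$ and the fourth-moment estimate for the zeta sums of Definition~\ref{def:zeta_sum}. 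The $L^1$ strategy of~\eqref{eq:overview_6} instead bounds
\begin{equation*}
\int_{\mathcal{T}}|F(it)M(it)|\,\d t \le R\,|\mathcal{T}|\,V_A V_B V_C,
\end{equation*}
where $|\mathcal{T}|$ is controlled by a Huxley (or Hal\'asz--Montgomery, or Jutila) large value theorem applied to whichever of $A, B, C$ is most advantageous, possibly after raising it to a suitable power. A Hölder variant that isolates two zeta sums, in the spirit of the forthcoming Lemma~\ref{lem:two_zetas}, would be kept in reserve when either pure strategy falls just short.

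Summing the bounds over the $O((\log x)^{O(1)})$ dyadic triples $(V_A, V_B, V_C)$ and choosing the grouping of the factors of $F(s)$ optimally among all possibilities, one obtains an admissible region in the $(\log_x A, \log_x B)$ plane --- a hexagon in the $c = 0.5$ case and a more complicated polygon in the $c = 0.45$ case --- on which~\eqref{eq:target} holds. The main obstacle is that no single grouping succeeds for every length profile of $F(s)$, so certain profiles (equivalently, certain Buchstab sums in~\eqref{eq:asy_form}) are intrinsically out of reach. These will have to be discarded by Harman's sieve, and the crucial point is that the total ``loss'' from the discarded sums stays below $1 - \epsilon$. Because the $\alpha_i, \beta_i$ parameters vary continuously, the verification is reduced to a finite check by casework on small-width intervals for each length, exploiting also the optionality of the Heath-Brown decomposition (Remark~\ref{rem:optional}); this check is infeasible by hand and is delegated to a computer calculation in Sections~\ref{sec:0.5} and~\ref{sec:0.45}.
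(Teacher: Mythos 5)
Your proposal is correct and matches the paper's approach essentially verbatim: group the factors as $F = ABC$, split $[T_1, 2T_1]$ into $S^{o(1)}$ level sets by the dyadic magnitudes of $|A|,|B|,|C|$, and on each level set deploy either the trivial $L^1$ bound paired with Huxley's large value theorem or Cauchy--Schwarz paired with Heath-Brown's mean value theorem (with a H\"older variant reserved for multiple zeta factors), exactly as in the discussion following Claim~\ref{claim} and in Propositions~\ref{prop:range_0.5} and~\ref{prop:range_0.45}. You also correctly recognize that Claim~\ref{claim} is only a target statement, established for admissible length profiles rather than for all $F$ of Information~\ref{info}, with the inadmissible profiles discarded via Harman's sieve and the loss bounded by the computer verification of Sections~\ref{sec:0.5}--\ref{sec:0.45}.
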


In this section we provide various tools that are employed in the next section, on the course establishing Claim \ref{claim} in certain easy cases. In the next five subsections we give five tools: Vinogradov-type pointwise bound, bound for the coefficients of the relevant Dirichlet polynomials, a fourth moment estimate for zeta sums, reduction to the case where the polynomials give power-saving bounds when $c = 0.5$, and handling the case with at least two zeta factors.

\subsection{Pointwise bound}
\label{sec:tools_pointwise}

\begin{lemma}
\label{lem:vinogradov}
Let $N \ge z_1$, $N' \le 2N$ and $f \in \{\xi_0, \xi_0 \cdot \log, \xi_0 \cdot g, \xi_0 \cdot \mu, 1_{\mathbb{P}}\}$ be given, with $f \neq \xi_0 \cdot \mu$ if $N' \ge L_{\zeta}$. We have
\begin{align*}
\left|\sum_{N < n \le N'} f(n)n^{-it}\right| \le N\exp(-(\log x)^{1/5})
\end{align*}
for all $|t| \in [T_0, T]$, assuming $x$ is large enough.
\end{lemma}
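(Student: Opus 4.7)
The plan is to reduce every one of the five coefficient choices to the standard Vinogradov--Korobov pointwise bound
$$\left|\sum_{M < m \le M'} m^{-it}\right| \ll M \exp\left(-c \frac{(\log M)^{3/2}}{(\log |t|)^{1/2}}\right),$$
valid for $|t| \ge 3$ and $M \ge 2$. For our range $|t| \in [T_0, T]$ with $T_0 = \exp(\sqrt{\log x}/3)$ and $T \le x$, together with $M \ge z_1 = \exp(\log x / (\log\log x)^5)$, the exponent on the right is at least $c\sqrt{\log x}/(\log\log x)^{O(1)}$, which dwarfs the target $(\log x)^{1/5}$. By partial summation the same estimate holds for the weighted versions with an additional factor of $\log m$.

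For the case $f = 1_{\mathbb{P}}$, I apply Heath-Brown's identity (Lemma \ref{lem:HB-dec}) with a fixed $k$ to rewrite $\sum_{N < p \le N'} p^{-it}$ as $O(1)$ pieces of the form $\sum_{n_1, \ldots, n_{2k}} f_1(n_1) \cdots f_{2k}(n_{2k}) (n_1 \cdots n_{2k})^{-it}$ with $f_j \in \{1, \log, \mu, g\}$ and $n_j > 5x^{1/k} \Rightarrow f_j \neq \mu$. Grouping the variables into a short product of size at most $N^{1/2 - \delta}$ and a long product, each piece becomes either a Type I linear form (handled directly by the Vinogradov bound above, applied to the long factor) or a Type II bilinear form (handled via Vaughan's Cauchy--Schwarz argument and the same Vinogradov bound applied to the diagonal sum). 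The prime-power factor $g$ is supported on only $O(\sqrt{N}\log N)$ integers and contributes a trivially negligible error.

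For $f \in \{\xi_0, \xi_0 \cdot \log, \xi_0 \cdot g, \xi_0 \cdot \mu\}$, I open $\xi_0$ according to \eqref{eq:def_xi_0}: when $N < L_\zeta$, $\xi_0(n) = \xi(n) = \sum_{d \mid n,\,p \mid d \Rightarrow p < z_1} \mu(d)$; when $N \ge L_\zeta$, $\xi_0(n) = \sum_{d \mid n,\, d < z_2,\, p \mid d \Rightarrow p < z_1}\mu(d)$, and the hypothesis excludes $f = \xi_0\cdot\mu$ precisely in the regime where the unrestricted divisor sum would be the longest. Swapping the order of summation pulls out an outer variable $d$ which in the first case is trivially truncated to $d \le N' < L_\zeta = x^{o(1)}$ and in the second is bounded by $z_2 = x^{o(1)}$; the remaining inner sum takes the form $\sum_{N/d < m \le N'/d} f_1(dm)\, m^{-it}$ with $f_1 \in \{1, \log, \mu, g, 1_{\mathbb P}\}$, and is bounded as in the previous paragraph (applying the Heath-Brown identity again if $f_1 = \mu$).

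The main obstacle is largely bookkeeping rather than mathematical: one must verify that across all of the manipulations the outer variable $d$ stays of $x^{o(1)}$ size, that partial summation can absorb the $\log$ weights, and that Heath-Brown's identity can be iterated to handle the nested $\mu$ and $g$ factors. The only genuinely quantitative point is the matching between the exponent $(\log x)^{1/5}$ and the lower bound $T_0 = \exp(\sqrt{\log x}/3)$: the threshold $T_0$ is chosen precisely so that $\log|t| \gg \sqrt{\log x}$, ensuring the Vinogradov--Korobov exponent $(\log M)^{3/2}(\log|t|)^{-1/2}$ is uniformly larger than $(\log x)^{1/5}$ for all admissible $M$ and $|t|$, with margin to spare for the $(\log x)^{O(1)}$ losses introduced by the decomposition.
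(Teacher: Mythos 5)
There is a genuine gap in the treatment of $f \in \{\xi_0, \xi_0\cdot\mu\}$ in the range $N' < L_\zeta$. First, a factual error: $L_\zeta$ is not $x^{o(1)}$ --- it equals $x^{1/4}$ when $c = 0.5$ and roughly $x^{0.275}$ when $c = 0.45$, so it is a genuine power of $x$. More seriously, for $N' < L_\zeta$ we have $\xi_0 = \xi$, which is the \emph{untruncated} rough-number indicator; opening it via M\"obius as $\xi(n) = \sum_{d \mid n,\ p\mid d \Rightarrow p < z_1}\mu(d)$ and swapping the order of summation produces an outer variable $d$ ranging over all $z_1$-smooth integers up to $N' \approx L_\zeta$, and for $d$ comparable to $N$ the inner sum over $m$ has $O(1)$ terms with no cancellation. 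The count of such smooth $d$ near $N$ is of order $N\rho(\log N/\log z_1) \approx N\exp(-O((\log\log x)^5\log\log\log x))$ by the Dickman function, which is vastly larger than the target $N\exp(-(\log x)^{1/5})$, so the contribution of large $d$ cannot be controlled. The cutoff at $z_2$ that makes the M\"obius opening usable is built into the definition of $\xi_0$ only for $h \ge L_\zeta$; for $h < L_\zeta$ there is no cutoff and this route is blocked.

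The paper sidesteps this by never expanding $\xi$ when $N' < L_\zeta$: one factors each $z_1$-rough $n$ as $n = mp$ with $p = P^+(n)$ the largest prime factor, so that the inner sum becomes a genuine prime sum $\sum_{N/m < p \le N'/m,\ p\ge P^+(m)}p^{-it}$ over primes $p \ge z_1$, to which the Vinogradov--Korobov prime-sum bound applies directly with the saving $\exp(-2(\log x)^{1/5})$; the outer sum over cofactors $m$ costs only a logarithm. The $d$-expansion of $\xi_0$ is used by the paper only in the complementary range $N \ge L_\zeta$, where $\xi_0$ is by definition truncated at $d < z_2 = x^{o(1)}$ and the inner $m$-sum is still long enough for a van der Corput exponent-pair bound. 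As a secondary point, your treatment of $1_{\mathbb{P}}$ via Heath-Brown's identity and a Type I/II decomposition is a legitimate alternative, but it is substantially heavier than the paper's direct route through Perron's formula and the zero-free region, and iterating Heath-Brown's identity to handle a nested $\mu$ (as you suggest) requires care to keep the accumulated divisor growth under control.
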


\begin{proof}
The proof is largely the same as in~\cite[Lemma 11]{HB-V}.

The result for $f = 1_{\mathbb{P}}$ is standard, following from Perron's formula and the Vinogradov-Korobov zero-free region for the zeta function. The result is immediate for $f = \xi_0 \cdot g$ by the sparsity of the support of $g$, noting that $\xi_0(p^e) \in \{0, 1\}$ for $e \ge 2$, $p$ prime.

For the case $f = \xi_0$, it suffices to obtain bounds for $N' < L_{\zeta}$ and $N \ge L_{\zeta}$ separately. In the former case, we have
\begin{align*}
\left|\sum_{N < n \le N'} f(n)n^{-it}\right| \le \sum_{\substack{m \le N' \\ P^+(m) \ge z_1}} \left|\sum_{\substack{N/m < p \le N'/m \\ p \ge P^+(m)}} p^{-it}\right|,
\end{align*}
where $P^+(m)$ denotes the largest prime factor of $m$. The inner sum is empty unless $N'/m \ge z_1$, in which case we have a bound of
$$\frac{N}{m}\exp(-2(\log x)^{1/5})$$
for the inner sum. Summing over $m$ results in a harmless log factor.

In the latter case $N \ge L_{\zeta}$ we have
\begin{align}
\label{eq:vinogradov_long}
\left|\sum_{N < n \le N'} f(n)n^{-it}\right| \le \sum_{d \le z_2} \left|\sum_{N/d < m \le N'/d} m^{-it}\right|.
\end{align}
For the inner sum we have the bound
$$\ll \sqrt{\frac{N}{d}}T_1^{1/6} + \frac{N}{d}T_1^{-1/6}$$
when $T_1 \le |t| \le 2T_1$ (see e.g.~\cite[Theorem 5.11]{titchmarsh}). By summing over $d$ we obtain that \eqref{eq:vinogradov_long} is
$$\ll z_2^{1/2}\sqrt{N}T_1^{1/6} + NT_1^{-1/6}\log z_2.$$
This is sufficient, as
$$z_2^{1/2}\sqrt{N}T_1^{1/6} \ll NT^{-1/12 + \epsilon} \ll Nx^{-\epsilon}$$
and
$$NT_1^{-1/6}\log z_2 \ll N\exp(-2(\log x)^{1/5})\log x$$
for $T_1 \in [T_0, T]$.

The case $f = \xi_0 \cdot \log$ follows similarly by partial summation. For $f = \xi_0 \cdot \mu$, by assumption we have $N' < L_{\zeta}$, and hence the same argument as above goes through. 
\end{proof}

Note that since the factors of $F(s)$ have length at least $z_1$, Lemma~\ref{lem:vinogradov} applies to any (non-constant) factor of $F(s)$.

\subsection{Coefficient bound}
\label{sec:tools_coefficient}

\begin{lemma}
\label{lem:coefficients}

\begin{enumerate}[(i)]
\item There is a constant $C = O(1)$ such that the following holds: the coefficients $c_k$ of any product of the polynomials $N_{i, j}(s)$, $R_j(s)$ and $H(s)$ in~\eqref{def:f} are bounded in absolute value by $\tau(k)^C(\log k)^C$. 
\item Let $c_k$ be the coefficients of any product of moments of those polynomials $N_{i, j}(s)$, $R_i(s)$ and $H(s)$ whose lengths do not exceed $L_{\zeta}$. Then, for any $k = x^{O(1)}$, we have $|c_k| = \exp(O((\log \log x)^{16})) = S^{o(1)}$.
\end{enumerate}
\end{lemma}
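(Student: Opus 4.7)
I would deduce both parts from elementary coefficient convolution estimates combined with the divisor bound $\tau_N(k) \ll_N \tau(k)^{N - 1}$ (verified on prime powers via $\binom{a + N - 1}{N - 1} \ll_N (a+1)^{N-1}$ and then extended multiplicatively).

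For (i), the key observation is that every coefficient function appearing as an $f_{i, j}$ -- namely $\xi_0$, $\xi_0 \cdot \log$, $\xi_0 \cdot g$, $\xi_0 \cdot \mu$, and $1_{\mathbb{P}}$ -- is bounded pointwise by $\tau(n)(\log n)$: indeed $|\xi_0(n)| \le \tau(n)$ follows immediately from~\eqref{eq:def_xi_0} (each term in the sum contributes a squarefree divisor of $n$), and $g(n) \le \Lambda(n) \le \log n$. Since the product in (i) has $m = O(1)$ factors, a standard convolution estimate gives
$$|c_k| \le \sum_{k_1 \cdots k_m = k} \prod_{j = 1}^{m} \tau(k_j)(\log k_j) \le (\log k)^{m}\, \tau_{2m}(k) \ll \tau(k)^{C}(\log k)^{C}$$
for some constant $C$, which is what (i) claims.

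For (ii), the additional leverage is that each $P_i$ has length at most $L_\zeta$, so by~\eqref{eq:def_xi_0} its coefficients agree with $\xi(n) = 1_{p \mid n \implies p \ge z_1}$ on the bulk $\{n < L_\zeta\}$ of the support. Hence a product of moments $Q = \prod_i P_i^{k_i}$, whose total degree $N = \sum_i k_i$ is forced to be $O(1)$ by the requirement that $Q$ contribute non-trivially at some $k = x^{O(1)}$ (since $L_\zeta$ is a power of $x$), is essentially supported on $z_1$-rough integers. For any $z_1$-rough $k = x^{O(1)}$ we have
$$\Omega(k) \le \frac{\log k}{\log z_1} \ll (\log \log x)^{5}, \qquad \tau(k) \le 2^{\Omega(k)} \le \exp\bigl(O((\log \log x)^{5})\bigr).$$
Plugging this into the convolution bound $|c_k| \le (\log x)^{N} \tau_{2N}(k) \le (\log x)^{N} \tau(k)^{2N - 1}$ yields $|c_k| \le \exp(O((\log \log x)^{16}))$, with substantial slack.

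The main subtlety is handling the narrow range $(L_\zeta, 2L_\zeta]$ on which some $n_{i, j}$ in the convolution can lie, and where $\xi_0$ may be nonzero at non-$z_1$-rough $n_{i, j}$, so that $k$ need not be strictly $z_1$-rough. Here I would substitute the explicit formula $\xi_0(n) = \sum_{d \mid n,\, d < z_2,\, p \mid d \implies p < z_1} \mu(d)$ into the ``bad'' factors and redistribute: the $z_1$-smooth part of $k$ gets captured by these $d$-sums, each of which is a M\"obius convolution over $z_1$-smooth $d < z_2$. The Rankin-style bound used in the proof of Lemma~\ref{lem:xi_to_xi0} shows that the cost of these auxiliary divisor sums is only $\exp(O(\log z_2/\log z_1)) = \exp(O((\log \log x)^{2}))$, easily absorbed into the budget.
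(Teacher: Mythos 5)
Your proof of part (i) has a genuine gap: you assert that the product in~\eqref{def:f} has ``$m = O(1)$ factors,'' but this is false. The number $t$ of polynomials $R_i(s)$ is \emph{not} bounded; from the proof of Lemma~\ref{lem:mod_cross} one only has $t \ll \log x / \log z_1 \ll (\log \log x)^5$. Plugging this into your convolution estimate would give $|c_k| \le (\log k)^m \tau_{2m}(k)$ with $m$ unbounded, and $\tau_{2m}(k)$ is \emph{not} $\tau(k)^{O(1)}$. The paper's proof uses a specific structural fact to collapse these many factors into one: by condition (6) of Lemma~\ref{lem:mod_cross} (ultimately coming from $\Delta'_{\mathcal{S}}$ discarding pairs of nearby prime factors), the supports $[R_i, R_i']$ are pairwise disjoint prime intervals whenever $R_i < x^{c/2 - \epsilon}$, so the product $\prod_{R_i < x^{c/2-\epsilon}} R_i(s)$ has coefficients lying in $\{0, 1\}$. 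Since only $O(1)$ of the $R_i$ can exceed $x^{c/2 - \epsilon}$ (their product divides something of size $x^{O(1)}$), and the remaining $N_{i,j}(s)$, $H(s)$ are $O(1)$ in number, the whole product reduces to a convolution of $O(1)$ factors each bounded by $\tau(\cdot)^{O(1)}(\log \cdot)^{O(1)}$, after which your argument goes through. Without this observation, (i) does not follow.

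In part (ii) you make the same misstatement: you claim the total degree $N$ is $O(1)$ ``since $L_\zeta$ is a power of $x$,'' but the polynomials can have lengths as small as $z_1 = x^{o(1)}$, so again $N$ can be as large as $O((\log \log x)^5)$. The paper keeps this $\ell \ll \log x / \log z_1$ explicit and obtains $|c_k| \ll (\log k)^\ell \tau(k)^\ell = \exp(O((\log\log x)^{16}))$. Your bound $(\log x)^N \tau(k)^{2N-1}$ does in fact survive with $N \ll (\log\log x)^5$ (giving roughly $\exp(O((\log\log x)^{10}))$), so the conclusion of (ii) is salvageable --- but your stated justification for controlling $N$ is incorrect and should be replaced by the length lower bound $\ge z_1$. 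Your extra worry about the range $(L_\zeta, 2L_\zeta]$ is unnecessary and can be dropped: since $L_\zeta \in \mathcal{S}$ and (per Information~\ref{info}) no length lies in $[L_\zeta x^{-\eta}, L_\zeta x^\eta]$, a polynomial of length at most $L_\zeta$ actually has length $< L_\zeta x^{-\eta}$, hence its dyadic support $[P, 2P]$ lies entirely below $L_\zeta$, and $\xi_0 = \xi$ throughout --- there is no bad range to patch with Rankin's trick.
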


\begin{proof}
For the first claim, note that $R_i(s) = \sum_{r_i \in [R_i, R_i'] \cap \mathbb{P}} r_i^{-s}$ and $[R_i, R_i']$ are pairwise disjoint for $R_i < x^{c/2 - \epsilon}$, so any product of distinct $R_i(s), R_i < x^{c/2 - \epsilon}$ has coefficients lying in $\{0, 1\}$. As the coefficients of $N_{i, j}(s), H(s)$ and $R_i(s), R_i \ge x^{c/2 - \epsilon}$ are bounded by $\tau(k)^{O(1)}(\log k)^{O(1)}$ and there are only $O(1)$ such polynomials, the coefficients of the product have the same property.

For the second claim, under the assumption of the polynomials being shorter than $L_{\zeta}$ their coefficients $c_{k'}$ are bounded by $1_{p \mid k' \implies p \ge z_1}\log k'$. As we are only considering coefficients $c_k$ with $k = x^{O(1)}$, the number $\ell$ of polynomials in the product satisfies $\ell \ll \log x / \log z_1$. Now, given $k$ with $p \mid k \implies p \ge z_1$, the number of ways one can write $k$ as the product of $\ell$ integers is at most 
$\tau(k)^{\ell}$. Noting that $\tau(k) = \exp(O(\log x / \log z_1))$, we obtain
$$|c_k| \ll (\log k)^{\ell}\tau(k)^{\ell} = \exp(O((\log \log x)^{16})) = S^{o(1)}.$$
\end{proof}

We note that while a bound of type $\tau(k)^C(\log k)^C$ for coefficients is good enough for most purposes, this bound is slightly problematic when applying Heath-Brown's mean value theorem (Proposition~\ref{prop:HB_MVT}). The reason is that Heath-Brown's result requires a bound on the maximum value of the coefficients (in contrast to many large value theorems which consider the mean square), and $\tau(k)$ has a small mean square but a large maximum (of type $\exp(\log k / \log \log k)$). Hence, we have to be slightly careful and distinguish between cases where $F(s)$ has or does not have zeta factors.

\subsection{Moment estimates}
\label{sec:tools_moment}

This subsection is devoted to obtaining fourth moment estimates for zeta sums. We first discard the case where there is a very long zeta factor.

\begin{lemma}
\label{lem:long_zeta}
Assume that $F(s)$ factorizes as $F(s) = P(s)Z(s)$, where $Z \ge \max(L_{\zeta}, T_1z_2)$ and $Z(s)$ is a polynomial whose coefficients are given by $\xi_0$ or $\xi_0 \cdot \log$. Then Claim~\ref{claim} holds.
\end{lemma}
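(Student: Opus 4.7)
The plan is to apply Cauchy--Schwarz to split
\[
\int_{T_1}^{2T_1}|F(it)M(it)|\,dt \le \left(\int_{T_1}^{2T_1}|P(it)|^2\,dt\right)^{1/2}\left(\int_{T_1}^{2T_1}|Z(it)M(it)|^2\,dt\right)^{1/2}
\]
and bound each factor. For $\int|P|^2$ I would use the standard mean value theorem together with Lemma~\ref{lem:coefficients}, giving $\int |P|^2 \all (T_1 + P)P$. For $\int|ZM|^2$ the key idea is to exploit the hypothesis $Z \ge T_1 z_2$ by expanding $Z$ using the definition \eqref{eq:def_xi_0} of $\xi_0$: since $Z \ge L_\zeta$, one has
\[
Z(s) = \sum_{\substack{d < z_2 \\ p\mid d \implies p<z_1}} \mu(d) d^{-s} Y_d(s), \qquad Y_d(s) = \sum_{Z/d < m \le 2Z/d} m^{-s},
\]
and each $Y_d$ has length $Z/d \ge T_1 z_2/z_2 = T_1$, i.e.\ it is a standard zeta sum of length at least $T_1$ with coefficients identically $1$.

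I would apply Heath-Brown's mean value theorem (Proposition~\ref{prop:HB_MVT}) to each $\int |Y_d M|^2$, treating $Y_d$ as the $Q$-polynomial (so the $\max|q_n|$-loss is trivial, since its coefficients are $1$). Because $Z/d \ge T_1$ and $R \ge 1$, the term $(Z/d)^2 R^2$ dominates the terms $(Z/d) R T_1$ and $(Z/d) R^{7/4} T_1^{3/4}$, and crucially the dominating term carries no $(NT)^\epsilon$-loss. Summing the resulting estimate $\int|Y_d M|^2 \ll (Z/d)^2 R^2$ over the admissible $d$ (using $\sum_d 1/d^2 \ll 1$ and absorbing the small remaining loss into $\all$) yields $\int|ZM|^2 \all Z^2 R^2$.

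Combining the two factors with $PZ \asymp x$ gives
\[
\int_{T_1}^{2T_1} |FM|\,dt \;\all\; \sqrt{(T_1+P)P}\cdot ZR \;=\; xR\sqrt{1+T_1/P}.
\]
The hypothesis $Z\ge T_1 z_2$ becomes $P \le x S^{o(1)}/(T_1 z_2)$, which, combined with the specific choices of $T$, $R$, $S$ and $z_2$ in Section~\ref{sec:parameters} (in particular $T\le x^{1-c}S^2$ and the fact that $S^2\sqrt{z_2} \ll x^{c+\nu-1/2}$ for large $x$), should imply $T_1/P \all 1$ in the relevant range, yielding the claim.

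The main obstacle I anticipate is handling the borderline regime where $T_1$ is close to $T$ and $P$ is close to its lower bound $z_1$, so that $T_1/P$ is not obviously $\all 1$. In that regime I would switch to the complementary estimate
\[
\int_{T_1}^{2T_1} |FM|\,dt \le \max_{t\in[T_1,2T_1]}|Z(it)| \cdot \int_{T_1}^{2T_1} |PM(it)|\,dt,
\]
applying the pointwise Vinogradov bound from Lemma~\ref{lem:vinogradov} (valid since $Z \ge L_\zeta$ and its coefficients are $\xi_0$ or $\xi_0\cdot \log$) to get $|Z(it)| \ll Z\exp(-(\log x)^{1/5})$, and bounding $\int|PM|$ by a Cauchy--Schwarz + Heath-Brown MVT argument. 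The exponential saving $\exp(-(\log x)^{1/5})$ easily absorbs any polynomial-in-$\log$ or $S^{O(1)}$ factors, since $(\log x)^{1/5} \gg (\log\log x)^{17}$, so this backup recovers the $\all xR$ bound in the marginal cases.
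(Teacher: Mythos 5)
Your proposal has a genuine gap; in fact both the main route and the backup route fail, and the paper's actual argument is much simpler and rests on a key observation you did not exploit.

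In the main route you arrive at $\int|FM| \all Rx\sqrt{1+T_1/P}$, and you need $T_1/P \all 1$. But the hypothesis $Z \ge T_1 z_2$ gives an \emph{upper} bound on $P$, namely $P \le xS^{o(1)}/(T_1 z_2)$, hence a \emph{lower} bound on $T_1/P$: it can be as large as $T_1^2 z_2/x$, which for $T_1$ near $T = x^{1-c}S^2$ and $P$ near its floor $z_1$ is roughly $x^{1-2c}z_2 S^{O(1)}$ or larger — nowhere near $\all 1$. (There is a further technical snag: to make the $N^2R^2$ term dominate in Proposition~\ref{prop:HB_MVT} you would have to apply the MVT with $T \asymp T_1$, but the proposition requires $m_i \le T$, and $m_i \asymp x^{1-c}$ can exceed $T_1$.) Your backup route fails for a quantitative reason: the Vinogradov pointwise bound on $Z$ only saves $\exp(-(\log x)^{1/5})$, but the quantity it must absorb is a ratio like $T_1/P$, which can be a genuine power of $x$ (e.g.\ $x^{1/2}$ when $c=0.5$, $T_1 \approx T$ and $P \approx z_1$). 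An exponential-of-$(\log x)^{1/5}$ saving cannot cancel a power of $x$; it only beats $S^{O(1)}$ and $(\log x)^{O(1)}$ factors.

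The missing idea is that $Z \ge T_1 z_2$ gives a \emph{far} stronger pointwise bound on $Z$ than Vinogradov. Expanding $\xi_0$ as you did, $Z(s)$ is a sum over $d < z_2$ of truncated zeta sums $\sum_{Z/d < m \le Z'/d} m^{-it}$ with $Z/d \ge T_1$. In that regime the classical estimate $\sum_{N < n \le M} n^{-1/2-it} \ll M^{1/2}/|t|$ (valid for $M \ge N \ge |t|/2$) and partial summation give $|\sum_{Z/d < m \le Z'/d} m^{-it}| \ll (Z/d)/T_1$ for $t \in [T_1, 2T_1]$; summing over $d$ yields $|Z(it)| \ll (Z/T_1)\log z_2$. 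This is a saving of a full factor $T_1$. Combining this with the Vinogradov bound on the non-constant polynomial $P(s)$ (Lemma~\ref{lem:vinogradov}) and the trivial $|M(it)| \le R$, one gets $|F(it)M(it)| \ll S^{o(1)} (x/T_1) R \exp(-(\log x)^{1/5})$, and integrating trivially over $t \in [T_1, 2T_1]$ gives $\all Rx$ directly. No Cauchy--Schwarz, no Heath-Brown MVT, no fourth moments are needed; the hypothesis $Z \ge T_1 z_2$ is precisely what turns $Z$ into a sum over $n \ge |t|$, where the Dirichlet series is already tiny.
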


Here and in what follows, when we say ``$F(s)$ factorizes as $A(s)B(s)$ with $X$'', we mean that one may arrange the factors on the right hand side of \eqref{def:f} as two products $A(s)$ and $B(s)$ so that $X$ holds.

\begin{proof}
This is similar to~\cite[Lemma 12]{HB-V}. The idea is to obtain a good pointwise bound for $F(s)$ via $Z(s)$ and to bound $M(s)$ trivially as $|M(it)| \le R$.

Assume first that the coefficients of $Z(s)$ are given by $1_{(Z, Z']}(n)\xi_0(n)$. Then
$$|Z(s)| = \left|\sum_{Z < n \le Z'} \xi_0(n)n^{-s}\right| \le \sum_{\substack{d < z_2}} \left|\sum_{\substack{Z/d < m \le Z'/m}} m^{-s}\right|.$$

It is well-known that (see e.g.~\cite[Theorem 4.11]{titchmarsh})
\begin{align*}
\sum_{N < n \le M} n^{-1/2 - it} \ll M^{1/2}/|t|
\end{align*}
uniformly for $M \ge N \ge |t|/2$. Hence, by partial summation,
\begin{align*}
\sum_{Z/d < m \le Z'/d} m^{-it} \ll \frac{Z}{dT_1}
\end{align*}
for $t \in [T_1, 2T_1]$, and thus
\begin{align}
\label{eq:Z_bound}
|Z(it)| \ll \frac{Z}{T_1}\log z_2.
\end{align}

Any polynomial in the factorization of $F(s)$ is shorter than $T^{1 + \epsilon}$ (see Information \ref{info}), and hence in particular $P(s)$ is non-constant. Thus the pointwise bound (Lemma~\ref{lem:vinogradov}) applies to $P(s)$, so we obtain
\begin{align*}
|F(it)| \ll S^{\epsilon}\frac{x\exp(-(\log x)^{1/5})}{T_1}\log z_2 \ll \frac{x}{S^{\epsilon}T_1},
\end{align*}
and hence
\begin{align*}
\int_{T_1}^{2T_1} |F(it)M(it)| \d t \ll RT_1 \frac{x}{S^{\epsilon}T_1},
\end{align*}
implying~\eqref{eq:target}.

The case where the coefficients of $Z(s)$ are given by $\xi_0 \cdot \log$ follows similarly using partial summation.
\end{proof}

\begin{lemma}
\label{lem:moments}
Let $M > 0$ and $t_1, \ldots , t_M \in [T_1, 2T_1]$ be such that $|t_i - t_j| \ge 1$ for $i \neq j$. Let $Q > x^{\epsilon}$ and $Q' \le 2Q$ be given, and let
$$Q(s) = \sum_{Q < q \le Q'} \frac{1}{q^s}.$$
Assume $T_1 \ge Q/2$. Then
\begin{align}
\label{eq:moment_sum}
\sum_{m = 1}^M |Q(it_m)|^4 \ll T_1Q^2(\log x)^8
\end{align}
and
\begin{align}
\label{eq:moment_int}
\int_{T_1}^{2T_1} |Q(it)|^4 \d t \ll T_1Q^2(\log x)^8.
\end{align}
\end{lemma}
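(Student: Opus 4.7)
The plan is to reduce the fourth moment of $Q(s)$ to the second moment of its square. Expand
$$Q(s)^2 = \sum_n r(n) n^{-s},$$
where $r(n)$ counts ordered pairs $(q_1,q_2)$ of primes in $(Q,Q']$ with $q_1 q_2 = n$. By unique factorization $r(n) \in \{0,1,2\}$, the support lies in $(Q^2, 4Q^2]$ and has size at most $(\pi(Q')-\pi(Q))^2 \ll Q^2/(\log Q)^2$. Hence
$$\sum_n r(n)^2 \le 2 \sum_n r(n) = 2(\pi(Q')-\pi(Q))^2 \ll \frac{Q^2}{(\log Q)^2}.$$
Since $Q > x^\epsilon$ we have $\log Q \gg \log x$, so this is $\ll Q^2/(\log x)^2$.

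For the integral bound I apply Montgomery--Vaughan's mean value theorem to the Dirichlet polynomial $Q(s)^2$, whose length is at most $4Q^2$:
$$\int_{T_1}^{2T_1} |Q(it)|^4 \, dt = \int_{T_1}^{2T_1} |Q(it)^2|^2 \, dt \ll (T_1 + 4Q^2) \sum_n r(n)^2 \ll (T_1 + Q^2) \frac{Q^2}{(\log x)^2}.$$
For the discrete bound I apply the dual Hal\'asz--Montgomery large sieve inequality for Dirichlet polynomials at $1$-separated points to the same polynomial $Q(s)^2$, giving
$$\sum_{m=1}^M |Q(it_m)|^4 \ll (T_1 + Q^2)(\log Q) \sum_n r(n)^2 \ll (T_1 + Q^2) \frac{Q^2}{\log x}.$$
In the regime $T_1 \gtrsim Q^2$ both estimates immediately yield $\ll T_1 Q^2 (\log x)^8$ (with many logs to spare).

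The delicate point is the regime where $Q/2 \le T_1 \ll Q^2$. Here I expect to write
$$\int_{T_1}^{2T_1} |Q(it)|^4 \, dt = T_1 \sum_n r(n)^2 + \sum_{n_1 \neq n_2} r(n_1) r(n_2) \int_{T_1}^{2T_1} (n_1/n_2)^{-it}\,dt,$$
bound the inner integral by $\min(T_1, 2/|\log(n_1/n_2)|)$, and control the shifted convolution $\sum_n r(n) r(n+d)$ using Brun--Titchmarsh (for fixed $p_1, p_3$, the congruence $p_2 p_1 \equiv -d \pmod{p_3}$ together with $p_2 \in (Q,Q']$ forces few choices, and $p_4 = (p_1 p_2 - d)/p_3$ is then determined), saving an extra factor of $\log$. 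Combining this with the $\min$ bound and summing dyadically in $d$ is the main obstacle: the argument must produce a total off-diagonal contribution of order $T_1 Q^2 (\log x)^8$ rather than the crude $Q^4/\log^2$.

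Finally, I note that the discrete sum bound can be deduced from the integral bound (up to logarithmic losses) by a standard ``pre-integration'' trick: each $|Q(it_m)|$ is compared to $\int_{t_m - 1/2}^{t_m + 1/2} |Q(it)|\,dt$ using a short Taylor expansion of $Q$, reducing the sum over $t_m$ to the integral over an interval of comparable length, so both bounds follow from the same argument.
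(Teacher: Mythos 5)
Your reading of the summation variable is off, and this causes the whole plan to collapse. Despite the paper's general convention that $q$ denotes a prime, in this particular lemma $q$ ranges over \emph{all integers} in $(Q,Q']$: the proof expresses $Q(it)$ via Perron's formula in terms of $\zeta(s+it)$ (which requires $\sum_{Q<n\le Q'} n^{-s}$, not a prime sum), and the only place the lemma is invoked, Lemma~\ref{lem:moments_2}, applies it to the integer sum $\sum_{Z/d < m \le Z'/d} m^{-s}$. Once $q$ runs over integers, your starting observation ``$r(n)\in\{0,1,2\}$'' fails: $r(n)$ is a restricted divisor count and $\sum_n r(n)^2$ is of size $Q^2(\log Q)^{O(1)}$, not $Q^2/(\log Q)^2$.

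Even setting that aside, the argument is incomplete exactly where it matters. The hypothesis is only $T_1 \ge Q/2$, so the regime $Q/2 \le T_1 \ll Q^2$ is the main case (for instance $T_1 \approx Q$). There, the Montgomery--Vaughan and Hal\'asz--Montgomery bounds you quote give $\approx Q^4(\log Q)^{O(1)}$, which is far larger than the target $T_1 Q^2 (\log x)^8$. You flag the off-diagonal shifted-convolution analysis as ``the main obstacle'' and leave it as a sketch; but this is not a routine bookkeeping step --- the needed cancellation is precisely what Ingham's fourth-moment estimate $\int_0^T |\zeta(1/2+it)|^4\,\d t \ll T(\log T)^4$ supplies. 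The paper's proof works by writing $Q(it)$ via Perron as an integral of $\zeta(\tfrac12 + i(\tau+t))$ against a kernel decaying like $1/(1+|\tau|)$, moving to the critical line, controlling the contour-shift error with the convexity bound, applying H\"older, and then invoking the zeta fourth moment; that last ingredient is what makes the bound work uniformly down to $T_1 \ge Q/2$. Your plan, pushed to completion, would effectively require reproving a variant of the fourth-moment estimate, which is why the off-diagonal step does not close.
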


\begin{proof}
The integral bound~\eqref{eq:moment_int} follows from the bound~\eqref{eq:moment_sum} on the sum by decomposing the integral over intervals of the form $[k, k+1], k \in \mathbb{Z}$, bounding the integrands by their maximums and bounding the contribution of odd and even $k$ separately via the bound~\eqref{eq:moment_sum}. Hence it suffices to establish \eqref{eq:moment_sum}.

By Perron's formula (see e.g. \cite[Lemma 1.1]{harman}) we have
\begin{align*}
Q(it) = \frac{1}{2\pi i} \int_{5/4 - iT_1/2}^{5/4 + iT_1/2} \zeta(s+it) \frac{2^s - 1}{s} Q^s ds + O(E(Q)) + O(E(Q')),
\end{align*}
where $E(q), q \in \{Q, Q'\}$ is bounded by
$$E(q) \ll \sum_{n = 1}^{\infty} \left(\frac{q}{n}\right)^{5/4}\min\left(1, \frac{1}{T|\log(q/n)|}\right).$$
The contribution of $n \ge 2q$ and $n \le q/2$ are bounded by $O(q^{5/4}/T)$. The contribution of $q/2 < n < 2q$ is bounded as in the proof of Proposition~\ref{prop:dir_to_arit}: given $J > 10$, the contribution of $J < |n - q| \le 2J$ is $O(q/T)$, and the contribution of $|n - q| \le 10$ is $O(1)$, from which
$$E(q) \ll \frac{q^{5/4}}{T}\log x + O(1) \ll Q^{1/2},$$
say.

Moving the line of integration into the line $\text{Re}(s) = 1/2$ produces an error of
$$\ll \max_{\frac{1}{2} \le \sigma \le \frac{5}{4}} \frac{Q^{\sigma} \cdot \left|\zeta\left(\sigma + \frac{iT_1}{2} + it\right)\right|}{T_1},$$
which by the convexity bound $|\zeta(\sigma + it')| \ll |t'|^{(1 - \sigma)/2 + \epsilon}$ for $0 \le \sigma \le 1$ (see e.g. \cite[Chapter 5.1]{titchmarsh}) is bounded by 
$$T_1^{\epsilon}\left(\frac{Q^{5/4}}{T_1} + \frac{Q^{1/2}}{T_1^{3/4}}\right) = O(Q^{1/2}),$$
say.

Hence, we have
\begin{align*}
|Q(it)| \ll Q^{1/2} \int_{-T_1/2}^{T_1/2} \left|\zeta\left(\frac{1}{2}+i(\tau + t)\right)\right|\frac{1}{1 + |\tau|} d\tau + Q^{1/2}.
\end{align*}
By Hölder's inequality we thus have
\begin{align*}
\sum_{m = 1}^M |Q(it_m)|^4 \ll Q^2\left(T_1 + (\log T_1)^3 \sum_{m = 1}^M \int_{-T_1/2}^{T_1/2} \left|\zeta\left(\frac{1}{2} + i(\tau + t_m)\right)\right|^4 \frac{1}{1 + |\tau|} d\tau\right).
\end{align*}
We note that
\begin{align*}
\int_{-T_1/2}^{T_1/2} \left|\zeta\left(\frac{1}{2} + i(\tau + t_m)\right)\right|^4 \frac{1}{1 + |\tau|} d\tau &\ll \int_{t_m - T_1/2}^{t_m + T_1/2} \left|\zeta\left(\frac{1}{2} + i\tau\right)\right|^4 \frac{d\tau}{1 + |\tau - t_m|} \\
&\ll \int_{T_1/2}^{5T_1/2} \left|\zeta\left(\frac{1}{2} + i\tau\right)\right|^4 \frac{d\tau}{1 + |\tau - t_m|}
\end{align*}
and that for any $\tau \in \mathbb{R}$ we have
$$\sum_{m = 1}^M \frac{1}{1 + |\tau - t_m|} \ll \log T_1,$$
and thus we have
$$\sum_{m = 1}^M |Q(it_m)|^4 \ll Q^2T_1 + Q^2(\log T_1)^4 \int_{T_1/2}^{5T_1/2} \left|\zeta\left(\frac{1}{2} + i\tau\right)\right|^4 d\tau.$$
The fourth moment of the Riemann zeta function (see e.g. \cite[Chapter 7.6]{titchmarsh}) gives a bound of $O(T_1(\log T_1)^4)$ for the integral, giving the result.
\end{proof}

\begin{lemma}
\label{lem:moments_2}
Let $M > 0$ and $t_1, \ldots , t_M \in [T_1, 2T_1]$ be such that $|t_i - t_j| \ge 1$ for $i \neq j$. Let $Z(s) = \sum_{Z < n \le Z'} z_nn^{-s}$ be a polynomial satisfying either
\begin{enumerate}[(i)]
\item $L_{\zeta} \le Z < T_1z_2$ and the coefficients $z_n$ of $Z(s)$ are given by $\xi_0(n)$ or $\xi_0(n)\log(n)$, or
\item $L_{\zeta} \le Z$ and the coefficients $z_n$ of $Z(s)$ are given by $\xi_0(n)g(n)$.
\end{enumerate}
Then
\begin{align*}
\sum_{m = 1}^M |Z(it_m)|^4 \ll T_1Z^2z_2^4
\end{align*}
and
\begin{align*}
\int_{T_1}^{2T_1} |Z(it)|^4 \d t \ll T_1Z^2z_2^4.
\end{align*}
\end{lemma}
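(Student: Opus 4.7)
The plan is to reduce \eqref{eq:moment_int} to the sum version $\sum_m |Z(it_m)|^4 \ll T_1 Z^2 z_2^4$ by the same unit-interval dissection of $[T_1, 2T_1]$ used in Lemma~\ref{lem:moments}. Case (ii) is then essentially pointwise: since $g(n) = \Lambda(n) 1_{n \notin \mathbb{P}}$ is supported on proper prime powers $p^e$ ($e \ge 2$), of which there are only $O(Z^{1/2}/\log Z)$ in $[Z, Z']$, and each has $|\xi_0(p^e)| \le 1$ together with $|g(p^e)| \le \log Z$, the triangle inequality gives $|Z(it)| \ll Z^{1/2}$, and hence $\sum_m |Z(it_m)|^4 \ll T_1 Z^2$, comfortably within the target.

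For case (i), since $n \ge Z \ge L_\zeta$ throughout the sum I would expand $\xi_0(n) = \sum_{d \mid n,\, d \in D} \mu(d)$ with $D = \{d < z_2 : p \mid d \Rightarrow p < z_1\}$, obtaining
\[
Z(s) = \sum_{d \in D} \mu(d) d^{-s} S_d(s), \qquad S_d(s) = \sum_{Z/d < m \le Z'/d} m^{-s}.
\]
The $\xi_0 \cdot \log$ variant reduces to this after splitting $\log n = \log d + \log m$ and using partial summation on the $\log m$ piece, at a total cost of $(\log x)^{O(1)}$. Rather than invoking Cauchy--Schwarz (which would produce a prohibitive $|D|^3 \le z_2^3$ prefactor in combination with long $S_d$), I would apply H\"older's inequality with exponents $(4/3, 4)$ and weights $d^{-1}$ and $d|S_d(it)|$; since $\sum_{d \ge 1} d^{-4/3} = O(1)$, this yields
\[
|Z(it)|^4 \ll \sum_{d \in D} d^4 |S_d(it)|^4.
\]
The $d^4$ weight is crucial: it shifts the bulk of the cost toward large $d$, where $S_d$ is short and Lemma~\ref{lem:moments} is sharp, leaving only a $|D_2|$-sized contribution from the small-$d$ range where that lemma fails.

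Splitting $D = D_1 \sqcup D_2$ according to whether $Z/d \le 2T_1$ or $Z/d > 2T_1$, I would handle $d \in D_1$ directly via Lemma~\ref{lem:moments}, which gives $\sum_m |S_d(it_m)|^4 \ll T_1(Z/d)^2 (\log x)^8$; summing against the $d^4$ weight produces the $D_1$-contribution $T_1 Z^2 (\log x)^8 \sum_{d \le z_2} d^2 \ll T_1 Z^2 z_2^3 (\log x)^8$. For $d \in D_2$ (nonempty only when $Z > 2T_1$, and of size $|D_2| \le |D| \le z_2$), van der Corput's second derivative test applied to the exponential sum $S_d(it) = \sum e^{-it \log m}$ gives the pointwise bound $|S_d(it)| \ll (Z/d)/\sqrt{T_1}$ uniformly on $[T_1, 2T_1]$, valid because $Z/d > 2T_1 \ge t$ there. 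This yields $\sum_m |S_d(it_m)|^4 \le T_1 \cdot (Z/d)^4/T_1^2 = (Z/d)^4/T_1$, so the $D_2$-contribution is
\[
\sum_{d \in D_2} d^4 \cdot (Z/d)^4/T_1 = Z^4 |D_2|/T_1 \le Z^4 z_2 / T_1 \le T_1 Z^2 z_2^3,
\]
where the final step uses the hypothesis $Z < T_1 z_2$. Combining the two pieces and absorbing $(\log x)^{O(1)}$ into $z_2^{o(1)}$ (since $\log z_2 = \log x / (\log \log x)^3$) yields the target bound. The main obstacle is precisely this long-$S_d$ regime, where Lemma~\ref{lem:moments} is unavailable: the hypothesis $Z < T_1 z_2$ is exactly the amount of slack needed to pay for the van der Corput loss, and the H\"older weighting is what keeps the overall $z_2$-exponent at three rather than five.
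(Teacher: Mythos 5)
Your proof is correct, and it follows the same overall skeleton as the paper's (the $\xi_0$-expansion, a H\"older/power-mean step, reliance on Lemma~\ref{lem:moments}, and the trivial pointwise handling of case~(ii)); but you do one thing more carefully, and this is worth pointing out. The paper's proof of case~(i) applies Lemma~\ref{lem:moments} to $S_d(s) = \sum_{Z/d < m \le Z'/d} m^{-s}$ for every $d < z_2$, but Lemma~\ref{lem:moments} carries the hypothesis $T_1 \ge Q/2$, i.e.\ $Z/d \le 2T_1$, which fails for small $d$ (in particular $d = 1$) whenever $2T_1 < Z < T_1 z_2$. Your split into $D_1$ (where Lemma~\ref{lem:moments} applies) and $D_2$ (where it does not), with a van der Corput bound filling the $D_2$ gap, is exactly the right repair, so your writeup is the more rigorous of the two.

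Two smaller remarks. First, in $D_2$ you use the second-derivative test to get $|S_d(it)| \ll (Z/d)/\sqrt{T_1}$; since $Z/d > 2T_1 \ge |t|$ there, the sharper first-derivative bound $|S_d(it)| \ll (Z/d)/T_1$ is available (see the estimate~\eqref{eq:Z_bound} in the proof of Lemma~\ref{lem:long_zeta}, which comes from~\cite[Theorem 4.11]{titchmarsh} plus partial summation), giving more room. Second, the $d$-weighted H\"older with weights $d^{-1}$ and $d\,|S_d|$ is a neat device but not essential: with the uniform power-mean inequality $|Z(it)|^4 \le z_2^3 \sum_d |S_d(it)|^4$ as in the paper, the $D_2$-contribution via the first-derivative bound is $\ll z_2^3 Z^4 / T_1^3$, which is $\ll T_1 Z^2 z_2^4$ because $D_2 \neq \emptyset$ forces $T_1 > Z/z_2 \ge L_\zeta/z_2 \ge x^{1/4 - o(1)}$ (respectively $x^{0.275-o(1)}$ for $c = 0.45$), hence $z_2 \ll T_1^2$. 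Your weighted version compensates for the weaker second-derivative bound by the $d^4$-shift and closes the estimate directly from $Z < T_1 z_2$, which is why it is a tidy choice; both routes land on $T_1 Z^2 z_2^{3+o(1)} \ll T_1 Z^2 z_2^4$.
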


\begin{proof}
The second claim follows from the first as in the proof of Lemma~\ref{lem:moments}. 

For (i), note that if the coefficients of $Z$ are given by $\xi_0$, we have
\begin{align*}
|Z(s)| = \left|\sum_{Z < n \le Z'} \xi_0(n)n^{-s} \right| \le \sum_{d < z_2} \left|\sum_{Z/d < m \le Z'/d} m^{-s}\right|,
\end{align*}
and by the power-mean inequality and Lemma~\ref{lem:moments} one thus obtains
\begin{align*}
\sum_{m = 1}^M |Z(it_m)|^4 &\le \sum_{m = 1}^M \left(\sum_{d < z_2} \left|\sum_{Z/d < m \le Z'/d} m^{-s}\right|\right)^4 \\
&\ll z_2^3 \sum_{m = 1}^M \sum_{d < z_2} \left|\sum_{Z/d < m \le Z'/d} m^{-s} \right|^4 \\
&\ll z_2^3 \sum_{d < z_2} T_1\left(\frac{Z}{d}\right)^2(\log x)^8 \\
&\ll T_1Z^2z_2^4.
\end{align*}
The case $\xi_0 \cdot \log$ is follows similarly by partial summation.

For (ii), note that $g$ is supported on proper powers of primes and $|\xi_0(p^e)| \le 1$ for prime powers $p^e$, from which the result immediately follows.
\end{proof}

In conclusion, from now on we may assume that the fourth moment bounds of Lemma~\ref{lem:moments_2} apply for zeta sums $Z(s)$: if the coefficients are given by $\xi_0$ or $\xi_0 \cdot \log$ and $Z \ge T_1z_2$, we are already done proving Claim~\ref{claim} by Lemma~\ref{lem:long_zeta}, and otherwise Lemma~\ref{lem:moments_2} applies.

We note that in the case $c = 0.5$ one may replace the $z_2^{O(1)}$ losses in Lemma~\ref{lem:moments_2} by $S^{\epsilon}$ losses.

\begin{lemma}
\label{lem:moments_3}
Assume $c = 0.5$. Let $M > 0, t_1, \ldots , t_M \in [T_1, 2T_1]$ and $Z(s)$ be given, where $|t_i - t_j| \ge 1$ for $i \neq j$, $x^{1/4} \le Z \le T_1z_2$ and the coefficients of $Z(s)$ are given by $\xi_0$, $\xi_0 \cdot \log$ or $\xi_0 \cdot g$. Then
\begin{align*}
\sum_{m = 1}^M |Z(it_m)|^4 \ll T_1Z^2S^{\epsilon}
\end{align*}
and
\begin{align*}
\int_{T_1}^{2T_1} |Z(it)|^4 \d t \ll T_1Z^2S^{\epsilon}.
\end{align*}
\end{lemma}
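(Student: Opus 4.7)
The plan is to follow the outline of Lemma~\ref{lem:moments_2}, refining the step that introduces the $z_2^{O(1)}$ loss. The integral bound reduces to the sum bound (by decomposing $[T_1,2T_1]$ into unit intervals), and the cases $f = \xi_0 \cdot g$ and $f = \xi_0 \cdot \log$ follow from the main case $f = \xi_0$ by sparsity of $g$'s support and by partial summation respectively. For the $\xi_0$ case I would split on the size of $Z$ relative to $T_1^{1/2}$.

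In the easier range $Z \le T_1^{1/2} S^{\epsilon/4}$, I would apply the Montgomery--Vaughan mean value theorem to $Z(s)^2$. The coefficients $b_m$ of $Z(s)^2$ satisfy $|b_m| \le (\xi_0 * \xi_0)(m)$, and using $|\xi_0(n)| \le \tau(\gcd(n, \Pi_1))$ together with Shiu's bound (Lemma~\ref{lem:shiu}) applied to the $z_1$-rough part of $m$ and Rankin's trick for the smooth part (as in the proof of Lemma~\ref{lem:xi_to_xi0}) yields $\sum_m |b_m|^2 \ll Z^2 (\log\log x)^{O(1)}$. Montgomery--Vaughan then gives the required $\ll T_1 Z^2 S^\epsilon$ in this regime. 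In the harder range $Z > T_1^{1/2} S^{\epsilon/4}$, I would adapt the Perron strategy of Lemma~\ref{lem:moments}: decompose $Z(s) = \sum_{d \mid \Pi_1,\, d < z_2} \mu(d) d^{-s} Y_d(s)$ with $Y_d(s) = \sum_{Z/d < m \le Z'/d} m^{-s}$, apply Perron to each $Y_d$, and combine into a single contour integral whose integrand is $\zeta(\tfrac{1}{2} + i\tau)\mathcal{P}(\tfrac{1}{2} + i\tau)$ with $\mathcal{P}(s) = \sum_{d \mid \Pi_1,\, d < z_2} \mu(d) d^{-s}$. Applying H\"older's inequality as in Lemma~\ref{lem:moments} then reduces the task to establishing
$$\int_{T_1/2}^{5T_1/2} \bigl|\zeta(\tfrac{1}{2}+it)\,\mathcal{P}(\tfrac{1}{2}+it)\bigr|^4\, dt \ll T_1 S^\epsilon.$$

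The hard part is this last estimate. A naive twisted fourth moment treating $\mathcal{P}(s)^2$ as a length-$z_2^2$ Dirichlet polynomial loses $z_2^{O(1)}$, which vastly exceeds $S^\epsilon$. To beat this one must exploit that $\mathcal{P}$ has small typical magnitude on the critical line: by Montgomery--Vaughan, $\int_{T_1}^{2T_1} |\mathcal{P}(\tfrac{1}{2}+it)|^2\, dt \ll T_1 \log z_1 \ll T_1 (\log\log x)^{O(1)}$, since $\sum_{d \mid \Pi_1} 1/d = \prod_{p < z_1}(1 + 1/p) \ll \log z_1$, even though $\|\mathcal{P}\|_\infty$ can be as large as $\sqrt{z_2}$. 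I would therefore split the range of integration based on whether $|\mathcal{P}(\tfrac{1}{2}+it)|$ exceeds a threshold $V_0 = S^{\epsilon/10}$: on the typical set $\{t : |\mathcal{P}(\tfrac{1}{2}+it)| \le V_0\}$ the bound reduces to $V_0^4$ times the standard fourth moment of $\zeta$, while on the exceptional set a Hal\'asz--Montgomery type large-value estimate controls its measure in terms of $\|\mathcal{P}\|_2^2 / V_0^2$, which combined with worst-case bounds on $|\zeta|$ and $|\mathcal{P}|$ should give the desired savings.
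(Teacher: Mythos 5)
Your reductions are sound: passing from the integral to the discrete sum, disposing of $\xi_0\cdot g$ by sparsity, handling $\xi_0\cdot\log$ by partial summation, and for small $Z$ applying Montgomery--Vaughan to $Z(s)^2$ (where the needed bound $\sum_m|b_m|^2\ll Z^2(\log x)^{O(1)} = Z^2 S^{o(1)}$ is indeed attainable -- your stated $(\log\log x)^{O(1)}$ is more than you need and harder to justify, but $(\log x)^{O(1)}$ suffices and is routine). The Perron combination giving $Z(it)\ll Z^{1/2}\int|\zeta\mathcal{P}(\tfrac12+i(t+\tau))|(1+|\tau|)^{-1}\,\d\tau + O(Z^{1/2})$, and hence the reduction to $\int_{T_1/2}^{5T_1/2}|\zeta\mathcal{P}(\tfrac12+i\tau)|^4\,\d\tau \ll T_1 S^{\epsilon}$, is also the right move. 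Note that the paper itself proves this lemma only by citing Heath-Brown's Lemma~13, so there is no in-paper argument to compare against; the comparison is with what that lemma must accomplish.

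The genuine gap is the last step. Your proposed split on $|\mathcal{P}|\lessgtr V_0=S^{\epsilon/10}$ does not close. On the exceptional set $E$ you have $\Meas(E)\ll T_1(\log\log x)^{O(1)}/V_0^2$, and the only control on $\int_E|\zeta|^4$ available from "worst-case bounds'' is $\Meas(E)\cdot\sup|\zeta|^4\ll T_1 S^{-\epsilon/5}(\log\log x)^{O(1)}\cdot T_1^{2/3}$ (using, say, the sub-Weyl bound $|\zeta(\tfrac12+it)|\ll t^{1/6}$); combined with $\sup_E|\mathcal{P}|^4\ll z_2^2$ this gives $\int_E|\zeta\mathcal{P}|^4\ll T_1^{5/3}z_2^2 S^{-\epsilon/5}(\log\log x)^{O(1)}$, which exceeds the target $T_1 S^{\epsilon}$ by a factor $\gg T_1^{2/3}$. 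Since $T_1\geq Z/z_2\geq x^{1/4-o(1)}$ while $S=x^{o(1)}$, this is hopeless, and no amount of fiddling with $V_0$ or further dyadic slicing of $|\mathcal{P}|$ fixes it: the obstruction is that without moment bounds on $\zeta$ beyond the fourth, the measure of $E$ cannot be traded for savings in $\int_E|\zeta|^4$. Cauchy--Schwarz or H\"older against higher moments of $\zeta$ is equally unavailable.

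What actually makes $\int|\zeta\mathcal{P}|^4\,\d\tau\ll T_1 S^{\epsilon}$ true is not a pointwise split but the multiplicative structure of $\mathcal{P}$: the coefficients of $\mathcal{P}^2$ are supported on $z_1$-smooth integers, and the \emph{precise} main term of the twisted fourth moment $\int|\zeta^2(\tfrac12+it)|^2\,|\mathcal{P}^2(\tfrac12+it)|^2\,\d t$ involves coefficient sums of the shape $\sum_{m,n}c^{(2)}_m\overline{c^{(2)}_n}(m,n)^2/(mn)$, which factor as an Euler product over $p<z_1$ and evaluate to $(\log z_1)^{O(1)}=(\log\log x)^{O(1)}$, with error terms negligible because $z_2^2=T_1^{o(1)}$. (Your remark that a "naive'' twisted fourth moment loses $z_2^{O(1)}$ is what happens if one only uses the crude upper bound with $\sum_n|c^{(2)}_n|^2$ in place of the weighted $\sum_n|c^{(2)}_n|^2/n$; keeping the weights is essential.) This asymptotic twisted-fourth-moment computation is the content of Heath-Brown's Lemma~13, and it is the missing ingredient in your proposal.
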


\begin{proof}
The second claim follows from the first and the case $\xi_0 \cdot g$ is trivial. For the case $\xi_0$, see~\cite[Lemma 13]{HB-V}. We note that our values of $S$ and $\eta$ are different from that of Heath-Brown, but the exact same proof works. The case $\xi_0 \cdot \log$ follows by partial summation.
\end{proof}

\subsection{Power-saving bounds}
\label{sec:tools_power}

We then note that Claim~\ref{claim} holds if at least one of our polynomials gives only little saving over the trivial bound, assuming $c = 0.5$.

\begin{lemma}
\label{lem:power_0.5}
Assume $c = 0.5$. Write $F(s) = Q_1(s) \cdots Q_k(s)$. Let $\mathcal{T}$ denote the set of $t \in [T_1, 2T_1]$ for which there exists at least one $1 \le i \le k$ with $|Q_i(it)| \ge Q_i^{4/5}$. Then
$$\int_{\mathcal{T}} |Q_1(s) \cdots Q_k(s)M(s)| \d t \all Rx$$
for any $M(s)$ as in Proposition~\ref{prop:dir_to_arit}.
\end{lemma}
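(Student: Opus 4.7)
The plan is to write $\mathcal{T}=\bigcup_{i=1}^k \mathcal{T}_i$ with $\mathcal{T}_i=\{t\in[T_1,2T_1]:|Q_i(it)|\ge Q_i^{4/5}\}$, and since the number of factors in any factorization of $F$ is $O(1)$ by Information~\ref{info}, it suffices to bound $\int_{\mathcal{T}_i}|FM|\,dt \lll Rx$ for each fixed $i$ separately.

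Fix $i$ and set $\tilde F := F/Q_i$. The first observation is that Vinogradov's pointwise bound (Lemma~\ref{lem:vinogradov}), applied to each basic factor of $\tilde F$ from Information~\ref{info}, yields $|\tilde F(it)|\ll (x/Q_i)\exp(-(\log x)^{1/5})$ on $[T_0,T]$. A consequence is that if $Q_i$ is very short, say $\log Q_i = O((\log x)^{1/5})$, then Vinogradov applied to $Q_i$ itself forces $|Q_i(it)|<Q_i^{4/5}$ everywhere, making $\mathcal{T}_i$ empty; so we may assume $Q_i$ is at least a slowly growing function of $x$.

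The main step is to apply Cauchy--Schwarz in the form
$$\int_{\mathcal{T}_i}|FM|\,dt \le \Bigl(\int_{T_1}^{2T_1}|Q_iM|^2\,dt\Bigr)^{1/2}\Bigl(\int_{\mathcal{T}_i}|\tilde F|^2\,dt\Bigr)^{1/2},$$
and to estimate each factor. For the first integral I would use Heath-Brown's mean value theorem (Proposition~\ref{prop:HB_MVT}) together with the coefficient bound Lemma~\ref{lem:coefficients}, obtaining $\int|Q_iM|^2\,dt\ll (Q_i^2R^2+Q_iRT)S^{o(1)}$ (the term $Q_iR^{7/4}T^{3/4}$ is negligible since $R\le T^{1/3}$ for our parameters). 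For the second integral I bound $\int_{\mathcal{T}_i}|\tilde F|^2\,dt\le|\mathcal{T}_i|\cdot\max|\tilde F|^2$, using the Vinogradov save for the maximum and a large value estimate for $|\mathcal{T}_i|$: if $Q_i\ge L_\zeta=x^{1/4}$ then Lemma~\ref{lem:moments_3} combined with $|Q_i(it)|^4\ge Q_i^{16/5}$ on $\mathcal{T}_i$ gives $|\mathcal{T}_i|\ll TQ_i^{-6/5}S^{\epsilon}$, while for $Q_i<L_\zeta$ the classical second moment bound $\int|Q_i|^2\,dt\ll(Q_i+T)Q_iS^{o(1)}$ yields $|\mathcal{T}_i|\ll(Q_i+T)Q_i^{-3/5}S^{o(1)}$.

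Collecting these bounds, using the parameter choices $c=0.5$ (so $T=x^{1/2+o(1)}$) and $R=x^{0.07+\nu}$, and the Vinogradov save $\exp(-(\log x)^{1/5})$ from $\tilde F$, the resulting estimate reduces to $\lll Rx$ after verifying each size regime of $Q_i$. The hard part will be the intermediate range where $Q_i$ is a zeta sum of moderate size (say $x^{1/4}\le Q_i\le x^{1/2}$): here neither the fourth moment estimate nor the pointwise save is individually sharp, and one must balance them through Cauchy--Schwarz. The specific threshold $Q_i^{4/5}$ is calibrated precisely so that, for $c=0.5$, the balance closes with an $x^{-\epsilon}$ margin, while for smaller $Q_i$ the Vinogradov save (which is polynomial-saving when iterated through several basic factors) provides the slack.
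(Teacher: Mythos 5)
Your proposed route does not match the paper's (which simply invokes~\cite[Section 9]{HB-V}, where the argument is built on the trivial bound $|M(it)|\le R$ together with large value estimates applied to \emph{all} the factors $Q_j$, after a further dyadic decomposition by the sizes $|Q_j(it)|\sim Q_j^{\sigma_j}$ for every $j$, not only $j=i$). Your alternative has a genuine, quantitative gap that I do not think can be repaired by the ingredients you list.

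The fatal step is the combination of Cauchy--Schwarz with $\int_{\mathcal{T}_i}|\tilde F|^2\,dt\le|\mathcal{T}_i|\max|\tilde F|^2$. Carrying out the computation: Heath--Brown's mean value theorem gives $\bigl(\int|Q_iM|^2\bigr)^{1/2}\gg Q_iR$ (this term is present regardless of the length of $Q_i$), and the pointwise bound gives $\max|\tilde F|\le (x/Q_i)\exp(-(\log x)^{1/5})$, so your bound contains the contribution
$$
Q_iR\cdot|\mathcal{T}_i|^{1/2}\cdot\frac{x}{Q_i}\exp\bigl(-(\log x)^{1/5}\bigr)=Rx\,|\mathcal{T}_i|^{1/2}\exp\bigl(-(\log x)^{1/5}\bigr).
$$
To beat $Rx/S^{\epsilon}$ this forces $|\mathcal{T}_i|\ll\exp\bigl(2(\log x)^{1/5}\bigr)$. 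But none of your large value estimates comes anywhere near that. For instance Lemma~\ref{lem:moments_3} gives $|\mathcal{T}_i|\ll TQ_i^{-6/5}S^{\epsilon}$, which for $Q_i$ in the range $[x^{1/4},x^{0.42}]$ is a genuine power of $x$ (e.g.\ of size $\approx x^{1/5}$ at $Q_i=x^{1/4}$); Huxley gives $|\mathcal{T}_i|\ll S^{o(1)}(Q_i^{2/5}+TQ_i^{-4/5})$, again a power of $x$ since $Q_i\ge z_1$. The Vinogradov save $\exp(-(\log x)^{1/5})$ is subexponential, so it cannot absorb any fixed power of $x$. Your remark that the Vinogradov save ``is polynomial-saving when iterated through several basic factors'' is not correct: there are at most $O(\log x/\log z_1)=(\log\log x)^{5}$ factors, so even the compounded saving is $\exp\bigl(-O((\log\log x)^{5}(\log x)^{1/5})\bigr)=x^{-o(1)}$, never a power saving. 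Similarly, your observation that $\mathcal{T}_i$ is empty when $\log Q_i=O((\log x)^{1/5})$ is vacuously true here, since every non-constant factor has length $\ge z_1$, and $\log z_1=\log x/(\log\log x)^5\gg(\log x)^{1/5}$.

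A secondary problem is that applying Proposition~\ref{prop:HB_MVT} to $Q_iM$ with $Q_i\ge L_{\zeta}$ a zeta sum picks up a factor $\max_n|\xi_0(n)|^2=\exp(O(\log x/\log\log x))$, which is itself larger than the Vinogradov gain, so even the ``easy'' regimes you describe do not close. The paper avoids this by never putting $M$ next to a zeta sum, and more generally by never asking the pointwise bound to cancel a power of $x$: the argument of~\cite[Section 9]{HB-V} bounds $\int_{\mathcal{T}}|F|$ directly, decomposing $\mathcal{T}$ into level sets $\mathcal{T}_{\sigma}$ for \emph{every} $Q_j$ simultaneously. The key point is that if some $\sigma_j$ (with $j\ne i$) is small then $\prod_jQ_j^{\sigma_j}$ is already much smaller than $x$, so a correspondingly weaker bound on $|\mathcal{T}_{\sigma}|$ suffices; if all $\sigma_j$ are large, one gains by applying the large-value estimate to several of the $Q_j$ at once. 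This balance is precisely what your single-polynomial-at-a-time bound on $|\mathcal{T}_i|$ cannot reproduce.
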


\begin{proof}
See~\cite[Section 9]{HB-V}. As with Lemma~\ref{lem:moments_3}, our values of $S$ and $\eta$ are different from those of Heath-Brown, but this changes nothing of improtance. Our value of $R$ is also different, but as the proof is based on the trivial bound $|M(s)| \le R$ and on large value theorems on the polynomials $Q_i(s)$, the proof of this lemma goes through for any value of $R$. Furthermore, while Heath-Brown's Dirichlet polynomials have coefficients $c_k = \xi_0(k)$, we also have the options $c_k = \xi_0(k)\log k$ and $c_k = \xi_0(k)g(k)$. However, the fourth moment estimate of Lemma~\ref{lem:moments_3} applies equally well in all of these cases.
\end{proof}

We note that we could establish a similar lemma when $c = 0.45$ but with $Q_i^{4/5}$ replaced by a larger threshold. However, we will take an approach which will not rely on pointwise bounds (other than Lemma~\ref{lem:vinogradov}).

\subsection{At least two zeta factors}

Recall the definition of zeta factor from Definition \ref{def:zeta_sum}. In this subsection we handle the case where $F(s)$ has at least two zeta factors.

\begin{lemma}
\label{lem:two_zetas}
Assume that $F(s)$ factorizes as $F(s) = P(s)Z_1(s)Z_2(s)$, where $Z_i$ are zeta sums with $Z_i > L_{\zeta}$. Then Claim~\ref{claim} holds.
\end{lemma}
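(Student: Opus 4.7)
The plan is to apply Hölder's inequality with exponents $(4,4,2)$:
$$\int_{T_1}^{2T_1}|FM|\,dt \le \Bigl(\int_{T_1}^{2T_1}|Z_1|^4\,dt\Bigr)^{1/4}\Bigl(\int_{T_1}^{2T_1}|Z_2|^4\,dt\Bigr)^{1/4}\Bigl(\int_{T_1}^{2T_1}|PM|^2\,dt\Bigr)^{1/2}.$$
For the fourth moment factors I would invoke Lemma~\ref{lem:moments_3} (when $c=0.5$) or Lemma~\ref{lem:moments_2} (when $c=0.45$), giving $(\int|Z_i|^4)^{1/4} \ll T_1^{1/4}Z_i^{1/2}L$, where $L=S^{\epsilon/4}$ or $L=z_2$ depending on the case. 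For $\int|PM|^2$ I would apply Proposition~\ref{prop:HB_MVT} with its ambient parameter taken to be $\max(T_1,3x/H')$ so that the condition $m_i\le T$ is met; the $R^{7/4}T^{3/4}$ term there is absorbable since $R\le T^{1/3}$ for our parameters.

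The coefficient bound on $P$ demands some care: if every factor of $P$ has length $\le L_\zeta$ then Lemma~\ref{lem:coefficients}(ii) gives $\max|p_n|\le S^{o(1)}$. If instead $P$ contains a longer factor, that factor must be a zeta sum by Information~\ref{info}, and I would pull it out using the pointwise bound of Lemma~\ref{lem:vinogradov} (which saves an entire $\exp(-(\log x)^{1/5})$, dwarfing any subsequent loss) before applying Proposition~\ref{prop:HB_MVT} to the remaining short-factor part. Either way, $(\int|PM|^2)^{1/2} \ll (PR + (PRT)^{1/2}x^{\epsilon/2})\cdot S^{o(1)}$.

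Combining everything and using $PZ_1Z_2 = x^{1+o(1)}$, the total bound splits into
$$T_1^{1/2}L^2\frac{Rx}{(Z_1Z_2)^{1/2}}S^{o(1)} \;+\; T_1^{1/2}T^{1/2}L^2 (xR)^{1/2}x^{\epsilon/2}S^{o(1)}.$$
The second term is comfortably $\ll Rx/S^{\epsilon'}$: after squaring this reduces to $x^{1-2c}\ll R$, which holds since $R\ge x^{0.07+\nu}$ for $c=0.5$ and $R\ge x^{0.18+\nu}\ge x^{0.1}$ for $c=0.45$. The first term is where the main obstacle lies, since the naive bound $Z_1Z_2\ge L_\zeta^2$ alone is \emph{not} enough (for $c=0.5$ the ratio $T/L_\zeta^2=S^2$ already swamps the gain $L^2=S^{\epsilon/2}$). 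The key observation that saves the day is that $L_\zeta\in\mathcal{S}$, so by Information~\ref{info} no factor of $F(s)$ lies within $x^\eta$ of $L_\zeta$, forcing $Z_1,Z_2\ge L_\zeta x^\eta$ and hence $Z_1Z_2\ge L_\zeta^2 x^{2\eta}$. Since $\log(x^\eta)=\log x\cdot\exp(-(\log\log\log x)^2)\gg (\log\log x)^{17}=\log S$ for large $x$, the factor $x^{-\eta}$ dominates any power of $S$; moreover, in the $c=0.45$ case the built-in margin $\exp(\log x/\sqrt{\log\log x})$ inside $L_\zeta$ beats the $z_2^2$ loss. Plugging $(Z_1Z_2)^{1/2}\ge L_\zeta x^\eta$ into the first term therefore yields the required savings of $S^\epsilon$ over $Rx$, establishing Claim~\ref{claim}.
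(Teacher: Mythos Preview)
Your Hölder $(4,4,2)$ setup and the analysis of the two resulting terms are essentially the paper's argument, and in particular your use of $L_\zeta\in\mathcal{S}$ to upgrade $Z_i\ge L_\zeta$ to $Z_i\ge L_\zeta x^\eta$ is exactly the key point for the first term. The second term is also fine.

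The genuine gap is your treatment of the case where $P(s)$ itself contains a zeta factor $Z_3$. Pulling $Z_3$ out pointwise via Lemma~\ref{lem:vinogradov} and then applying Proposition~\ref{prop:HB_MVT} to $P'=P/Z_3$ does \emph{not} give $(\int|PM|^2)^{1/2}\ll (PR+(PRT)^{1/2}x^{\epsilon/2})S^{o(1)}$. Concretely,
\[
\Bigl(\int|PM|^2\Bigr)^{1/2}\le Z_3\exp(-(\log x)^{1/5})\Bigl(\int|P'M|^2\Bigr)^{1/2}
\ll \exp(-(\log x)^{1/5})\bigl(PR+(Z_3PRT)^{1/2}x^{\epsilon/2}\bigr)S^{o(1)},
\]
so the second summand carries an extra factor $Z_3^{1/2}\ge L_\zeta^{1/2}\ge x^{1/8}$, which is polynomial in $x$ and completely swamps the subpolynomial saving $\exp(-(\log x)^{1/5})$. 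Tracing this through, the second term of your final bound becomes $T\,Z_3^{1/2}(xR)^{1/2}x^{\epsilon/2}\exp(-(\log x)^{1/5})$, which for $c=0.5$ (say $Z_3\approx x^{1/4}$) is of order $x^{9/8}R^{1/2}$, far larger than the target $Rx$.

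The paper handles this case differently. For $c=0.45$ it never splits $P$: it simply uses Lemma~\ref{lem:coefficients}(i) together with the crude bound $\max|p_n|\le\tau(k)^{O(1)}(\log k)^{O(1)}\ll\exp(O(\log x/\log\log x))$, which is absorbed by the $\exp(\log x/\sqrt{\log\log x})$ margin built into $L_\zeta$. For $c=0.5$ that margin is unavailable, so instead one writes $P=Z_3Q$ and applies a \emph{fourfold} Hölder $(4,4,4,4)$, using Lemma~\ref{lem:moments_3} on each $Z_i$ and Proposition~\ref{prop:HB_MVT} on $|Q(it)^2M(it)|^2$ (after $|M|^4\le R^2|M|^2$); the point is that $Q<x^{1/4-\eta}$, so the coefficients of $Q^2$ are $S^{o(1)}$ by Lemma~\ref{lem:coefficients}(ii), and the exponents balance.
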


Note that it may be the case that $P(s)$ has a zeta factor.

\begin{proof}
Assume first that $c = 0.45$. If $Z_i \ge T_1z_2$ for some $i \in \{1, 2\}$, we are done by Lemma~\ref{lem:long_zeta}, so assume not. We apply Hölder's inequality, the fourth moment estimate from Lemma~\ref{lem:moments_2} and Heath-Brown's mean value theorem from Proposition~\ref{prop:HB_MVT}. Noting that $RT > R^{7/4}T^{3/4}$ with out choice of parameters and that the coefficients $p_n$ of $P(s)$ satisfy $\max |p_n| \le \exp(\log x / (\log \log x)^{1 - \epsilon}) = x^{o(1)}$ by Lemma~\ref{lem:coefficients} and the classical bound $\tau(k) \ll \exp(O(\log x / \log \log x))$, we have
\begin{align}
\label{eq:two_zeta_0.45}
&\int_{T_1}^{2T_1} |F(it)M(it)| \d t \\
\ll &\left(\int_{0}^{T} |M(it)P(it)|^2 \d t\right)^{1/2}\left(\int_{T_1}^{2T_1} |Z_1(it)|^4 \d t\right)^{1/4}\left(\int_{T_1}^{2T_1} |Z_2(it)|^4 \d t\right)^{1/4} \nonumber \\
\ll &(P^2R^2 + PRTx^{\epsilon})^{1/2}\sqrt{T_1Z_1Z_2}z_2^2 \max |p_n| \nonumber \\
\ll &\frac{Rx\sqrt{T}\exp(\log x / (\log \log x)^{1-\epsilon})}{\sqrt{Z_1Z_2}} + R^{1/2}Tx^{1/2 + \epsilon} \nonumber,
\end{align}
which is acceptable.

If $c = 0.5$, we consider two cases depending on whether $P(s)$ has zeta factors or not. If $P(s)$ has no zeta factors (so its coefficients are bounded by $S^{o(1)}$ by Lemma \ref{lem:coefficients}), we proceed similarly as in \eqref{eq:two_zeta_0.45}, using the stronger fourth moment estimate from Lemma~\ref{lem:moments_3}. We obtain the bound
\begin{align*}
\int_{T_1}^{2T_1} |F(it)M(it)| \d t \ll (P^2R^2 + PRTx^{\epsilon})^{1/2}\sqrt{TZ_1Z_2}S^{o(1)}
\end{align*}
As we have disposed polynomials with length close to $L_{\zeta} \in \mathcal{S}$, we have $Z_i \ge L_{\zeta}x^{\eta}$. Note that $x^{\eta} \gg S$. It follows that $P \le x^{1/2 - \eta/2}$, and hence the above is bounded by
\begin{align*}
S^{o(1)}R\sqrt{TPZ_1Z_2} \cdot \sqrt{P} + x^{\epsilon}T \sqrt{RPZ_1Z_2} \ll
Rx^{1 - \eta/5} + x^{2\epsilon}\sqrt{R}T\sqrt{x},
\end{align*}
which is sufficient.

If $c = 0.5$ and $P(s)$ has a zeta factor $Z_3(s)$, we write $P(s) = Z_3(s)Q(s)$ and use Hölder's inequality to get
\begin{align*}
\int_{T_1}^{2T_1} |F(it)M(it)| \d t \ll I_1^{1/4}I_2^{1/4}I_3^{1/4} \left(\int_{T_1}^{2T_1} |Q(it)M(it)|^4 \d t\right)^{1/4},
\end{align*}
where by Lemma \ref{lem:moments_3}
\begin{align*}
I_i := \int_{T_1}^{2T_1} |Z_i(it)|^4 \d t \ll T_1Z_i^2S^{o(1)}.
\end{align*}
We further bound $|M(it)|^4 \le R^2|M(it)|^2$ and apply Proposition~\ref{prop:HB_MVT} to the polynomial $Q^2$. Note that $Q$ is shorter than $xS^{\epsilon}/Z_1Z_2Z_3 < x^{1/4 - \eta}$ and hence the coefficients of $Q^2$ are bounded by $S^{o(1)}$ by Lemma \ref{lem:coefficients}. We get
\begin{align*}
\int_{T_1}^{2T_1} |F(it)M(it)| \d t &\ll S^{o(1)}T_1^{3/4}\sqrt{RZ_1Z_2Z_3} \left(\int_{T_1}^{2T_1} |Q^2(it)M(it)|^2 \d t \right)^{1/4} \\
&\ll S^{o(1)}T^{3/4}\sqrt{RZ_1Z_2Z_3}\left(Q^4R^2 + x^{\epsilon}Q^2RT\right)^{1/4} \\
&\ll S^{o(1)}T^{3/4}R\sqrt{Qx} + x^{\epsilon}T\sqrt{x}R^{3/4}.
\end{align*}
The first term is small enough by $Q < x^{1/4 - \eta}$ and the second term clearly is small enough.
\end{proof}

\subsection{Writing \texorpdfstring{$F(s) = A(s)B(s)C(s)$}{F(s) = A(s)B(s)C(s)}}

We then take products of the factors of $F(s)$ in order to write $F(s) = A(s)B(s)C(s)$ for some $A(s), B(s), C(s)$. First note that the set of $t$ for which $\min(|A(it)|, |B(it)|, |C(it)|) \le x^{-1}$, say, has a negligible contribution to the integral in~\eqref{eq:target}. We may then partition the rest of $t \in [T_1, 2T_1]$ into $O((\log x)^{3+\epsilon}) = S^{o(1)}$ sets based on the values $\sigma_A, \sigma_B, \sigma_C$ satisfying $|A(it)| \sim A^{\sigma_A}$, $|B(it)| \sim B^{\sigma_B}$ and $|C(it)| \sim C^{\sigma_C}$. Note that we may assume $\sigma_A, \sigma_B, \sigma_C \le 1 - (\log x)^{-4/5}$ by Lemma~\ref{lem:vinogradov} (and $\sigma_A, \sigma_B, \sigma_C \le 4/5$ if $c = 0.5$ by Lemma~\ref{lem:power_0.5}). Given $\sigma_A, \sigma_B, \sigma_C$, we denote the set of such $t$ by $\mathcal{T}_{\sigma}$. 

Hence, it suffices to show that
$$\int_{\mathcal{T}_{\sigma}} |A(it)B(it)C(it)M(it)| \d t \all Rx.$$
We utilize two different strategies for bounding the integral. The first one is the simple bound
\begin{align*}
\int_{\mathcal{T}_{\sigma}} |A(it)B(it)C(it)M(it)| \d t \ll A^{\sigma_A}B^{\sigma_B}C^{\sigma_C}R|\mathcal{T}_{\sigma}|.
\end{align*}
This is sufficient if
\begin{align}
\label{eq:L1-cond}
|\mathcal{T}_{\sigma}| \all A^{1 - \sigma_A}B^{1 - \sigma_B}C^{1-\sigma_C}.
\end{align}

The second strategy is to apply the Cauchy-Schwarz inequality and Proposition \ref{prop:HB_MVT} to get (recall that $RT > R^{7/4}T^{3/4}$ by our choice of parameters)
\begin{align*}
&\int_{\mathcal{T}_{\sigma}} |A(it)B(it)C(it)M(it)| \d t \\
\ll &\left(\int_{T_0}^T |B(it)M(it)|^2 \d t\right)^{1/2}\left(\int_{\mathcal{T}_{\sigma}} |A(it)C(it)|^2\right)^{1/2} \\
\ll &\left((\max |b_n|^2)\left(B^2R^2 + BRTx^{\epsilon}\right)\right)^{1/2}\left(|\mathcal{T}_{\sigma}|A^{2\sigma_A}C^{2\sigma_C}\right)^{1/2}.
\end{align*}
If $F(s)$ has no zeta factors, so that in particular $\max |b_n|^2 = S^{o(1)}$ by Lemma \ref{lem:coefficients}, Claim \ref{claim} reduces to showing that
\begin{align}
\label{eq:cond}
|\mathcal{T}_{\sigma}| \ll \min\left(S^{-\epsilon}A^{2-2\sigma_A}C^{2-2\sigma_C}, H'RA^{1-2\sigma_A}C^{1-2\sigma_C}\right)
\end{align}
holds for any choice of $\sigma_A, \sigma_B, \sigma_C \le 1 - (\log x)^{-4/5}$ (and $\sigma_A, \sigma_B, \sigma_C \le 4/5$ if $c = 0.5$). Note that we have dropped the $x^{\epsilon}$ loss in the second term of \eqref{eq:cond}, as one may decrease the value of $\nu$ in \eqref{eq:def-R} if necessary.

The case where $F(s)$ has a zeta factor is similar. By Lemma \ref{lem:two_zetas} we may assume there is only one zeta factor, which we choose to be $A(s)$, so that the coefficients of $B$ again satisfy $\max |b_n|^2 = S^{o(1)}$.

Our proofs for \eqref{eq:cond} rely on Huxley's large value theorem.

\begin{lemma}
\label{lem:huxley}
Let $P(s)$ be a Dirichlet polynomial, let $T$ and $\sigma \le 1$ be given and let $V$ denote the measure of $t \in [T, 2T]$ for which $|P(it)| \sim P^{\sigma}$. Write $G = \sum |c_p|^2$, where the sum is over the coefficients $c_p$ of $P(s)$. Then
\begin{align*}
V \all \left(GP^{1 - 2\sigma} + T\min(GP^{-2\sigma}, G^3P^{1 - 6\sigma})\right).
\end{align*}
\end{lemma}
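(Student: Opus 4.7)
The plan is to reduce the measure bound to a count of well-spaced large-value points and then combine Montgomery's $L^2$ mean value theorem with Huxley's classical large value theorem. For the reduction, I would partition $[T,2T]$ into unit subintervals and, on each subinterval on which $|P(it)| \gtrsim P^\sigma$ holds somewhere, select a representative. This yields $R \gg V / \log^{O(1)} T$ points $t_1,\ldots,t_R \in [T,2T]$ satisfying $|t_r - t_s|\ge 1$ for $r\neq s$ and $|P(it_r)| \ge \tfrac12 P^\sigma$ for every $r$. Since the logarithmic loss is harmless under $\all$, it then suffices to prove
$$R \all GP^{1-2\sigma} + T\min\bigl(GP^{-2\sigma},\, G^3 P^{1-6\sigma}\bigr).$$

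For the first summand together with the ``easy'' alternative $GP^{-2\sigma}$ in the minimum, I would apply the standard $L^2$ mean-value estimate $\int_T^{2T} |P(it)|^2\,dt \ll G(T+P)\log^{O(1)} P$. Combined with $|P(it_r)|^2 \gg P^{2\sigma}$ and the well-spacing (via a Gallagher-type sampling inequality), this produces
$$R P^{2\sigma} \ll \sum_{r=1}^R |P(it_r)|^2 \ll G(T+P)\log^{O(1)} P,$$
giving $R \all GP^{1-2\sigma} + TGP^{-2\sigma}$. This accounts for the first bound and the easier alternative in the minimum simultaneously.

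For the remaining alternative $TG^3 P^{1-6\sigma}$ I would invoke Huxley's large value theorem (from his 1975 Inventiones paper; see also \cite{IK}), which states
$$R \ll_\epsilon (PT)^\epsilon \bigl(GP^{1-2\sigma} + G^3 T P^{1-6\sigma}\bigr).$$
The proof proceeds by the Hal\'asz--Montgomery method: one applies the inequality $\sum_r |\langle \xi,\phi_r\rangle|^2 \le \|\xi\|^2 \max_r \sum_s |\langle\phi_r,\phi_s\rangle|$ to the coefficient vector $\xi=(c_n)$ and to the auxiliary vectors $\phi_r=(n^{-it_r})_{n\sim P}$, bounds the off-diagonal inner products $\langle\phi_r,\phi_s\rangle=\sum_{n\sim P} n^{-i(t_r-t_s)}$ via truncated zeta-sum estimates, and then sharpens the resulting $G$-dependence to a $G^3$-dependence by a clumping / reflection argument on the well-spaced sample points.

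The main obstacle is of course this last bound, which is the technical core of large-value theory. Since Huxley's theorem is a well-established classical result whose statement contains only $(PT)^\epsilon$ losses (comfortably absorbed by the $\all$-notation, given $S \ge (\log x)^A$ for every fixed $A$), the plan is simply to cite it rather than reprove it and to check that combining its output with the mean-value bound above indeed yields the stated inequality.
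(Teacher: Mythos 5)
Your proposal is correct and takes essentially the same route as the paper, which simply cites \cite[Theorem 9.7 and Corollary 9.9]{IK} — the discrete Montgomery $L^2$ mean-value estimate for the $GP^{1-2\sigma} + TGP^{-2\sigma}$ bound and Huxley's large value theorem for the $GP^{1-2\sigma} + TG^3P^{1-6\sigma}$ bound, with the stated $\min$ arising from intersecting the two. You have merely spelled out the reduction to well-spaced points that these references implicitly carry out; the substance is identical.
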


\begin{proof}
See~\cite[Theorem 9.7 and Corollary 9.9]{IK}.
\end{proof}

When we apply Lemma~\ref{lem:huxley}, $P(s)$ will always be a moment of products of factors of $F(s)$, where those factors are shorter than $L_{\zeta}$, with $P = x^{O(1)}$. Thus Lemma~\ref{lem:coefficients} gives $G \ll PS^{o(1)}$, and Lemma \ref{lem:huxley} implies
\begin{align}
\label{eq:huxley}
|\mathcal{T}_{\sigma}| \ll S^{o(1)}\left(P^{2 - 2\sigma} + T\min(P^{1 - 2\sigma}, P^{4 - 6\sigma})\right).
\end{align}

\section{Ranges of \texorpdfstring{$(A, B, C)$}{(A, B, C)}}
\label{sec:ranges}

In this section we determine certain cases where \eqref{eq:cond} and thus Claim \ref{claim} hold. We first consider the case $c = 0.5$.

\begin{proposition}[Ranges for $c = 0.5$]
\label{prop:range_0.5}
Let $c = 0.5$ and $R = x^{0.07 + \nu}$. Assume $F(s) = A(s)B(s)C(s)$, where $F(s), A(s), B(s)$ and $C(s)$ satisfy at least one of the following conditions:
\begin{itemize}
\item[(i)] $F(s)$ has no zeta factors, $A, B, C \ge z_1$ and $A, B \ge x^{0.43}$.
\item[(ii)] $F(s)$ has no zeta factors, $A, B, C \ge z_1$ and $B < x^{0.43}, AC^{3/5} \le x^{0.57}$ and $A \le x^{0.56}\min(x/C^8, 1)$.
\item[(iii)] $A(s)$ is a zeta sum, $A \ge L_{\zeta}$, $BS^3 \le x^{1/2}$ and $C \le x^{0.32}$.
\end{itemize}
Then Claim~\ref{claim} holds.
\end{proposition}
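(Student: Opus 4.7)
The strategy is to verify the sufficient condition \eqref{eq:cond} for each admissible level set $\mathcal{T}_\sigma$ determined by $|A(it)| \sim A^{\sigma_A}$, $|B(it)| \sim B^{\sigma_B}$, $|C(it)| \sim C^{\sigma_C}$. Since $c = 0.5$, Lemma~\ref{lem:power_0.5} restricts $\sigma_A, \sigma_B, \sigma_C \le 4/5$, and Huxley's bound \eqref{eq:huxley} applies to any polynomial built as a moment of non-zeta factors. A useful piece of flexibility is that the Cauchy-Schwarz inequality underlying \eqref{eq:cond} may be carried out with $M$ grouped with any one of $A$, $B$, or $C$, so all three orientations of \eqref{eq:cond} are available.

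For case (i), all factors of $F$ are non-zeta, and $ABC \approx x$ together with $A, B \ge x^{0.43}$ forces $C \le x^{0.14}$ and $A, B \le x^{0.57}$. The plan is to apply Huxley to an appropriate combined moment such as $AB$, $AC$, $BC$, or to $A$, $B$, or $C$ individually, choosing the combination and the orientation of Cauchy-Schwarz according to which of $\sigma_A, \sigma_B, \sigma_C$ is most constraining and which Huxley branch is active. The key computational observation is that Huxley applied to a product $XY$ at the level set $|X(it)Y(it)| = X^{\sigma_X}Y^{\sigma_Y}$ produces exponents of the form $X^{2-2\sigma_X}Y^{2-2\sigma_Y}$, $TX^{1-2\sigma_X}Y^{1-2\sigma_Y}$, $TX^{4-6\sigma_X}Y^{4-6\sigma_Y}$, which align cleanly with the right-hand side of \eqref{eq:cond}. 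The bound $A, B \ge x^{0.43}$ provides the slack needed, via $T = x^{1/2}$ and $RH' = x^{0.57+\nu}$, to absorb the $T$ and $T/RH'$ losses across all values of $\sigma$.

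For case (ii), $B < x^{0.43}$ is short, so products involving $B$ may no longer exceed $T$ by a sufficient margin; instead we apply Huxley to a combined moment $A^a C^b$ with $(a, b)$ chosen according to which Huxley branch is active. The constraint $AC^{3/5} \le x^{0.57}$ matches the first Huxley term $(A^aC^b)^{2-2\sigma}$ against \eqref{eq:cond} after balancing exponents. The two-piece constraint $A \le x^{0.56}\min(x/C^8, 1)$ reflects the two remaining branches: the bound $A \le x^{0.56}$ is sharp when $C \le x^{1/8}$ and the middle Huxley branch $T(A^aC^b)^{1-2\sigma}$ is active, while the bound $AC^8 \le x^{1.56}$ is sharp when $C > x^{1/8}$ and the third branch $T(A^aC^b)^{4-6\sigma}$ becomes active, requiring a larger exponent $b$ on $C$ in the combined moment.

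For case (iii), by Lemma~\ref{lem:two_zetas} we may assume $A$ is the only zeta factor, so $B$ and $C$ have coefficients bounded by $S^{o(1)}$ and the fourth moment of Lemma~\ref{lem:moments_3} applies to $A$. Apply Cauchy-Schwarz as
\begin{align*}
\int_{T_1}^{2T_1}|F(it)M(it)|\,dt \le \left(\int_0^T|B(it)M(it)|^2\,dt\right)^{1/2}\left(\int_{T_1}^{2T_1}|A(it)C(it)|^2\,dt\right)^{1/2},
\end{align*}
bounding the first factor by $(B^2R^2 + BRT)^{1/2}$ via Proposition~\ref{prop:HB_MVT}. For the second factor, a further Cauchy-Schwarz together with $\int|A|^4 \all TA^2$ from Lemma~\ref{lem:moments_3} and the standard mean square bound $\int|C^2|^2 \ll (C^2 + T)C^2$ yields $(\int|AC|^2)^{1/2} \all T^{1/4}A^{1/2}(C^2+T)^{1/4}C^{1/2}$. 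Multiplying out and using $ABC \approx x$ together with $A \ge L_\zeta = x^{1/4}$, the constraint $BS^3 \le x^{1/2}$ controls the terms dominant in the regime $C^2 \le T$, while $C \le x^{0.32}$ controls the terms dominant when $C^2 \ge T$.

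The main obstacle will be case (ii): the two-piece constraint on $A$ reflects an optimization across several Huxley branches and choices of $(a, b)$ in the combined moment, so the analysis requires careful bookkeeping to verify that the stated inequalities correspond exactly to the boundaries of the admissible region in the $(\log_x A, \log_x C)$-plane. The arithmetic is mechanical but requires attention to ensure no more than $S^{o(1)}$ is lost at any step.
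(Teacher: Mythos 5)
Your case (iii) argument is essentially the paper's: both apply the fourth moment of $A$ from Lemma~\ref{lem:moments_3} and combine it (via Cauchy--Schwarz, or equivalently via a geometric mean of two bounds on $|\mathcal{T}_\sigma|$) with a mean-square or Huxley bound for $C^2$, and you correctly identify where $BS^3\le x^{1/2}$ and $C\le x^{0.32}$ enter.

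For cases (i) and (ii), however, there is a genuine gap. The paper's key device is \emph{not} to apply Huxley once to a combined Dirichlet polynomial like $AB$, $AC$, $BC$ or $A^aC^b$. It applies Huxley \emph{separately} to $A$ and to a high moment $C^w$ — with $w$ chosen according to the length of $C$, roughly so that $C^w\approx T$, which can require $w$ unboundedly large since $C$ may be as short as $z_1$ — and then combines the two resulting bounds on $|\mathcal{T}_\sigma|$ by a weighted geometric mean, simultaneously averaging between the two branches of the $\min$ in the $A$-bound. Applying Huxley to a single product $AC$ gives a first term $A^{2-2\sigma_A}C^{2-2\sigma_C}$ that matches the target, but the $T$-branch produces $T\min(A^{1-2\sigma_A}C^{1-2\sigma_C},A^{4-6\sigma_A}C^{4-6\sigma_C})$, and the first of these requires $AC\gg T$, which fails precisely when $C$ is short (here $C\le x^{0.14}$ while $T\approx x^{1/2}$). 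Likewise, applying Huxley to $AC^w$ directly produces a leading term $A^{2-2\sigma_A}C^{2w-2w\sigma_C}$, which overshoots the target $A^{2-2\sigma_A}C^{2-2\sigma_C}$ unless $w=1$. So the "combined moment" plan as stated does not close; the separate-application-plus-weighted-averaging is an essential extra ingredient.

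A secondary misreading: in case (i) the hypothesis $A,B\ge x^{0.43}$ is used only at the very start, to guarantee $AC\le xS^\epsilon/B\le H'R$, which collapses \eqref{eq:cond} to the single requirement $|\mathcal{T}_\sigma|\all A^{2-2\sigma_A}C^{2-2\sigma_C}$; it does not "absorb $T$ losses across all $\sigma$." In case (ii), the complementary inequality $B<x^{0.43}$ instead gives $AC\ge H'RS^{-\epsilon}$, so only the $H'R$-branch of \eqref{eq:cond} need be verified, and the constraint $AC^{3/5}\le x^{0.57}$ is there to handle the regime where the non-$T$ Huxley term $A^{2-2\sigma_A}$ dominates — a case your sketch does not address. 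Finally, the two conditions $A\le x^{0.56}$ and $AC^8\le x^{1.56}$ arise from the two branches of Huxley applied to $C^4$ (with $w=4$ fixed) inside the weighted average, not from the middle versus third branch of Huxley applied to a single polynomial $A^aC^b$; your attribution of the branches is therefore not quite right.
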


Note that in (iii) we allow $B(s)$ or $C(s)$ to be constant polynomials (though in the most difficult case $A = x^{1/4 + \epsilon}$ the conditions imply that $B(s)$ and $C(s)$ are non-constant).

The set of all $A \ge B \ge C$ with $ABC = x^{1 + o(1)}$ satisfying (i) or (ii) are illustrated in Figure~\ref{fig:kuva1} in black, the $x$-axis denoting the value of $\log A / \log x$ and the $y$-axis $\log B / \log x$. Note that the length of $C$ is then determined by $ABC = x^{1 + o(1)}$. We further mark the outlines of the five other symmetric cases in the figure, the axes of symmetry denoted by line segments. The triangle corresponds to the region $AB \le x^{1 + o(1)}, A, B \ge 1$.

\begin{figure}[ht]
	\centering
	\includegraphics[scale=0.75]{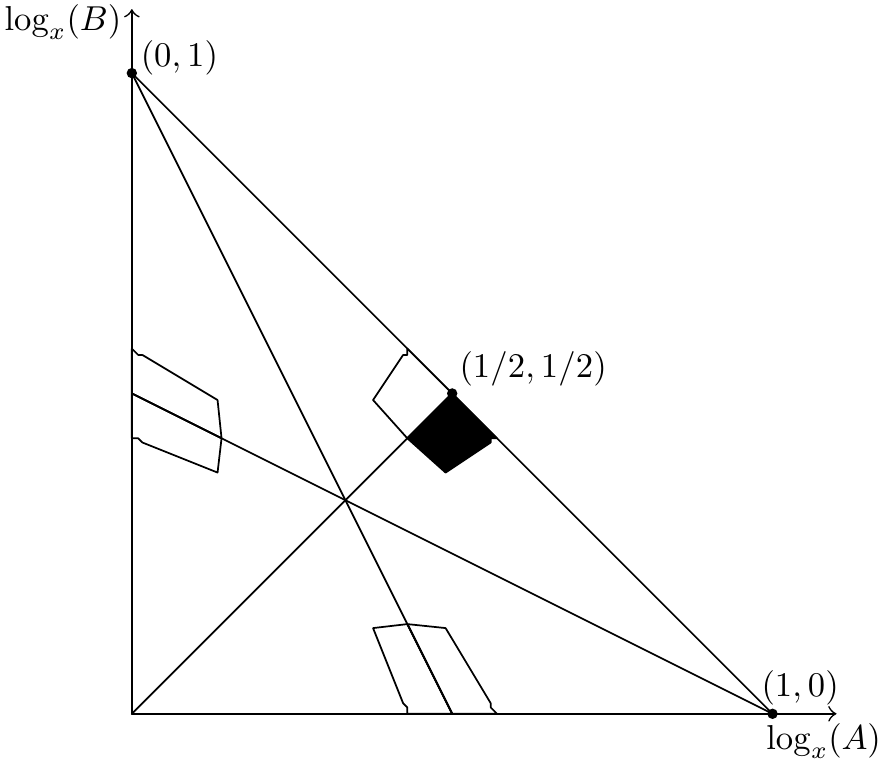}
	\caption{Set of $(A, B, C)$ covered by Proposition~\ref{prop:range_0.5}(i)-(ii).}
	\label{fig:kuva1}
\end{figure}

\begin{proof}
We aim to show~\eqref{eq:cond}. By Lemma~\ref{lem:power_0.5} we may assume $\sigma_A, \sigma_B, \sigma_C \le 4/5$. Our proof is somewhat similar to the proof of \cite[Proposition 3]{HB-V}.

(i): We may assume $A \ge B$. Noting that $AC \le xS^{\epsilon}/B \le H'R$, it suffices to show 
\begin{align}
\label{eq:proof_i}
|\mathcal{T}_{\sigma}| \all A^{2-2\sigma_A}C^{2-2\sigma_C}.
\end{align}
By~\eqref{eq:huxley} we have
$$|\mathcal{T}_{\sigma}| \ll S^{o(1)}\left(A^{2 - 2\sigma_A} + T\min(A^{1 - 2\sigma_A}, A^{4 - 6\sigma_A})\right).$$
If the former term dominates, we are done, as $C \ge z_1$ and $\sigma_C \le 1 - (\log x)^{-4/5}$. Hence we may assume 
\begin{align}
\label{eq:proof_iA}
|\mathcal{T}_{\sigma}| \ll S^{o(1)}T\min(A^{1-2\sigma_A}, A^{4-6\sigma_A}).
\end{align}
For any $w \ge 2$ such that $C^w = x^{O(1)}$ we have, by \eqref{eq:huxley},
\begin{align}
\label{eq:proof_iC}
|\mathcal{T}_{\sigma}| \ll S^{o(1)}\left(C^{2w - 2w\sigma_C} + TC^{w - 2w\sigma_C}\right).
\end{align}
(Note the implied constant does not depend on $w$.) We hence have, by taking weighted averages of \eqref{eq:proof_iA} and \eqref{eq:proof_iC},
\begin{align*}
|\mathcal{T}_{\sigma}| &\ll S^{o(1)}\left(TA^{1 - 2\sigma_A}\right)^{1 - 3/2w}\left(TA^{4 - 6\sigma_A}\right)^{1/2w}\left(C^{2w - 2w\sigma_C} + TC^{w - 2w\sigma_C}\right)^{1/w} \\
&\ll S^{o(1)}T^{1 - 1/w}A^{(2w+1)/2w - 2\sigma_A}\left(C^{2 - 2\sigma_C} + TC^{1 - 2\sigma_C}\right).
\end{align*}
Hence \eqref{eq:proof_i} follows if both
\begin{align}
\label{eq:proof_iTarget}
T^{(w-1)/w} \all A^{(2w-1)/2w} \quad \text{ and } \quad T \all A^{(2w-1)/2w}C
\end{align}
hold. The former condition may be written as $T^{(2w-2)/(2w-1)} \all A$.

Let now $w$ be the integer such that $T^{2/(2w+1)} < C \le T^{2/(2w-1)}$. Clearly $C^w = x^{O(1)}$. As $T^{2/(2w+1)}, T^{2/(2w-1)} \in \mathcal{S}$, we then have $T^{2/(2w+1)}S^{10} < C < T^{2/(2w-1)}S^{-10}$, say. Now
\begin{align*}
A \ge \sqrt{F/C} > \sqrt{x/C}S^{-\epsilon} > T^{1 - 1/(2w-1)}S^{\epsilon}
\end{align*}
and
\begin{align*}
A^{(2w-1)/2w}C &\ge \sqrt{ABC}^{(2w-1)/(2w)}C^{\frac{1}{2} + \frac{1}{4w}} \ge \sqrt{x}^{(2w-1)/2w}C^{\frac{1}{2} + \frac{1}{4w}}S^{-\epsilon} \\
&> T^{(2w-1)/2w}C^{(2w+1)/4w}S^{-3} > TS^{\epsilon},
\end{align*}
implying \eqref{eq:proof_iTarget}.

(ii): We have $AC \ge S^{-\epsilon}x/B \ge H'RS^{-2\epsilon}$, so it suffices to show 
\begin{align}
\label{eq:proof_ii}
|\mathcal{T}_{\sigma}| \all H'RA^{1-2\sigma_A}C^{1-2\sigma_C}.
\end{align}

We have, by \eqref{eq:huxley},
\begin{align*}
|\mathcal{T}_{\sigma}| \ll S^{o(1)}\left(A^{2 - 2\sigma_A} + T\min(A^{1 - 2\sigma_A}, A^{4 - 6\sigma_A})\right).
\end{align*}
Consider first the case where $A^{2 - 2\sigma_A}$ dominates. We then have, by using the assumption on $AC^{3/5}$ and the fact $\sigma_C \le 4/5$,
\begin{align*}
|\mathcal{T}_{\sigma}| \ll S^{o(1)}A^{2 - 2\sigma_A} \ll S^{o(1)}A^{1 - 2\sigma_A}\frac{x^{0.57}S^{-\epsilon}}{C^{3/5}} \ll H'R A^{1 - 2\sigma_A}C^{1 - 2\sigma_C}S^{-\epsilon/2}.
\end{align*}
This suffices.

Assume then that $|\mathcal{T}_{\sigma}| \ll S^{o(1)}T\min(A^{1 - 2\sigma_A}, A^{4 - 6\sigma_A})$. Let $w = 4$. Using also \eqref{eq:huxley} to $C^w$ we obtain 
\begin{align*}
|\mathcal{T}_{\sigma}| &\ll S^{o(1)}(TA^{1 - 2\sigma_A})^{1 - 3/2w}(TA^{4 - 6\sigma_A})^{1/2w}(C^{2w - 2w\sigma_C} + TC^{w - 2w\sigma_C})^{1/w} \\
&\ll S^{o(1)} T^{1 - 1/w}A^{1 + 1/2w - 2\sigma_A}C^{2 - 2\sigma_C} + TA^{1 + 1/2w - 2\sigma_A}C^{1 - 2\sigma_C} \\
&= S^{o(1)} T^{\frac{3}{4}}A^{\frac{9}{8} - 2\sigma_A}C^{2 - 2\sigma_C} + TA^{\frac{9}{8} - 2\sigma_A}C^{1 - 2\sigma_C}.
\end{align*}
This yields \eqref{eq:proof_ii} assuming
$$T^{\frac{3}{4}}A^{\frac{1}{8}}C \all H'R \quad \text{and} \quad TA^{\frac{1}{8}} \all H'R,$$
which hold under our assumptions.

(iii): If $B(s)C(s)$ has a zeta factor, we are done by Lemma~\ref{lem:two_zetas}. Assume this is not the case.

We have
\begin{align*}
|\mathcal{T}_{\sigma}| &\ll S^{o(1)}(TA^{2 - 4\sigma_A})^{1/2}(C^{4 - 4\sigma_C} + TC^{2 - 4\sigma_C})^{1/2} \\
&\ll S^{o(1)}\frac{T^{1/2}}{A}A^{2 - 2\sigma_A}C^{2-2\sigma_C} + S^{o(1)}\frac{T}{AC}A^{2 - 2\sigma_A}C^{2 - 2\sigma_C}.
\end{align*}
This implies \eqref{eq:cond}. Indeed, both of the terms above are dominated by $S^{-\epsilon}A^{2 - 2\sigma_A}C^{2 - 2\sigma_C}$, as $A \ge L_{\zeta}$ and $L_{\zeta} \in \mathcal{S}$ imply $A \ge L_{\zeta}x^{\eta} \ge T^{1/2}S^{2\epsilon}$ and $AC > TS^{2\epsilon}$ follows from $BS^3 \le x^{1/2}$. Both terms are also dominated by $H'RA^{1-2\sigma_A}C^{1-2\sigma_C}$, as $T^{1/2}C < H'RS^{-\epsilon}$ by assumption and $T  < H'RS^{-\epsilon}$ by our choice of parameters.
\end{proof}

We then give the following ranges in the case $c = 0.45$.

\begin{proposition}[Ranges for $c = 0.45$]
\label{prop:range_0.45}
Let $c = 0.45$ and $R = x^{0.18 + \nu}$. Assume $F(s) = A(s)B(s)C(s)$, where $F(s), A(s), B(s)$ and $C(s)$ satisfy at least one of the following conditions:
\begin{itemize}
\item[(i)] $F(s)$ has no zeta factors, $A, B, C \ge z_1$, $AC \le H'R$, and for some $w \in \mathbb{Z}_+$ with $C^w = x^{O(1)}$ one has both $T^{2w}S^{w} \le A^{2w-1}C^{2w}$ and $T^{2w-2}S^{w} \le A^{2w-1}$.
\item[(ii)] $F(s)$ has no zeta factors, $A, B, C \ge z_1$, $AC > H'R$, and for some $w \in \mathbb{Z}_+$ with $C^w = x^{O(1)}$ one has $A^{1/2w}S \le H'RT^{-1}$ and $A^{1/2w}CS \le H'RT^{-1 + 1/w}$ and $B^{2w-1} \ge T^{2w-2}/R^{2w-3}$ and $B^{6w-1}C^{4w} \ge T^{6w}/R^{6w-3}$.
\item[(iii)] $A(s)$ is a zeta sum, $A \ge L_{\zeta}$, $BS^3 \le H'$ and $CS \le H'RT^{-1/2}$.
\end{itemize}
Then Claim~\ref{claim} holds.
\end{proposition}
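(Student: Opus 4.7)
The proof will follow the same scheme as Proposition~\ref{prop:range_0.5}: we verify the condition \eqref{eq:cond} (or its analog for the zeta-sum case) by applying Huxley's large value theorem \eqref{eq:huxley} to one or more factors of $F(s)$ and combining the resulting bounds via weighted geometric means. The structural difference is that for $c = 0.45$ the power-saving bound $\sigma_\cdot \le 4/5$ from Lemma~\ref{lem:power_0.5} is unavailable, so the constraints involving $B$ in hypothesis (ii) and the integer parameter $w$ appearing in all three cases must bear more weight.

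For part (i), since $AC \le H'R$, it suffices to prove $|\mathcal{T}_\sigma| \all A^{2-2\sigma_A}C^{2-2\sigma_C}$. I would apply \eqref{eq:huxley} to $A(s)$ and, separately, to $C(s)^w$. If the first Huxley term for $A$ dominates, we are immediately done because $C \ge z_1$ and $\sigma_C \le 1-(\log x)^{-4/5}$ give $C^{2-2\sigma_C}\gg S^{\epsilon}$. Otherwise one combines the two $A$-bounds $TA^{1-2\sigma_A}$ and $TA^{4-6\sigma_A}$ with the $C^w$-bounds via a weighted geometric mean (weights $1-3/(2w)$ and $3/(2w)$), arriving at
\[
|\mathcal{T}_\sigma|\ll S^{o(1)}T^{1-1/w}A^{(2w+1)/(2w)-2\sigma_A}\bigl(C^{2-2\sigma_C}+TC^{1-2\sigma_C}\bigr),
\]
exactly as in the proof of Proposition~\ref{prop:range_0.5}(i). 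The hypotheses $T^{2w-2}S^w\le A^{2w-1}$ and $T^{2w}S^w\le A^{2w-1}C^{2w}$ are precisely the two inequalities needed to absorb both resulting terms.

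For part (iii), where $A(s)$ is a zeta sum, I would follow the proof of Proposition~\ref{prop:range_0.5}(iii): by Lemma~\ref{lem:two_zetas} we may assume $B(s)C(s)$ has no zeta factor, then apply Hölder / Cauchy-Schwarz with the fourth-moment estimate from Lemma~\ref{lem:moments_2} applied to $A$, and Huxley on $C^w$. The bound $A \ge L_\zeta$ combined with $L_\zeta \in \mathcal{S}$ gives $A\ge L_\zeta x^\eta$, and the two hypotheses $BS^3 \le H'$ and $CS \le H'RT^{-1/2}$ translate directly into the inequalities required to show that both resulting terms are dominated by $H'R A^{1-2\sigma_A}C^{1-2\sigma_C}$.

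Part (ii) is the main new difficulty. The target is $|\mathcal{T}_\sigma|\all H'RA^{1-2\sigma_A}C^{1-2\sigma_C}$, which from the Cauchy-Schwarz step corresponds (up to $S^{o(1)}$) to $|\mathcal{T}_\sigma|\ll (Rx/T)A^{1-2\sigma_A}C^{1-2\sigma_C}$. The plan is to split according to which term of Huxley on $A$ dominates. When the second term of \eqref{eq:huxley} dominates, one combines the $TA^{1-2\sigma_A}$ and $TA^{4-6\sigma_A}$ bounds with Huxley on $C^w$ exactly as in the proof of Proposition~\ref{prop:range_0.5}(ii), yielding the two required conditions $A^{1/2w}S\le H'RT^{-1}$ and $A^{1/2w}CS\le H'RT^{-1+1/w}$. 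When the first term $A^{2-2\sigma_A}$ dominates, the $c=0.5$ argument used the power-saving bound $\sigma_C\le 4/5$, which we no longer have. Instead, one applies \eqref{eq:huxley} to $B(s)$, using the identity $A^{\sigma_A}B^{\sigma_B}C^{\sigma_C}=|F(it)|\le F = xS^{o(1)}$ on $\mathcal{T}_\sigma$ to eliminate $\sigma_B$ in favor of $\sigma_A,\sigma_C$. Taking weighted geometric means of $|\mathcal{T}_\sigma|\ll A^{2-2\sigma_A}$ with the three $B$-Huxley bounds (with weights depending on $w$) produces terms whose defining inequalities, after substitution $B = xS^{o(1)}/(AC)$, collapse precisely to $B^{2w-1}\ge T^{2w-2}/R^{2w-3}$ and $B^{6w-1}C^{4w}\ge T^{6w}/R^{6w-3}$. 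I expect this weight-optimization step to be the main obstacle: identifying the correct pair of convex combinations is essentially a small linear-programming problem in the exponents of $A,B,C,T,R$, and was almost certainly discovered by the numerical search described in Section~\ref{sec:overview}.
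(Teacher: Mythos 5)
Your outline for parts (i) and (iii) is essentially correct: both do follow by the same arguments as the corresponding parts of Proposition~\ref{prop:range_0.5}, and the paper says exactly this and moves on.

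For part (ii), however, there is a genuine gap. You state that the target is $|\mathcal{T}_\sigma|\all H'RA^{1-2\sigma_A}C^{1-2\sigma_C}$, i.e.\ you only retain the Cauchy--Schwarz criterion \eqref{eq:cond}. But since $AC>H'R$ in case (ii), the paper actually shows that it suffices to prove
\[
|\mathcal{T}_\sigma|\all \max\bigl(A^{1-\sigma_A}B^{1-\sigma_B}C^{1-\sigma_C},\,H'RA^{1-2\sigma_A}C^{1-2\sigma_C}\bigr),
\]
so the $L^1$ criterion \eqref{eq:L1-cond} is also available. This is indispensable: in the sub-case where Huxley on $A$ gives $|\mathcal{T}_\sigma|\ll S^{o(1)}A^{2-2\sigma_A}$ and Huxley on $B$ gives $|\mathcal{T}_\sigma|\ll S^{o(1)}B^{2-2\sigma_B}$, the geometric mean yields $|\mathcal{T}_\sigma|\ll S^{o(1)}A^{1-\sigma_A}B^{1-\sigma_B}$, which together with $C^{1-\sigma_C}\gg S^\epsilon$ gives only the $L^1$ target. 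Meeting the Cauchy--Schwarz target $H'RA^{1-2\sigma_A}C^{1-2\sigma_C}$ from $A^{2-2\sigma_A}$ alone would require $A\le H'RC^{1-2\sigma_C}$, which can fail badly when $\sigma_C$ is near $1$ and $AC>H'R$. Without the $L^1$ option this case simply cannot be closed.

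Your proposed mechanism is also different from the paper's in a way that does not clearly work. You suggest applying Huxley only to $A$ and $B$, and ``eliminating $\sigma_B$'' via $A^{\sigma_A}B^{\sigma_B}C^{\sigma_C}\ll xS^{o(1)}$. That relation is an upper bound on $\sigma_B$ given $\sigma_A,\sigma_C$, and the Huxley estimates $B^{2-2\sigma_B}$, $TB^{4-6\sigma_B}$ are decreasing in $\sigma_B$; substituting the upper bound therefore weakens the estimate rather than strengthening it. The paper instead keeps all three of $\sigma_A,\sigma_B,\sigma_C$, also applies Huxley to $C^w$ (which is where the $C^{4w}$ in the hypothesis $B^{6w-1}C^{4w}\ge T^{6w}/R^{6w-3}$ comes from --- you would not produce this term without the $C^w$ input), and shows that the system of inequalities expressing ``each Huxley bound exceeds both targets'' has no solution, eliminating $\sigma_A$ first and then manipulating the remaining system using only the length relation $ABC=xS^{o(1)}$. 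The two hypotheses on $B$ then emerge from the two sub-cases according to which term of the $C^w$-Huxley bound dominates.
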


Items (i) and (iii) are analogous to Proposition~\ref{prop:range_0.5}, with the proof of part (ii) requiring more work. For a given $(A, B, C)$, one should take $w$ so that $C^w$ is approximately $T$.

Figure~\ref{fig:kuva2} illustrates (an approximation of) the regions encompassed by (i) and (ii) (when $\nu \approx 0$). One sees that the regions are much more complicated than in the case $c = 0.5$. Furthermore, there are now six connected components instead of three. These matters make the task of finding suitable decompositions of $F(s)$ more difficult.

\begin{figure}[ht]
	\centering
	\includegraphics{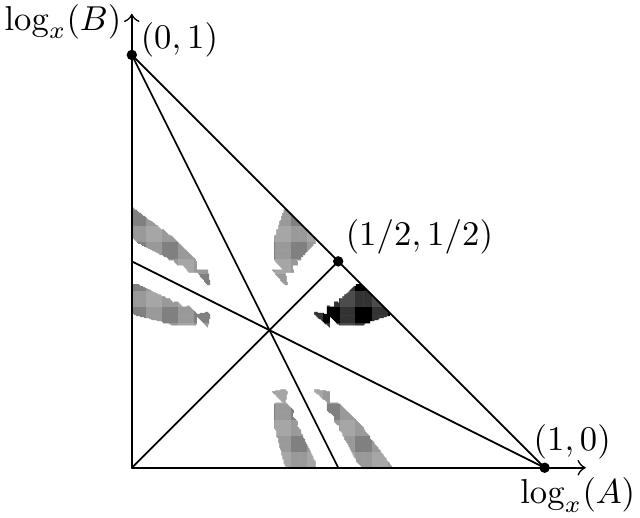}
	\caption{Set of $(A, B, C)$ covered by Proposition~\ref{prop:range_0.45}(i)--(ii).}
	\label{fig:kuva2}
\end{figure}

\begin{proof}
The proofs of (i) and (iii) follow from the proofs of the corresponding parts of Proposition~\ref{prop:range_0.5}. We are left with proving (ii).

Since $AC > H'R$, one of \eqref{eq:L1-cond} and \eqref{eq:cond} follows once we show
\begin{align}
\label{eq:proof_iiT}
|\mathcal{T}_{\sigma}| \all \max\left(A^{1-\sigma_A}B^{1-\sigma_B}C^{1-\sigma_C}, H'RA^{1-2\sigma_A}C^{1-2\sigma_C}\right),
\end{align}
which then implies Claim~\ref{claim}.

We utilize Huxley's large value theorem \eqref{eq:huxley} to the polynomials $A, B$ and $C^w$, obtaining the bounds
\begin{align*}
|\mathcal{T}_{\sigma}| &\ll S^{o(1)}\left(A^{2-2\sigma_A} + T\min(A^{1-2\sigma_A}, A^{4-6\sigma_A})\right), \\
|\mathcal{T}_{\sigma}| &\ll S^{o(1)}\left(B^{2-2\sigma_B} + TB^{4-6\sigma_B}\right) \\
|\mathcal{T}_{\sigma}| &\ll S^{o(1)}\left(C^{2w - 2w\sigma_C} + T\min(C^{w - 2w\sigma_C}, C^{4w - 6w\sigma_C})\right).
\end{align*}

We will consider separate cases according to which terms in these bounds dominate. We use the shorthand $T\min(P^k)$ for $T\min(P^{k - 2k\sigma_P}, P^{4k - 6k\sigma_P})$.

First, assume that we have $|\mathcal{T}_{\sigma}| \ll S^{o(1)}T\min(A)$. This implies
\begin{align*}
|\mathcal{T}_{\sigma}| &\ll S^{o(1)}\left(TA^{1 - 2\sigma_A}\right)^{1 - 3/2w}\left(TA^{4-6\sigma_A}\right)^{1/2w}\left(C^{2w-2w\sigma_C} + TC^{w - 2w\sigma_C}\right)^{1/w} \\
&= S^{o(1)}T^{1 - 1/w}A^{1 + 1/2w - 2\sigma_A}\left(C^{2 - 2\sigma_C} + T^{1/w}C^{1 - 2\sigma_C}\right),
\end{align*}
and \eqref{eq:proof_iiT} follows if $A^{1/2w}CS^{\epsilon} \ll H'RT^{-1 + 1/w}$ and $A^{1/2w}S^{\epsilon} \ll H'RT^{-1}$.

Hence, we may from now on assume $|\mathcal{T}_{\sigma}| \ll S^{o(1)}A^{2 - 2\sigma_A}$. If $|\mathcal{T}_{\sigma}| \ll S^{o(1)}B^{2 - 2\sigma_B}$, then we have, by weighted averages and $\sigma_C \le 1 - (\log x)^{-4/5}$,
\begin{align*}
|\mathcal{T}_{\sigma}| \ll S^{o(1)}(A^{2-2\sigma_A})^{1/2}(B^{2-2\sigma_B})^{1/2} \ll S^{-\epsilon}A^{1-\sigma_A}B^{1-\sigma_B}C^{1-\sigma_C},
\end{align*}
implying \eqref{eq:proof_iiT}. Hence from now on we may also assume $|\mathcal{T}_{\sigma}| \ll S^{o(1)}TB^{4 - 6\sigma_B}$.

There are two cases to check, one where $|\mathcal{T}_{\sigma}| \ll S^{o(1)}C^{2w-2w\sigma_C}$ and one where $|\mathcal{T}_{\sigma}| \ll S^{o(1)}T\min(C^w)$. 

Consider first the former case. To show \eqref{eq:proof_iiT} it suffices to show that the system
\begin{align*}
\begin{cases}
A^{2 - 2\sigma_A} &> A^{1 - \sigma_A}B^{1 - \sigma_B}C^{1 - \sigma_C}S^{-\epsilon} \\
A^{2 - 2\sigma_A} &> H'RA^{1 - 2\sigma_A}C^{1 - 2\sigma_C}S^{-\epsilon} \\
TB^{4 - 6\sigma_B} &> A^{1 - \sigma_A}B^{1 - \sigma_B}C^{1 - \sigma_C}S^{-\epsilon} \\
C^{2w - 2w\sigma_C} &> H'RA^{1 - 2\sigma_A}C^{1 - 2\sigma_C}S^{-\epsilon}
\end{cases}
\end{align*}
of inequalities has no solution in reals $\sigma_A, \sigma_B, \sigma_C$, when $\epsilon > 0$ is small enough. (Note that, after taking logarithms, this is a system of linear inequalities.) We first eliminate $\sigma_A$, by plugging the first inequality into the third and fourth. It follows that any solution to the above must also be a solution to the system
\begin{align*}
\begin{cases}
1 &> \frac{H'R}{AC}C^{2 - 2\sigma_C}S^{-\epsilon} \\
TB^{4 - 6\sigma_B} &> B^{2 - 2\sigma_B}C^{2 - 2\sigma_C}S^{-2\epsilon} \\
C^{2w - 2w\sigma_C} &> \frac{H'R}{AC}B^{2 - 2\sigma_B}C^{4 - 4\sigma_C}S^{-3\epsilon}.
\end{cases}
\end{align*}
We raise the first inequality to power $w - 5/2$, the second one to power $1/2$ and multiply all of the three inequalities together. We obtain
\begin{align*}
(TB^{4 - 6\sigma_B})^{1/2}C^{2w - 2w\sigma_C} > \left(\frac{H'R}{AC}\right)^{w - 3/2}B^{3 - 3\sigma_B}C^{2w(1 - \sigma_C)}S^{-(w+3/2)\epsilon}.
\end{align*}
Simplifying we obtain
$$T^{1/2}B > \left(\frac{H'R}{AC}\right)^{w - 3/2}S^{-(w+3/2)\epsilon}.$$
Note that $H'R/(AC) = S^{o(1)}H'R/(x/B) \ge SBR/T$. Hence no solutions exist if
$$T^{1/2}B \le \left(\frac{BR}{T}\right)^{w - 3/2},$$
which finally rearranges to
$$B \ge \frac{T^{(2w-2)/(2w-1)}}{R^{(2w-3)/(2w-1)}}.$$

Consider then the latter case where $|\mathcal{T}_{\sigma}| \ll S^{o(1)}T\min(C^w)$. We again reduce to a system of linear inequalities, namely
\begin{align*}
\begin{cases}
A^{2 - 2\sigma_A} &> A^{1 - \sigma_A}B^{1 - \sigma_B}C^{1 - \sigma_C}S^{-\epsilon} \\
A^{2 - 2\sigma_A} &> H'RA^{1 - 2\sigma_A}C^{1 - 2\sigma_C}S^{-\epsilon} \\
TB^{4 - 6\sigma_B} &> A^{1 - \sigma_A}B^{1 - \sigma_B}C^{1 - \sigma_C}S^{-\epsilon} \\
TC^{4w - 6w\sigma_C} &> H'RA^{1 - 2\sigma_A}C^{1 - 2\sigma_C}S^{-\epsilon}.
\end{cases}
\end{align*}
As before, we eliminate $\sigma_A$, and obtain
\begin{align*}
\begin{cases}
1 &> \frac{H'R}{AC}C^{2 - 2\sigma_C}S^{-\epsilon} \\
TB^{4 - 6\sigma_B} &> B^{2 - 2\sigma_B}C^{2 - 2\sigma_C}S^{-2\epsilon} \\
TC^{4w - 6w\sigma_C} &> \frac{H'R}{AC}B^{2 - 2\sigma_B}C^{4 - 4\sigma_C}S^{-3\epsilon}.
\end{cases}
\end{align*}
Similarly to before, by raising the first inequality to power $3w-5/2$, the second to $1/2$ and multiplying all of the resulting inequalities together on obtains, after applying $H'R/(AC) \ge SBR/T$ and rearrangement, that no solutions exist if
\begin{align*}
B^{6w-1}C^{4w} \ge \frac{T^{6w}}{R^{6w-3}}.
\end{align*}
\end{proof}

In the case $C$ is short (namely shorter than $x^{0.08-\epsilon}$), we give a simple approximation of the range of $(A, B, C)$ covered by (i) and (ii) in Proposition~\ref{prop:range_0.45}. The idea is that the regions in Figure~\ref{fig:kuva2} are well approximated as the region between two lines when one of the polynomials is short. This description is easier to work with when proving certain theoretical results in Section~\ref{sec:0.45}.

\begin{lemma}[Ranges for $c = 0.45$, simple approximation]
\label{lem:range_simple}
Let $c = 0.45$ and assume $R = x^{0.18 + \nu}$. Write $F(s) = A(s)B(s)C(S)$, and assume $F(s)$ has no zeta factors and that $(A, B, C)$ satisfies $A, B, C \ge z_1$, $C \le x^{0.08-\epsilon}$ and
$$x^{0.37+\epsilon}C^{-1/5} \le B \le x^{0.45-\epsilon}C^{-1/2}.$$
Then either (i) or (ii) of Proposition~\ref{prop:range_0.45} is satisfied.
\end{lemma}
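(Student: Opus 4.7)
The plan is to pass to logarithmic variables, split on the size of $AC$ relative to $H'R$, and in each case exhibit an integer $w$ witnessing either (i) or (ii). Set $\alpha = \log_x A$, $\beta = \log_x B$, $\gamma = \log_x C$, $t := \log_x T = 0.55 + o(1)$, $r := \log_x R = 0.18 + \nu$, $h := \log_x H' = 0.45 + o(1)$, and $\sigma := \log_x S = o(1)$. Information~\ref{info} gives $\alpha + \beta + \gamma = 1 + o(1)$, so the hypotheses become $0 < \gamma \leq 0.08 - \epsilon$, $0.37 + \epsilon - \gamma/5 \leq \beta \leq 0.45 - \epsilon - \gamma/2$, and consequently $0.55 + \epsilon - \gamma/2 + o(1) \leq \alpha \leq 0.63 - \epsilon - 4\gamma/5 + o(1)$.

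For the case $AC \leq H'R$ (equivalently $\alpha + \gamma \leq 0.63 + \nu + o(1)$), I would verify Proposition~\ref{prop:range_0.45}(i). Its two hypotheses translate, modulo $o(1)$, to
\[
w \;\ge\; w_1 := \frac{\alpha}{2(\alpha + \gamma - t)}, \qquad w \;\le\; w_2 := \frac{2t - \alpha}{2(t - \alpha)},
\]
with the upper bound vacuous when $\alpha \geq t$. The strip gives $\alpha + \gamma - t \geq \epsilon + \gamma/2 > 0$, so $w_1$ is finite, and $w_1 \gamma \leq \alpha/2 \leq 0.32$, ensuring $C^{\lceil w_1 \rceil} = x^{O(1)}$. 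A direct computation gives $w_2 - w_1 = [2t(\alpha + \gamma - t) - \alpha\gamma]/[2(\alpha + \gamma - t)(t - \alpha)]$; evaluating at the extremes of the strip (the infimum occurs near the lower edge $\alpha = 0.55 + \epsilon - \gamma/2$ with $\gamma$ close to $0.08$) yields $w_2 - w_1 = 1 + \Theta(\epsilon/\gamma^2) > 1$, so $w := \lceil w_1 \rceil$ lies in $[w_1, w_2]$.

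For the case $AC > H'R$, the combination $0.63 + \nu < \alpha + \gamma \leq 0.63 - \epsilon + \gamma/5$ forces $\gamma > 5(\nu + \epsilon)$, so $\gamma$ is bounded away from $0$. I take $w := \lceil 0.4/\gamma \rceil$, so $w\gamma \in [0.4, 0.48]$ and $C^w \leq x^{0.48}$. The four conditions of Proposition~\ref{prop:range_0.45}(ii) reduce to: (1) $\alpha/(2w) \leq 0.08 + \nu + o(1)$ and (2) $\alpha/(2w) + \gamma \leq 0.08 + \nu + t/w + o(1)$, both immediate from $\alpha \leq 0.63$ and $2w \geq 10$; (3) $(0.19 - 2\nu)/(2w-1) \geq \gamma/5 - \epsilon - \nu$, whose left-hand side is $\geq 0.19\gamma/(0.8 + \gamma) \geq 0.2125\gamma > \gamma/5$ since $2w - 1 \leq 0.8/\gamma + 1$ and $\gamma \leq 0.08$; (4) an inequality equivalent to $\beta \geq 0.37 - \nu - 2\gamma/3 + O(1/w)$, which the strip's $\beta \geq 0.37 + \epsilon - \gamma/5$ easily satisfies.

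The main obstacle is the narrow window of admissible $w$ at the strip's boundaries. In the first case near $\gamma = 0.08 - \epsilon$, $w_2 - w_1$ exceeds $1$ only by an $\epsilon$-dependent margin, so the $\epsilon$-buffer in the hypothesis is indispensable and the integer $w$ is essentially forced. In the second case, condition (3) of (ii) forces $w \lesssim 0.95/\gamma$ while condition (4) forces $w \gtrsim 0.325/\gamma$; the choice $w \approx 0.4/\gamma$ lies centrally between these, but with little room near the upper endpoint $\gamma \approx 0.08$. All estimates must carry $o(1)$ losses from $\sigma$ and from $\alpha + \beta + \gamma = 1 + o(1)$; since $w = O(1/\epsilon + 1/\gamma)$ throughout and $\gamma \geq 1/(\log\log x)^5$, the accumulated losses remain $o(1)$.
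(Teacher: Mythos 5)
Your proposal follows the same overall strategy as the paper: split on whether $AC \le H'R$ and, in each regime, exhibit an integer $w$ for which the corresponding alternative of Proposition~\ref{prop:range_0.45} holds. The details differ enough to be worth noting. In the case $AC \le H'R$ you write down the exact admissible window $[w_1, w_2]$ from condition (i) and argue its length exceeds $1$; the paper instead plugs the hypothesis $B \le x^{0.45-\epsilon}C^{-1/2}$ into $AC = F/B$ and $A = (AC)/C$ to reduce directly to the cleaner constraint $C^{(2w-1)/2} \le x^{0.55} \le C^{(2w+1)/2}$, a window of width exactly $1$ with no optimization over the strip needed. Your route works but requires more care than you give it: the claim ``$w_1\gamma \le \alpha/2$'' is only valid when $\alpha \ge t$ (in general one gets $w_1\gamma \le \alpha \le 0.63$, which is still $O(1)$, so no harm done), and the assertion that the infimum of $w_2-w_1$ over the strip occurs at $\alpha = t+\epsilon-\gamma/2$, $\gamma$ near $0.08$ is stated rather than proved. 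One can confirm it: writing $u = t-\alpha$, a short computation gives
$$w_2 - w_1 - 1 = \frac{(t-u)(\gamma - 2u)}{2(\gamma-u)u},$$
and $\gamma - 2u \ge 2\epsilon > 0$ on the strip, so the difference always exceeds $1$; but this calculation is exactly the work your one-line claim is standing in for. In the case $AC > H'R$ you pick the specific $w = \lceil 0.4/\gamma\rceil$ and verify all four conditions of (ii), while the paper shows the admissible interval $\left(\frac{0.325}{\gamma}-\frac{1}{14}, \frac{0.475}{\gamma}+\frac{1}{2}\right)$ has length greater than $1$ and contains an integer $\ge 4$. Both are fine; your choice $0.4/\gamma$ lies inside the paper's interval, and your condition-(3)/(4) estimates are consistent with the paper's reduction, modulo the same small bookkeeping of the $\nu$ and $O(1/w)$ terms.
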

The exponent $0.37$ comes from $T/R \approx x^{0.37}$ and $0.08$ comes from $H'R/T \approx x^{0.08}$.

\begin{proof}
We first consider the case $AC \le H'R$. We aim to find an integer $w \in \mathbb{Z}_+, w = O(1)$ so that (i) of Proposition~\ref{prop:range_0.45} is satisfied, i.e.
\begin{align}
\label{eq:proof_simple}
T^{2w}S^w \le A^{2w-1}C^{2w} \quad \text{and} \quad T^{2w-2}S^w \le A^{2w-1}.
\end{align}
Note that
\begin{align*}
A^{2w-1}C^{2w} = (AC)^{2w-1}C > (x^{0.55+\epsilon/2}C^{1/2})^{2w-1}C = x^{(0.55+\epsilon/2)(2w-1)}C^{(2w+1)/2}
\end{align*}
and
\begin{align*}
A^{2w-1} > x^{(0.55+\epsilon/2)(2w-1)}C^{-(2w-1)/2}.
\end{align*}
Recalling that $T = x^{0.55}S^2$, \eqref{eq:proof_simple} follows if $w$ satisfies
\begin{align*}
C^{(2w-1)/2} \le x^{0.55} \le C^{(2w+1)/2}.
\end{align*}
Such an integer $w$ clearly exists.

We then consider the case $AC > H'R$. We aim to find $w$ such that (ii) of Proposition~\ref{prop:range_0.45} is satisfied. We restrict our search to $w \ge 4$, in which case we have
\begin{align*}
A^{1/2w}S < S^2(x/BC)^{1/2w} \le (x^{1 - 0.37})^{1/8} < x^{0.08 - \epsilon} < H'R/T
\end{align*}
and
\begin{align*}
A^{1/2w}CS \le x^{1/2w}x^{0.08-\epsilon} < T^{1/w}\frac{H'R}{T}x^{-\epsilon},
\end{align*}
so the first two conditions of Proposition~\ref{prop:range_0.45}(ii) are satisfied. Hence, we are left with finding $w \ge 4$ such that
\begin{align*}
B^{2w-1} \ge T^{2w-2}/R^{2w-3} \quad \text{and} \quad B^{6w-1}C^{4w} \ge T^{6w}/R^{6w-3}.
\end{align*}
Writing $C = x^\alpha$ and using $B \ge x^{0.37 + \epsilon - \alpha/5}$ and $R \ge x^{0.18}$, the first inequality is satisfied if
\begin{align*}
(2w-1)(0.37 - \alpha/5) &> 0.37(2w-2) + 0.18 \Leftrightarrow \\
\frac{2\alpha}{5}w &< 0.37 - 0.18 + \frac{\alpha}{5} 
\end{align*}
and the second one if
\begin{align*}
(6w-1)(0.37 - \alpha/5) + 4w\alpha &> 0.37 \cdot 6w + 0.54 \Leftrightarrow \\
\left(4 - \frac{6}{5}\right)\alpha w &> 0.37 + 0.54 - \frac{\alpha}{5}.
\end{align*}
It follows that $w$ satisfies both of these inequalities if
$$\frac{0.325}{\alpha} - \frac{1}{14} < w < \frac{0.475}{\alpha} + \frac{1}{2}$$
To show that there is an integer solution for $w$, it suffices to check that the difference between the upper and lower bounds is greater than one. This indeed is the case, as
$$\frac{0.15}{\alpha} + \frac{8}{14} \ge \frac{0.15}{0.08} + \frac{8}{14} > 1.$$
Finally, note that there exists a solution with $w \ge 4$, since
$$\frac{0.475}{\alpha} + \frac{1}{2} \ge \frac{0.475}{0.08} + \frac{1}{2} > 4.$$
\end{proof}

\section{Applying Harman's sieve: \texorpdfstring{$c = 0.5$}{c = 0.5)}}
\label{sec:0.5}

We have above established that we may obtain an asymptotic
\begin{align}
\label{eq:0.5_asy}
\sum_{\substack{p_1, \ldots , p_n \\ p_i \in I_i \\ p_n < \ldots < p_1}} S(\mathcal{A}_{p_1 \cdots p_n}(m), z) = \frac{\delta_0}{\delta_1}\sum_{\substack{p_1, \ldots , p_n \\ p_i \in I_i \\ p_n < \ldots < p_1}} S(\mathcal{B}_{p_1 \cdots p_n}(m), z) + o\left(\frac{\delta_0 x}{\log x}\right)
\end{align}
(for all but $O(R)$ values of $m$) under certain assumptions. Namely, we assume that the polynomial
$$F(s) = P_1(s) \cdots P_n(s)Q(s)H(s),$$
may be written as $F(s) = A(s)B(s)C(s)$, where $A, B, C$ satisfy the conditions of Proposition~\ref{prop:range_0.5}. Here $P_i(s)$ corresponds to the sum over $p_i$ and hence $P_i \in I_i$. If one wishes, one may apply the Heath-Brown decomposition to $P_i(s)$. Furthermore, $Q(s)$ is a product of polynomials shorter than $z$ and $H(s)$ is a polynomial of length bounded by $T^{1+\epsilon}$. In the case $z > L_{\zeta}$ one may also decompose $Q(s)$ by the Heath-Brown decomposition.

Our ultimate aim is to show that, for some constant $d > 0$, we have
$$S(\mathcal{A}(m), 2\sqrt{x}) \ge d\frac{\delta_0}{\delta_1} S(\mathcal{B}(m), 2\sqrt{x})$$
for all but $O(R)$ integers $m$ (recall Lemma \ref{lem:to_buch}). To this end, we utilize our asymptotics of form \eqref{eq:0.5_asy} together with Harman's sieve. Recall the basic idea of Harman's sieve: First, one uses the Buchstab identity to write $S(\mathcal{A}(m), 2\sqrt{x})$ as a linear combination of sums as in the left hand side of \eqref{eq:0.5_asy}. For example, one could write, by two applications of Buchstab's identity,
\begin{align}
\label{eq:harman_example}
S(\mathcal{A}(m), 2\sqrt{x}) &= S(\mathcal{A}(m), z) - \sum_{z \le p < 2\sqrt{x}} S(\mathcal{A}_p, p) \nonumber \\
&= S(\mathcal{A}(m), z) - \sum_{z \le p < 2\sqrt{x}} S(\mathcal{A}_p, z') + \sum_{\substack{z \le p < 2\sqrt{x} \\ z' \le q < p}} S(\mathcal{A}_{pq}, q).
\end{align}
(In practice we often apply Buchstab's identity four or six times.) For some of the sums one may apply asymptotics of the form \eqref{eq:0.5_asy}. Note that the right hand side of \eqref{eq:0.5_asy} is easy to evaluate, as $\mathcal{B}(m)$ is a long interval. For some of the sums one might not have an asymptotic as in \eqref{eq:0.5_asy}. When applying Harman's sieve, one arranges things so that such ``difficult'' sums have a positive sign (such as the first and third sum in \eqref{eq:harman_example}), so that they may be discarded and what remains is a lower bound for $S(\mathcal{A}(m), 2\sqrt{x})$. Of course, one has to be careful to not discard too many of the sums, so that the lower bound is strictly positive. The contribution of discarded terms is called \emph{loss}, normalized so that aim is to keep the loss strictly below $1$.

The problem has been thus reduced to a combinatorial task of finding ranges of $p_1, \ldots , p_n$ such that \eqref{eq:0.5_asy} holds, i.e. that for any choice of the polynomials $P_i(s)$, $Q(s)$ and $H(s)$ a suitable factorization $F(s) = A(s)B(s)C(s)$ may be found, and then applying Buchstab's identity suitably to deduce a lower bound for $S(\mathcal{A}(m), 2\sqrt{x})$.

In Section~\ref{sec:0.5_theoretical} we first present theoretical results covering certain situations where~\eqref{eq:0.5_asy} may be evaluated (mainly in the cases $n \le 2$). For the cases where we have several polynomials and the execution of Harman's sieve we employ a computational procedure presented in Section~\ref{sec:0.5_algorithm}.

\subsection{Theoretical results}
\label{sec:0.5_theoretical}

We start with the main lemma of this section.

\begin{lemma}
\label{lem:MN}
Let $0 \le n \ll 1$, $P_1, \ldots,  P_n \ge z_1$ and $z \le x^{0.07 - \epsilon}$ be given. Assume that there is a subset $I \subset \{1, \ldots , n\}$ such that
$$M := \prod_{i \in I} P_i \quad \text{and} \quad N := \prod_{i \not\in I} P_i$$
satisfy $M < x^{1/2 - \epsilon}, Nz < x^{0.32 - \epsilon}$ and $MN < x^{3/4 - \epsilon}$. Then
\begin{align*}
\sum_{\substack{p_1, \ldots , p_n \\ z_1 < p_i \le P_i \\ p_n < \ldots < p_1}} S(\mathcal{A}_{p_1 \cdots p_n}(m), z) = \frac{\delta_0}{\delta_1} \sum_{\substack{p_1, \ldots, p_n \\ z_1 < p_i \le P_i \\ p_n < \ldots < p_1}} S(\mathcal{B}_{p_1 \cdots p_n}(m), z) + o\left(\frac{\delta_0 x}{\log x}\right).
\end{align*}
for all except $O(R)$ integers $m \in [x/H', 3x/H']$.
\end{lemma}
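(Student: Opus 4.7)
First, I would invoke Lemma~\ref{lem:mod_cross} without applying Heath-Brown's decomposition to the sums over the $p_i$ (permitted by Remark~\ref{rem:optional}); since $z \le x^{0.07-\epsilon} < L_\zeta$, the parameter $n'$ in that reduction vanishes. This reduces the desired asymptotic to Claim~\ref{claim} for Dirichlet polynomials of the form
\begin{align*}
F(s) = P_1(s) \cdots P_n(s) \, R_1(s) \cdots R_t(s) \, H(s),
\end{align*}
where $P_i(s) = \sum_{z_1 < p \le P_i} p^{-s}$, each $R_j(s)$ is a dyadic prime sum of length at most $z$, and $H(s) = \sum_h \xi_0(h) h^{-s}$ has length at most $T^{1+\epsilon}$. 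By Proposition~\ref{prop:range_0.5}, it suffices to partition the factors of $F$ into three groups $A$, $B$, $C$ satisfying one of the conditions~(i)--(iii).

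I would split on the length of $H$. If $H \ge L_\zeta$, then since $L_\zeta \in \mathcal{S}$, the $\mathcal{S}$-gap gives $H \ge L_\zeta x^{\eta}$, so $H$ is a zeta sum. Applying Proposition~\ref{prop:range_0.5}(iii) with $A = H$, I initialize $B$ with $M := \prod_{i \in I} P_i$ and $C$ with $N := \prod_{i \notin I} P_i$, and distribute the $R_j$'s by first-fit: each $R_j$ goes into $C$ if doing so keeps $C \le x^{0.32}$, otherwise into $B$. From $M < x^{1/2-\epsilon}$ and $Nz < x^{0.32-\epsilon}$, the free capacities of $B$ and $C$ sum to $0.82 - 3\log_x S - \log_x(MN)$, while the load equals $\log_x \prod_j R_j = 1 - \log_x(MNH) + o(1)$. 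Hence the packing slack computes to $\log_x H - 0.18 + O(\log_x S)$, which exceeds the maximum item size $\log_x z$ thanks to $H \ge x^{0.25+\eta}$, $\log_x z \le 0.07 - \epsilon$, and $\eta \gg \log_x S$; the standard first-fit argument (no item fails to fit, since the combined free space in the two bins always exceeds the next item's size) then completes the packing.

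If instead $H < L_\zeta$, the $\mathcal{S}$-gap forces $H \le L_\zeta x^{-\eta}$, so $F$ has no zeta factor and I would aim for part~(i) of Proposition~\ref{prop:range_0.5}. I place $M$ into $A$, $N \cdot H$ into $B$, and distribute the $R_j$'s greedily so as to raise each of $A$ and $B$ above $x^{0.43}$, leaving the rest in $C$. Combining $M < x^{1/2-\epsilon}$, $Nz < x^{0.32-\epsilon}$ and $MN < x^{3/4-\epsilon}$ with $z \le x^{0.07-\epsilon}$, the total $R_j$-log-mass $1 - \log_x(MNH)$ exceeds the total deficit $(0.43 - \log_x M)_+ + (0.43 - \log_x(NH))_+$ by at least $0.14$, while each greedy overshoot is at most $\log_x z < 0.07$. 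This leaves the log-mass of $C$ at least $0.14 - 2\log_x z \ge 2\epsilon$, well above $\log_x z_1$; hence $C$ contains at least one $R_j$ and satisfies $C \ge z_1$, and $A,B \ge x^{0.43}$ hold by construction.

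The main technical obstacle is this combinatorial bin-packing verification: showing that the $\epsilon$-slacks in the hypotheses $M < x^{1/2-\epsilon}$, $Nz < x^{0.32-\epsilon}$, $MN < x^{3/4-\epsilon}$ and $z \le x^{0.07-\epsilon}$, together with the $\mathcal{S}$-gap $x^{\eta}$ at $L_\zeta$, jointly suffice across all parameter regimes --- including the degenerate cases $n = 0$ and $|I| \in \{0, n\}$ --- to produce a valid distribution of the $R_j$'s. The coefficient and moment estimates needed by Proposition~\ref{prop:range_0.5} are available from Lemmas~\ref{lem:coefficients} and~\ref{lem:moments_3}, so no further analytic work is required beyond this packing step.
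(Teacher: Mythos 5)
Your overall strategy matches the paper's: reduce to Claim~\ref{claim} via Lemma~\ref{lem:mod_cross} without decomposing the $P_i(s)$, split on whether $H\ge L_\zeta$, and then build the decomposition $F=ABC$ by a greedy packing of the short prime polynomials $R_j(s)$. The first case ($H\ge L_\zeta$) is handled correctly and is essentially the paper's argument in slightly different clothing: the paper's ``adjoin $R_i$ to whichever of $B,C$ still has room'' and your first-fit-into-$C$-else-$B$ are interchangeable, and your slack computation (slack $\approx \log_x H - 0.18 \ge 0.07+\eta > \log_x z$) is sound.

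However, in the case $H<L_\zeta$ your central accounting claim is false as stated. You assert that the $R_j$-mass $1-\log_x(MNH)$ exceeds $(0.43-\log_x M)_+ + (0.43-\log_x(NH))_+$ by at least $0.14$. Take, for instance, $\log_x M = 0.44$, $\log_x N = 0.30$, $\log_x H = 0.24$ (consistent with $M<x^{1/2-\epsilon}$, $Nz<x^{0.32-\epsilon}$, $MN<x^{3/4-\epsilon}$, $H<L_\zeta$); here both deficits vanish, so the excess equals the mass, which is only $1-0.98=0.02\ll 0.14$. The identity ``excess $=0.14$'' holds only when \emph{both} deficits are strictly positive. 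When one or both deficits vanish the excess can be as small as $O(\epsilon)$, and your bound ``$C$ keeps $\ge 0.14 - 2\log_x z$'' no longer follows; the greedy can in principle consume all $R_j$'s and leave $C$ empty, which would violate the requirement $C\ge z_1$ in Proposition~\ref{prop:range_0.5}(i). The argument \emph{can} be rescued by a case analysis: whenever a deficit vanishes, the corresponding overshoot term $\log_x z$ also disappears, and the constraint $Nz < x^{0.32-\epsilon}$ (with its built-in $z$) is exactly what compensates when only the $A$-deficit is active, so one always ends with $\gtrsim x^{\epsilon}$ log-mass in $C$. But as written your derivation has a genuine numeric gap. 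The paper sidesteps all of this bookkeeping by reserving $R_k(s)$ for $C$ at the outset, splitting on $M\ge x^{0.43}$ versus $M<x^{0.43}$, and distributing $R_1,\ldots,R_{k-1}$ between $A$ and $B$ by ``adjoin to the shorter''; that this always yields $A,B\ge x^{0.43}$ is then a short direct check rather than a slack computation. I'd recommend you adopt that reservation to avoid the edge cases.
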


\begin{proof}
Write
$$F(s) = P_1(s) \cdots P_n(s)R_1(s) \cdots R_k(s)H(s)$$
as in \eqref{def:f}, and consider cases according to the length of $H(s)$.

Assume first that $H > L_{\zeta}$. We partition $P_i(s)$ and $R_i(s)$ as $(B', C')$ so that Proposition~\ref{prop:range_0.5}(iii) is satisfied with $(A, B, C) = (H, B', C')$. This is done via the following process: define $B_0 = M, C_0 = N$, and for each $1 \le i \le k$ define $(B_i, C_i)$ either by $(B_i, C_i) = (R_iB_{i-1}, C_{i-1})$ or by $(B_i, C_i) = (B_{i-1}, R_iC_{i-1})$. We claim that at each step we may define $(B_i, C_i)$ so that $B_i \le x^{1/2}S^{-3}$ and $C_i \le x^{0.32}$. By assumption this holds for $i = 0$. For $i \ge 1$, we cannot have both $B_{i-1}R_i > x^{1/2}S^{-3}$ and $C_{i-1}R_i > x^{0.32}$, as this would imply
$$HB_{i-1}C_{i-1}R_i \ge x^{1/4}B_{i-1}C_{i-1}R_i^2x^{-0.07+\epsilon} > x^{1/4 + 1/2 + 0.32 - 0.07 + \epsilon}S^{-3} > x^{1 + \epsilon/2},$$
a contradiction. Executing the process in this manner and choosing $(B', C') = (B_k, C_k)$ allows us to apply Proposition~\ref{prop:range_0.5}(iii).

We may then assume that $F(s)$ has no zeta factors. (Note that we did not decompose the polynomials $P_i(s)$, cf. Remark \ref{rem:optional}.) Note that necessarily $k \ge 1$, as $HMN < x^{1 - \epsilon}$.

Consider first the case $M \ge x^{0.43}$. Construct a pair $(A', B')$ by the following process: Begin with $A_0 = M, B_0 = NH$. At each step $1 \le i < k$, adjoin $R_i$ to the shorter of $A_{i-1}, B_{i-1}$. In the end we must have $B_{k-1} \ge x^{0.43}$. Indeed, this is by construction the case if $A_{k-1} \neq A_0$, and if $A_{k-1} = A_0$, we have $B_{k-1} \ge xS^{-\epsilon}/(MR_k) > x^{0.43+\epsilon/2}$. Furthermore, $A_{k-1} \ge A_0 = M \ge x^{0.43}$. Hence Proposition~\ref{prop:range_0.5}(i) is satisfied with $(A, B, C) = (A', B', R_k)$.

Consider then the case $M < x^{0.43}$. By $Nz < x^{0.32 - \epsilon}$ we have 
$$MNHR_k < x^{0.43 + 0.32 + 1/4 - \epsilon/2} = x^{1 - \epsilon/2}$$
and hence $k \ge 2$. We adjoin $R_1$ to $M$, and in general keep adjoining $R_2, \ldots , R_{k-1}$ to $M$ as long as $M < x^{0.43}$. In the end we must have $x^{0.43} \le M \le x^{1/2 - \epsilon}$ as $NHR_k < x^{0.57 - \epsilon/2}$, and we may apply the process of the previous paragraph.
\end{proof}

We obtain the following lemma as an immediate consequence.

\begin{lemma}
\label{lem:SR}
We have
$$S(\mathcal{A}(m), x^{0.07 - \epsilon}) = \frac{\delta_0}{\delta_1}S(\mathcal{B}(m), x^{0.07 - \epsilon}) + o\left(\frac{\delta_0 x}{\log x}\right)$$
for all except $O(R)$ integers $m \in [x/H', 3x/H']$.
\end{lemma}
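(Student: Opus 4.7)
The plan is to simply invoke Lemma~\ref{lem:MN} in the degenerate case $n = 0$, with $z = x^{0.07 - \epsilon}$. When $n = 0$ there are no primes $p_1, \ldots, p_n$ to sum over, and the outer Buchstab-type sum collapses to the single term $S(\mathcal{A}(m), z)$ on the left and $S(\mathcal{B}(m), z)$ on the right. Hence the asymptotic produced by Lemma~\ref{lem:MN} is exactly the statement of Lemma~\ref{lem:SR}.

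To check that the hypotheses of Lemma~\ref{lem:MN} are met, I would take $I = \emptyset$ (equivalently, any subset of the empty index set will do), so that both $M$ and $N$ are empty products and equal $1$. The three numerical conditions $M < x^{1/2 - \epsilon}$, $Nz < x^{0.32 - \epsilon}$, and $MN < x^{3/4 - \epsilon}$ then reduce to $1 < x^{1/2 - \epsilon}$, $x^{0.07 - \epsilon} < x^{0.32 - \epsilon}$, and $1 < x^{3/4 - \epsilon}$, all of which are trivially satisfied for $x$ large. The constraint $z \le x^{0.07 - \epsilon}$ is an equality here.

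There is no real obstacle: the entire content is in recognizing that Lemma~\ref{lem:MN} applies with no primes extracted at all, which is precisely the situation before any Buchstab iteration has been performed. The resulting exceptional set of size $O(R)$ is inherited directly from Lemma~\ref{lem:MN}, and the error term $o(\delta_0 x / \log x)$ is the same.
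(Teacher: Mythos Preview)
Your proposal is correct and is exactly the paper's own proof: apply Lemma~\ref{lem:MN} with $n=0$ and $z=x^{0.07-\epsilon}$, so that $M=N=1$ and all the size conditions are trivially met.
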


\begin{proof}
Apply Lemma~\ref{lem:MN} with $n = 0$.
\end{proof}

We now start the task of evaluating $S(\mathcal{A}(m), 2x^{1/2})$. We apply the Buchstab identity twice to obtain
\begin{align}
\label{eq:two_buchs}
S(\mathcal{A}(m), 2x^{1/2}) &= S(\mathcal{A}(m), x^{0.07 - \epsilon}) \nonumber\\
&- \sum_{x^{0.07 - \epsilon} \le p < 2x^{1/2}} S(\mathcal{A}_p(m), x^{0.07 - \epsilon}) \\
&+ \sum_{x^{0.07 - \epsilon} \le q < p < 2x^{1/2}} S(\mathcal{A}_{pq}(m), q).\nonumber
\end{align}
By Lemma~\ref{lem:SR}, we have an asymptotic for the first term (for all but $O(R)$ exceptional values of $m$).

Next, we dispose of the awkward case $p \approx x^{1/2}$.
\begin{lemma}
\label{lem:awkward}
We have
\begin{align*}
\sum_{x^{1/2 - \epsilon} \le p < 2x^{1/2}} S(\mathcal{A}_p(m), x^{0.07 - \epsilon}) = \frac{\delta_0}{\delta_1} \sum_{x^{1/2 - \epsilon} \le p < 2x^{1/2}} S(\mathcal{B}_p(m), x^{0.07 - \epsilon}) + o\left(\frac{\delta_0 x}{\log x}\right)
\end{align*}
for all except $O(R)$ integers $m \in [x/H', 3x/H']$.
\end{lemma}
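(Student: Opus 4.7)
Following the reduction of Section~\ref{sec:dir}, the task becomes verifying the assumption~\eqref{eq:dir_to_arit} of Proposition~\ref{prop:dir_to_arit} for Dirichlet polynomials of the form $F(s) = P(s) R_1(s) \cdots R_t(s) H(s)$, where $P$ is dyadic in $[x^{1/2-\epsilon}, 2x^{1/2}]$, each $R_i$ lies in $[z_1, x^{0.07-\epsilon})$, and $H$ is the rough-part factor of length at most $T^{1+\epsilon}$. The direct obstacle is that $P \approx x^{1/2}$ falls just outside the range Lemma~\ref{lem:MN} can handle: for $n=1$ and $P_1 = P$, neither choice of subset $I \subset \{1\}$ satisfies both required size bounds (with $I = \{1\}$ the bound $M < x^{1/2-\epsilon}$ fails, and with $I = \emptyset$ the bound $Nz < x^{0.32-\epsilon}$ fails).

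The plan is to gain flexibility by applying Heath-Brown's identity (Lemma~\ref{lem:HB-dec}) with $k = 4$ to the prime sum $P(s)$. After this, in each resulting summand $P(s)$ is replaced by a product $\prod_{j=1}^{J_1} N_{1,j}(s)$ with $\prod_j N_{1,j} \approx x^{1/2}$, and, crucially, any factor with $N_{1,j} > 5x^{1/4}$ is forced to have coefficient function in $\{\xi_0, \xi_0 \log, \xi_0 g\}$---that is, it is a zeta sum in the sense of Definition~\ref{def:zeta_sum}. I then verify Claim~\ref{claim} summand-by-summand via Proposition~\ref{prop:range_0.5}.

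The case split is by the number of zeta factors of $F(s)$, where the pool of potential zeta factors consists of those $N_{1,j}$ with $N_{1,j} > L_\zeta = x^{1/4}$ together with $H$ (if $H \ge L_\zeta$, in which case $\xi_0$-valued $H$ is automatically a zeta sum). Two or more zeta factors is handled directly by Lemma~\ref{lem:two_zetas}. Exactly one zeta factor $Z$ calls for Proposition~\ref{prop:range_0.5}(iii) with $A = Z$, grouping the remaining factors---each of length $\le L_\zeta$---into $B, C$ with $BS^3 \le x^{1/2}$ and $C \le x^{0.32}$ by a greedy clustering. No zeta factor means $H < L_\zeta$ and each $N_{1,j} < L_\zeta$; in this case all coefficients are $\tau$-bounded by Lemma~\ref{lem:coefficients}, so I will reassemble the $N_{1,j}$'s as $A \approx x^{1/2} \ge x^{0.43}$ and invoke Proposition~\ref{prop:range_0.5}(i), building $B \ge x^{0.43}$ from a greedy clustering on $\{R_i\} \cup \{H\}$ and using the remaining factors for $C$.

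The main difficulty will be the greedy clustering step, particularly in the one-zeta case when $Z$ lies near $L_\zeta$: here the admissible window for $B$ narrows to $[x^{0.68}/Z, x^{1/2}/S^3]$, and an unlucky remaining factor near $L_\zeta$ threatens to push $B$ past $x^{1/2}/S^3$ or leave $C$ too small. This is controlled by the $\Delta'_{\mathcal{S}}$ modification from Section~\ref{sec:dir}, which forces every factor length to avoid the neighbourhood $[L_\zeta x^{-\eta}, L_\zeta x^{\eta}]$ of $L_\zeta$ (as $L_\zeta \in \mathcal{S}$), leaving enough room in the admissible window for the clustering to succeed.
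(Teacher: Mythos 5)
Your proposal is correct and follows essentially the same route as the paper's proof: apply the Heath--Brown decomposition to $P(s)$, split on the number of zeta factors (pool $=\{N_{1,j}\}\cup\{H\}$), dispatch $\ge 2$ zetas via Lemma~\ref{lem:two_zetas}, one zeta via Proposition~\ref{prop:range_0.5}(iii) with a clustering of the remaining sub-$L_\zeta$ factors, and zero zetas via Proposition~\ref{prop:range_0.5}(i) with $A=P$, $C$ a single $R_i$. The paper organizes the clustering in the one-zeta case in two phases (first greedily merge the $N_{1,j}$-type factors into at most three pieces, then adjoin the $R_i$'s one by one using the contradiction $HB_{i-1}C_{i-1}R_i \le xS^{o(1)}$), and the role you attribute to $\Delta'_{\mathcal S}$ is slightly off: the avoidance of $[L_\zeta x^{-\eta},L_\zeta x^{\eta}]$ is used to bound the number of merged pieces at three (otherwise their product would exceed $L_\zeta^2 x^{\eta}\ge P$), not to widen the admissible window for $B$ --- the room for adjoining $R_i$'s comes from $R_i < x^{0.07-\epsilon}$ together with $Z\ge L_\zeta$. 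These are details of execution, not of approach; your high-level argument is the paper's.
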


\begin{proof}
We apply the Heath-Brown decomposition to the sum over $p$, and thus write
$$F(s) = \left(\prod_{i, j} N_{i, j}(s)\right)R_1(s) \cdots R_k(s)H(s),$$
where $N_{i, j}(s), R_i(s)$ and $H(s)$ are as in Information \ref{info}.

If $H < L_{\zeta}$ and no $N_{i, j}(s)$ is longer than $L_{\zeta}$, we apply Proposition~\ref{prop:range_0.5}(i) with 
$$(A, B, C) = (P, R_1 \cdots R_{k-1}H, R_k).$$

If $H \ge L_{\zeta}$, by Lemma~\ref{lem:two_zetas} we are done if some $N_{i, j}(s)$ is longer than $L_{\zeta}$. Assume not. Combine any of $N_{i, j}(s)$ as long as their product is shorter than $L_{\zeta}$. In the end one has two or three polynomials, all shorter than $L_{\zeta}$ (otherwise their product would exceed $L_{\zeta}^2x^{\eta}$). In any case one can partition the polynomials into two sets, so that the product $B(s)$ of the first set satisfies $B < x^{1/2 - \epsilon}$ and the product $C(s)$ of the second set satisfies $C < x^{0.32 - \epsilon}$. The result now follows similarly as in the proof of Lemma~\ref{lem:MN} by adjoining polynomials $R_i(s)$ suitably one-by-one to $B(s)$ or $C(s)$.

If $H < L_{\zeta}$ and some $N_{i, j}(s)$ is longer than $L_{\zeta}$, we swap $H(s)$ and $N_{i, j}(s)$ and apply the argument of the previous paragraph.
\end{proof}

Lemmas~\ref{lem:MN} and~\ref{lem:awkward} together give an asymptotic for the second term on the right hand side of~\eqref{eq:two_buchs}. We further note that in the third term
$$\sum_{\substack{x^{0.07 - \epsilon} \le q < p < 2x^{1/2}}} S(\mathcal{A}_{pq}, q)$$
one may drop the region $p > x^{1/2 - \epsilon}$. The loss caused by this operation is $O(\epsilon)$ (see \eqref{eq:full_loss} below), which is negligible. Hence, our task is to show that the loss arising from the sum
$$\sum_{x^{0.07 - \epsilon} \le q < p < x^{1/2 - \epsilon}} S(\mathcal{A}_{pq}(m), q)$$
is bounded from above by $1 - c'$ for some constant $c' > 0$ independent of $\epsilon$.


Next, we show that there are certain regions with $q > x^{1/4}$ where $S(\mathcal{A}_{pq}, q)$ may be evaluated. Namely, let
\begin{align}
\label{eq:B_1}
B_1 &= \{(x, y) : x > y > \frac{1}{4}+\epsilon,  x+y < 0.57-\epsilon, x + 0.7y > 0.43+\epsilon, x - 3y > -0.56+\epsilon\}, \\
B_2 &= \{(x, y) : (1-x-y, y) \in B_1\} \nonumber
\end{align}
and denote $p = x^{\alpha_p}, q = x^{\alpha_q}$. See Figure \ref{fig:kuva3} for an illustration of the regions where $(\alpha_p, \alpha_q) \in B_j$.

\begin{figure}[ht]
	\centering
	\includegraphics{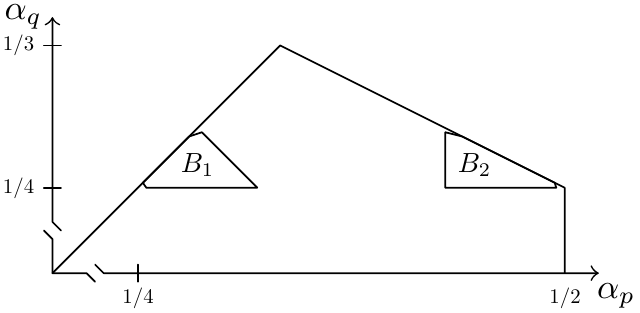}
	\caption{Sets $B_1$ and $B_2$.}
	\label{fig:kuva3}
\end{figure}

\begin{lemma}
\label{lem:sum_B}
For $i \in \{1, 2\}$, we have
\begin{align*}
\sum_{(\alpha_p, \alpha_q) \in B_i} S(\mathcal{A}_{pq}(m), q) = \frac{\delta_0}{\delta_1} \sum_{(\alpha_p, \alpha_q) \in B_i} S(\mathcal{B}_{pq}(m), q) + o\left(\frac{\delta_0 x}{\log x}\right)
\end{align*}
for all except $O(R)$ integers $m \in [x/H', 3x/H']$.
\end{lemma}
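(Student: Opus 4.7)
The plan is to invoke the reduction procedure of Section~\ref{sec:dir}, adapted as in Remark~\ref{rem:z=p_n} for the case $z = p_n = q$, in order to reformulate the claim as Claim~\ref{claim} for Dirichlet polynomials of the form
$$F(s) = P(s)\, Q(s)\, \prod_i R_i(s)\, \prod_j N_j(s)\, H(s),$$
where $P, Q$ are the prime polynomials of length $p, q$ arising from the Buchstab variables, and the remaining factors are the usual sieve polynomials described in Information~\ref{info}. The task then reduces to exhibiting, for each such $F$ and each $(\alpha_p, \alpha_q) \in B_i$, a factorization $F = ABC$ lying in the range of Proposition~\ref{prop:range_0.5}.

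I would proceed by case analysis according to the zeta factors of $F$. If $F$ has two or more zeta factors, Lemma~\ref{lem:two_zetas} handles it. If $F$ has exactly one zeta factor---arising either from $H \ge L_\zeta$ or from an $N_j$ after Heath-Brown decomposition---I would apply Proposition~\ref{prop:range_0.5}(iii) by taking $A$ equal to this zeta factor. For $(\alpha_p, \alpha_q) \in B_1$, I would set $C = Q$ (so $C = q < x^{0.32}$) and let $B$ absorb $P$ together with the remaining non-zeta factors; the bound $B \le x^{1/2}/S^3$ then follows from $A \ge L_\zeta = x^{1/4}$, $C \ge x^{1/4 + \epsilon}$, and $F$ having length $x^{1+o(1)}$. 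For $B_2$, where $\alpha_p > 0.43$, the symmetric choice with $C$ a short sieve piece (or $C = P$-based after a permutation) works by the same size accounting.

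In the no-zeta case, I would apply the Heath-Brown decomposition to $P$, $Q$, and each $N_j$, ensuring every factor of $F$ has length strictly below $L_\zeta = x^{1/4}$ (otherwise a zeta factor would emerge, contradicting the case). With this small-factor regime, one constructs $(A, B, C)$ fulfilling Proposition~\ref{prop:range_0.5}(i) or (ii) by partitioning the factors appropriately. For $B_1$ the intended shape is $(A, B, C) = (P \cdot X,\, Q \cdot Y,\, Z)$ with $A, B \ge x^{0.43}$ and $C \le x^{0.14}$; feasibility of such a partition---despite sieve factor sizes potentially reaching $x^{1/4}$---is precisely what the defining inequalities of $B_1$ encode. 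The constants $0.7$ and $3$ appearing in $\alpha_p + 0.7\alpha_q > 0.43$ and $\alpha_p - 3\alpha_q > -0.56$ reflect the boundary lines of Proposition~\ref{prop:range_0.5}(i)-(ii), most notably the constraints $AC^{3/5} \le x^{0.57}$ and $A \le x^{1.56}/C^8$ in case (ii). For $B_2$, where $\alpha_p > 0.43$, one can directly take $A = P$ and split the remaining factors into $B, C$, yielding an easier partition problem handled by the analogous $B_2$ constraints via the reflection symmetry $\alpha_p \mapsto 1 - \alpha_p - \alpha_q$.

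The main obstacle I anticipate is the feasibility verification in the no-zeta case of $B_1$: showing that for every collection of sieve factors with lengths bounded by $x^{1/4}$, a partition $(X, Y, Z)$ with the required size bounds can always be constructed. A careful sub-case analysis on the number and sizes of ``large'' sieve factors (those of length approaching $x^{1/4}$) will be needed; in the tight corners of $B_1$, one may need to leverage the additional granularity afforded by Heath-Brown decomposing $P$ or $Q$ into many small sub-factors in order to complete the balancing argument.
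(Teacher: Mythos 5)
Your high-level plan (reduce via Section~\ref{sec:dir} and Remark~\ref{rem:z=p_n} to finding a factorization $F=ABC$ satisfying Proposition~\ref{prop:range_0.5}) matches the paper, but the specific partition strategy you propose for the no-zeta case of $B_1$ has a genuine gap, and the zeta case has a smaller but real mismatch.

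For the no-zeta case, the shape $(A,B,C)=(P\cdot X,\,Q\cdot Y,\,Z)$ with $A,B\ge x^{0.43}$ and $C\le x^{0.14}$ is not always attainable, and the inequalities defining $B_1$ do not ``encode'' its feasibility. Take, say, $\alpha_p=0.27$, $\alpha_q=0.26$ (a point of $B_1$) and suppose $H$ and the $R_i$ consist of two lumps of size $x^{0.24}$ and $x^{0.23}$, and Heath-Brown decomposition writes $Q$ as $Q_1Q_2$ with $Q_1=Q_2=x^{0.13}$ and $P$ as $x^{0.14}\cdot x^{0.13}$. The multiset of factor exponents is $\{0.14,0.13,0.13,0.13,0.24,0.23\}$ summing to $1$, and no partition into three nonempty parts has two parts each $\ge 0.43$ (the possible subset sums near $0.43$ from the complement of any single small factor are $0.37, 0.38, 0.40, 0.47, 0.50, 0.51,\ldots$, never two legs simultaneously $\ge 0.43$). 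The paper avoids this by not decomposing $P$ at all, decomposing only $Q$ into $Q_1\ge Q_2$ (so that $Q_2\le\sqrt{Q}$), keeping $R_1\cdots R_kH$ as one lump, and always taking $C=Q_2$. When $PQ_1>x^{0.43}$ one uses Proposition~\ref{prop:range_0.5}(i) with $A=PQ_1$, $B=R_1\cdots R_kH$; otherwise one uses (ii) with $A=R_1\cdots R_kH$, $B=PQ_1$. The point of the constants $0.7$ and $3$ in $B_1$ is precisely to guarantee the Proposition~\ref{prop:range_0.5}(ii) conditions after substituting $C\le\sqrt{Q}$: $AC^{3/5}\le x/PQ\cdot Q^{3/10}\le x^{1-\alpha_p-0.7\alpha_q}<x^{0.57}$ and $A/(x/C^8)\le PQ^{-3}\le x^{\alpha_p-3\alpha_q}$ versus the threshold $x^{-0.56}$. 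You cannot recover these bounds from an $A,B\ge x^{0.43}$ partition, because as shown above such a partition need not exist.

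In the zeta case your choice $C=Q$ is inconsistent when the zeta factor is itself a fragment of $Q$: one cannot simultaneously put $Q_1$ in $A$ and all of $Q$ in $C$. The paper instead takes $A=Q_1$, $B=R_1\cdots R_kHQ_2$, $C=P$, which uses the condition $\alpha_p+\alpha_q<0.57$ to give $C=P<x^{0.32}$ directly and uses $AC\ge L_\zeta P>x^{1/2+\epsilon}$ to control $B$. When the single zeta factor is $H$ rather than a fragment of $Q$, the paper simply swaps $Q_1$ and $H$. To repair your proposal you would need to reorganize along these lines: keep $P$ intact, decompose only $Q$, make $C$ the short $Q$-fragment (or $P$ in the zeta case), and argue both branches (i) and (ii) depending on the size of $PQ_1$.
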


\begin{proof}
The sums count products of three primes, essentially of size $x^{\alpha_p}, x^{\alpha_q}$ and $x^{1 - \alpha_p - \alpha_q}$. Hence, the cases $i = 1$ and $i = 2$ are analogous, and it suffices to consider $i = 1$.

Hence, consider
$$F(s) = P(s)Q(s)R_1(s) \cdots R_k(s)H(s),$$
where $R_i \le Q$ and the pair $(\log_x(P), \log_x(Q))$ lies in $B_1$. We note that in this proof we treat $R_1 \cdots R_kH$ as a single polynomial.

We apply the Heath-Brown decomposition to the polynomial $Q(s)$. Taking products of any two polynomials shorter than $L_{\zeta}$ and noting that $Q(s)$ is shorter than $x^{1/3+\epsilon}$, we may thus consider the case where $Q$ decomposes as the product of at most two polynomials, with a polynomial longer than $L_{\zeta}$ a zeta sum. Assume that we get two polynomials $Q_1, Q_2$ from the decomposition with $Q_1 \ge Q_2$, where possibly $Q_2 = 1$.

If $F(s)$ has at least two zeta factors, we are done by Lemma~\ref{lem:two_zetas}. Assume not.

If $Q_1 \ge L_{\zeta}$, we apply Proposition~\ref{prop:range_0.5}(iii) with
$$A = Q_1, B = R_1 \cdots R_kHQ_2, C = P.$$
Note that $AC \ge x^{1/4}P \ge x^{1/2+\epsilon}$ and hence $B < x^{1/2-\epsilon/2}$, and that $C = P < x^{0.32-\epsilon}$ by the condition $x+y < 0.57 - \epsilon$ in~\eqref{eq:B_1}.

Assume then that $Q_1 < L_{\zeta}$ (and hence $Q_2 > 1$). If $H(s)$ is a zeta sum, we swap $Q_1$ and $H$ and apply the argument above. Hence assume that there are no zeta factors.

We take
$$(A, B, C) = (\max(PQ_1, R_1 \cdots R_kH), \min(PQ_1, R_1 \cdots R_kH), Q_2),$$
and show that (i) or (ii) of Proposition~\ref{prop:range_0.5} holds. 

We first note that by assumption $PQ < x^{0.57-\epsilon}$, and thus $R_1 \cdots R_kH > x^{0.43+\epsilon/2}$. Hence, we are done by Proposition \ref{prop:range_0.5}(i) if $PQ_1 > x^{0.43}$, and hence we may assume the contrary. In particular, $B = PQ_1$ and $A = R_1 \cdots R_kH$.

We apply Proposition \ref{prop:range_0.5}(ii). To do so, we have to check that the conditions
\begin{align*}
\frac{x}{PQ}Q_2^{3/5} < x^{0.57-\epsilon/2} \quad \text{ and } \quad \frac{x}{PQ} < x^{0.56-\epsilon/2}\min(1, x/Q_2^8)
\end{align*}
hold. The first one follows by
$$\frac{PQ}{Q_2^{3/5}} \ge PQ^{7/10} > x^{0.43+\epsilon}$$
and the second one follows from
$$PQ > x^{1/2} > x^{0.44+\epsilon/2} \quad \text{and} \quad \frac{PQ}{Q_2^8} \ge PQ^{-3} \ge x^{-0.56 + \epsilon}.$$
\end{proof}

\subsection{Computational procedure}
\label{sec:0.5_algorithm}

As the computations get very laborious to do by hand when the Buchstab identity is applied twice or even four times more, we will from now on rely on computer calculation for bounding the loss. Below we describe the algorithm used for the computation.

Consider the task of bounding the loss arising from
$$\sum_{\substack{(p_1, \ldots,  p_n) \in I \\ p_n < \ldots < p_1}} S(\mathcal{A}_{p_1 \cdots p_n}, p_n)$$
for some product of intervals $I \subset [0, 1/2 - \epsilon]^n$. We first cover the set $I$ with a union of boxes
$$\mathcal{B} := [x^{\alpha_1}, x^{\beta_1}) \times \cdots \times [x^{\alpha_n}, x^{\beta_n})$$
for $0 \le \alpha_i < \beta_i \le 1/2 - \epsilon$, and consider the sum over a single box $\mathcal{B}$. We may assume $\beta_i \ge \alpha_{i+1}$, as otherwise the condition $p_{i+1} < p_i$ is not satisfied in $\mathcal{B}$, and that $\alpha_1 + \ldots + \alpha_{n-1} + 2\alpha_n \le 1 + \epsilon$, as otherwise the sum is empty. In practice we will choose the decompositions so that $\beta_i - \alpha_i$ are small (e.g. less than $1/100$) -- we specify the details in the end.

Next, we determine whether the Buchstab identity can be applied twice more. More precisely, the question is whether there exist parameters $z$ and $z_{p_{n+1}}$ so that, writing
\begin{align*}
\sum_{\substack{(p_1, \ldots,  p_n) \in \mathcal{B} \\ p_n < \ldots < p_1}} S(\mathcal{A}_{p_1 \cdots p_n}, p_n) &= \sum_{\substack{(p_1, \ldots,  p_n) \in \mathcal{B} \\ p_n < \ldots < p_1}} S(\mathcal{A}_{p_1 \cdots p_n}, z) \\
&- \sum_{\substack{(p_1, \ldots,  p_n) \in \mathcal{B}, p_{n+1} \\ p_{n+1} < p_n < \ldots < p_1}} S(\mathcal{A}_{p_1 \cdots p_np_{n+1}}, z_{p_{n+1}}) \\
&+ \sum_{\substack{(p_1, \ldots,  p_n) \in \mathcal{B}, p_{n+1}, p_{n+2} \\ p_{n+2} < p_{n+1} < p_n < \ldots < p_1}} S(\mathcal{A}_{p_1 \cdots p_n}, p_{n+2}),
\end{align*}
the first and second sums on the right hand side may be evaluated asymptotically. By Lemma~\ref{lem:MN}, one has an asymptotic for the first sum if $\beta_1 + \ldots + \beta_n < 3/4 - \epsilon$ and $\{1, \ldots , n\}$ may be partitioned into two sets $M, N$ such that
$$\sum_{i \in M} \beta_i < 1/2 - \epsilon \quad \text{and} \quad \sum_{i \in N} \beta_i < 0.32 - \epsilon.$$
Moreover, the better bound one has for the sum over $N$, the larger one may take $z$. Similarly, an asymptotic for the second sum is found if the sum $\beta_1 + \ldots + \beta_n + \beta_n$ is less than $3/4 - \epsilon$ and may be partitioned into two subsums smaller than $1/2 - \epsilon$ and $0.32 - \epsilon$, and better bounds for the latter subsum allow one to choose larger values of $z_{p_{n+1}}$.

We apply the Buchstab identity in this way until we can no more or until $n = 6$, after which the benefits from further applications of the identity would be negligible.

The question, then, is whether we have an asymptotic formula for
$$\sum_{\substack{(p_1, \ldots , p_n) \in \mathcal{B} \\ p_n < \ldots < p_1}} S(\mathcal{A}_{p_1 \cdots p_n}, p_n),$$
which corresponds to asking whether the polynomial
$$F(s) = P_1(s) \cdots P_n(s)R_1(s) \cdots R_k(s)H(s)$$
necessarily satisfies Claim~\ref{claim}. We do not utilize the Heath-Brown decomposition to $P_i(s)$ or $R_i(s)$ here. We have $R_i \le P_n$ and $P_i \in [x^{\alpha_i}, x^{\beta_i}]$ for all $i$.

If $H > L_{\zeta}$, we consider whether
$$P_1(s) \cdots P_n(s)R_1(s) \cdots R_k(s)$$
may be written as $B(s)C(s)$ with $B \le x^{1/2 - \epsilon}, C \le x^{0.32 - \epsilon}$, so that Proposition~\ref{prop:range_0.5}(iii) is satisfied. We note that
$$P_1 \le x^{\beta_1}, \ldots, P_n \le x^{\beta_n} \text{ and } R_1 \cdots R_k \le x^{1 - \alpha_1 - \ldots - \alpha_n - 1/4 + \epsilon},$$
and hence a suitable decomposition $(B, C)$ may be found (if one exists) by considering partitions of the multiset
$$\{\beta_1, \ldots , \beta_n, 3/4 - \alpha_1 - \ldots - \alpha_n\}$$
into two multisets and checking whether in any partition the two parts have sums less than $1/2 - \epsilon$ and $0.32 - \epsilon$.

If $H < L_{\zeta}$, there are no zeta factors, and we consider whether $F(s)$ may be written as $(A, B, C)$ so that Proposition~\ref{prop:range_0.5}(i) or (ii) is satisfied. We utilize two strategies. 

The first strategy is a crude one, where we combine all of $R_1(s), \ldots , R_k(s)$ and $H(s)$ into one polynomial $Q(s)$, and go through all ways of writing $P_1(s) \cdots P_n(s)Q(s)$ as $A(s)B(s)C(s)$. The number of such ways is bounded by $3^{n+1}$.

The second strategy is slightly more careful, though it requires $\beta_1 + \ldots + \beta_n < 3/4$ so that $k \ge 1$. We perform a casework on the length of $R_1(s)$, combine all of $R_2(s) \cdots R_k(s)H(s)$ into one polynomial $Q(s)$, and check whether a suitable decomposition $A(s)B(s)C(s)$ for $P_1(s) \cdots P_n(s)R_1(s)Q(s)$ may be found for every possible length of $R_1(s)$. The benefit of this strategy is that we have more polynomials and in particular the short polynomial $R_1$ at our disposal.

In any case, we end up considering several decompositions $F(s) = A(s)B(s)C(s)$. Lower and upper bounds on the factors of $F(s)$ yield bounds on the lengths of $A(s), B(s), C(s)$ via the following (trivial) lemma. In the lemma and afterwards we denote lower and upper bounds on the length of $P(s)$ by $P_l$ and $P_u$ so that $P \in [x^{P_l}, x^{P_u}]$.

\begin{lemma}
\label{lem:operations}

\begin{itemize}
\item[(i)] Let $A(s)$ and $B(s)$ be Dirichlet polynomials. Then
$$AB \in [x^{A_l + B_l}, x^{A_u + B_u}].$$
\item[(ii)] Let $A(s), B(s)$ and $C(s)$ be Dirichlet polynomials with $C(s) = A(s)B(s)$. Then
$$B \in [x^{C_l - A_u}, x^{C_u - A_l}].$$
\end{itemize}
\end{lemma}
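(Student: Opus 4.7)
The plan is to exploit the fact that multiplication of Dirichlet polynomials multiplies their lengths, so the statement is immediate from the definition of the bounds $P_l, P_u$.

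For part (i), I would recall that if $A(s)$ has coefficients supported in an interval of the form $[A, A']$ with $A \in [x^{A_l}, x^{A_u}]$, and similarly $B(s)$ has coefficients supported around length $B \in [x^{B_l}, x^{B_u}]$, then the product $A(s)B(s)$ has coefficients supported around length $AB$, since $(AB)(s) = \sum_m \bigl(\sum_{k_1 k_2 = m} a_{k_1}b_{k_2}\bigr) m^{-s}$. Under the convention that $P$ denotes the length of $P(s)$, we therefore have $AB \in [x^{A_l} x^{B_l}, x^{A_u} x^{B_u}] = [x^{A_l+B_l}, x^{A_u+B_u}]$.

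For part (ii), given $C(s) = A(s)B(s)$, the same multiplicativity gives $C = AB$ as lengths, hence $B = C/A$. Substituting the given bounds $C \in [x^{C_l}, x^{C_u}]$ and $A \in [x^{A_l}, x^{A_u}]$ yields
\begin{align*}
B = \frac{C}{A} \in \left[\frac{x^{C_l}}{x^{A_u}},\, \frac{x^{C_u}}{x^{A_l}}\right] = [x^{C_l - A_u}, x^{C_u - A_l}],
\end{align*}
as claimed.

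There is no real obstacle here: the lemma is purely a bookkeeping device that lets the computational procedure in Section~\ref{sec:0.5_algorithm} propagate bounds on the exponents of the factors of $F(s)$ through the operations of taking products and taking quotients. The only (negligible) subtlety is the convention that the ``length'' is determined up to a dyadic constant by the support of the coefficients; any resulting $O(1)$ multiplicative slack is absorbed into the already-present $S^{o(1)}$ tolerances in Information~\ref{info} and the definitions of $A_l, A_u$.
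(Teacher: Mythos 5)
Your proof is correct and takes essentially the same route as the paper: both just observe that lengths multiply under products of Dirichlet polynomials and then propagate the bounds $A \ge x^{A_l}$, $B \ge x^{B_l}$ (and their upper-bound analogues) through the identities $AB = C$ and $B = C/A$. The extra remark you add about dyadic slack is harmless but not needed; the paper's two-line proof is the same calculation without it.
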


\begin{proof}
(i): Since $A \ge x^{A_l}$ and $B \ge x^{B_l}$, we have $AB \ge x^{A_l + B_l}$. The upper bound is proven similarly.

(ii): The upper bound follows from $x^{A_l}B \le AB = C \le x^{C_u}$, the lower bound being similar.
\end{proof}

Now, given lower and upper bounds on the lengths $A, B$ and $C$, Proposition~\ref{prop:range_0.5} applies assuming that 
\begin{align}
\label{eq:loose_0.5}
&(\text{if } B_u > 0.43 - \epsilon, \text{ then } A_l > 0.43 + \epsilon) \text{ and} \nonumber \\
&(\text{if } B_l < 0.43 + \epsilon, \text{ then } A_u + 0.6C_u < 0.57 - \epsilon \text{ and } A_u < 0.56 + \min(0, 1 - 8C_u) - \epsilon). 
\end{align}

In the case we do not have an asymptotic formula, the loss arising from discarding the sum is equal to (see~\cite{BH})
\begin{align}
\label{eq:full_loss}
\int_{x_1 = \alpha_1}^{\beta_1} \int_{x_2 = \alpha_2}^{\beta_2} \cdots \int_{x_n = \alpha_n}^{\beta_n} \omega\left(\frac{1 - x_1 - \ldots - x_n}{x_n}\right) 1_{x_n < \ldots < x_1} \frac{\d x_1 \cdots \d x_n}{x_1 \cdots x_{n-1}x_n^2},
\end{align}
where $\omega$ is the Buchstab function. Discarding the indicator function (which often has no effect, as the differences $\beta_i - \alpha_i$ are small and we have assumed $\beta_i \ge \alpha_{i+1}$)  and bounding the integrand by its supremum, we obtain an upper bound of
\begin{align}
\label{eq:loss_bound}
\sup_{u \in J} \omega(u)\frac{1}{\alpha_n}\prod_{i = 1}^n \frac{\beta_i - \alpha_i}{\alpha_i},
\end{align}
where the supremum over $u$ is over the interval
$$J := \left[\frac{1 - \beta_1 - \ldots - \beta_n}{\beta_n}, \frac{1 - \alpha_1 - \ldots - \alpha_n}{\alpha_n}\right].$$
We apply the bounds
\begin{align*}
\omega(u) \le \begin{cases}
0, \qquad \qquad \qquad \ u < 1 \\
\frac{1}{u}, \qquad \qquad \ 1 \le u \le 2 \\
\frac{1 + \log(u-1)}{u}, \quad 2 \le u \le 3 \\
0.565, \qquad \quad  3 < u
\end{cases}
\end{align*}
to bound such supremums. (The first three items here are equalities.)

In practice, beginning from 
$$\sum_{x^{0.07} < q < p \le x^{1/2 - \epsilon}} S(\mathcal{A}_{pq}, q),$$
we will decompose the sums over $p$ and $q$ into intervals of the form $[x^{\alpha_i}, x^{\beta_i}]$ with $\beta_i - \alpha_i = 1/3000$. In further applications of the Buchstab identity we will take $\beta_i - \alpha_i = 1/400$.

There are some additional implementation issues not discussed in detail here: In practice it suffices to consider only decompositions $F(s) = A(s)B(s)C(s)$ where $C(s)$ is equal to $P_n(s)$ or $R_1(s)$. Given a box $\mathcal{B} \subset \mathbb{R}^2$, we check whether $\mathcal{B}$ lies in the region $B_1 \cup B_2$ of Lemma~\ref{lem:sum_B} to handle the case $n = 2$. We take $\epsilon = 10^{-9}$ in various lemmas, and in general impose margins of $10^{-9}$ at various situations to avoid mistakes from rounding errors. The interested reader is invited to read the implementation.

The computation takes approximately fifteen minutes on a usual consumer laptop, giving an upper bound of $0.991 < 1$. As one would expect, most of the loss arises when $p$ is large (e.g. the case $p < x^{1/4}$ gives a loss of less than $0.03$). The program prints more detailed information during runtime.

\begin{remark}
\label{rem:easy}
There is an easier way (both computationally and conceptually) to obtain non-rigorous estimates for the loss. Instead of considering intervals of possible polynomial lengths, one takes a sample with the polynomial lengths being, for example, of the form $x^{k/n}, k \in \mathbb{Z}_+$ for some fixed $n$ (e.g. $n = 200$) to approximate the loss. Such a computation suggests that the value of $R$ could be somewhat improved from $x^{0.07}$, but not by much -- it seems to us that reaching the value $R = x^{0.06}$ would require new ideas.
\end{remark}

\section{Applying Harman's sieve: \texorpdfstring{$c = 0.45$}{c = 0.45}}
\label{sec:0.45}

We assume the reader has read Section~\ref{sec:0.5} before reading this section.

The case $c = 0.45$ is largely similar to the case $c = 0.5$. The central differences are that the results of Proposition~\ref{prop:range_0.45} are more complicated than those of Proposition~\ref{prop:range_0.5}, the resulting ranges of $(A, B, C)$ are more disconnected (see Figures~\ref{fig:kuva1} and~\ref{fig:kuva2}) and that we employ the Heath-Brown decomposition. Nevertheless, the modification is relatively straightforward.

We first give necessary theoretical results in Section~\ref{sec:theoretical}, after which we explain the computational procedure used in this case.

\subsection{Theoretical tools}
\label{sec:theoretical}

In this section we present tools which our computational procedure is based on. At many places we need results that rely only on lower and upper bounds on the length of relevant polynomials $P$. We denote these bounds by $P \in [x^{P_l}, x^{P_u}]$. These bounds behave well under multiplication and division, see Lemma~\ref{lem:operations}.

We extend Proposition~\ref{prop:range_0.45}(i) and (ii) to the case where we only have loose bounds on the lengths of polynomials (cf.~\eqref{eq:loose_0.5}). In what follows we write $h = 0.45, t = 0.55$ and $r = 0.18$. 

\begin{lemma}
\label{lem:uncertain_typeII}
Let $F(s) = A(s)B(s)C(s)$ and let $\epsilon > 0$ be fixed. Assume $F(s)$ has no zeta factors and $A, B, C \ge z_1$. Denote by $\mathcal{C}_1$ the condition
\begin{align*}
& B_u \le t - r - \epsilon \quad \text{or for some } 1 \le w \le 20 \text{ we have} \\
& [2wt + \epsilon \le (2w-1)A_l + 2wC_l \text{ and } (2w-2)t + \epsilon \le (2w-1)A_l]
\end{align*}
and by $\mathcal{C}_2$ the condition
\begin{align*}
& B_l \ge t-r+\epsilon \quad \text{or for some } 1 \le w \le 20 \text{ we have} \\
& [A_u/2w \le h+r-t-\epsilon \text{ and } A_u/2w + C_u \le h+r-\left(1 - \frac{1}{w}\right)t-\epsilon \text{ and } \\
& (2w-1)B_l \ge (2w-2)t - (2w-3)r + \epsilon \text{ and } (6w-1)B_l + 4wC_l \ge 6wt - (6w-3)r + \epsilon]. 
\end{align*}
Assuming that both $\mathcal{C}_1$ and $\mathcal{C}_2$ hold, then Claim~\ref{claim} holds.
\end{lemma}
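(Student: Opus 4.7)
The plan is to split into the dichotomy $AC \le H'R$ vs.\ $AC > H'R$ underlying Proposition~\ref{prop:range_0.45}, and show that $\mathcal{C}_1$ handles the former case (via Proposition~\ref{prop:range_0.45}(i)) while $\mathcal{C}_2$ handles the latter (via Proposition~\ref{prop:range_0.45}(ii)). The starting observation is that, by Information~\ref{info}, $F = x \cdot S^{o(1)}$, so $B = x \cdot S^{o(1)}/(AC)$. Combined with $H'R = x^{h+r+\nu-o(1)}$ (recall $H' = x^h(\log\log x)^{-4}$ and $R = x^{r+\nu}$), and the identity $1-h-r = t-r$, the condition $AC \le H'R$ is equivalent to $B \ge x^{t-r-\nu+o(1)}$.

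In the case $AC \le H'R$, the first alternative of $\mathcal{C}_1$, namely $B_u \le t-r-\epsilon$, is incompatible with $B \ge x^{t-r-\nu+o(1)}$ once we fix $\nu$ smaller than $\epsilon$ (which is permitted since $\nu$ is arbitrary). Hence the second alternative of $\mathcal{C}_1$ must apply: there exists $1 \le w \le 20$ with $(2w-1)A_l + 2wC_l \ge 2wt + \epsilon$ and $(2w-1)A_l \ge (2w-2)t + \epsilon$. Taking $\log_x$ of the inequalities $T^{2w}S^w \le A^{2w-1}C^{2w}$ and $T^{2w-2}S^w \le A^{2w-1}$ required by Proposition~\ref{prop:range_0.45}(i) (using $T = x^t S^2$), one obtains linear inequalities that differ from those guaranteed by $\mathcal{C}_1$ by an additive $O(w \log_x S) = o(1)$ error, which is absorbed by the $\epsilon$-margin. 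The side hypothesis $C^w = x^{O(1)}$ is automatic from $w \le 20$ and $C \le F = x^{1+o(1)}$.

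The case $AC > H'R$ is handled symmetrically via $\mathcal{C}_2$ and Proposition~\ref{prop:range_0.45}(ii). Now $B \le x^{t-r-\nu+o(1)}$ rules out $B_l \ge t-r+\epsilon$, so the second alternative of $\mathcal{C}_2$ supplies a suitable $w \in \{1,\ldots,20\}$. Each of the four inequalities required by Proposition~\ref{prop:range_0.45}(ii) — for instance $A^{1/2w}S \le H'RT^{-1}$, which becomes $A_u/(2w) \le h+r-t - O(\log_x S) - O(\log_x \log\log x) + \nu$ after taking logarithms — is a small perturbation of the corresponding inequality in $\mathcal{C}_2$; the perturbation combines $\log_x S$, $\log_x \log\log x$, and $\nu$, each of which is dominated by the fixed $\epsilon$-margin for $x$ sufficiently large.

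The main obstacle is purely bookkeeping: verifying that the $\epsilon$-margins present in $\mathcal{C}_1$ and $\mathcal{C}_2$ indeed absorb all the small losses from the parameters $S$, the $(\log\log x)^{-4}$ factor inside $H'$, and the arbitrarily small $\nu$ inside $R$. This is routine once one writes the four logarithmic inequalities from Proposition~\ref{prop:range_0.45}(ii) side by side with those in $\mathcal{C}_2$ and notes that $\log_x S$ and $\log_x \log\log x$ tend to zero and that $\nu$ may be chosen freely (in particular smaller than $\epsilon$) before $x$ is taken large.
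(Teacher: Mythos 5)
Your proposal is correct and takes essentially the same approach as the paper's (very terse) proof: it deduces the lemma as a direct consequence of Proposition~\ref{prop:range_0.45}(i) and (ii), with the $\epsilon$-margins absorbing the $S^{O(1)}$-factors, the $(\log\log x)^{-4}$ in $H'$, and the auxiliary $\nu$ in $R$. One small stylistic point: in your final paragraph you describe $\nu$ as "dominated by the fixed $\epsilon$-margin for $x$ sufficiently large," which reads as though $\nu \to 0$; $\nu$ is a fixed constant and is not small in $x$ but small by choice, as you correctly flagged earlier when you fixed $\nu < \epsilon$ before letting $x \to \infty$. That ordering of quantifiers is the only place where the argument genuinely needs care, and you do handle it.
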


Note that even though we assume upper and lower bounds for $A, B, C$, we still have $F = xS^{o(1)}$ independent of those bounds and that the polynomials $A(s), B(s), C(s)$ are longer than $z_1$ (assuming they are non-constant).

\begin{proof}
This is a direct consequence of Proposition~\ref{prop:range_0.45}(i) and (ii). Note that the $\epsilon$-terms in Lemma~\ref{lem:uncertain_typeII} handle the $S^{O(1)}$-terms in Proposition~\ref{prop:range_0.45}. We have restricted to considering only $w \le 20$ for practical reasons (the exact threshold $20$ being somewhat arbitrary).
\end{proof}

We then give the corresponding result for Proposition~\ref{prop:range_0.45}(iii).

\begin{lemma}
\label{lem:uncertain_typeI}
Let $F(s) = A(s)B(s)C(s)$ and let $\epsilon > 0$ be fixed. Assume that $A_l > t/2 + \epsilon$ and that $A(s)$ is a zeta sum. If
$$B_u \le h-\epsilon \quad \text{and} \quad C_u \le h + r - t/2-\epsilon,$$
then Claim~\ref{claim} holds.
\end{lemma}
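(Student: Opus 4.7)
The approach is to recognize that this lemma is a direct re-packaging of Proposition~\ref{prop:range_0.45}(iii) in the $(A_l, A_u, B_l, B_u, C_l, C_u)$ framework, where the fixed $\epsilon$-margins in the hypotheses are designed precisely to swallow every subpolynomial correction factor (powers of $S$, $(\log \log x)^{O(1)}$, and the $\exp(\log x/\sqrt{\log \log x})$ in $L_\zeta$) appearing in the parameter choices of Section~\ref{sec:parameters}. Since $A(s)$ is already assumed to be a zeta sum, only the three numerical conditions $A \ge L_\zeta$, $BS^3 \le H'$, and $CS \le H' R T^{-1/2}$ from Proposition~\ref{prop:range_0.45}(iii) need to be verified.

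First I would handle the lower bound on $A$: since $L_\zeta = x^{t/2}\exp(\log x/\sqrt{\log \log x}) = x^{t/2 + o(1)}$, the hypothesis $A_l > t/2 + \epsilon$ gives $A \ge x^{t/2 + \epsilon} \ge L_\zeta$ for $x$ sufficiently large. Next, for the bound on $B$, we have $B \le x^{B_u} \le x^{h - \epsilon}$, and $S^3 = \exp(3(\log \log x)^{17}) = x^{o(1)}$, so $BS^3 \le x^{h - \epsilon/2}$, which is dominated by $H' = x^h(\log \log x)^{-4}$ for large $x$.

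The third condition requires a slightly longer computation. Substituting the parameter values,
\[
H' R T^{-1/2} \;=\; \frac{x^h}{(\log \log x)^4} \cdot x^{r + \nu} \cdot \frac{1}{x^{t/2}\, S} \;=\; \frac{x^{h + r - t/2 + \nu}}{(\log \log x)^4\, S},
\]
while $CS \le x^{h + r - t/2 - \epsilon} \cdot S$. Dividing the two bounds, the desired inequality $CS \le H' R T^{-1/2}$ reduces to $S^2 (\log \log x)^4 \le x^{\epsilon + \nu}$, which holds for $x$ sufficiently large since the left side is $x^{o(1)}$ while $\epsilon, \nu > 0$ are fixed.

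With all three conditions verified, Claim~\ref{claim} follows immediately by invoking Proposition~\ref{prop:range_0.45}(iii). The only real obstacle here is the careful accounting of the subpolynomial factors, and the generous $\epsilon$-slack in the hypotheses has been tailored so that this bookkeeping is routine; no new analytic ingredient beyond Proposition~\ref{prop:range_0.45}(iii) enters.
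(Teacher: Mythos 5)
Your proof is correct and takes the same route as the paper, which simply notes $A > L_\zeta$ and invokes Proposition~\ref{prop:range_0.45}(iii); you have merely filled in the parameter bookkeeping that the paper leaves implicit.
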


\begin{proof} Note that $A > L_{\zeta}$. The result follows from Proposition~\ref{prop:range_0.45}(iii).
\end{proof}

We next note that if $F(s)$ has a zeta factor, then polynomials shorter than $H'^2R/x$ do not cause us problems. With our choice of parameters we have $H'^2R/x = x^{0.08 + o(1)}$.

\begin{lemma}
\label{lem:ignore_short}
Let $A, B', C', P$ be Dirichlet polynomials with $AB'C'P \le xS^{o(1)}$ and $P \le H'^2Rx^{-1}$. Assume that $A > L_{\zeta}$ is a zeta sum and that
$$B'S^3 < H' \quad \text{and} \quad C'S < \frac{H'R}{\sqrt{T}}.$$
Defining $(B_1, C_1) = (B'P, C')$ and $(B_2, C_2) = (B', C'P)$, there is some $i \in \{1, 2\}$ such that
$$B_iS^3 < H' \quad \text{and} \quad C_iS < \frac{H'R}{\sqrt{T}}.$$
\end{lemma}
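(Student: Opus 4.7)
The plan is to argue by contradiction: suppose neither $i = 1$ nor $i = 2$ satisfies the required inequalities, and derive an upper bound on $A$ that is inconsistent with $A$ being a zeta sum.

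First I would pinpoint which of the two inequalities fails in each case. Since $C_1 = C'$ and by hypothesis $C'S < H'R/\sqrt{T}$, the only way $i = 1$ can fail is via $B_1 S^3 = B'P S^3 \ge H'$. Symmetrically, $B_2 = B'$ satisfies $B_2 S^3 < H'$, so the only way $i = 2$ can fail is via $C_2 S = C'P S \ge H'R/\sqrt{T}$. Multiplying these two lower bounds gives
$$B'C' P^2 S^4 \ge \frac{H'^2 R}{\sqrt{T}}.$$
Now I would use the hypothesis $P \le H'^2 R/x$ (equivalently $1/P \ge x/(H'^2R)$) to cancel one factor of $P$, obtaining
$$B'C'P \ge \frac{x}{\sqrt{T}\, S^4}.$$
Combining with $A B' C' P \le x S^{o(1)}$ yields $A \le \sqrt{T}\, S^{4 + o(1)}$.

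The remaining step is to show this contradicts $A > L_\zeta$. Here the key is that $A$, being a (product of) polynomial factor(s) obtained from the framework of Information~\ref{info}, satisfies the $\mathcal{S}$-avoidance property; since $L_\zeta \in \mathcal{S}$ by~\eqref{eq:S_choice}, the strict bound $A > L_\zeta$ is automatically strengthened to $A \ge L_\zeta x^\eta$. A direct comparison of parameters then closes the argument: for $c = 0.45$ we have $L_\zeta/\sqrt{T} = \exp(\log x/\sqrt{\log\log x})/S$, which beats $S^{4+o(1)}$ because $\log x/\sqrt{\log\log x}$ dominates $(\log\log x)^{17} = \log S$; for $c = 0.5$ we have $L_\zeta = x^{1/4}$ and $\sqrt{T} = x^{1/4} S$, so the bare ratio $L_\zeta/\sqrt{T} = 1/S$ is insufficient, and one genuinely needs the $x^\eta$ margin, which suffices because $\eta \log x = \log x \cdot \exp(-(\log\log\log x)^2) \gg (\log\log x)^{17}$.

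The main obstacle is the delicate $c = 0.5$ case, where $L_\zeta$ is actually smaller than $\sqrt{T}$ (by a factor of $S$). Here the proof genuinely relies on the $\mathcal{S}$-avoidance providing an $x^\eta$ cushion around $L_\zeta$, together with the parameter choices $\eta = \exp(-(\log\log\log x)^2)$ and $S = \exp((\log\log x)^{17})$ being such that $x^\eta$ dwarfs any fixed power of $S$; this is the only nontrivial use of the specific parameter balancing from Section~\ref{sec:parameters} in the argument.
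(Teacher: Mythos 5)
Your proof is correct and takes essentially the same route as the paper: assume both $i=1$ and $i=2$ fail, pinpoint the specific inequality that must fail in each case, multiply the two lower bounds, cancel one factor of $P$ using $P \le H'^2R/x$, and conclude $A \le \sqrt{T}\,S^{4+o(1)}$, contradicting $A > L_\zeta$. Your extra discussion of the $c=0.5$ case is not needed (this lemma lives in Section~\ref{sec:0.45} and is only invoked for $c=0.45$, where $L_\zeta/\sqrt{T}$ already dominates any power of $S$ without appealing to the $x^\eta$ cushion), but the analysis you give there is sound as a remark on why the parameter choices matter.
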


\begin{proof}
If not, then one has both
$$B'P \ge \frac{H'}{S^3} \quad \text{and} \quad C'P \ge \frac{H'R}{S\sqrt{T}},$$
so that
$$B'C'P \ge \frac{H'^2R}{\sqrt{T}S^4P} \ge \frac{x}{\sqrt{T}S^4},$$
which contradicts $A > L_{\zeta}$ and $AB'C'P \le xS^{o(1)}$.
\end{proof}

Hence, recalling Proposition~\ref{prop:range_0.45}(iii), in the presence of a zeta sum we may ignore polynomials $R_1(s), \ldots , R_k(s)$ assuming $z < x^{0.08 - \epsilon}$.

Our next result concerns the case where the polynomial
$$F(s) = P_1(s) \cdots P_n(s)R_1(s) \cdots R_k(s)H(s)$$
has many short factors $R_i < z$ with $z$ small. Heuristically, one should be able to find a suitable decomposition $F(s) = A(s)B(s)C(s)$ in this case, since having many short polynomials gives one plenty of options for adjusting the lengths $A, B, C$. The next result formalizes this intuition. The result is stronger the longer $R_1(s) \cdots R_k(s)$ is.

\begin{lemma}
\label{lem:MN-2}
Let $Q_1, \ldots , Q_n \ge z_1$ and $H \ge z_1$ be given. Let $\epsilon > 0$ be small and fixed and let $z \le x^{0.061}$. Assume that $Q_1 \cdots Q_nH < x/S$ and that at least one of the following conditions hold:
\begin{itemize}
\item[(i)] There is some subproduct $M(s)$ of $Q_1(s) \cdots Q_n(s)H(s)$ such that
$$M \in [x^{0.37 + \epsilon}, x^{0.45-\epsilon}z^{-1/2}] \cup [x^{0.55+\epsilon}, x^{0.63-\epsilon}z^{-4/5}].$$
\item[(ii)] We have $Q_1 \cdots Q_nH < x^{0.9-3\epsilon}/z$.
\item[(iii)] There is some subproduct $M(s)$ of $Q_1(s) \cdots Q_n(s)H(s)$ and some integer $K \ge 2$ such that
$$Z := \frac{x}{HQ_1 \cdots Q_n}$$
satisfies $Z > z^{K-1}S$ and
$$M \in [x^{0.37+\epsilon}Z^{-(5K-4)/5K}, x^{0.45-\epsilon}z^{-1/2}] \cup [x^{0.55+\epsilon}Z^{-(2K-1)/2K}, x^{0.63-\epsilon}z^{-4/5}].$$
\end{itemize}
Then, assuming
$$F(s) = Q_1(s) \cdots Q_n(s)H(s)R_1(s) \cdots R_k(s)$$
has no zeta factors, $F(s)$ satisfies Claim~\ref{claim} (for any $k \ge 1$, $R_i < z$).
\end{lemma}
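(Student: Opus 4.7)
The plan is to reduce each case to exhibiting a decomposition $F(s) = A(s)B(s)C(s)$ satisfying one of the conditions (i) or (ii) of Proposition~\ref{prop:range_0.45}, from which Claim~\ref{claim} follows. Throughout I would set $C = R_j$ for some chosen index $j$, so that $C \le z \le x^{0.061}$ holds automatically, and then distribute the remaining factors between $A$ and $B$. The two intervals appearing in hypothesis~(i) correspond to the two natural ways to land inside the simple range of Lemma~\ref{lem:range_simple}: either $B$ is short, or (by the $A \leftrightarrow B$ symmetry of the hypotheses of Proposition~\ref{prop:range_0.45}(i)--(ii)) $A$ is short.

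For case (i), if $M \in [x^{0.37+\epsilon}, x^{0.45-\epsilon}z^{-1/2}]$ I would take $B = M$ and put the remaining factors into $A$; a direct check using $C \le z$ and $C \ge 1$ shows that $B$ lies in the window $[x^{0.37+\epsilon}C^{-1/5}, x^{0.45-\epsilon}C^{-1/2}]$ demanded by Lemma~\ref{lem:range_simple}. If instead $M \in [x^{0.55+\epsilon}, x^{0.63-\epsilon}z^{-4/5}]$ I would set $A = M$ and place the rest into $B$, invoking the $A \leftrightarrow B$-symmetric form of Lemma~\ref{lem:range_simple}; a short computation with $B = F/(AC) \cdot S^{o(1)}$ and $C \le z$ gives the corresponding window for $A$. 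In either subcase the leftover factors are nonempty, for otherwise $MC$ would be much smaller than $x$, contradicting $F \approx x$.

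For case (ii), the bound $V := Q_1 \cdots Q_n H < x^{0.9-3\epsilon}/z$ forces $R_1 \cdots R_k > x^{0.1+3\epsilon}z$, supplying abundant short factors. I would reduce to case (i) by producing a subproduct $M$ in one of the two target intervals, using a walk: order the factors $Q_i, H, R_1, \ldots, R_{k-1}$ (placing the larger ones first, say) and form running partial products, noting that each $R_i$-step multiplies the current length by at most $z$. A case analysis exploiting the upper bound on $V$ shows that some partial product must fall into one of the two intervals appearing in~(i). Case (iii) runs parallel to (i): the target intervals for $M$ are widened by factors of $Z^{-(5K-4)/5K}$ or $Z^{-(2K-1)/2K}$, which remain nonempty under $Z > z^{K-1}S$, and the verification of the $B$-window (or dually the $A$-window) proceeds exactly as in~(i) after accounting for the powers of $Z$.

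The main obstacle will be the walk in case (ii): a single target window may be narrower than the step size $\log_x z$, so a naive walk can jump over the window entirely. The remedy is to exploit the two disjoint windows of~(i) in tandem, ordering the factors so that any overshoot of the lower window $[x^{0.37+\epsilon}, x^{0.45-\epsilon}z^{-1/2}]$ lands in the upper window $[x^{0.55+\epsilon}, x^{0.63-\epsilon}z^{-4/5}]$ (via the $A \leftrightarrow B$ duality), and conversely. Careful factor ordering and accounting for the $S^{o(1)}$ error terms should deliver the claim.
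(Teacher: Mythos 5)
Your overall plan (choose $C$ to be one of the $R_i$, then invoke Lemma~\ref{lem:range_simple} by exhibiting a subproduct landing in one of its two windows) is indeed the paper's strategy, and your handling of case~(i) is essentially correct. However, the treatment of cases (ii) and (iii) has a genuine gap.

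The key computation you are missing is the reason the hypothesis $z \le x^{0.061}$ is present: with $C = R_k \le z \le x^{0.061}$, each of the two target intervals
$[x^{0.37+\epsilon}C^{-1/5}, x^{0.45-\epsilon}C^{-1/2}]$ and $[x^{0.55+\epsilon}C^{-1/2}, x^{0.63-\epsilon}C^{-4/5}]$ has multiplicative length $x^{0.08-2\epsilon}C^{-3/10} > x^{0.0615} \ge z$. Consequently a walk that multiplies in the factors $R_1,\ldots,R_{k-1}$ one at a time (each $\le z$) \emph{cannot} skip over either window; the obstacle you flag does not arise, and no ``duality between the two windows'' argument is needed. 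Your proposed fix is not only unnecessary, it is unclear that it works: an overshoot past $x^{0.45-\epsilon}z^{-1/2}$ lands in the gap $(x^{0.45-\epsilon}z^{-1/2}, x^{0.55+\epsilon})$, not in the upper window, so the duality does not rescue the walk.

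Once this point is in place, the actual mechanism for (ii) and (iii) is different from what you describe. One fixes $C = R_k$ the shortest of the $R_i$, writes $L = R_1\cdots R_{k-1}$, and looks for a subproduct $Q$ of $Q_1(s)\cdots Q_n(s)H(s)$ in the $L$-dilated windows $J_1\cup J_2$; then the walk by factors of $L$ transports $Q$ into $I_1\cup I_2$. Under (ii) the lower bound on $R_1\cdots R_k$ forces $L \ge x^{0.1+2\epsilon}$, which makes $J_1$ and $J_2$ overlap into a single interval of length $> x^{0.32}$ in which a subproduct is readily found. Under (iii) one uses that $R_k$ is minimal to derive $R_k^{1/5}L \ge Z^{(5k-4)/5k}$ and $R_k^{1/2}L \ge Z^{(2k-1)/2k}$ (with $k \ge K$ forced by $Z > z^{K-1}S$), which shows $J_1\cup J_2$ contains the set specified in hypothesis~(iii); this is the precise bookkeeping your sketch of ``widened by factors of $Z^{-(5K-4)/5K}$'' glosses over. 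Your proposal names the right lemma and the right general shape, but without the window-length computation and the merging/minimality estimates the argument does not close.
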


As Lemma~\ref{lem:ignore_short} already essentially handles the case where $z$ is small and one has a zeta factor, restricting to the case where $F(s)$ has no zeta factors is not an issue. Note that the condition $Q_1 \cdots Q_nH < x/S$ implies $k \ge 1$. In (iii) the bound $Z > z^{K-1}S$ implies that $k \ge K$, so $K$ is a lower bound on the number of factors $R_i(s)$. In practice the polynomials $Q_i(s)$ correspond to the polynomials $P_i(s)$ or factors arising from applying the Heath-Brown decomposition to them.

\begin{proof}
We aim to write $F(s) = A(s)B(s)C(s)$ so that the conditions of Lemma~\ref{lem:range_simple} are satisfied. Note first that since $k \ge 1$, if a subproduct $M$ as in (i) may be found, we may simply take
$$(A, B, C) \in \left\lbrace\left(M, \frac{F}{MR_1}, R_1\right), \left(\frac{F}{MR_1}, M, R_1\right)\right\rbrace,$$
depending on which of the two intervals in (i) $M$ lies in.

Assume then that we are in the situation of (ii) or (iii). We have
$$Q_1 \cdots Q_n H = \frac{x}{Z},$$
where $Z$ is defined as in (iii), and hence
$$R_1 \cdots R_k \gg \frac{Z}{S^{1/2}}.$$
It follows that $k \ge K$. Note that we may assume $R_1 \ge \ldots \ge R_k$.

We choose $C = R_k$. It suffices to find a subproduct $P$ of $F/R_k$ such that $P$ has length
$$P \in [x^{0.37+\epsilon}R_k^{-1/5}, x^{0.45-\epsilon}R_k^{-1/2}] \cup [x^{0.55+\epsilon}R_k^{-1/2}, x^{0.63-\epsilon}R_k^{-4/5}] =: I_1 \cup I_2.$$
Indeed, if $P$ lies in the former interval, we take $B(s) = P(s)$ and $A(s) = F(s)/(P(s)C(s))$ in Lemma~\ref{lem:range_simple}. In the latter case one takes $B(s) = F(s)/(P(s)C(s))$ and $A(s) = P(s)$.

Let $L(s) = R_1(s) \cdots R_{k-1}(s)$. Since $C \le z \le x^{0.061}$, the lengths of the intervals $I_i$ are $x^{0.08-2\epsilon}C^{-3/10} > x^{0.0615} \ge z.$ Hence, it suffices to find a subset of $Q_1(s), \ldots , Q_n(s), H(s)$ whose product $Q(s)$ satisfies
\begin{align*}
Q \in [x^{0.37+\epsilon}R_k^{-1/5}L^{-1}, x^{0.45-\epsilon}R_k^{-1/2}] \cup [x^{0.55+\epsilon}R_k^{-1/2}L^{-1}, x^{0.63-\epsilon}R_k^{-4/5}] =: J_1 \cup J_2,
\end{align*}
as then one can construct a desired subproduct $P(s)$ of $F(s)/R_k(s)$ by adjoining factors of $L(s)$ to $Q(s)$ one by one until $Q$ lies in $I_1$ or $I_2$.

We note that if $L \ge x^{0.1 + 2\epsilon}$, then $J_1 \cup J_2$ is a single interval of length
$$\frac{x^{0.63-\epsilon}R_k^{-4/5}}{x^{0.37+\epsilon}R_k^{-1/5}L^{-1}} > x^{0.32}$$
It is easy to see that in this case a suitable subproduct $Q$ exists. This gives (ii).

Note that since $R_k(s)$ is the shortest of $R_1(s), \ldots , R_k(s)$, we have
$$R_k^{1/5}L = \frac{R_1 \cdots R_k}{R_k^{4/5}} \ge \frac{R_1 \cdots R_k}{(R_1 \cdots R_k)^{4/5k}} \ge Z^{(5k-4)/5k}$$
and similarly
$$R_k^{1/2}L \ge Z^{(2k-1)/2k}.$$
Hence the union $J_1 \cup J_2$ contains
$$[x^{0.37+\epsilon}Z^{-(5k-4)/5k}, x^{0.45-\epsilon}z^{-1/2}] \cup [x^{0.55+\epsilon}Z^{-(2k-1)/2k}, x^{0.63-\epsilon}z^{-4/5}].$$
As $k \ge K$ and the intervals are the longer the larger $k$ is, by the assumption of (iii) there is a subproduct of $Q_1(s) \cdots Q_n(s)H(s)$ lying in this set, implying the result.
\end{proof}

The next result is used when applying the Heath-Brown decomposition to a polynomial $P(s)$ to bound the lengths of the factors.

\begin{lemma}
\label{lem:HB-dec-lengths}
Let $L_{\zeta} \le P \le 10x^{1/2}$ be given, let $J = O(1)$ and let $N_1(s), \ldots , N_J(s)$ be such that $N_1 \cdots N_J = P$. Assuming that $N_i < L_{\zeta}$ for all $i$, one may partition $\{N_1(s), \ldots , N_J(s)\}$ into two sets $\mathcal{Q}_1, \mathcal{Q}_2$ such that the products $Q_i(s)$ of elements of $\mathcal{Q}_i$ satisfy
$$\max(Q_1, Q_2) \in [\sqrt{P}, \max(L_{\zeta}, P^{2/3})].$$
\end{lemma}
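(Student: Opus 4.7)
The plan is to treat this as a purely combinatorial partitioning problem and solve it by a greedy algorithm. Set $M := \max(L_{\zeta}, P^{2/3})$. Since $Q_1 Q_2 = P$, the lower bound $\max(Q_1, Q_2) \ge \sqrt{P}$ is automatic, so the content of the lemma is producing a non-trivial partition (both $\mathcal{Q}_1, \mathcal{Q}_2$ non-empty) with $Q_1, Q_2 \le M$. Equivalently, I want to find a proper non-empty subset of $\{N_1,\ldots,N_J\}$ whose product lies in $[P/M, M]$; this interval is non-empty since $M \ge P^{2/3} \ge \sqrt{P}$.

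First I would sort the factors as $N_1 \ge N_2 \ge \cdots \ge N_J$, noting that $J \ge 2$ (otherwise $N_1 = P \ge L_{\zeta}$ would contradict $N_1 < L_{\zeta}$). If $N_1 \ge P/M$, I take $\mathcal{Q}_1 = \{N_1\}$; then $Q_1 = N_1 < L_{\zeta} \le M$ and $Q_2 = P/N_1 \le P/(P/M) = M$, so both products lie in $[P/M,M]$ and we are done.

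Otherwise $N_1 < P/M$, and I would accumulate $N_1, N_2, \ldots$ into $\mathcal{Q}_1$ greedily, stopping at the first $k$ with $N_1 \cdots N_k \ge P/M$. Such a $k$ exists because $N_1 \cdots N_J = P \ge P/M$, and moreover $k \le J-1$: indeed $N_J < L_{\zeta} \le M$ forces $N_1 \cdots N_{J-1} = P/N_J > P/M$, so the greedy algorithm stops before exhausting all factors and $\mathcal{Q}_2$ is non-empty. For the upper bound on $Q_1$, I use that the $(k-1)$-st partial product is still below $P/M$ and the newly added factor satisfies $N_k \le N_1 < P/M$, whence
$$\frac{P}{M} \;\le\; Q_1 \;=\; (N_1 \cdots N_{k-1})\cdot N_k \;<\; \frac{P}{M}\cdot\frac{P}{M} \;\le\; M,$$
where the last inequality is $P^2/M^2 \le M$, i.e., $M \ge P^{2/3}$, which is exactly built into the definition of $M$. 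Then $Q_2 = P/Q_1 \le M$ as well, and $\max(Q_1,Q_2) \in [\sqrt{P}, M]$.

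There is no real obstacle in this argument: the only substantive design choice is $M = \max(L_{\zeta}, P^{2/3})$, which is precisely what is needed to simultaneously accommodate the single-factor regime (where $N_1 < L_{\zeta} \le M$ suffices) and the greedy overshoot regime (where $(P/M)^2 \le M$ is the binding constraint). The assumption $J = O(1)$ plays no role at all, and the hypothesis $P \le 10 x^{1/2}$ is likewise irrelevant to the combinatorial argument.
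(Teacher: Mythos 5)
Your proof is correct, and it takes a genuinely different route from the paper's. The paper works bottom-up: it repeatedly merges any two polynomials whose product is still below $L_{\zeta}$, argues that (using $P \le 10x^{1/2} < L_{\zeta}^2$ for $c=0.45$) at most three polynomials can survive this process, and then in the three-polynomial case merges the two shortest and observes the resulting product is at most $P^{2/3}$. Your argument is a direct greedy accumulation aimed at landing a subproduct in $[P/M, M]$, where $M = \max(L_{\zeta}, P^{2/3})$: sort decreasingly, special-case $N_1 \ge P/M$, otherwise accumulate until the partial product first reaches $P/M$, and bound the overshoot by $N_k \le N_1 < P/M$ together with $(P/M)^2 \le M$. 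Both arguments are elementary, but yours has two small advantages: it is a single pass rather than a merge-then-case-split, and, as you correctly observe, it never invokes the upper bound $P \le 10x^{1/2}$ — that hypothesis is used in the paper precisely to cap the number of survivors of the merging process at three, a step your proof bypasses entirely. The one thing worth making explicit (which you do implicitly) is that $J \ge 2$, which follows from $P \ge L_{\zeta}$ and $N_i < L_{\zeta}$, so there is something to partition at all.
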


\begin{proof}
We first use a recursive algorithm for reducing the number of factors $N_i(s)$. As long as there exist $i \neq j$ such that $N_iN_j < L_{\zeta}$, replace $N_i(s)$ and $N_j(s)$ by their product $N_i(s)N_j(s)$, reducing the number of polynomials by one. In the end the number of polynomials must be two or three, as otherwise we would have $P < L_{\zeta}$ or $P > L_{\zeta}^2 > 10x^{1/2}$. If there remain two polynomials, we are done. If there remain three polynomials, combine the shortest two of them. The resulting polynomial has length not exceeding $P^{2/3}$.
\end{proof}

For the case $p_2 > L_{\zeta}$ we use the following lemma.

\begin{lemma}
\label{lem:p_2_large}
Let $x^{0.275+\epsilon} \le P_2 \le P_1 \le 10x^{1/2}$ be given with $P_1P_2^2 \le 10x$. Assume that $x/P_1P_2 \ge L_{\zeta}$ and $\min(P_1, x/P_1P_2) \le x^{0.355 - \epsilon}$. Then, assuming that the polynomial $F(s)$ obtained by applying the Heath-Brown decomposition to any polynomials longer than $L_{\zeta}$ has at least one zeta factor, $F(s)$ satisfies Claim~\ref{claim}.
\end{lemma}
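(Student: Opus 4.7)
The plan is to apply Proposition~\ref{prop:range_0.45}(iii), using the hypothesized zeta factor as the polynomial $A$; the task is then to group the remaining atomic factors of $F(s)$ into $B$ and $C$ with $B \le H'/S^3 = x^{0.45-o(1)}$ and $C \le H'R/(S\sqrt{T}) = x^{0.355-o(1)}$. First I would apply Lemma~\ref{lem:two_zetas} to dispose of the case where $F(s)$ has two or more zeta factors, leaving the case where $F(s)$ has exactly one zeta factor $A \ge L_\zeta$, which arises as an atomic factor from the Heath-Brown decomposition of exactly one of $P_1$, $P_2$, or $P_3 := x/(P_1 P_2)$.

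Next, writing $P_S := \min(P_1, P_3) \le x^{0.355-\epsilon}$ and $P_L := \max(P_1, P_3)$, I would combine the identity $P_L = x/(P_S P_2)$ with the lower bounds $P_2, P_S \ge L_\zeta$ (valid since $P_2 \ge x^{0.275+\epsilon} > L_\zeta$ for large $x$, $P_1 \ge P_2$, and $P_3 \ge L_\zeta$ by hypothesis) to conclude $P_L \le x/L_\zeta^2 = x^{0.45-o(1)}$. I would also note the elementary bound $P_2 \le \min(P_1, \sqrt{10 x/P_1}) \le x^{1/3}$ coming from $P_2 \le P_1$ together with $P_1 P_2^2 \le 10 x$.

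A three-way casework on the origin of $A$ then proceeds as follows. If $A$ comes from the decomposition of $P_2$ or of $P_L$, I would set $C := P_S$ (all atomic factors of $P_S$) and put every remaining non-$A$ atomic factor into $B$; the product identity $ABC = x$ gives $B = x/(P_S A)$. If $A$ comes from $P_S$, I would instead set $C := P_2$ and $B := P_L \cdot (P_S/A) = x/(P_2 A)$. In every case $B$ has the form $x/(W A)$ where $W \in \{P_S, P_2\}$ satisfies $W \ge L_\zeta$, giving $B \le x/L_\zeta^2 = x^{0.45-o(1)}$. The $C$ bound reads $C = P_S \le x^{0.355-\epsilon}$ in the first two cases and $C = P_2 \le x^{1/3}$ in the third, in both situations safely below $x^{0.355-o(1)}$.

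The main point to verify carefully is that the bounds $B \le x/L_\zeta^2$ and $C \le x^{0.355-\epsilon}$ have enough slack to absorb the $S$-factors appearing in Proposition~\ref{prop:range_0.45}(iii). For the $B$ bound this reduces to comparing $\exp(2\log x/\sqrt{\log\log x})$ (from the definition of $L_\zeta$) against $S^3(\log\log x)^4 = \exp(O((\log\log x)^{17}))$, and the former dominates because $\log x/\sqrt{\log\log x}$ eventually exceeds any fixed power of $\log\log x$. For the $C$ bound, the positive parameter $\nu$ in the definition of $R$ gives a power-of-$x$ margin that trivially absorbs polylogarithmic and $S$-type losses. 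The casework itself is straightforward; the delicate step is really only this margin bookkeeping.
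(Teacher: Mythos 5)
Your proof is correct and follows essentially the same strategy as the paper's: dispose of the two-zeta-factor case via Lemma~\ref{lem:two_zetas}, then apply Proposition~\ref{prop:range_0.45}(iii) with the unique zeta factor as $A$, placing as $C$ the shortest of the three blocks $P_1, P_2, x/(P_1P_2)$ not containing $A$, and everything else as $B$. Your $P_S/P_L$ casework and the margin check $B \le x/L_{\zeta}^2 \le H'/S^3$ are a clean restatement of the paper's ``we may assume $P_1 \le x^{0.355-\epsilon}$ by symmetry'' reduction, and in fact treat the $Q \le x^{0.355-\epsilon}$ subcase slightly more carefully (there $Q$ is only bounded below by $L_{\zeta}$, not by $x^{0.275+\epsilon}$, so the paper's stated $B < x^{0.45-\epsilon/2}$ does not literally transfer, whereas your $x/L_{\zeta}^2$ bound does); the only nit is that $P_1P_2^2 \le 10x$ gives $P_2 \le (10x)^{1/3}$ rather than $x^{1/3}$, which changes nothing.
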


\begin{proof}
If $F(s)$ has at least two zeta factors, we are done by Lemma~\ref{lem:two_zetas}. We let $Q(s)$ be the product of factors of $F(s)$ that are not factors resulting from the Heath-Brown decomposition applied to $P_1(s)$ or $P_2(s)$, so that $P_1P_2Q = xS^{o(1)}$. We may assume that $P_1 \le x^{0.355 - \epsilon}$, as the case $Q \le x^{0.355 - \epsilon}$ is symmetric.  

If the zeta factor $Z(s)$ arises from decomposing $P_1(s)$, denote by $P_1'(s)$ the remaining polynomial of length $P_1/Z$. We apply Proposition \ref{prop:range_0.45}(iii) with
$$(A, B, C) = (Z, P_1'Q, P_2).$$
Note that $P_1'Q \le xS^{\epsilon}/ZP_2 \le xS^{\epsilon}/x^{0.55+\epsilon} < x^{0.45-\epsilon/2}$ and $P_2 \le x^{0.355 - \epsilon}$ by assumption.

If the zeta factor $Z(s)$ arises from decomposing $P_2(s)$, we similarly as above take $(A, B, C) = (Z, P_2'Q, P_1)$.

If the zeta factor $Z(s)$ is a factor of $Q(s)$, denote the product of the other factors of $Q(s)$ by $Q'(s)$, and take
$$(A, B, C) = (Z, P_2Q', P_1).$$
Now $P_2Q' \le xS^{\epsilon}/ZP_1 \le xS^{\epsilon}/x^{0.55+\epsilon} < x^{0.45-\epsilon/2}$ and $P_1 \le x^{0.355 - \epsilon}$ by assumption. 
\end{proof}

For the case where $P_2 \ge x^{0.275+\epsilon}$ and there are no zeta factors we will employ a casework on the lengths of the polynomials arising from the Heath-Brown decomposition.

\subsection{Details of the procedure and results}
\label{sec:comp_results}

We employ Harman's sieve in a similar manner as in Section \ref{sec:0.5}. First, starting from the Buchstab sum $S(\mathcal{A}(m), 2x^{1/2})$, we apply Buchstab's identity twice to get
\begin{align*}
&S(\mathcal{A}(m), 2x^{1/2}) \\
= &S(\mathcal{A}(m), x^{0.06}) - \sum_{x^{0.06} \le p_1 < 2x^{1/2}} S(\mathcal{A}_{p_1}(m), x^{0.06}) + \sum_{x^{0.06} \le p_2 < p_1 \le 2x^{1/2}} S(\mathcal{A}_{p_1p_2}(m), p_2)
\end{align*}
(cf. \eqref{eq:two_buchs}). An asymptotic for the first term on the right hand side is obtained from Lemma~\ref{lem:MN-2}(ii) (if $H < L_{\zeta}$) and Lemma~\ref{lem:ignore_short} (if $H > L_{\zeta}$). We also have asymptotics for the second term: First, apply the Heath-Brown decomposition to $P_1(s)$. In case the resulting polynomial
$$F(s) = \prod_{1 \le i \le J} N_i(s)R_1(s) \cdots R_k(s)H(s)$$
has no zeta factors we may apply Lemma \ref{lem:MN-2}, and in the presence of a zeta factor one sees that the non-zeta factors of $N_1(s) \cdots N_J(s)H(s)$ may be partitioned into $B'(s)C'(s)$ such that $B'(s) < x^{0.45 - \epsilon}, C'(s) < x^{0.355 - \epsilon}$, from which the result follows via $k$ applications of Lemma \ref{lem:ignore_short} and Proposition \ref{prop:range_0.45}(iii).

Hence, the computation starts from
$$\sum_{\substack{x^{0.06} \le p_2 < p_1 < 2x^{1/2}}} S(\mathcal{A}_{p_1p_2}(m), p_2),$$
with the aim of showing that the Buchstab identity can be applied in such a manner that the resulting loss is less than one. As in Section \ref{sec:0.5_algorithm}, we split the sum into sums over $p_1 \in I_1, p_2 \in I_2$ for shorter intervals $I_i$.

We handle the case $p_2 > L_{\zeta}$ separately. In this case we apply the Heath-Brown decomposition to $P_1(s)$, $P_2(s)$ and any potential $R_i(s)$ longer than $L_{\zeta}$. We perform a casework on the lengths of the resulting factors, utilizing Lemma~\ref{lem:HB-dec-lengths} and using Lemma~\ref{lem:p_2_large} to discard the case with zeta factors, then considering ways of combining the factors to polynomials $A(s), B(s), C(s)$ and checking whether any satisfy Lemma~\ref{lem:uncertain_typeII}. An asymptotic is obtained if in all cases a suitable decomposition $F(s) = A(s)B(s)C(s)$ is found. (The loss arising from $p_2 > L_{\zeta}$ is roughly $0.09$ with our choice of parameters below.) From now on, assume that $p_2 < L_{\zeta}$.

We implement a procedure that determines if an asymptotic for
\begin{align*}
\sum_{\substack{p_1, \ldots , p_n \\ p_i \in I_i \\ p_n < \ldots < p_1}} S(\mathcal{A}_{p_1 \cdots p_n}(m), z)
\end{align*}
may be obtained, where $z$ is a function of $x$ or $z = p_n$. In practice $n \ge 2$ and $z$ is either at most $x^{0.06}$ or equal to $p_n$. Once again the problem is determining whether the polynomial
$$F(s) = P_1(s) \cdots P_n(s)R_1(s) \cdots R_k(s)H(s)$$
may be written as $A(s)B(s)C(s)$ so that Proposition \ref{prop:range_0.45} is satisfied, for any polynomials $R_i \le z$ and $H$. We may apply the Heath-Brown decomposition to $P_1(s)$ if we wish.

Our procedure is as follows (recall the notation $P_l, P_u, h, t$ and $r$ from the beginning of Section \ref{sec:theoretical}):
\begin{enumerate}
\item Check if the case $H > L_{\zeta}$ can be handled. (The answer is trivially positive if $(P_1)_l + \ldots + (P_n)_l > 1 - t/2 + \epsilon$. Assume otherwise.)
\begin{enumerate}[(i)]
\item Write $Q(s) = R_1(s) \cdots R_k(s)$. Consider whether $P_1(s) \cdots P_n(s)Q(s)$ may be partitioned as $B(s)C(s)$ as in Lemma~\ref{lem:uncertain_typeI}. By Lemma~\ref{lem:ignore_short}, the polynomial $Q(s)$ may be dropped if $z \le x^{0.06}$.
\item If this fails and $P_1 > L_{\zeta}$, apply the Heath-Brown decomposition to $P_1(s)$. Consider a casework on the lengths of the factors $Q_1(s), Q_2(s)$ of $P_1(s)$ (see Lemma~\ref{lem:HB-dec-lengths}), and in each case consider partitions $B(s)C(s)$ of the polynomial $Q_1(s)Q_2(s)P_2(s) \cdots P_n(s)Q(s)$ and check whether any satisfy Lemma~\ref{lem:uncertain_typeI}. Again, $Q(s)$ may be dropped if $z \le x^{0.06}$.
\end{enumerate}
\item Check if the case $H < L_{\zeta}$ can be handled without applying the Heath-Brown decomposition to $P_1(s)$.
\begin{enumerate}[(i)]
\item Write $Q(s) = H(s)R_1(s) \cdots R_k(s)$ and consider whether $P_1(s) \cdots P_n(s)Q(s)$ may be written as a product $A(s)B(s)C(s)$ satisfying Lemma~\ref{lem:uncertain_typeII}.
\item If not, and $P_1 \cdots P_n < x^{1 - t/2 - \epsilon}$ so that $k \ge 1$, consider cases depending on the size of $H(s)$. In each case, write $Q(s) = R_1(s) \cdots R_k(s)$ and consider partitions of $P_1(s) \cdots P_n(s)H(s)Q(s)$, again checking whether Lemma~\ref{lem:uncertain_typeII} applies. If $z \le x^{0.06}$, it also suffices if some condition of Lemma~\ref{lem:MN-2} is satisfied.
\end{enumerate}
\item Check if the case $H < L_{\zeta}$ can be handled by applying the Heath-Brown decomposition to $P_1(s)$ (assuming $P_1 > L_{\zeta}$).
\begin{enumerate}[(i)]
\item Check the case where $P_1(s)$ outputs a zeta sum. Let $P_1(s)$ output $Q_1(s), Q_2(s)$ with $Q_1 \ge L_{\zeta}$ a zeta sum (and $Q_2(s)$ possibly constant), and write $Q(s) = R_1(s) \cdots R_k(s)H(s)$. Consider decompositions of $Q_2(s)P_2(s) \cdots P_n(s)Q(s)$ as $B(s)C(s)$, and check whether any satisfy the conditions of Lemma~\ref{lem:uncertain_typeI}. The factors $R_1(s), \ldots , R_k(s)$ may be dropped if $z \le x^{0.06}$.
\item Check the case where $P_1(s)$ does not output a zeta sum. Perform a casework on the lengths of the factors $Q_1(s), Q_2(s)$ (see Lemma~\ref{lem:HB-dec-lengths}), write $Q(s) = R_1(s) \cdots R_k(s)H(s)$ and consider decomposition $A(s)B(s)C(s)$ of the polynomial $Q_1(s)Q_2(s)P_2(s) \cdots P_n(s)Q(s)$, checking whether any satisfy Lemma~\ref{lem:uncertain_typeII}.
\end{enumerate}
\end{enumerate}

If (1) fails, we return that an asymptotic cannot be established. Assuming (1) succeeds, we perform step (2), and only if it fails we perform step (3). Success of either (2) or (3) results in finding an asymptotic formula.

If no asymptotic formula is found, the loss is bounded as in~\eqref{eq:full_loss}
and~\eqref{eq:loss_bound}.

Values of $z$ for which we may apply the Buchstab identity twice more are determined by trial and error with the candidates $z = 0.06, 0.059, \ldots , 0.001$. 

The intervals $I_i = [x^{\alpha_i}, x^{\beta_i}]$ are chosen so that $\beta_i - \alpha_i = 1/6000$ if $i \le 2$ and $\beta_i - \alpha_i = 1/450$ otherwise, and we take $\epsilon = 10^{-9}$ in various places. The computation gives the upper bound $0.996 < 1$ for the loss. As with $c = 0.5$, the program prints more detailed information on the contribution of different values of $p_1$ on the loss.

\begin{remark}
As in Remark~\ref{rem:easy}, one may approximate the loss by more straightforward means. Such approximations indicate that reaching $R = x^{0.17}$ would require new ideas.
\end{remark}

\section{Applications}
\label{sec:applications}

In this section we discuss the applications of Theorems~\ref{thm:PRF},~\ref{thm:bin} and~\ref{thm:approx} and show how the theorems follow from Theorem~\ref{thm:many}. We remark that likely one could obtain improvements to our results by proving variants of Theorem~\ref{thm:many} for different values of $c$.

We also note that Theorem \ref{thm:0.5} gives an improvement in a recent results of Kosyak, Moree, Sofos and Zhang \cite{KMSZ} on the maximum coefficients of cyclotomic polynomials. The author thanks Moree for pointing this out.

\subsection{Prime-representing functions}
\label{sec:PRF}

A folklore question in number theory is finding simple (non-trivial) functions that generate primes, i.e. functions $f : \mathbb{Z}_+ \to \mathbb{Z}_+$ such that $f(n)$ is a prime for all $n$. Mills~\cite{mills} famously showed that there exists a constant $A > 1$ such that $\lfloor A^{3^n} \rfloor$ is a prime for every $n \in \mathbb{Z}_+$. In short, the idea is to inductively construct a convergent sequence of constants $A_1, A_2, \ldots$ for which $\lfloor A_k^{3^n} \rfloor$ is a prime for any $n \le k$, and take $A = \lim_{k \to \infty} A_k$. The constant $3$ in the exponent arises from there being primes in intervals of the form $[x, x + x^{1 - 1/3}]$ for $x$ large enough, and stronger results on the length of such intervals allow one to reduce the constant $3$.

While we do not know whether there exist primes in intervals of length $x^{1 - 1/2}$, nevertheless Matomäki~\cite{matomaki-PRF} has shown that there exist constants $A > 1$ such that $\lfloor A^{2^n} \rfloor$ is a prime for any $n \ge 1$. The idea is to consider merely almost all intervals $[x, x + x^{1 - 1/2}]$ instead of all of them. With some modification the proof of Mills adapts to this case, assuming one has a strong enough bound for the set of exceptional $x$ for which $[x, x + x^{1 - 1/2}]$ has no or only few primes. Such a result is given by Matomäki in~\cite{matomaki}. Here again one may reduce the constant $2$ assuming one has analogous results for shorter intervals.

An improvement of the constant $2$ has been given by Islam in \cite{islam} by extending the result of Matomäki in~\cite{matomaki} to intervals slightly shorter than $\sqrt{x}$, reducing the constant to $\approx 1.946$.

The bound of Theorem~\ref{thm:0.45}, or Theorem~\ref{thm:many} to be precise, is strong enough that Matomäki's proof adapts to intervals of length $x^{0.45} = x^{1 - 1/(20/11)}$, leading to prime-representing functions of the form $\lfloor A^{(20/11)^n} \rfloor$. Numerically $20/11 \approx 1.8181\ldots$

\begin{proof}[Proof of Theorem~\ref{thm:PRF}]
Our proof follows those given by Mills~\cite{mills} and Matomäki~\cite[Corollary 4]{matomaki-PRF}.

Fix $\alpha \ge 20/11$ and $\epsilon > 0$ small enough. We inductively construct a sequence $p_0, p_1, \ldots$ of primes such that the interval
$$I_n = [p_n^\alpha, (p_n + 1)^\alpha - 1)$$
contains at least $\epsilon p_n^{\alpha-1}/\log p_n$ primes and
$$p_{n+1} \in I_n$$
for all $n \ge 0$. (Here we override the notation in Section \ref{sec:intro}, where $p_n$ denoted the $n$th prime.) Let $S_n = I_n \cap \mathbb{P}$.

Choose $p_0$ as a large prime, and assume we have already constructed $p_0, \ldots , p_n$ as above. We aim to construct $p_{n+1} \in S_n$ so that $[p_{n+1}^\alpha, (p_{n+1} + 1)^\alpha - 1)$ contains many primes. Note that for $p \in S_n$, the intervals
$$[p^\alpha, (p + 1)^\alpha - 1) \subset [p_n^{\alpha^2}, 2p_n^{\alpha^2}]$$
are disjoint and of length $(\alpha + o(1))p^{\alpha-1} = (\alpha + o(1))(p^\alpha)^{(\alpha-1)/\alpha} > (p^\alpha)^{0.45}$. By Theorem~\ref{thm:many} all but $O(p_n^{(0.18 + \epsilon)\alpha^2})$ primes $p \in S_n$ are such that $[p^\alpha, (p+1)^\alpha - 1)$ contains at least $\epsilon p^{\alpha-1}/\log p$ primes. By the induction hypothesis, $|S_n| \ge \epsilon p_n^{\alpha-1}/\log p_n$, which is much larger than $p_n^{(0.18 + \epsilon)\alpha^2}$. Hence, one may choose $p_{n+1}$ as desired.

Now, let
$$a_n = p_n^{\alpha^{-n}} \quad \text{and} \quad b_n = (p_n + 1)^{\alpha^{-n}}.$$
We trivially have $a_n < b_n$, by construction we have $a_{n+1} \ge a_n$ (as $a_{n+1} = p_{n+1}^{\alpha^{-(n+1)}} \ge (p_n^\alpha)^{\alpha^{-(n+1)}} = a_n$), and we have
$$b_{n+1} = (p_{n+1} + 1)^{\alpha^{-(n+1)}} < \left((p_n + 1)^{\alpha}\right)^{\alpha^{-(n+1)}} = (p_n + 1)^{\alpha^{-n}} = b_n.$$
It follows that $a_n$ is a bounded by $b_1$ and increasing. Furthermore, if one defines
$$A = \lim_{n \to \infty} p_n^{\alpha^{-n}},$$
we have, for all $n \ge 0$, $a_n \le A < b_n$ by above and thus $\lfloor A^{\alpha^n} \rfloor = p_n$.
\end{proof}

Similarly to~\cite{matomaki-PRF}, the proof could be generalized to prime-representing functions of the form $\lfloor A^{c_1 \cdots c_n} \rfloor$, where $c_i \ge 20/11$, and one sees that there are uncountably many admissible $A$ for any given $\alpha$ or $c_i$.

\subsection{Binary digits of primes}

In the last years there have been numerous results on primes with restricted digits. Mauduit and Rivat~\cite{mauduit-rivat} showed that the sum of digits function of prime numbers in a given base is equidistributed modulo $m$ for any fixed $m \in \mathbb{Z}_+$ (except in certain trivial cases). Bourgain~\cite{bourgain} has shown that one may prescribe a positive proportion of the binary digits of an integer at arbitrary places and find primes in the resulting set (assuming the final digit has not been set to $0$). Maynard~\cite{maynard} proved that, for any $d \in \{0, 1, \ldots , 9\}$, there are infinitely many primes without the digit $d$ in their decimal representation.

We consider the problem of finding primes with many digits $d$ in their binary representation for a given $d \in \{0, 1\}$. This is similar to the problem considered by Bourgain, differing in that we do not prescribe the places of the digit $d$ in the binary expansion. We note that the corresponding problem for smooth numbers was very recently studied by Hauck and Shparlinski \cite{hauck-shparlinski}.

We first give a useful lemma.

\begin{lemma}
\label{lem:thin_tail}
Fix $d \in \{0, 1\}$. For any $\epsilon > 0$ there exists a constant $c_{\epsilon} > 0$ such that the following holds: The number of integers $n \in [0, 2^k)$ whose binary expansion contains at most $(1/2 - \epsilon)k$ digits $d$ is $O((2^k)^{1 - c_{\epsilon}})$.
\end{lemma}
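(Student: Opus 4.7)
The plan is to interpret the problem combinatorially and then apply a standard entropy (or Chernoff) bound. First, I would note that the number of integers in $[0, 2^k)$ whose binary expansion (padded to length $k$) contains exactly $j$ digits equal to $d$ is precisely $\binom{k}{j}$, since we simply choose the positions of the $d$-digits among $k$ slots and the other digits are forced to be $1-d$. Hence, writing $\alpha = 1/2 - \epsilon$, the quantity we want to bound is
\begin{align*}
N := \sum_{0 \le j \le \alpha k} \binom{k}{j}.
\end{align*}

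Next I would invoke the classical entropy estimate
\begin{align*}
\sum_{0 \le j \le \alpha k} \binom{k}{j} \le 2^{k H(\alpha)}, \qquad 0 < \alpha \le 1/2,
\end{align*}
where $H(\alpha) = -\alpha \log_2 \alpha - (1-\alpha)\log_2(1-\alpha)$ is the binary entropy function. This is standard and follows, e.g., from the Chernoff-type argument: for any $\lambda > 0$,
\begin{align*}
\sum_{j \le \alpha k}\binom{k}{j} \le \sum_{j = 0}^{k}\binom{k}{j} 2^{\lambda(\alpha k - j)} = 2^{\lambda \alpha k}(1 + 2^{-\lambda})^{k},
\end{align*}
and optimizing over $\lambda$ (taking $2^{-\lambda} = \alpha/(1-\alpha)$) yields exactly the bound $2^{kH(\alpha)}$.

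Finally, I would observe that $H$ is strictly less than $1$ on $[0, 1/2)$, since $H$ attains its unique maximum value $1$ at $\alpha = 1/2$ and is continuous and strictly concave. Consequently $H(1/2 - \epsilon) < 1$ for every $\epsilon > 0$, and we may set
\begin{align*}
c_\epsilon := 1 - H(1/2 - \epsilon) > 0.
\end{align*}
Then $N \le 2^{k H(1/2 - \epsilon)} = (2^k)^{1 - c_\epsilon}$, which is the desired bound. There is essentially no obstacle here; the only subtlety worth noting is the convention about leading zeros (integers in $[0, 2^k)$ with fewer than $k$ binary digits are padded with leading zeros when $d = 0$), and the entropy bound handles both cases uniformly, so the conclusion is the same for $d = 0$ and $d = 1$.
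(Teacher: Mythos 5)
Your proof is correct and takes essentially the same approach as the paper: both reduce the count to $\sum_{i \le (1/2-\epsilon)k}\binom{k}{i}$ and bound it via the entropy/Stirling estimate $\binom{k}{i} \approx 2^{kH(i/k)}$, yielding a constant $c_\epsilon = 1 - H(1/2 - \epsilon) > 0$. Your use of the Chernoff argument to bound the whole tail sum directly by $2^{kH(1/2-\epsilon)}$ is a clean variant of the paper's term-by-term Stirling bound followed by summing over $O(k)$ terms, but it is the same underlying idea.
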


Note that while the lemma is stated for integers in the interval $[0, 2^k)$, the result may be applied to any $2^k$ consecutive integers, showing that most of those integers have approximately equal amounts of zeros and ones among their final $k$ binary digits.

\begin{proof}
The number of such $n$ is bounded by
$$\sum_{0 \le i \le (1/2 - \epsilon)k} {k \choose i}.$$
Via Striling's approximation one may show that for $i \le (1/2 - \epsilon)k$ we have ${k \choose i} \ll (2^k)^{1 - c_{\epsilon}}$ for some constant $c_{\epsilon} > 0$, from which the result follows.
\end{proof}

We then note that given $\epsilon > 0$ and $x = 2^k$ large enough (in terms of $\epsilon$), there are primes $p < x$ such that at least $(1/2 - \epsilon)k$ of the binary digits of $p$ are ones. Indeed, the number of integers $n < x$ having less than $k(1/2 - \epsilon)$ binary ones is $O(x^{1 - c_{\epsilon}})$ by Lemma \ref{lem:thin_tail} whereas the prime number theorem states that there are roughly $x/\log x$ primes less than $x$.

The $50\% - \epsilon$ bound may be improved by adapting the argument to short intervals. Let us sketch this argument: By~\cite{BHP}, intervals of length $x^{0.525}$ contain $\gg x^{0.525}/\log x$ primes. Consider then the interval
$$I := [2^k - 2^{k \cdot 0.525}, 2^k)$$
for $k$ large. The first $0.475k$ digits of any integer $n \in I$ are ones. Furthermore, by Lemma \ref{lem:thin_tail} there must be primes $p \in I$ such that out of the last $0.525k$ digits of $p$, at least a proportion of $50\% - \epsilon$ are ones.  Hence the number of ones is at least
$$0.475k + \left(\frac{0.525}{2} - \epsilon\right)k = (0.7375 - \epsilon)k,$$
i.e. a proportion of $73.75\% - \epsilon$ of the digits are ones. A natural barrier for this method is $75\% - \epsilon$, which is what one would get if one could find primes in intervals of length $x^{1/2 + \epsilon}$. 

One may improve the argument by considering merely almost all intervals. As in Section~\ref{sec:PRF}, this requires strong enough quantitative bounds on the size of the exceptional set.

\begin{proof}[Proof of Theorem~\ref{thm:bin}]
Let $k$ be a large enough integer divisible by $20$, let $x = 2^{k-1}$ and denote
$$I = [x, 2x) = [2^{k-1}, 2^k).$$
Any integer $n \in I$ has exactly $k$ binary digits.

Let $\epsilon > 0$ be small enough and let $t = 0.517 + \epsilon$. Let $n_1 < \ldots < n_m$ denote the integers $n \in I$ for which $2^{0.45k} \mid n$ and whose first $0.55k$ digits contain at least $\lfloor tk \rfloor$ instances of the digit $d$. (Recall that $20 \mid k$, so $0.45k$ and $0.55k$ are integers.) Hence
$$m \ge \binom{0.55k-1}{\lfloor tk \rfloor},$$
and from Stirling approximation we obtain
$$m \ge \left(\frac{0.55^{0.55}}{t^{t}(0.55 - t)^{0.55 - t}} - \epsilon\right)^k > 1.1329^k > 2^{(0.18 + \epsilon)k} > x^{0.18 + \epsilon}$$
for $\epsilon > 0$ small enough and $k$ large enough in terms of $\epsilon$. For each $n_i$, consider the interval
$$I_i = [n_i, n_i + 2^{0.45k}).$$
Note that $2^{0.45k} > n_i^{0.45}$ and that $I_i$ are pairwise disjoint. Since $m \ge x^{0.18 + \epsilon}$, by Theorem~\ref{thm:many} there exists $1 \le i \le m$ such that the interval $I_i$ contains $\gg 2^{0.45k}/k$ primes. Consider then the primes $p \in I_i$ for such $i$. By Lemma \ref{lem:thin_tail}, there exist primes $p \in I_i$ such that out of the last $0.45k$ digits of $p$ at least $(0.225 - \epsilon/2)k$ are equal to $d$. As the first $0.55k$ digits of $p$ have at least $tk = (0.517 + \epsilon)k$ digits equal to $d$, in total $p$ has at least
$$(0.517 + 0.225 + \epsilon/2)k = (0.742 + \epsilon/2)k$$
digits equal to $d$.
\end{proof}

We note that the under the Lindelöf hypothesis one may go beyond the $75\% - \epsilon$ barrier: It is known that the Lindelöf hypothesis implies that for any $c > 0$ the number of prime gaps $p_{n+1} - p_n$ longer than $x^c$ is at most $x^{1 - 2c + \epsilon}$ \cite{yu}. Applying this result with $c = 0.4$ in the above argument would allow one to replace $0.55$ by $0.6$ and $t = 0.517 + \epsilon$ by $0.542 + \epsilon$, resulting in a proportion of $76.3\%$ of the digit $d$.

\subsection{Approximation by multiplicative functions}

Harman~\cite{harman-approx} has considered the approximation of real numbers by multiplicative functions. More precisely, for a given real $\alpha > 1$, the aim is to find $n$ with
\begin{align}
\label{eq:harman-approx}
\left|\frac{\sigma(n)}{n} - \alpha\right|
\end{align}
as small as possible, where $\sigma(n) = \sum_{d \mid n} d$ is the sum-of-divisors function. Harman shows that there are infinitely many $n$ for which~\eqref{eq:harman-approx} is smaller than $n^{-0.52}$, improving on a result of Wolke~\cite{wolke}.

Harman~\cite[Theorem 2.2]{harman-approx} shows that if the number of disjoint intervals of length $x^c$ containing few primes up to $x$ is $o(x^{c/(2 - c)}/\log x)$, then~\eqref{eq:harman-approx} may be bounded by $n^{-(1 - c) + \epsilon}$ infinitely often. By a result of Peck~\cite{peck} this is true for $c \approx 0.471$, giving the bound $n^{-0.52}$ above. Theorem~\ref{thm:many} allows one to take $c = 0.45$, as our exponent $0.18 + \epsilon$ is smaller than $0.45/(2 - 0.45) \approx 0.29$. This implies Theorem~\ref{thm:approx}.

\bibliography{references}
\bibliographystyle{plain}
\end{document}